\documentclass{article}

\usepackage[margin=3.2cm]{geometry}

\usepackage{setspace}

\usepackage{graphics}
\usepackage[affil-it]{authblk}

\usepackage{tikz}
\usetikzlibrary{arrows.meta,positioning,calc}

\usepackage{lmodern}

\usetikzlibrary{arrows.meta, shapes, shadows.blur, calc}

\definecolor{boxfill}{HTML}{EAF2FF}
\definecolor{boxfillB}{HTML}{DDEBFF}
\definecolor{boxdraw}{HTML}{2B4C7E}

\usepackage{docmute}

\usepackage[utf8]{inputenc}

\usepackage{mathtools}

\usepackage{amsmath}

\usepackage{amsfonts}

\usepackage{amssymb}

\usepackage{constants}

\usepackage{amsthm}
\usepackage{aliascnt}

\usepackage{makeidx}
\makeindex

\usepackage{textcomp}
\usepackage[sb]{libertine}
\usepackage[varqu,varl]{zi4}%
\usepackage[libertine,bigdelims,vvarbb]{newtxmath}
\usepackage[supscaled=1.2,raised=-.13em]{superiors}
\usepackage{bm}
\useosf

\usepackage[breaklinks,linktocpage]{hyperref}
\usepackage[hyperpageref]{backref}

\usepackage{url}
\usepackage{breakurl}

\usepackage{tikz-cd}

\usepackage{enumitem}
\usepackage[UKenglish]{babel}
\usepackage[makeroom]{cancel}
\usepackage{pstricks-add}

\definecolor{shadecolor}{rgb}{0.88,0.91,0.95}       
\usepackage{tcolorbox}

\usepackage{dynkin-diagrams}
\usetikzlibrary{backgrounds}

\usepackage{booktabs}

\usepackage{fancyhdr}
\newcommand{\R}{\mathbb{R}}

\newcommand{\End}{\operatorname{End}}

\newcommand{\Ric}{\operatorname{Ric}}
\newcommand{\Riem}{\operatorname{Riem}}

\newcommand{\Hess}{\operatorname{Hess}}

\newcommand{\Spec}{\operatorname{Spec}}

\newcommand{\Id}{\operatorname{Id}}

\renewcommand{\|}[1]{\left| \left| #1 \right| \right|}

\newcommand{\<}{\left\langle}
\renewcommand{\>}{\right\rangle}

\newcommand{\vol}{\operatorname{vol}}

\renewcommand{\tilde}[1]{\widetilde{#1}}

\newcommand\Item[1][]{%
  \ifx\relax#1\relax  \item \else \item[#1] \fi
  \abovedisplayskip=0pt\abovedisplayshortskip=0pt~\vspace*{-\baselineskip}}

\usepackage[capitalise]{cleveref}

\numberwithin{equation}{section}

\newtheorem{proposition}[equation]{Proposition}
\crefname{proposition}{Proposition}{Propositions}

\newtheorem{lemma}[equation]{Lemma}
\crefname{lemma}{Lemma}{Lemmas}

\newtheorem{corollary}[equation]{Corollary}
\crefname{corollary}{Corollary}{Corollaries}

\newtheorem*{corollary*}{Corollary}
\crefname{corollary*}{Corollary}{Corollaries}

\newtheorem{theorem}[equation]{Theorem}
\crefname{theorem}{Theorem}{Theorems}

\crefname{exercise}{Exercise}{Exercises}

\crefname{task}{Task}{Task}

\crefname{conjecture}{Conjecture}{Conjectures}

\crefname{algorithm}{Algorithm}{algorithms}

\newtheorem*{theorem*}{Theorem}
\crefname{theorem*}{Theorem}{Theorems}

\crefname{claim}{Claim}{Claims}

\theoremstyle{remark}

\crefname{question}{Question}{Questions}

\newtheorem{definition}[equation]{Definition}
\crefname{definition}{Definition}{Definitions}

\crefname{example}{Example}{Examples}

\newtheorem{remark}[equation]{Remark}
\crefname{remark}{Remark}{Remarks}

\crefname{assumption}{Assumption}{Assumptions}

\crefname{problem}{Problem}{Problems}

\usepackage{todonotes}

\author[1]{Timothy Buttsworth}
\author[1]{William Hadden}
\author[2]{Elli Heyes}
\author[2]{Daniel Platt}
\author[2]{Toby Wiseman}

\affil[1]{\small School of Mathematics and Statistics\\
The University of New South Wales\\ Kensington\\
 Sydney\\ NSW 2052, Australia}
 \affil[2]{\small Faculty of Natural Sciences\\
Imperial College London\\ South Kensington\\
 London\\ SW7 2AZ, United Kingdom}

\date{\today}
\title{The K\"ahler-Einstein metric on the third Del Pezzo surface}

\begin{document}

\maketitle

\abstract{The compact four-dimensional manifold $\mathbb{CP}_2\# 3\overline{\mathbb{CP}_2}$ is known to admit a toric K\"ahler-Einstein metric $g_{\text{KE}}$, but the metric is not known in closed form, which makes it difficult to draw conclusions about its geometry. In this article, we use a combination of analytic and computer-assisted techniques to produce an approximate Einstein metric $g$ described explicitly herein, and also prove that the true Einstein metric $g_{\text{KE}}$ is close to $g$, where both the closeness and the topology are described explicitly. As an application, we prove bounds on the first invariant eigenvalue of the Laplace-Beltrami operator, and prove that this K\"ahler-Einstein metric does not have positive holomorphic sectional curvature everywhere.} 

\tableofcontents

\section{Introduction}

Let $M$ be a smooth manifold. 
A Riemannian metric $g$ on $M$ is said to be an \textit{Einstein metric} if there exists a constant $\lambda$ so that
\begin{equation}\label{GRS}
 \mathrm{Ric} (g)=\lambda g \ \text{on} \ M,
\end{equation}
where $\mathrm{Ric}(g)$ is the Ricci curvature of $g$. One of the most important topics in both geometry and physics is finding and classifying Einstein metrics; a thorough motivation for and discussion of the subject can be found, for example, in \cite{Besse}. If the dimension of $M$ is at most three, then any complete and connected Einstein manifold is either the round sphere, hyperbolic space, Euclidean space, or a quotient thereof. On the other hand, classifications of higher-dimensional Einstein metrics are typically out of reach. Even in dimension four, where Einstein metrics are sometimes referred to as \textit{gravitational instantons}, many important open questions remain (see e.g. \cite{AndersonSurvey} for a survey of the four-dimensional situation). 

The study of Einstein metrics is challenging because \eqref{GRS} is a system of second order, quasi-linear, \emph{weakly elliptic} partial differential equations. A gauge fixing trick is employed to replace \eqref{GRS} with an equivalent elliptic problem, for which the local theory is well-understood, see e.g. \cite[Section 5]{Besse}. On the other hand, the global theory is far from understood, and more \textit{ad hoc} methods and simplifying assumptions are typically employed to study Einstein metrics in dimensions four and higher.

One simplifying assumption that is often employed in the study of Einstein metrics is that of \textit{special holonomy}, i.e., the assumption that the holonomy group of the Riemannian manifold $(M,g)$ is strictly smaller than the special orthogonal group. By work of Berger \cite{Berger} and Brown-Gray \cite{BrownGray}, it is known that apart from quotients, products and locally symmetric spaces, the possible exceptional holonomy groups are $\mathsf{U}(n)$, $\mathsf{SU}(n)$, $\mathsf{Sp}(n)\cdot \mathsf{Sp}(1)$, $\text{Sp}(n)$, $G_2$, and $\mathsf{Spin}(7)$, in which case we say that the Riemannian manifold $(M,g)$ is \textit{K\"ahler}, \textit{Calabi-Yau}, \textit{quaternion-K\"ahler}, \textit{hyper-K\"ahler}, $G_2$ or $\mathsf{Spin}(7)$, respectively.  Apart from the K\" ahler assumption, all of these holonomy assumptions are so strong that any such metric is automatically Einstein (and is even Ricci-flat, apart from the quaternion-K\"ahler case). Finding Riemannian manifolds with these holonomy groups is quite a difficult task; see \cite{Joyce} for some special examples and some other survey material. 

Given a smooth manifold $M$ with a complex structure $J$, the K\"ahler metrics are those Riemannian metrics $g$ for which the so-called \textit{K\"ahler form} $\omega$, which is defined according to the rule  $\omega(u,Jv)=g(u,v)$, is closed. The question of whether the complex manifold $(M,J)$ admits a K\"ahler-Einstein metric has a complete answer  which is intimately related to the sign of the \textit{first Chern class $c_1(M,J)$}. Of the compact and four-dimensional complex manifolds with positive first Chern class, Tian \cite{Tian} shows that there is a K\"ahler-Einstein metric with $\lambda>0$ precisely on the manifolds $\mathbb{CP}^2$, $\mathbb{CP}^1\times \mathbb{CP}^1$ and $\mathbb{CP}^2\# k \overline{\mathbb{CP}^2}$ for $3\le k\le 8$ (with the canonical choice of complex structure). The K\"ahler-Einstein metrics on $\mathbb{CP}^2$ and  $\mathbb{CP}^1\times \mathbb{CP}^1$ are both known explicitly (they are the Fubini-Study metric and the round product metrics, respectively), but the K\"ahler-Einstein metrics on $\mathbb{CP}^2\# k \overline{\mathbb{CP}^2}$ for $3\le k\le 8$ are much more opaque. 

Another popular assumption that is often employed in the study of Einstein metrics is \textit{symmetry}, i.e., one takes a smooth manifold $M$ equipped with a smooth action by a Lie group $\mathsf{G}$, and searches for Einstein metrics amongst those for which the Lie group acts by isometries. Rather than reducing the system of PDEs to a scalar PDE (as in the K\"ahler case), the effect of assuming symmetry is to reduce the number of independent variables in the equations. For example, if the Lie group $\mathsf{G}$ acts \textit{transitively} on $M$, then $M$ is referred to as a \textit{homogeneous space}, and a Riemannian metric on which $\mathsf{G}$ acts isometrically is called a \textit{homogeneous Riemannian manifold}; solving \eqref{GRS} reduces to a system of \textit{algebraic equations} if we restrict our search to the space of homogeneous Riemannian manifolds. Although these equations are often quite complicated, much is now known about homogeneous Einstein metrics; see \cite{Alekseevskii}, \cite{BWZ} and \cite{BohmLafuente} and the references therein for some of the highlights in this field. Another assumption is that the Lie group $\mathsf{G}$ acts with \textit{cohomogeneity one}, i.e., the generic orbits have co-dimension one. In this case, \eqref{GRS} reduces to a system of ordinary differential equations. This field of enquiry has also been pursued diligently in recent decades, and while the references are too numerous to detail fully, we highlight the construction of a novel cohomogeneity one Einstein metric  on the compact four-dimensional manifold $\mathbb{CP}^2\# \overline{\mathbb{CP}^2}$ by Page \cite{Page}, a classification of all cohomogeneity one Einstein metrics on closed four-manifolds \cite{Broder26}, and a surprising sequence of pairwise-non-isometric cohomogeneity one Einstein metrics on the sphere $S^n$ for $5\le n\le 9$ by B\"ohm \cite{Bohm}.  

Another popular symmetry condition is the assumption that $M$ is $2n$-dimensional, and that there is an effective action on $M$ by the $n$-torus $T^n$, in which case we say the manifold is \textit{toric}. There are a number of important examples of toric Einstein metrics in dimension four. Firstly, the round sphere $S^4$ admits an isometric action of $\mathsf{O}(4)$, and this action contains an effective action of $\mathsf{SO}(2)\times \mathsf{SO}(2)\simeq T^2$, so the round sphere is \textit{toric}. Similarly, the Fubini-Study metric on $\mathbb{CP}^2$ is also toric. For a less trivial four-dimensional example, it is interesting to note that the Einstein metric on $\mathbb{CP}^2\# \overline{\mathbb{CP}^2}$ from Page \cite{Page} is invariant under a cohomogeneity-one action of $\mathsf{U}(2)$, which also contains a copy of $\mathsf{SO}(2)\times \mathsf{SO}(2)\simeq T^2$ acting effectively, so the Page metric is also toric. Another notable example is the Chen-Lebrun-Weber metric on $\mathbb{CP}^2\# 2 \overline{\mathbb{CP}^2}$ which was constructed using the toric assumption in \cite{ChenLebrunWeber}. Finally, it is also known that the K\"ahler-Einstein metric on $\mathbb{CP}^2\# 3 \overline{\mathbb{CP}^2}$ is  toric as well as K\"ahler, in stark contrast to the Page and Chen-Lebrun-Weber metrics which are both toric, but \textit{conformally K\"ahler}, rather than K\"ahler. 

The Einstein metric on $\mathbb{CP}^2\# 3\overline{\mathbb{CP}^2}$ \textbf{is the only known compact four-dimensional Einstein metric which is both K\"ahler and toric, but for which no closed form solution exists.} 
This leads to an embarrassing situation for differential geometers: even with the independently-powerful assumptions of toric and K\"ahler, we know almost nothing about this Einstein metric. 
This situation has been partially remedied in  \cite{Wiseman}, where the authors construct several numerical approximations for this Einstein metric.
The main purpose of this paper is to refine these numerical techniques, and combine them with several rigorous analytical techniques to \textit{verify} that these approximations do actually get us close to the true Einstein metric. 

\begin{theorem}\label{maintheorem}
    Let $(M_P,\omega_P,g,J)$ be the approximate Kähler-Einstein metric from \cref{Data} on $\mathbb{CP}^2\# 3\overline{\mathbb{CP}^2}$.
    Then there exists $\varphi \in H^5(M)$ such that $(M_P,\omega_P+i \partial \bar \partial \varphi, J)$ is a Kähler-Einstein metric in the Kähler class $[\omega_P]$ and satisfies the bound for $\|{\varphi}_{H^5(M,g)}$ recorded in \cref{tab:numerical-constants}. 
\end{theorem}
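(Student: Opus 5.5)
The plan is a Newton--Kantorovich / quantitative inverse function theorem argument applied to the complex Monge--Ampère equation in the fixed Kähler class $[\omega_P]$. First I would normalise so that $\mathrm{Ric}(\omega_{\mathrm{KE}})=\omega_{\mathrm{KE}}$, i.e.\ so that $[\omega_P]=2\pi c_1(M)$. I would then define the Ricci potential $f\in C^\infty(M)$ of the approximate metric by $\mathrm{Ric}(\omega_P)-\omega_P=i\partial\bar\partial f$, normalised by $\int_M e^{f}\omega_P^2=\int_M\omega_P^2$. The Kähler--Einstein condition for $\omega_\varphi=\omega_P+i\partial\bar\partial\varphi$ is then
\begin{equation*}
F(\varphi):=\log\frac{\omega_\varphi^2}{\omega_P^2}+\varphi-f=0 .
\end{equation*}
I would restrict to $T^2$-invariant functions, and also to functions invariant under the discrete symmetry group of the hexagon (the $\mathbb{Z}_2\times S_3$ automorphisms), assuming the data from \cref{Data} are chosen invariant. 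The point of this restriction is that the linearisation at $0$, namely $L=\Delta_{\omega_P}+1$ (with the sign convention in which $\Delta\le0$), has kernel built from eigenfunctions with $\Delta u=-u$. These come from holomorphy potentials of the torus action, and they are excluded by the symmetry. So $F$ maps a ball in $H^5_{\mathrm{inv}}$ to $H^3_{\mathrm{inv}}$, and $L$ is invertible there.

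The key steps, in order, are as follows.

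1. \emph{Residual.} Compute a rigorous bound $\|F(0)\|_{H^3}=\|f\|_{H^3}\le\epsilon$ from the explicit formula for $g$. This will use interval arithmetic on the symplectic potential over the moment polytope.

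2. \emph{Inverse bound.} Bound $\|L^{-1}\|_{H^3\to H^5}\le K$. I would split this into an $L^2$ estimate and elliptic regularity. For the $L^2$ part, I need a lower bound $\lambda_1^{\mathrm{inv}}>1$ on the first invariant nonzero eigenvalue of $-\Delta_{\omega_P}$, which gives $\|L^{-1}\|_{L^2\to L^2}\le (\lambda_1^{\mathrm{inv}}-1)^{-1}$. I would then bootstrap to $H^5$ using explicit elliptic/Gårding constants for $g$, commuting derivatives with $\Delta$ and controlling the commutators by curvature bounds.

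3. \emph{Lipschitz bound on the derivative.} Show that $\|DF(\varphi)-DF(0)\|_{H^5\to H^3}\le M\|\varphi\|_{H^5}$ on a ball of radius $r$. Here $DF(\varphi)=\Delta_{\omega_\varphi}+1$, and the difference is a quadratic-type expression in $i\partial\bar\partial\varphi$ divided by $\omega_\varphi^2/\omega_P^2$. This uses explicit Sobolev embedding $H^5\hookrightarrow C^{3}$ and multiplication constants for $H^3$ in real dimension four.

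4. \emph{Conclusion.} If $2KM\cdot K\epsilon<1$ (the Kantorovich condition), the contraction mapping $\varphi\mapsto\varphi-L^{-1}F(\varphi)$ has a fixed point with $\|\varphi\|_{H^5}\le 2K\epsilon$, which is the entry in \cref{tab:numerical-constants}. Smallness of $\varphi$ in $C^2$ ensures $\omega_\varphi>0$, and elliptic regularity makes $\omega_\varphi$ smooth.

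I expect the main obstacle to be step 2, and specifically the rigorous \emph{lower} bound on $\lambda_1^{\mathrm{inv}}$. Numerical eigenvalues are upper bounds by Rayleigh--Ritz, so I need a genuine lower-bound method. My first attempt would be a Lichnerowicz-type argument. Although $\mathrm{Ric}(g)\ge(1-\delta)g$ only approximately, for Kähler metrics invariant eigenfunctions with eigenvalue near $1$ correspond to holomorphic vector fields, and these are ruled out by symmetry. Concretely, I would use the Bochner formula $\int|\bar\partial\nabla^{1,0}u|^2=\int(\Delta u)^2-\int\mathrm{Ric}(\nabla u,\nabla u)$, together with a quantitative estimate that $\|\bar\partial\nabla^{1,0}u\|$ controls the distance to invariant holomorphy potentials, which are zero here. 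If that proves too lossy, the fallback is a temple/Lehmann--Goerisch inequality, or a domain-decomposition lower bound on the polytope in symplectic coordinates, where invariant functions reduce to a weighted 2D problem.
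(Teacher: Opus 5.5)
\textbf{Verdict: genuine gap.} Your framework matches the paper's. It has an $H^3$ residual and an $H^3\to H^5$ bound for the inverse of the linearisation on $T^2\rtimes D_6$-invariant functions. That bound comes from an $L^2$ spectral gap plus a commutator bootstrap controlled by certified bounds on $\Ric,\Riem,\nabla\Riem,\nabla^2\Riem$. Then come a Lipschitz bound for the nonlinearity and a contraction; your Kantorovich step is \cref{theorem:fixed-point}.

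The gap is the step you flag yourself: a certified lower bound $\lambda_1^{\mathrm{inv}}>1$ (your complex normalisation; Riemannian $\lambda_{\min}>2$). The method you propose first cannot produce it. With $\Ric\geq\mu g$, the Bochner identity gives only $\lambda_1^{\mathrm{inv}}\geq\mu\approx 0.9946$. Even for an exactly K\"ahler--Einstein metric it gives only $\lambda_1^{\mathrm{inv}}\geq 1$, with equality on holomorphy potentials. So curvature input alone never yields the strict, quantitative margin you need.

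Your extra ingredient is that $\bar\partial\nabla^{1,0}f$ controls the distance to the (here trivial) invariant holomorphy potentials. Concretely, this means an explicit inequality $\|{\bar\partial\nabla^{1,0}f}_{L^2}^2\geq c\,\|{f}_{L^2}^2$ for invariant mean-zero $f$, with $c>1-\mu$. That is a spectral gap for the fourth-order Lichnerowicz operator $\mathcal{D}^*\mathcal{D}$. For toric $f$, $|\bar\partial\nabla^{1,0}f|^2$ is a constant multiple of $u^{ik}u^{jl}\,\partial_{x_i}\partial_{x_j}f\,\partial_{x_k}\partial_{x_l}f$, a Hessian form whose weight degenerates on $\partial P$. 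Certifying such a bound is no easier than the original problem. On $T^2$-invariant mean-zero functions, $\ker\mathcal{D}$ is spanned by $x_1,x_2$, and $D_6$-symmetry removes these. That gives injectivity only, not a number. Temple or Lehmann--Goerisch bounds likewise presuppose a rigorous lower bound for the next invariant eigenvalue.

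The paper's route is closest to your last fallback, but it needs an ingredient you do not anticipate. Since $dV_g=dx\,d\theta$, the invariant Rayleigh quotient is $\int_Q u^{ij}f_if_j\,dx/\int_Q f^2\,dx$ on a fundamental triangle. A guaranteed lower bound comes from Crouzeix--Raviart elements and Liu's theorem (\cref{theorem:liu-theorem}). This uses elementwise Loewner minorants $B_T\preceq u^{ij}$ and the interpolation constant $0.1893\,h(T)/\sqrt{\alpha_T}$ of \cref{corollary:Lius-constant-for-A-norm}.

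Because $u^{ij}$ degenerates on $\partial P$, no $\alpha_T>0$ exists on elements meeting the boundary. So the FEM bound is computed on the inset hexagon $P_\delta$ with $\delta=1/5120000$. \Cref{theorem:eigenvalue-inset-comparison} then transfers the bound to $P$ as follows. For a normalised invariant eigenfunction, Ilias's Sobolev inequality (using $\Ric\geq\mu g$) bounds its $L^4$ norm. H\"older's inequality on the collar $M\setminus M_\delta$, whose volume is at most $28\pi^2\delta$, then shows that only $O(\sqrt{\delta})$ of its $L^2$ mass lies there. This gives \cref{equation:lambda-min-lower-bound-final} and $\lambda_{\min}\geq 6.2366$, far above $2$.

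Three smaller points:
\begin{enumerate}
\item $H^5$ does not embed in $C^3$ in real dimension four, only in $C^{2,\alpha}$. That is still enough for $\omega_\varphi>0$.
\item Because of the constants, the correct bound is $\|{L^{-1}}_{L^2\to L^2}\leq\max\{1,(\lambda_1^{\mathrm{inv}}-1)^{-1}\}$, not $(\lambda_1^{\mathrm{inv}}-1)^{-1}$.
\item Your logarithmic form makes the Lipschitz step require an $H^3$ bound on $(\omega_\varphi^2/\omega_P^2)^{-1}$. The paper's form $\omega_\varphi^2/\omega_P^2-e^{F-\varphi}$ has an exactly quadratic Monge--Amp\`ere term in complex dimension two. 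The price is the linearisation $-\frac12\Delta+e^F$ and the Kato perturbation step in \cref{corollary:L-spectral-gap}, which your form avoids.
\end{enumerate}
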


As a consequence, we are now in a position to draw some reliable conclusions about the Kähler-Einstein metric $g_{KE}$ from \cref{maintheorem}.
We start with the holomorphic sectional curvature of this K\"ahler-Einstein metric. 

\begin{corollary}\label{corollary:not-positive-hsc}
The holomorphic sectional curvature of $g_{\text{KE}}$ is not uniformly positive. 
\end{corollary}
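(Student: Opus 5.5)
The plan is to exhibit a single point $p\in M$ and a unit vector $X\in T_pM$ at which the holomorphic sectional curvature $H_{g}(X)=\Riem_g(X,JX,JX,X)$ of the approximate metric $g$ from \cref{Data} is strictly negative by a definite, rigorously certified margin $\delta>0$. We then use \cref{maintheorem} to show that $H_{g_{\text{KE}}}(X)$ differs from $H_g(X)$ by less than $\delta$. The natural candidates for $p$ are the torus-fixed points and the points on the exceptional curves. These are the points where the numerics of \cite{Wiseman} indicate that the curvature concentrates or degenerates. We will scan $H_g$ over the moment polygon, take $p$ at (or near) the minimiser, and choose $X$ along one of the torus-orbit directions, so that $H_g(X)$ reduces to an explicit expression in the symplectic potential and its derivatives up to fourth order. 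We then evaluate this expression with interval arithmetic to certify $H_g(X)\le -\delta$.

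Next we control the perturbation. Write $\omega_{\text{KE}}=\omega_P+i\partial\bar\partial\varphi$. In a unitary frame the metric coefficients are $g_{i\bar j}+\varphi_{i\bar j}$, and the curvature is
\[
R_{i\bar j k\bar l}=-\partial_k\partial_{\bar l}g^{\text{KE}}_{i\bar j}+(g^{\text{KE}})^{p\bar q}\,\partial_k g^{\text{KE}}_{i\bar q}\,\partial_{\bar l}g^{\text{KE}}_{p\bar j}.
\]
So the difference of the curvatures at $p$ is a polynomial expression in $\nabla^2\varphi,\nabla^3\varphi,\nabla^4\varphi$ at $p$, with coefficients built from $g$, its curvature, and $(g+\nabla^2\varphi)^{-1}$. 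We also need to compare norms: $X$ is $g$-unit but not $g_{\text{KE}}$-unit, so we normalise by $g_{\text{KE}}(X,X)=1+O(|\nabla^2\varphi|)$.

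The key analytic input is a pointwise $C^4$ bound on $\varphi$ at $p$. Since $\varphi\in H^5$ in real dimension $4$, the Sobolev embedding $H^5\hookrightarrow C^{3,\alpha}$ only gives $C^3$ control, which is not enough. \textbf{This is the main obstacle}, and there are two ways to handle it. The first thing to try is to use the Kähler--Einstein equation itself, $\log\det(g+\varphi_{i\bar j})/\det g = -\varphi + F$ with $F$ the explicit Ricci potential of $g$. Differentiating it twice expresses the relevant fourth derivatives of $\varphi$ through lower-order terms. Alternatively, and more robustly, we can integrate the curvature expression over a small ball $B$ around $p$. We use that $H_g(X)\le-\delta$ holds on all of $B$ (checked with interval arithmetic), and the $L^2$ norm of $\nabla^4\varphi$ over $B$ is controlled by $\|\varphi\|_{H^5}$. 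This shows the average of $H_{g_{\text{KE}}}$ over $B$, for the parallel-transported $X$, is negative, so $H_{g_{\text{KE}}}$ is negative somewhere. In the averaged version the terms involving $\nabla^2\varphi$ and $\nabla^3\varphi$ are bounded in sup norm by the explicit Sobolev constants, while $\nabla^4\varphi$ is bounded in $L^1(B)\le \vol(B)^{1/2}\|\varphi\|_{H^4}$.

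Finally, we plug in the numerical value of $\|\varphi\|_{H^5(M,g)}$ from \cref{tab:numerical-constants} and check that the total error is smaller than $\delta$. The conclusion is that $g_{\text{KE}}$ has a negative holomorphic sectional curvature value. In particular its holomorphic sectional curvature is not uniformly positive.
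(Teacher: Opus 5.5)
Your averaged argument follows the same route as the paper. The paper certifies that the numerator $\Riem_g(V,JV,JV,V)$ is negative on an open set $U$, using the constant torus direction $\xi=(1,1)$ on a small box near the vertex $(1,-1)$, which agrees with your guess. It then compares the integral over $U$ with that for $g_{\text{KE}}$, using a $C^0$ bound on $g_{\text{KE}}-g$ and an $L^2$ bound on the curvature difference, so that $\nabla^4\varphi$ is only needed in $L^2$ (\cref{lemma:metric-comparison,lemma:curvature-comparison,proposition:negative_HSC_at_point}).

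However, one step fails as you state it, because the embedding you quote is off by one derivative. In real dimension $4$ one only has $H^5\hookrightarrow C^{2,\alpha}$. Here $\nabla^3\varphi$ lies only in $H^2$, and $H^2$ does not embed in $L^\infty$ in dimension $4$ (the borderline case $kp=n$). So no Sobolev constant bounds $\nabla^3\varphi$ in sup norm by $\lVert\varphi\rVert_{H^5}$; only $\nabla^2\varphi$ is controlled pointwise. That control comes from \cref{equation:L-infty-embedding} applied to $\nabla^2\varphi$; it yields $\eta$ and justifies your renormalisation by $g_{\text{KE}}(X,X)$. Your claim that ``the terms involving $\nabla^3\varphi$ are bounded in sup norm'' is therefore false.

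The repair is to control $\nabla^3\varphi$ in an integral norm. This suffices because it enters the curvature difference at most quadratically. The paper does this tensorially: with $h=g_{\text{KE}}-g$ and the difference of Chern connections one gets \eqref{eqn:curvature-difference-identity}, schematically $R_{\text{KE}}-R_g=g_{\text{KE}}^{-1}\nabla h\,g_{\text{KE}}^{-1}\nabla h-g_{\text{KE}}^{-1}\nabla^2 h$. Hence $\bigl\lVert\,|R_{\text{KE}}-R_g|\,\bigr\rVert_{L^2}$ is bounded, up to factors $(1-\eta)^{-1}$ and $(1-\eta)^{-2}$, by $\lVert\nabla^4\varphi\rVert_{L^2}+\lVert\nabla^3\varphi\rVert_{L^4}^2$. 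The $L^4$ norm is then handled by the explicit $H^1\hookrightarrow L^4$ constant.

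Working covariantly also matters for bookkeeping, because the Sobolev norms and constants are defined with $\nabla_g$. In your coordinate formula you would also have to absorb background Christoffel terms; terms linear in $\nabla^3\varphi$ can be bounded in $L^1(B)$ by $\vol(B)^{1/2}\lVert\nabla^3\varphi\rVert_{L^2}$.

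Drop your first suggested remedy. Differentiating $\log\bigl(\det(g+\varphi_{i\bar j})/\det g\bigr)=F-\varphi$ twice controls only the trace $g_{\text{KE}}^{i\bar j}\varphi_{i\bar jk\bar l}$ of the fourth derivatives, modulo lower-order terms. That is, it reproduces $\Ric(g_{\text{KE}})=g_{\text{KE}}$ and says nothing about holomorphic sectional curvature in a fixed direction.

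Finally, you need neither a pointwise margin $H_g\le-\delta$ on $B$ nor the renormalisation by $|X|^4_{g_{\text{KE}}}$. Since the denominator is positive, it suffices that the integral of the numerator be negative, which is exactly the certified inequality $\mathcal M<0$.
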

This is related to \cite[Conjecture 1.3]{broder2023some}, suggesting that if $\Ric>0$, then there is a cohomologous metric with positive holomorphic sectional curvature (the recent work \cite{zhang2026positiveholomorphicsectionalcurvature} confirms that such a metric indeed exists).
Our result shows that allowing cohomologous metrics in the conjecture statement is essential, because the Kähler-Einstein metric itself has positive Ricci, but its holomorphic sectional curvature changes its sign.
Yang's proof of Yau's conjecture in \cite{yang2018rc} gives a necessary criterion for the existence of a strictly positive holomorphic sectional curvature metric, however the third del Pezzo surface satisfies this criterion, so it does not say anything new about this surface.

We also consider the spectrum of the Laplace-Beltrami operator of this K\"ahler-Einstein metric. Since we have toric symmetries, \cite[Theorem 11.52]{Besse} implies that the first eigenvalue is precisely $2$, which corresponds to a toric eigenfunction. However, there is an additional $D_6$-symmetry coming from the automorphism group of the image of the moment map, and it is natural to consider the first eigenfunction which is invariant under \textit{both} symmetries. 

\begin{corollary}
    The first non-zero $T^2 \rtimes D_6$-invariant eigenvalue $\lambda_{\min}^{KE}$ of the Laplace operator for $g^{KE}$ satisfies the lower and upper bound recorded in \cref{tab:numerical-constants}.
\end{corollary}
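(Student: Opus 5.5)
The strategy is a min–max argument restricted to the $T^2\rtimes D_6$-invariant subspace, combined with perturbation bounds that transfer spectral information from the explicit approximate metric $g$ to $g_{\text{KE}}$. Write $g_{\text{KE}} = g + h$, where $h$ is determined by $\varphi$ through $\omega_{\text{KE}} = \omega_P + i\partial\bar\partial\varphi$. The $H^5$ bound in \cref{maintheorem}, together with Sobolev embedding in real dimension four ($H^5 \hookrightarrow C^{2,\alpha}$), gives an explicit $C^0$ bound $\|h\|_{C^0(g)} \le \varepsilon$, with $\varepsilon$ computable from \cref{tab:numerical-constants}.

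The first step is to check that the invariant subspace makes sense for both metrics. $g$ is $T^2\rtimes D_6$-invariant by construction. The KE metric in $[\omega_P]$ is unique up to automorphisms, and its potential can be chosen invariant by averaging, or by uniqueness of the continuity-method solution. The invariant functions therefore form a common subspace $V$.

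On $V$, the first nonzero eigenvalue is the Rayleigh quotient minimum
\[
\lambda = \min\left\{ \frac{\int |d f|^2_g \, d\mathrm{vol}_g}{\int f^2 \, d\mathrm{vol}_g} : f \in V,\ \int f \, d\mathrm{vol}_g = 0 \right\}.
\]
Both the numerator and the denominator change by factors of $(1\pm C\varepsilon)$ when $g$ is replaced by $g_{\text{KE}}$, using pointwise comparison of the metrics and of the volume forms. The mean-zero constraint is handled by subtracting the mean and noting that this only lowers the denominator. This yields
\[
\lambda^{KE}_{\min} \in \bigl[(1-C\varepsilon)\lambda^g_{\min},\ (1+C\varepsilon)\lambda^g_{\min}\bigr].
\]

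It remains to bound $\lambda^g_{\min}$ rigorously for the explicit metric $g$. In symplectic (action-angle) coordinates on the hexagon $P$, invariant functions are functions $u(x)$ on $P/D_6$. The Dirichlet form becomes
\[
\int_P G^{ij}\,\partial_i u\,\partial_j u \, dx,
\]
where $G^{ij}$ is the Hessian of the explicit symplectic potential, and the volume form is Lebesgue measure $dx$.

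The upper bound is straightforward: evaluate the Rayleigh quotient on an explicit invariant polynomial test function with interval arithmetic.

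The lower bound is the main obstacle. The approach I would try first is to compute a Galerkin approximation $(\tilde\lambda, \tilde u)$ numerically and then certify it with a Temple- or Kato-type inclusion using the residual $\|\Delta_g \tilde u + \tilde\lambda \tilde u\|_{L^2}$, evaluated rigorously. This requires an a priori lower bound on the second invariant eigenvalue to separate it from the first. To get that separation, I would use a comparison argument: bound $G^{ij}$ below by a multiple of a reference toric metric whose invariant spectrum is known explicitly, or use Lichnerowicz-type estimates from $\Ric(g) \approx \lambda g$, which give $\lambda_1 \ge \tfrac{4}{3}\cdot(\text{Ricci lower bound})$ in dimension four. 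Combining these with the $(1\pm C\varepsilon)$ transfer gives the two-sided bounds on $\lambda_{\min}^{KE}$ recorded in \cref{tab:numerical-constants}.
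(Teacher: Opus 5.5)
Your transfer step, which compares the numerator and denominator of the Rayleigh quotient via the $C^0$ closeness of $g$ and $g^{KE}$ and takes an infimum over constants to handle the mean-zero condition, matches the paper. So does your upper bound via an explicit invariant test function (\cref{proposition:eigenvalue-comparison} and \cref{approxmineigenvalue}).

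\textbf{The gap is the lower bound for $\lambda_{\min}$ of the approximate metric, which is the hard part.} Temple's or Kato's inequality gives $\lambda_1 \ge \tilde\lambda - \eta^2/(\nu-\tilde\lambda)$, with $\eta$ the residual norm. This only works once you have a rigorous $\nu \le \lambda_2$ with $\nu > \tilde\lambda \approx 6.3$. That is a guaranteed lower bound on the \emph{second} invariant eigenvalue, which is a strictly harder version of the problem you are trying to solve.

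Neither of your suggested sources supplies this. Lichnerowicz only gives $\lambda_1 \ge \tfrac43\mu$, or $2\mu$ in the K\"ahler case, i.e.\ roughly $1.3$ to $2$, and says nothing about $\lambda_2$. There is also no toric metric on this hexagon whose invariant spectrum is known explicitly, so you have no comparison or homotopy base problem. As written, the lower bound is deferred rather than proved.

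A related slip: the Dirichlet form is $\int_P u^{ij}\partial_i f\,\partial_j f\,dx$, where $u^{ij}$ is the \emph{inverse} Hessian of the symplectic potential, not the Hessian. The inverse Hessian degenerates on $\partial P$. This degeneracy is exactly what defeats comparison arguments that rely on uniform or elementwise lower bounds for the coefficient matrix.

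The paper's route needs no spectral separation at all. On an inset hexagon $P_\delta$ it uses Crouzeix--Raviart elements with piecewise-constant matrices $B_T \preceq u^{ij}$. Liu's explicit interpolation-error constant $0.1893\,h(T)/\sqrt{\alpha_T}$ then gives a guaranteed lower bound directly (\cref{theorem:liu-theorem}, \cref{corollary:Lius-constant-for-A-norm}). It then controls the eigenfunction's mass in the degenerate collar $P\setminus P_\delta$ using the Ilias $L^4$ Sobolev inequality (with $\Ric(g)\ge\mu g$) and H\"older. This yields \cref{theorem:eigenvalue-inset-comparison}.

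A minor point: invariance of $g^{KE}$ does not follow from averaging the potential, because the Monge--Amp\`ere equation is nonlinear. In the paper invariance comes for free, since the fixed point is found in $H^5(M,g)^{T^2\rtimes D_6}$.
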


Torus-invariant eigenvalues can behave quite differently from non-invariant eigenvalues, see e.g. \cite{abreu2002invariant}.
Toric Kähler manifolds have received special attention, see e.g. \cite{legendre2018toric}.
Even more specifically, the eigenvalues on the third del Pezzo surface have been studied \emph{numerically} in the literature:
\cite{Doran2008} compute the first non-zero toric and $D_6$-eigenvalue with two different methods to be approximately $6.32$, which is numerically confirmed in \cite[Section 4.4]{HallMurphy}.
It is then shown in Theorem 1.2 and Section 4.4 of
\cite{HallMurphy} that the second eigenvalue of the Laplace-Beltrami without any symmetry assumption satisfies $\lambda_2\le 5.29$. 
So remarkably, this invariant eigenvalue is not even the \textit{second smallest} eigenvalue. 

The approximate Kähler-Einstein metric is fully explicit, so it is possible to numerically search for other interesting geometric structures on this Riemannian manifold.
As one example, minimal surfaces on Kähler-Einstein manifolds have been studied in the literature, for example \cite{Lee1994,Arezzo2000}.
Particular attention has been paid to $\mathbb{CP}^2$ and nearly Kähler manifolds, in which cases the metric is given explicitly.
The cohomogeneity two situation is particularly amenable to numerical calculations, as demonstrated in \cite{Tomter1996}, using the theory developed in \cite{Hsiang1971}.
As a second example, one could search for Hermite-Einstein connections.
From \cite{Donaldson1985} it is known that stable bundles admit Hermite-Einstein connections.
Explicitly computing them is needed for applications in Physics and is an active area of research, see \cite{Douglas2007} for the first paper, and the more recent \cite{Ashmore2022} and the references therein.

The minimal surface and Hermite-Einstein equations are elliptic, so we expect that one can apply the same approach of constructing approximate solutions and perturbing them to genuine solutions with respect to the approximate Kähler-Einstein metric $g$.
\Cref{maintheorem} can then be used to perturb them to solutions for the \emph{genuine} Kähler-Einstein metric.

On a more general note, there are numerous open problems for other Kähler manifolds, in particular Calabi-Yau manifolds.
The strategy of computing an approximate Kähler-Einstein metric and perturbing it to a genuine Kähler-Einstein metric (in this case Ricci-flat) is applicable in this setting, too.
There is a sprawling literature on approximate Calabi-Yau metrics, see the foundational \cite{Headrick2005,Donaldson2005}, with recent approaches too many to list.
Certifying that one of these numerical candidates is near the genuine Calabi-Yau metric is a central challenge in theoretical physics.
In mathematics, it would allow constructions of new minimal surfaces, Hermite-Einstein connections, and solutions to other elliptic systems such as the Strominger system.

The overall strategy for the proof of Theorem \ref{maintheorem} is as follows:
\begin{itemize}
    \item use numerical techniques to construct an \textit{ostensibly}-accurate approximate K\"ahler-Einstein metric by solving for a symplectic potential as described in \cref{subsection:numerical-approach};    
    \item verify the approximate solution is actually accurate using interval-arithmetic to estimate the \textit{a posteriori} error (residue) in the equation, see \cref{subsection:a-posteriori-error};
    
    \item combine an analysis of the linearisation of the Einstein equation with fixed point methods to prove that there is a true solution nearby, see \cref{perturb}.
\end{itemize} 
Thus, Theorem \ref{maintheorem} is part of a growing collection of theorems in the field of partial differential equations that are proved using computer-assisted techniques. A survey of these techniques is available in \cite{PDESurvey}. See also \cite{ButtsworthHodgkinson24} and \cite{Wang} for the use of these techniques in the recent construction of novel cohomogeneity-one Einstein metrics. 

Let us discuss in some more detail how the proof of Theorem \ref{maintheorem} proceeds, and how the paper is organised:
\begin{itemize}
    \item Section \ref{prelim} provides a detailed study of Riemannian metrics on $\mathbb{CP}^2\# 3 \overline{\mathbb{CP}^2}$ that are both toric and K\"ahler, and also provides some estimates for the norms of various covariant derivatives of the Riemann curvature operator. We also discuss how the Einstein condition \eqref{GRS} appears for such metrics, simplifying the condition into a single scalar PDE and compute its linearisation.
    
    \item Section \ref{approx} is where we produce a Riemannian metric $g$ on $\mathbb{CP}^2\# 3\overline{\mathbb{CP}^2}$ which is both K\" ahler and toric, and which approximately satisfies the simplified Einstein condition discussed in Section \ref{prelim}. We use interval-arithmetic to rigorously \textit{a posteriori} estimate: several curvature quantities; the error of the approximation, i.e. the extent to which $g$ fails to be a true Einstein metric; and a lower bound for the smallest $T^2 \rtimes D_6$-invariant eigenvalue of the Laplacian with respect to $g$. 
    
    \item Section \ref{perturb} is where we ``fix the error" from Section \ref{approx} with a perturbation, thus constructing a true toric and K\"ahler Einstein metric $g^{KE}$. The perturbation is found by casting the Einstein equation in fixed-point form, guided by the linearised form of this equation found in Section \ref{prelim}, and invoking the Banach fixed-point theorem. This requires estimating the size of the inverse-Laplacian, which is possible with the eigenvalue estimate of Section \ref{approx}, and also requires estimating the ``non-linear remainder term" of the linearisation, which requires further estimates presented in Appendix \ref{GAE}. 
    \item Section 5 exhibits two applications. We rigorously bound the first eigenvalue of the Laplacian associated to this metric $g^{KE}$ acting on toric functions. We also look at the holomorphic sectional curvature, and prove a sign change.
\end{itemize}

\begin{figure}[htbp]
\centering
\scalebox{0.7}{\begin{tikzpicture}[
  font=\sffamily,
  block/.style={
    rounded corners=2mm,
    draw=boxdraw,
    very thick,
    top color=boxfill,
    bottom color=boxfillB,
    text width=4.3cm,
    align=center,
    inner sep=6pt,
    general shadow={
      shadow xshift=0.6ex,
      shadow yshift=-0.6ex,
      fill=black!20,
      draw=none
    }
  },
  wire/.style={
    draw=black!65,
    thick,
    line cap=round,
    line join=round,
    rounded corners=6pt
  },
  arrow/.style={
    wire,
    -{Stealth[length=2.2mm,width=1.9mm]},
    shorten <=0pt,
    shorten >=3.2pt
  }
]

\def\xsep{5.55cm}
\def\ysep{2.15cm}
\def\botsep{2.10cm}

\node[block] (A) at (0,0)
  {Approximate solution\\[-1pt]
   {\footnotesize\textit{\cref{subsection:numerical-approach}}}};

\node[block] (C) at (0,-\ysep)
  {$\|{\Ric},\ \|{\Riem}$ bounds\\[-1pt]
   {\footnotesize\textit{
     \cref{subsection:a-posteriori-geometric-bounds}}}};

\node[block] (B) at (-\xsep,-2*\ysep)
  {A~posteriori error\\[-1pt]
   {\footnotesize\textit{\cref{subsection:a-posteriori-error}}}};

\node[block] (E) at (0,-2*\ysep)
  {Eigenvalue for approx.\ solution\\[-1pt]
   {\footnotesize\textit{
     \cref{subsection:non-sharp-eigenvalue-bound}}}};

\node[block] (F) at (\xsep,-2*\ysep)
  {A~priori estimate\\[-1pt]
   {\footnotesize\textit{\cref{corollary:a-priori-estimate}}}};

\node[block] (D) at (-\xsep,-3*\ysep)
  {Higher order estimate\\[-1pt]
   {\footnotesize\textit{\cref{subsection:quadratic-estimate}}}};

\node[block] (H) at (0,-3*\ysep)
  {Injectivity estimate\\[-1pt]
   {\footnotesize\textit{\cref{corollary:inj-estimate}}}};

\node[block] (I) at (0,-4*\ysep)
  {Exact solution\\[-1pt]
   {\footnotesize\textit{\cref{maintheorem}}}};

\node[block] (J) at (-0.62*\xsep,-4*\ysep-\botsep)
  {Holomorphic sectional curvature\\[-1pt]
   {\footnotesize\textit{\cref{HSC}}}};

\node[block] (K) at (0.62*\xsep,-4*\ysep-\botsep)
  {Eigenvalue for true solution\\[-1pt]
   {\footnotesize\textit{\cref{Firsteigenvalueestimate}}}};

\draw[arrow] (A.south) -- (C.north);

\path let
  \p1=(C.south),
  \p2=(E.north),
  \p3=(B.north),
  \p4=(F.north)
in
  coordinate (ForkC) at (\x1,{(\y1+\y2)/2})
  coordinate (ForkB) at (\x3,{(\y1+\y2)/2})
  coordinate (ForkF) at (\x4,{(\y1+\y2)/2});

\draw[wire]  (C.south) -- (ForkC);
\draw[arrow] (ForkC) -- (ForkB) -- (B.north);
\draw[arrow] (ForkC) -- (E.north);
\draw[arrow] (ForkC) -- (ForkF) -- (F.north);

\draw[arrow] (B.south) -- (D.north);

\path (B.west) ++(-9mm,0) coordinate (BIleft);
\path let \p1=(BIleft), \p2=(I.west) in
  coordinate (BIturn) at (\x1,\y2);

\draw[arrow]
  (B.west) -- (BIleft) -- (BIturn) -- (I.west);

\coordinate (Ilefttop)
  at ($(I.north west)!0.30!(I.north east)$);

\path let \p1=(D.south), \p2=(Ilefttop) in
  coordinate (DIleft)  at (\x1,{(\y1+\y2)/2})
  coordinate (DIright) at (\x2,{(\y1+\y2)/2});

\draw[arrow]
  (D.south) -- (DIleft) -- (DIright) -- (Ilefttop);

\draw[arrow] (E.south) -- (H.north);

\path let \p1=(F.south), \p2=(H.north) in
  coordinate (FH1) at (\x1,{(\y1+\y2)/2})
  coordinate (FH2) at (\x2,{(\y1+\y2)/2});

\draw[arrow]
  (F.south) -- (FH1) -- (FH2) -- (H.north);

\draw[arrow] (H.south) -- (I.north);

\path let
  \p1=(I.south),
  \p2=(J.north),
  \p3=(K.north)
in
  coordinate (ForkI) at (\x1,{(\y1+\y2)/2})
  coordinate (ForkJ) at (\x2,{(\y1+\y2)/2})
  coordinate (ForkK) at (\x3,{(\y1+\y2)/2});

\draw[wire]  (I.south) -- (ForkI);
\draw[arrow] (ForkI) -- (ForkJ) -- (J.north);
\draw[arrow] (ForkI) -- (ForkK) -- (K.north);

\end{tikzpicture}}
\end{figure}

The computer code carrying out the certified calculations described in the paper can be found in the companion repository \url{https://github.com/william-hadden/toric-verified-numerics}.

\section*{Acknowledgments}
We are grateful to the \textit{Problem Solving Workshop: Computational Geometric Analysis}, held at the CUNY Graduate Center, at which work on this program began. We are grateful to Kyle Broder for his suggestions regarding the holomorphic sectional curvature applications of our approximations. The fourth author thanks Michael Douglas for helpful conversations. 

\newpage
\section{Preliminaries}\label{prelim}
The purpose of this paper is to study a K\"ahler-Einstein metric on $\mathbb{CP}_2\# 3 \overline{\mathbb{CP}_2}$ which happens to be invariant under a certain effective $T^2$ action. 
 In this section, we discuss the details of this manifold, its complex structure, the associated torus action, as well as the corresponding space of $T^2$-invariant K\"ahler metrics, including their curvatures, the Einstein condition for such metrics,  as well as the linearised Einstein condition. 

\subsection{Canonical K\"ahler geometries from Delzant polytopes}
Our pathway to K\"ahler geometries on $\mathbb{CP}_2\# 3 \overline{\mathbb{CP}_2}$ will be somewhat indirect, and proceeds by building up the manifold from its torus action, rather than its complex structure. 

To start, we consider a compact and connected four-dimensional symplectic manifold $(M,\omega)$ equipped with an effective torus action $\tau:T^2\times M\to M$. If $\tau$ preserves the symplectic form $\omega$ and admits a momentum map $\mu:M\to \mathbb{R}^2$, we say that the triple  $(M,\omega,\tau)$ is a \textit{Hamiltonian $T^2$-space}. 
Delzant \cite{Delzant} demonstrates that the image $\mu(M)\subseteq \mathbb{R}^2$ of the momentum map is always a convex polytope $P$, satisfying the following so-called \textit{Delzant conditions}:
\begin{itemize}
    \item each vertex $v$ meets exactly two edges; 
    \item the edges meeting each vertex $v$ are rational, i.e., the two edges can be described as  $v+te_1$ and $v+te_2$, for some $e_1,e_2\in \mathbb{Z}\times \mathbb{Z}$; and 
    \item it is possible to choose $e_1,e_2$ from the previous line to be a basis for $\mathbb{Z}\times \mathbb{Z}$. 
\end{itemize}
Delzant polytopes in $\mathbb{R}^2$ are typically found by specifying affine functions $l_r:\mathbb{R}^2\to \mathbb{R}$ for $r=1,\cdots,N$, and defining $P\subseteq \mathbb{R}^2$ to be the set of points $x$ where $l_r(x)\ge 0$ for $r=1,\cdots, N$. 
Delzant proves that there is a one-to-one correspondence between such polytopes and Hamiltonian $T^2$-spaces. 
\begin{theorem}\label{DelzantConstruction}
    For each Delzant polytope in $P\subseteq \mathbb{R}^2$, there is a Hamiltonian $T^2$-space $(M_P,\omega_P,\tau_P)$ whose momentum map is surjective onto $P$. Furthermore, if $(M,\omega,\tau)$ is any other Hamiltonian $T^2$-space with the same momentum map image, then the two spaces are isomorphic (in the Hamiltonian $T^2$-space sense).  
\end{theorem}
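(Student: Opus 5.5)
The plan is to follow Delzant's original argument: existence by symplectic reduction, uniqueness by a local-to-global argument.

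\textbf{Existence.} Write the $N$ affine functions as $l_r(x)=\langle x,u_r\rangle+\lambda_r$, where $u_r\in\mathbb{Z}^2$ is the primitive inward normal to the $r$-th edge. Define the linear map $\pi:\mathbb{R}^N\to\mathbb{R}^2$ by $e_r\mapsto u_r$. The Delzant conditions say that the normals at each vertex form a $\mathbb{Z}$-basis, so $\pi$ is surjective and sends $\mathbb{Z}^N$ onto $\mathbb{Z}^2$. It therefore descends to a surjection of tori $T^N\to T^2$ whose kernel $K$ is a compact connected subtorus of dimension $N-2$.

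Let $T^N$ act on $\mathbb{C}^N$ diagonally. It has momentum map $\Phi(z)=-\tfrac12(|z_1|^2,\dots,|z_N|^2)+(\lambda_1,\dots,\lambda_N)$, and the induced $K$-action has momentum map $\iota^*\circ\Phi$, where $\iota:\mathfrak{k}\to\mathbb{R}^N$ is the inclusion. Set $Z=(\iota^*\Phi)^{-1}(0)$. I would check three things about $Z$.
\begin{itemize}
\item $Z$ is compact, because $P$ is bounded.
\item $K$ acts freely on $Z$. At a point of $Z$ whose image lies on the face where exactly the edges in a set $I$ meet, the stabiliser in $T^N$ is the coordinate torus $T^I$. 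Since $|I|\le 2$ and the $u_r$ with $r\in I$ are part of a $\mathbb{Z}$-basis, $T^I$ meets $K$ trivially.
\item $0$ is a regular value, which follows from freeness.
\end{itemize}
Then $M_P=Z/K$ is a compact symplectic $4$-manifold by Marsden--Weinstein reduction. The residual action of $T^N/K\cong T^2$ is Hamiltonian, and its momentum map is identified with $P$ via the dual map $\pi^*$, which sends $P$ affinely onto $\Phi(Z)\cap(\pi^*(\mathbb{R}^2)+\lambda)$. Hence the momentum map is surjective onto $P$, and the action is effective because it is free on the preimage of the interior.

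\textbf{Uniqueness.} Let $(M,\omega,\tau)$ have momentum map image $P$. The plan has three steps.
\begin{enumerate}
\item By Atiyah--Guillemin--Sternberg connectedness, the fibres of $\mu$ are connected, and each fibre is a single $T^2$-orbit.
\item The local normal form (equivariant Darboux, or Marle--Guillemin--Sternberg) identifies a neighbourhood of the orbit over each $x\in P$ with a standard model that depends only on the local shape of $P$ at $x$. So $M$ and $M_P$ are locally isomorphic over a cover of $P$ by open sets.
\item Patch the local isomorphisms into a global one.
\end{enumerate}

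The main obstacle is the patching in step 3. On overlaps, two local isomorphisms differ by a $T^2$-equivariant symplectomorphism preserving $\mu$. Such a map is given by a map into $T^2$ of the form $\exp(d h)$ for a smooth function $h$ locally defined on $P$. These local symmetries form a sheaf $\mathcal{S}$ on $P$ fitting into an exact sequence $0\to\mathbb{R}\times\mathbb{Z}^2\to C^\infty\to\mathcal{S}\to0$. The obstruction to patching lies in $H^1(P,\mathcal{S})$. Since $P$ is contractible and $C^\infty$ is fine, this group is isomorphic to $H^2(P;\mathbb{R}\times\mathbb{Z}^2)=0$, so the local isomorphisms glue. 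The delicate part, and where I expect the real work, is showing that an automorphism preserving $\mu$ near boundary points, where the action is not free, is still of the form $\exp(dh)$ with $h$ smooth; this requires a Schwarz-type smooth-invariant-function argument.

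In this paper I would simply cite \cite{Delzant} for the theorem, since it is used only as a black box to produce $M_P$.
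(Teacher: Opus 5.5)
Your proposal is correct and takes the same approach as the paper: the paper gives no proof of its own and simply cites \cite{Delzant}, and your sketch (reduction of $\mathbb{C}^N$ by the kernel torus $K$ for existence; local normal forms plus $H^1(P,\mathcal{S})\cong H^2(P;\mathbb{R}\times\mathbb{Z}^2)=0$ for uniqueness, with smoothness of $h$ at the boundary correctly flagged as the delicate point) is a faithful outline of Delzant's argument. The only blemish is sign bookkeeping in the existence step: because $\lambda$ is already built into $\Phi$, you have $\Phi(Z)\subseteq\pi^*(\mathbb{R}^2)$, so intersecting it with the translate $\pi^*(\mathbb{R}^2)+\lambda$ is a slip. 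What you mean is that $x\mapsto\pi^*x+\lambda$ maps $P$ onto $\{(\tfrac12|z_r|^2)_r : z\in Z\}$; also, with your conventions the reduced momentum map $(\pi^*)^{-1}\circ\Phi$ actually has image $-P$, which you can fix by flipping a sign convention.
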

Let us talk about the topological construction of these Hamiltonian $T^2$-spaces. 
For a given Delzant polytope $P$ with interior $P^o$, the associated Hamiltonian $T^2$-space can be found by applying a certain compactification process to the open manifold $P^o\times T^2$, with $T^2$ acting freely in the obvious way (see Chapter 1 of \cite{Guillemin} for more details). Then the principal part of $M_P$ (the points where $T^2$ acts freely) is equivariantly diffeomorphic to $P^o\times T^2$ through the moment map $\mu_P$. On the other hand, points $p\in M_P$ with $\mu_P(p)\in \partial P$ are on singular orbits of the group action.

This construction of a Hamiltonian $T^2$-space $M$ leads naturally to K\"ahler geometry. Indeed, using $(x_1,x_2)$ as the standard coordinates for $P\subseteq \mathbb{R}^2$ and $(\theta_1,\theta_2)$ as coordinates for $T^2$, the associated symplectic form appears as $\omega_P=dx_1\wedge d\theta_1+dx_2\wedge d\theta_2$. Further, Abreu explains in \cite[Section 2.1]{Abreu} that $(M_P,\omega_P,\tau_P)$ comes equipped with a canonical complex structure $J_P$ that is both compatible with $\omega_P$ and invariant under the action of $\tau_P$. In this way, we get a canonical K\"ahler-toric geometry on a compact four-dimensional manifold associated to each Delzant polytope $P\subseteq \mathbb{R}^2$. 

\subsection{Canonical K\"ahler geometry on $\mathbb{CP}^2\# 3\overline{\mathbb{CP}^2}$}
Rather than provide a detailed description of the canonical K\"ahler geometries for each Delzant polytope, we will specialise immediately to the Delzant polytope of primary interest in this paper, namely the polytope $P\subseteq \mathbb{R}^2$ defined by the condition that the following six affine functions are non-negative: 
\begin{align}\label{dP3polytope}
    l_{\pm 1}(x_1,x_2)=1\pm x_1, \qquad l_{\pm 2}(x_1,x_2)=1\pm x_2, \qquad l_{\pm 3}(x_1,x_2)=1\mp (x_1+x_2). 
\end{align}
Note in particular that $P$ is a hexagon with the following six vertices: 
\begin{align*}
    (1,0), (0,1), (-1,1), (-1,0), (0,-1), (1,-1). 
\end{align*}

Using $(x_1,x_2,\theta_1,\theta_2)$ to denote the standard coordinates for $P^o\times T^2\subset \mathbb{R}^2\times T^2$, we have an obvious basis $\{\partial_{x_1},\partial_{x_2},\partial_{\theta_1},\partial_{\theta_2}\}$ on the tangent space of each point in the principal part $M_P^o$ of $M_P$. In \cite{Abreu}, and \cite{Guillemin94}, it is shown that, with respect to this basis, the canonical complex structure $J_P$ has the matrix form 
\begin{align*}
  J_P=  \begin{pmatrix}
        0&0&*&*\\
        0&0&*&*\\
        \frac{\partial^2 u_{P}}{\partial x_1^2}&\frac{\partial^2 u_P}{\partial x_1 \partial x_2}&0&0\\
         \frac{\partial^2 u_P}{\partial x_1\partial x_2}&\frac{\partial^2 u_P}{\partial x_2^2}&0&0
    \end{pmatrix},
\end{align*}
where the $*$s are chosen so that $J_P$ squares to minus the identity, and $u_P:P\to \mathbb{R}$ is given by
\begin{align}\label{canonicalpotential}
    u_P(x)=\frac{1}{2}\sum_{|r|=1,2,3}l_r(x)\text{log}(l_r(x)). 
\end{align}

It turns out that this abstract  K\"ahler geometry generated from our specific Delzant polytope $P$ is indeed isomorphic to our target manifold $\mathbb{CP}^2\# 3\overline{\mathbb{CP}^{2}}$.

\begin{proposition}
    Let $(M_P, \omega_P, J_P)$ be the canonical Kähler geometry associated to the hexagon $P \subset \R^2$ from \eqref{dP3polytope}.
    Then $M_P$ is diffeomorphic to $\mathbb{CP}^2\# 3\overline{\mathbb{CP}^2}$ and the K\"ahler class $[\omega_P]$ is equal to $2\pi c_1(M_P,J_P)$.
\end{proposition}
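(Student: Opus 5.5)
Two things have to be established: the diffeomorphism type of $M_P$, and the identity $[\omega_P]=2\pi c_1(M_P,J_P)$. Both are read off from the combinatorics of the hexagon $P$ via the Delzant correspondence of \cref{DelzantConstruction}.

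\textbf{Topology.} Recall that a corner chop of a Delzant polygon (replacing a vertex by a new short edge) corresponds to an equivariant symplectic blow-up at the fixed point over that vertex. Start from the triangle $\Delta=\{x_1\ge -1,\ x_2\ge -1,\ x_1+x_2\le 1\}$, cut out by $l_{1},l_{2},l_{3}$. Its edge directions at each vertex form a $\mathbb{Z}^2$-basis, so $\Delta$ is Delzant, and it is the moment polytope of $\mathbb{CP}^2$ with a suitably scaled Fubini--Study form. The vertices of $\Delta$ are $(-1,-1)$, $(-1,2)$ and $(2,-1)$. Adding the three constraints $l_{-1},l_{-2},l_{-3}\ge0$ cuts off each of these corners:
\begin{itemize}
\item $l_{-3}\ge 0$, i.e.\ $x_1+x_2\ge -1$, cuts off $(-1,-1)$;
\item $x_2\le 1$ cuts off $(-1,2)$;
\item $x_1\le 1$ cuts off $(2,-1)$.
\end{itemize}
For each new edge I would check that its primitive normal is the sum of the normals of the two adjacent edges, which is exactly the condition that the chop is a blow-up. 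For example, $-(e_1+e_2)=(-e_1)+(-e_2)$ at $(-1,-1)$ in the inward-normal convention, and similarly at the other two corners. Since the three chopped regions are disjoint, $M_P$ is $\mathbb{CP}^2$ blown up at three points, which is diffeomorphic to $\mathbb{CP}^2\#3\overline{\mathbb{CP}^2}$.

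\textbf{Cohomology class.} I would use the standard toric facts. Let $D_r=\mu_P^{-1}(\{l_r=0\})$ be the toric divisors, and write $l_r(x)=\langle x,\nu_r\rangle+\lambda_r$ with $\nu_r$ primitive and inward pointing. Then
\begin{itemize}
\item $c_1(M_P,J_P)=\sum_r [D_r]$ (Poincaré dual), since $-K=\sum D_r$ on a toric variety;
\item $[\omega_P]/2\pi=\sum_r \lambda_r [D_r]$, in the normalization where $\omega_P=dx\wedge d\theta$ with $\theta$ of period $2\pi$. This is Guillemin's formula: the symplectic volume of $D_r$ equals $2\pi$ times the lattice length of the corresponding edge, and that length is an affine combination of the $\lambda_s$ with coefficients given by intersection numbers.
\end{itemize}
Here every $\lambda_r=1$, so $[\omega_P]=2\pi c_1$. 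As an independent check that avoids convention issues, I would pair both sides with each $[D_r]$. Each edge of $P$ has lattice length $1$, so $\int_{D_r}\omega_P=2\pi$. On the other side, adjunction gives $c_1\cdot D_r=2+D_r^2=2-1=1$, because in the blow-up picture the six divisors are the three exceptional curves and the three strict transforms of lines, each of self-intersection $-1$. The classes $[D_r]$ span $H_2(M_P;\mathbb{Q})$, so agreement on all of them proves the identity.

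\textbf{Main obstacle.} The delicate point is normalization: the factor $2\pi$ depends on the period convention for $\theta$ and on the fact that the polytope is reflexive, with every facet at lattice distance $1$ from the origin. I would nail this down first by computing directly $\int_{D_r}\omega_P=\int_{\text{edge}}dx\int_{S^1}d\theta$ for one edge, and then propagate the result using the $D_6$ symmetry of $P$.
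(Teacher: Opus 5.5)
Your proof is correct, but it takes a different route from the paper. The paper's proof is a citation. Abreu identifies this hexagon as the moment polytope of $\mathbb{CP}^2\#3\overline{\mathbb{CP}^2}$ with a Kähler form in the class $2\pi c_1$. Delzant's uniqueness (\cref{DelzantConstruction}) then gives a symplectomorphism to $(M_P,\omega_P)$, which transports the diffeomorphism type and, since $c_1$ is a symplectic invariant, the class identity.

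You instead derive both facts from the combinatorics of $P$. The three disjoint Delzant corner chops of the triangle cut out by $l_1,l_2,l_3$ exhibit $M_P$ as a triple blow-up of $\mathbb{CP}^2$. Pairing with the six toric divisors then pins down the class: each has symplectic area $2\pi$ (lattice length one), while $c_1\cdot D_r=2+D_r^2=1$. I checked the ingredients. Each edge of $P$ has lattice length one. Cyclically adjacent inward normals satisfy $\nu_{r-1}+\nu_{r+1}=\nu_r$, so every $D_r^2=-1$. The chops are disjoint and smaller than the original edges. Your pairing argument is already a complete proof of the class identity on its own, given that the $[D_r]$ span $H_2$.

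What each approach buys:
\begin{itemize}
\item The paper's version is shorter, but it silently relies on Abreu's normalisation of $\omega$ and of the angle periods matching $\omega_P=dx\wedge d\theta$.
\item Yours is self-contained and explains why the class is monotone: $P$ is reflexive (all $\lambda_r=1$) and the angles have period $2\pi$. The cost is quoting standard toric facts: corner chop equals blow-up, the area formula, and adjunction or $c_1=\sum_r[D_r]$.
\end{itemize}

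One harmless slip: the identity $-(e_1+e_2)=(-e_1)+(-e_2)$ you wrote at $(-1,-1)$ is the outward-normal form. With inward normals it reads $\nu_{-3}=e_1+e_2=\nu_1+\nu_2$. The blow-up criterion is insensitive to this overall sign, so nothing breaks.
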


\begin{proof}
    In \cite[p.12]{Abreu}, Abreau identifies the hexagon as the image under the momentum map of $X=\mathbb{CP}^2\# 3\overline{\mathbb{CP}^2}$ with Kähler form s.t. $[\omega_X]=2\pi c_1(X,J_X)$.
    By \cref{DelzantConstruction}, $X$ and $M_P$ are symplectomorphic, hence also $[\omega_P]=2\pi c_1(M_P,J_P)$.
\end{proof}

There is an important observation to make about the behaviour of the complex structure $J_P$ as we approach the singular orbits. A simple computation gives

\begin{align}\label{ucanHess}
    \begin{pmatrix}
        \frac{\partial^2 u_{P}}{\partial x_1^2}&\frac{\partial^2 u_P}{\partial x_1 \partial x_2}\\
         \frac{\partial^2 u_P}{\partial x_1\partial x_2}&\frac{\partial^2 u_P}{\partial x_2^2}
    \end{pmatrix}=\frac{1}{2}\begin{pmatrix}
        \frac{1}{l_{1}}+\frac{1}{l_{-1}}+\frac{1}{l_{3}}+\frac{1}{l_{-3}}&\frac{1}{l_{3}}+\frac{1}{l_{-3}}\\
        \frac{1}{l_{3}}+\frac{1}{l_{-3}}&\frac{1}{l_{2}}+\frac{1}{l_{-2}}+\frac{1}{l_{3}}+\frac{1}{l_{-3}}
    \end{pmatrix}
\end{align}
and 
\begin{align}\label{deltacan}
\begin{split}
    \text{det}\begin{pmatrix}
        \frac{\partial^2 u_{P}}{\partial x_1^2}&\frac{\partial^2 u_P}{\partial x_1 \partial x_2}\\
         \frac{\partial^2 u_P}{\partial x_1\partial x_2}&\frac{\partial^2 u_P}{\partial x_2^2}
    \end{pmatrix}&=\frac{1}{4}\left( \frac{1}{l_{1}}+\frac{1}{l_{-1}}\right)\left(\frac{1}{l_{2}}+\frac{1}{l_{-2}}\right)+\frac{1}{4}\left( \frac{1}{l_{3}}+\frac{1}{l_{-3}}\right)\left(\frac{1}{l_{1}}+\frac{1}{l_{-1}}+\frac{1}{l_{2}}+\frac{1}{l_{-2}}\right)\\
    &=\left(\delta_P(x)\Pi_{|r|=1,2,3} l_r(x)\right)^{-1},
    \end{split}
\end{align}
where $\delta_P$ is smooth and positive on $P$ (including on the boundary). 
Clearly $u_P$ is not smoothly extendible to $\partial P$, but this is to be expected because the equivariant diffeomorphism from $P^o\times T^2$ to $M_P$ cannot be extended to the boundary of the polytope. In fact, the singularities of $u_P$ precisely accommodate that degeneration, thus allowing the resulting complex structure $J_P$ to be smooth over all of $M_P$, not just the principal part $M_P^o$. 

\subsection{Other toric K\"ahler metrics on $\mathbb{CP}^2\# 3\overline{\mathbb{CP}^{2}}$}\label{othertoricKahler}
We now have a compact four-dimensional toric K\"ahler manifold $(M_P,\omega_P,J_P)$, with its canonical toric K\"ahler metric $g_P$. This Riemannian metric is not necessarily Einstein, so we search for a K\"ahler-Einstein metric by fixing the symplectic form $\omega_P$, and replacing $J_P$ with another compatible toric complex structure $J$. As discussed in Appendix A of \cite{Abreu} (especially Proposition A.1), this is equivariantly-biholorphically equivalent to fixing the complex structure $J_P$, and varying the symplectic form $\omega$, which is usually what is done in the search for K\"ahler-Einstein metrics. 

We will again define our new complex structure $J$ in the basis $\{\partial_{x_1},\partial_{x_2},\partial_{\theta_1},\partial_{\theta_2}\}$ according to the matrix 
\begin{align}\label{newcomplex}
    J
    =
    \begin{pmatrix}
        0&0&*&*\\
        0&0&*&*\\
        \frac{\partial^2 u}{\partial x_1^2}&\frac{\partial^2 u}{\partial x_1 \partial x_2}&0&0\\
         \frac{\partial^2 u}{\partial x_1\partial x_2}&\frac{\partial^2 u}{\partial x_2^2}&0&0
    \end{pmatrix},
\end{align}
where the $*$s are chosen so that $J$ squares to minus the identity, and 
\begin{align}\label{perturbedpotential}
    u=u_P+h,
\end{align} where $u_P$ was defined in \eqref{canonicalpotential}, and $h:P\to \mathbb{R}$ is smooth, all the way up to and including the boundary of $P$. Theorem 2.8 of \cite{Abreu} tells us that the construction actually reaches \textit{all} compatible complex structures.
\begin{theorem}\label{smoothnessthm}
    The complex structure defined according to \cref{newcomplex} and \cref{perturbedpotential} is smooth, toric and compatible with $\omega_P$ if the Hessian of $u$ is positive definite on $P^o$, and its determinant has the form 
    \begin{align}
        \label{equation:delta-smoothness}
        \text{det}(\text{Hess}_x(u))(x)=\left(\delta(x)\Pi _{|r|=1,2,3}l_r(x)\right)^{-1}
    \end{align} 
    for some function $\delta:P\to \mathbb{R}$ that is smooth and positive, all the way up to and including the boundary $\partial P$. Furthermore, if the complex structure $J$  is toric and compatible with $\omega_P$, then $J$ satisfies \cref{newcomplex},  \cref{perturbedpotential} for some smooth function $h:P\to \mathbb{R}$, as well as \cref{equation:delta-smoothness} for some smooth and positive function $\delta:P\to \mathbb{R}$. 
\end{theorem}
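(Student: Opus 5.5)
The statement is essentially Abreu's Theorem 2.8 (building on Guillemin's boundary analysis), specialised to the hexagon $P$. The plan is to reduce both directions to a local analysis near the boundary strata of $P$, since on the principal part everything is explicit.

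\textbf{Sufficiency.} On $M_P^o \cong P^o\times T^2$ the matrix \eqref{newcomplex}, with $G=\text{Hess}_x(u)$ and the $*$ block taken to be $-G^{-1}$, squares to $-\Id$. It is integrable because the complex coordinates $z_j=\partial u/\partial x_j+i\theta_j$ are holomorphic. It is also compatible with $\omega_P$, since $\omega_P(\cdot,J\cdot)$ is the metric $G_{ij}dx_idx_j+G^{ij}d\theta_id\theta_j$, which is positive definite exactly when $G>0$. The metric is visibly $T^2$-invariant. What remains is smooth extension across $\mu_P^{-1}(\partial P)$. There are two kinds of strata.
\begin{itemize}
\item \emph{Interiors of edges.} After an $SL(2,\mathbb{Z})$ change of coordinates, take $l_r=x_1$ near $x_1=0$.
\item \emph{Vertices.} By the Delzant condition we may take $l_r=x_1$ and $l_s=x_2$.
\end{itemize}
In each chart I would compare with the canonical structure $J_P$, which we already know is smooth. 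Write $G=G_P+\Hess(h)$, so that $G^{-1}=G_P^{-1}\bigl(\Id+\Hess(h)G_P^{-1}\bigr)^{-1}$. The singular part of $G_P$ is $\frac12\,\text{diag}(1/x_1,1/x_2)$ plus smooth terms. The determinant condition \eqref{equation:delta-smoothness} gives $\det G^{-1}=\delta\,\Pi l_r$ with $\delta$ smooth and positive, which forces $G^{-1}$ to be smooth on $P$ with the same vanishing pattern as $G_P^{-1}$. Concretely, the adjugate of $G$ times $\Pi l_r$ is smooth, so $G^{-1}=\delta\cdot\Pi l_r\cdot\mathrm{adj}(G)$ is smooth. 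Moreover $G^{-1}$ agrees with $G_P^{-1}$ to first order at the boundary, namely $(G^{-1})_{11}=2x_1+O(x_1^2)$.

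Guillemin's local model then applies. Passing to polar-type coordinates $(r_j=\sqrt{2x_j},\theta_j)$ on $\mathbb{C}\times\mathbb{C}$ or $\mathbb{C}\times T^*S^1$, the metric $G_{ij}dx_idx_j+G^{ij}d\theta_id\theta_j$ becomes $dr^2+r^2d\theta^2$ plus smooth, even corrections. Hence $J$ is smooth. The key observation is that smooth functions of $x_j=r_j^2/2$ are smooth on $\mathbb{C}$.

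\textbf{Necessity.} Suppose $J$ is toric and compatible with $\omega_P$. On $M_P^o$ the induced metric is $T^2$-invariant and Kähler, so $\partial_{\theta_j}$ are Killing and holomorphic. Then $J\partial_{\theta_j}$ are gradient fields of functions $y_j$, and $\omega_P$-compatibility together with integrability shows that $G^{ij}=g(\partial_{\theta_i},\partial_{\theta_j})$ is the Hessian-inverse of a convex function $u$ on $P^o$. This function is the Legendre dual of the Kähler potential, which gives \eqref{newcomplex}. By Abreu's Proposition A.1, $J$ is equivariantly biholomorphic to $J_P$ via a map preserving $\mu_P$. Since the Kähler potentials of $J$ and $J_P$ differ by a smooth $T^2$-invariant function on $M_P$, the symplectic potentials differ by a function $h$ smooth on $P$; this is Abreu's argument through Legendre duality. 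Finally, $\det G^{-1}=\det\bigl(g(\partial_{\theta_i},\partial_{\theta_j})\bigr)$ is a smooth function on $M_P$ vanishing exactly to first order on each boundary divisor, because the $\partial_{\theta}$'s degenerate linearly in $l_r$. Dividing by $\Pi l_r$ therefore gives a smooth, positive $\delta$.

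\textbf{Main obstacle.} The hardest part is the boundary regularity: showing that the $\delta$ condition plus smoothness of $h$ really gives smooth $J$ at the vertices, and conversely that $h$ extends smoothly up to $\partial P$. I would not reprove these; I would cite Guillemin \cite{Guillemin94} and \cite[Theorem 2.8]{Abreu} for both, checking only that the hexagon satisfies their Delzant hypotheses, which it does.
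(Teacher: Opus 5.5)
Your proposal takes the same route as the paper. The paper gives no independent proof: it imports the statement directly as Abreu's Theorem 2.8, with Guillemin's analysis of the canonical structure. Those are exactly the citations you rely on for boundary regularity and for the smoothness of $h$. Your supplementary checks are correct, in particular that $G^{-1}=\delta\,\Pi_r l_r\,\mathrm{adj}(G)$ is smooth with $(G^{-1})_{11}=2x_1+O(x_1^2)$ at a facet; this is the same cancellation the paper later uses via $u^{ij}=A^{ij}/D_0$.

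There are two harmless slips. First, $J\partial_{\theta_j}=-\nabla_g x_j=-\partial_{y_j}$ is the gradient of the moment-map component, not of $y_j$. Second, the necessity direction gives the block form of $J$ only after an equivariant symplectomorphism $(x,\theta)\mapsto(x,\theta+\nabla F(x))$ that adapts the angle coordinates to $J$; the ``Furthermore'' clause should be read with this normalisation.
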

Since the positivity of $\delta$ is crucial in our construction, it is helpful to describe it more explicitly. 
\begin{lemma}\label{deltaform}
    For a given smooth $h:P\to \mathbb{R}$, the function $\delta$ defined according to \cref{equation:delta-smoothness} satisfies 
    \begin{align*}
    \delta_P(x)\delta(x)^{-1}=1+\delta_P(x)\Pi _{|r|=1,2,3}l_r(x) \left(\text{det}(\text{Hess}_x (h))+\frac{\partial^2 h}{\partial x_1^2}\frac{\partial^2 u_P}{\partial x_2^2}  +\frac{\partial^2 u_P}{\partial x_1^2}\frac{\partial^2 h}{\partial x_2^2}-2 \frac{\partial^2 h}{\partial_{x_1}\partial_{x_2}}\frac{\partial^2 u_P}{\partial_{x_1}\partial_{x_2}}\right),
\end{align*}
where $\delta_P$ is given by \cref{deltacan}. 
\end{lemma}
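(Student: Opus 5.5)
The identity is pure linear algebra on $2\times 2$ matrices, combined with the defining relations \eqref{deltacan} and \cref{equation:delta-smoothness}. Write $A=\Hess_x(u_P)$ and $B=\Hess_x(h)$, so that $\Hess_x(u)=A+B$ by \eqref{perturbedpotential}. The starting point is the elementary expansion of a $2\times 2$ determinant of a sum:
\begin{align*}
\det(A+B)=\det A+\det B+A_{11}B_{22}+A_{22}B_{11}-2A_{12}B_{12},
\end{align*}
which uses the symmetry $A_{12}=A_{21}$ and $B_{12}=B_{21}$. This holds because the determinant of $2\times 2$ matrices is a quadratic form, and the cross term is the associated polarisation, namely $\operatorname{tr}(\operatorname{adj}(A)B)$.

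Next I substitute the two defining identities. By \cref{equation:delta-smoothness}, the left side is $\left(\delta\,\Pi_{|r|=1,2,3} l_r\right)^{-1}$. By \eqref{deltacan}, $\det A=\left(\delta_P\,\Pi_{|r|=1,2,3} l_r\right)^{-1}$. Multiplying the whole expansion by $\delta_P\,\Pi_{|r|=1,2,3} l_r$ turns the left side into $\delta_P\delta^{-1}$ and the $\det A$ term into $1$. The remaining terms become $\delta_P\Pi l_r$ times
\begin{align*}
\det(\Hess_x(h))+\frac{\partial^2 h}{\partial x_1^2}\frac{\partial^2 u_P}{\partial x_2^2}+\frac{\partial^2 u_P}{\partial x_1^2}\frac{\partial^2 h}{\partial x_2^2}-2\frac{\partial^2 h}{\partial x_1\partial x_2}\frac{\partial^2 u_P}{\partial x_1\partial x_2},
\end{align*}
which is exactly the claimed formula.

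The only point needing care is the domain. The computation is valid pointwise on $P^o$, where every $l_r>0$ and every quantity is finite. On $P^o$, the function $\delta$ is simply defined by \cref{equation:delta-smoothness}, provided $\det\Hess_x(u)\neq 0$ there. If one wants the identity on all of $P$, then the right-hand side should be read as its continuous extension. Each product $\Pi l_r\cdot \partial^2 u_P$ is smooth up to $\partial P$, because the entries of \eqref{ucanHess} have only simple poles $1/l_r$, and these are cancelled by the factor $\Pi l_r$. The identity then extends to $\partial P$ by continuity. I do not expect a real obstacle: the main step is just keeping the sign of the off-diagonal cross term straight, which is $-2A_{12}B_{12}$.
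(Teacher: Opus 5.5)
Your proposal is correct and follows the paper's proof: expand $\det(\Hess_x(u_P)+\Hess_x(h))$ as a $2\times 2$ determinant of a sum, substitute \eqref{deltacan} and \cref{equation:delta-smoothness}, and multiply through by $\delta_P\prod_{|r|=1,2,3} l_r$. Your remark on extending the identity to $\partial P$ by continuity is a harmless refinement that the paper leaves implicit.
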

\begin{proof}
Clearly 
\begin{align*}
     \text{det}(\text{Hess}_x(u))&=\text{det}(\text{Hess}_x(u_P)+\text{Hess}_x(h))\\
     &=\text{det}(\text{Hess}_x(u_P))+\text{det}(\text{Hess}_x(h))+\frac{\partial^2 h}{\partial x_1^2}\frac{\partial^2 u_P}{\partial x_2^2}  +\frac{\partial^2 u_P}{\partial x_1^2}\frac{\partial^2 h}{\partial x_2^2}-2 \frac{\partial^2 h}{\partial_{x_1}\partial_{x_2}}\frac{\partial^2 u_P}{\partial_{x_1}\partial_{x_2}};  
\end{align*}
combining with \cref{deltacan} gives the result. 
\end{proof}
Equation \eqref{ucanHess} makes it clear that the expression $\delta_P\delta^{-1}$ in Lemma \ref{deltaform} is smooth (since the product $\Pi _{|r|=1,2,3}l_r(x)$ can be used to cancel all singular terms in $\text{Hess}_x(u_P)$). In determining whether $u$ can be used to determine a compatible complex structure $J$, it therefore suffices to verify that the expression for $\delta_P(x)\delta^{-1}(x)$ is positive, but this is only possible once $h$ is specified. 

If the conditions of Theorem \ref{smoothnessthm} hold, then the Riemannian metric associated to the toric and compatible complex structure $J$ is given by 
\begin{align}\label{toricKahlermetric}
    g=u_{ij}dx_i dx_j+u^{ij}d\theta_id\theta_j,
\end{align}
where $u_{ij}=\frac{\partial^2 u}{\partial{x_i}\partial{x_j}}$, and $u^{ij}$ are the inverse components of this same matrix, i.e., 
\begin{align}
    \label{inverse_metric}
    \begin{pmatrix}
        u^{11}&u^{12}\\
        u^{21}&u^{22}
    \end{pmatrix}=\frac{1}{U} \begin{pmatrix}
        u_{22}&-u_{12}\\
        -u_{21}&u_{11}
    \end{pmatrix},
\end{align}
where 
\begin{align*}
    \frac{1}{U}=\frac{1}{u_{11}u_{22}-u_{12}u_{21}}=\left(\delta(x)\Pi _{|r|=1,2,3}l_r(x)\right). 
\end{align*}There are some simple yet important geometric observations that hold for such metrics. First, the volume form on the principal part of the manifold is given by 
\begin{align}\label{volumeform}
    dV_g=dx_1dx_2d\theta_1d\theta_2,
\end{align}
and in particular, the total volume of $(M_P,g)$ is always $12\pi^2$ (since the Euclidean area of $P\subseteq \mathbb{R}^2$  is $3$). Secondly, the positive definite Laplacian of $g$ acting on a toric and scalar function $\varphi:M_P\to \mathbb{R}$ is given by 
\begin{align}\label{toricLaplacian}
    \Delta_g(\varphi)=
    -
    \sum_{i,j=1}^{2}\frac{\partial}{\partial_{x_i}}\left(u^{ij}\frac{\partial \varphi}{\partial{x_j}}\right).
\end{align}

\subsection{Toric K\"ahler curvature}
In this paper, we are interested in determining whether or not a  toric K\"ahler metric on $M_P$ of the form \cref{toricKahlermetric} is Einstein. It is possible to compute the Ricci curvature of such a metric in terms of the matrix components $u_{ij}$, the inverse components $u^{ij}$, as well as some of their derivatives. Since $u^{ij}:P\to \mathbb{R}$ is smooth, but $u_{ij}:P\to \mathbb{R}$ is not, we will always aim to apply our derivatives to \textit{inverse components}. For example, we use the notation $u^{ij}_{ab}=\frac{\partial^2 u^{ij}}{\partial_{x_a}\partial_{x_b}}$. 
\begin{proposition}
    The Ricci curvature of the Riemannian metric \eqref{toricKahlermetric} is given by 
    \begin{align}\label{toricKahlerRicci}
        \Ric(g)= R_{ij}dx_idx_j+R^{ij}d\theta_id\theta_j,
    \end{align}
    where 
    \begin{align*}
        R_{ij}=-\frac{1}{2}u_{im}u_{jn}u^{nr}_{lr}u^{ml},
    \end{align*}
      and $R^{ij}=\sum_{k,l=1}^{2} u^{ik}u^{jl} R_{kl}$.
\end{proposition}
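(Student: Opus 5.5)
The strategy is to pass to complex (logarithmic) coordinates, where the Ricci form of a Kähler metric is $-i\partial\bar\partial \log\det$, and then translate back to symplectic coordinates using the Legendre transform. On the principal part $M_P^o$, set $y=\nabla_x u$ and use complex coordinates $z_j=y_j+i\theta_j$. These are holomorphic for $J$ by the standard Guillemin–Abreu theory that underlies \eqref{newcomplex}. In these coordinates the metric \eqref{toricKahlermetric} is Kähler with potential $\phi(y)$, the Legendre dual of $u$. Its Hessian is $\phi_{ij}=u^{ij}$, because $\partial x/\partial y=(\mathrm{Hess}\,u)^{-1}$.

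The Ricci form is $\rho=-i\partial\bar\partial\log\det(\phi_{ij})$. For a torus-invariant function $f(y)$ we have $i\partial\bar\partial f=\tfrac12 f_{y_ay_b}\,dy_a\wedge d\theta_b$. Taking $f=-\log\det(u^{ij})=\log U$, the Ricci tensor is $\Ric(J\cdot,\cdot)$-related to $\rho$ in the same way $g$ is related to $\omega$. Since $g=\phi_{ab}(dy_ady_b+d\theta_ad\theta_b)$, this gives
\begin{align*}
\Ric=-\tfrac12\,\frac{\partial^2 \log\det(u^{kl})}{\partial y_a\partial y_b}\,(dy_ady_b+d\theta_ad\theta_b).
\end{align*}
This already shows the block structure \eqref{toricKahlerRicci}. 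It also shows that the $\theta\theta$ block equals $R_{ab}^{(y)}$, while the $xx$ block is obtained by pulling back via $dy=\mathrm{Hess}(u)\,dx$. Hence $R^{ij}=u^{ik}u^{jl}R_{kl}$ holds automatically, with the $xx$ components $R_{kl}=u_{ka}u_{lb}R^{(y)}_{ab}$.

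The remaining step, and the main computational obstacle, is to express $\partial_{y_a}\partial_{y_b}\log\det(u^{kl})$ in $x$-derivatives of the inverse components. I plan to use the chain rule $\partial_{y_a}=u^{am}\partial_{x_m}$. First, $\partial_{x_m}\log\det(u^{kl})=-\partial_{x_m}\log U$. Then I will invoke the classical identity $u^{nr}_{\ ,r}=-u^{nr}\partial_{x_r}\log U$. This follows from Jacobi's formula $\partial_r\log U=u^{pq}u_{pqr}$, together with $\partial_r u^{nr}=-u^{np}u_{pqr}u^{qr}$ and the symmetry of $u_{pqr}$. The identity gives $\partial_{y_a}\log\det(u^{kl})=u^{am}\,u_{mn}u^{nr}_{\ ,r}\cdot$ with the index collapsing to $\partial_{y_a}\log\det(u^{kl})=u^{ar}_{\ ,r}$. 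A neat first derivative is the key simplification. Differentiating once more gives $\partial_{y_b}\partial_{y_a}\log\det=u^{bl}u^{ar}_{\ ,rl}$.

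Substituting yields $R^{(y)}_{ab}=-\tfrac12 u^{bl}u^{ar}_{\ ,lr}$. Pulling back then gives $R_{ij}=-\tfrac12 u_{ia}u_{jb}u^{bl}u^{ar}_{\ ,lr}$, which after relabelling is exactly the stated formula $R_{ij}=-\tfrac12 u_{im}u_{jn}u^{nr}_{lr}u^{ml}$ up to the symmetric roles of $m,n$. Symmetry in $i,j$ can be checked separately, because $u^{bl}u^{ar}_{\ ,lr}=\partial_{y_b}\partial_{y_a}(\cdot)$ is symmetric. The only delicate points are sign conventions (the factor $\tfrac12$ from $i\partial\bar\partial$, and $\Ric(u,v)=\rho(u,Jv)$) and checking the index bookkeeping in the two derivative identities. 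I would verify the overall sign by testing on the Fubini–Study case, or by computing the trace $-\tfrac12 u^{ij}_{\ ,ij}$ of the scalar curvature, which should reproduce Abreu's formula.
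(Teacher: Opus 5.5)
Your proof is correct. It reaches the formula by a different route than the paper, although both use the same complex coordinates $z=y+i\theta$ with $y=\nabla_x u$ (the paper's $\xi$).

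\textbf{The paper's route.} In Appendix~\ref{CDE} the paper computes the Christoffel symbols $\Gamma^\alpha_{\beta\delta}=\frac12 u^{\delta\beta}_{\alpha}$. From these it computes the full curvature $R_{\beta\bar\gamma\delta}{}^{\alpha}=-\frac14 u^{\gamma\tau}u^{\delta\beta}_{\tau\alpha}$. It then contracts to get the Ricci endomorphism $-\frac12 u^{\beta\gamma}_{\alpha\gamma}$.

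\textbf{Your route.} You start from the standard Kähler identity $\rho=-i\partial\bar\partial\log\det(\phi_{ab})$. This reduces everything to the single Jacobi-formula identity $\partial_{y_a}\log\det(u^{kl})=\partial_{x_r}u^{ar}$.

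\textbf{How they relate.} The two routes are closely linked. The traced Christoffel symbol $\Gamma^\alpha_{\beta\alpha}=\partial_{z_\beta}\log\det(g_{\alpha\bar\nu})=\frac12 u^{\alpha\beta}_{\alpha}$ is exactly your identity. The paper obtains it componentwise before tracing; you obtain it after tracing.

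\textbf{What each buys.} Your argument is shorter and needs no curvature beyond the Ricci form. The paper's route is natural in context: it needs $\Riem$, $\nabla\Riem$ and $\nabla^2\Riem$ in these coordinates anyway for the $C^0$ curvature bounds, so the Ricci formula comes out as a by-product.

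\textbf{Comments on details.}
\begin{itemize}
\item Your argument runs opposite to the paper's \cref{lemma:ricci-potential}, which deduces $\rho=i\partial\bar\partial\ln U$ from a Ricci formula. Because you invoke the general $\log\det$ formula rather than that lemma, there is no circularity.
\item The step written as $u^{am}u_{mn}u^{nr}_{\ ,r}\cdot$ is garbled. The intended computation $-u^{am}\partial_{x_m}\log U=u^{ar}_{\ ,r}$ is right.
\item Your final identification of $u^{bl}u^{ar}_{\ ,lr}$ with the paper's $u^{al}u^{br}_{\ ,lr}$ is legitimate, since both equal $\partial_{y_a}\partial_{y_b}\log\det(u^{kl})$.
\item On your proposed sanity check: the full Riemannian trace is $-u^{ij}_{\ ,ij}$. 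The quantity $-\frac12 u^{ij}_{\ ,ij}$ is the trace over the $2\times 2$ block, which matches Abreu's normalisation.
\end{itemize}
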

\begin{proof}
   This computation is standard for toric K\"ahler metrics, but follows directly from results in Appendix \ref{CDE}.
\end{proof}
Thus, we are able to determine how close the Riemannian metric \cref{toricKahlermetric} is to being Einstein at each point on $M_P$. 

\begin{corollary}\label{Riccibounds}
    If $g$ is the toric K\"ahler Riemannian metric on $M_P$ having the form \cref{toricKahlermetric}, then for each $\lambda\in \mathbb{R}$, we have 
    \begin{align*}
        |\Ric(g)-\lambda g|_g^2=\frac{1}{2}\sum_{i,k,a,b=1}^2u^{i a}_{k a}u^{k b}_{i b}+2\lambda\sum_{a,b=1}^{2}u_{ab}^{ab}+4\lambda^2.
    \end{align*}
\end{corollary}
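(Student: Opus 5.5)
The plan is to expand the squared norm algebraically and reduce everything to the mixed (endomorphism) form of the Ricci tensor. Since $g$ and $\Ric(g)$ are both block-diagonal in the frame $\{\partial_{x_1},\partial_{x_2},\partial_{\theta_1},\partial_{\theta_2}\}$ by \eqref{toricKahlermetric} and \eqref{toricKahlerRicci}, all cross terms between the $x$-block and the $\theta$-block vanish. We then write
\begin{align*}
|\Ric(g)-\lambda g|_g^2=|\Ric(g)|_g^2-2\lambda\,\mathrm{tr}_g\Ric(g)+\lambda^2|g|_g^2,
\end{align*}
with $|g|_g^2=\dim M_P=4$. This produces the $4\lambda^2$ term, so it remains to compute the scalar curvature and $|\Ric|^2_g$.

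First we simplify $R_{ij}$. Using $u^{ml}u_{im}=\delta_{il}$ (symmetry of the Hessian), the formula from \eqref{toricKahlerRicci} collapses to
\begin{align*}
R_{ij}=-\tfrac12 u_{jn}u^{nr}_{ir}.
\end{align*}
Raising an index with the $x$-block of the inverse metric, $u^{jk}$, gives the endomorphism
\begin{align*}
R_i{}^k:=R_{ij}u^{jk}=-\tfrac12 u^{kr}_{ir}.
\end{align*}
This expression involves only derivatives of the smooth inverse components, which is the form we want.

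Next we handle the $\theta$-block. Its metric components are $u^{ij}$, with inverse $u_{ij}$, and $R^{ij}=u^{ik}u^{jl}R_{kl}$. Hence the mixed tensor $u_{jl}R^{il}=u^{ik}R_{kj}$ is the transpose of the $x$-block endomorphism, expressed in the dual basis, and it has the same trace and the same trace of square.

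From this, the trace is
\begin{align*}
\mathrm{tr}_g\Ric = 2\sum_i R_i{}^i=-\sum_{a,b}u^{ab}_{ab},
\end{align*}
and the squared norm is
\begin{align*}
|\Ric|_g^2=2\sum_{i,k}R_i{}^kR_k{}^i=\tfrac12\sum u^{ka}_{ia}u^{ib}_{kb}.
\end{align*}
Substituting these gives $-2\lambda\,\mathrm{tr}_g\Ric=2\lambda\sum_{a,b} u^{ab}_{ab}$, and assembling the three pieces yields the claimed identity.

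The main obstacle is bookkeeping rather than anything conceptual. We must make sure that $|T|_g^2$ for a symmetric $2$-tensor equals the trace of the square of its mixed form, $T_i{}^kT_k{}^i$. We must also check carefully that the $\theta$-block contributes exactly the same amount as the $x$-block; this is where the relation $R^{ij}=u^{ik}u^{jl}R_{kl}$ and the inverse relation between the two metric blocks are used. The identity is first derived on the principal part $M_P^o$. Both sides are smooth functions on $M_P$, since the $u^{ij}$ are smooth up to $\partial P$, so the identity extends by continuity to all of $M_P$.
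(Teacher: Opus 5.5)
Your proposal is correct and follows essentially the paper's route. Both expand the square using $\langle \Ric,g\rangle_g=\mathrm{scal}$ and $|g|_g^2=4$, then compute $|\Ric|_g^2$ as twice the trace of the square of the Ricci endomorphism $-\tfrac12 u^{kr}_{ir}$; the paper (Appendix \ref{CDE}) just does this in holomorphic/antiholomorphic blocks, while you use the real $x$- and $\theta$-blocks of \eqref{toricKahlerRicci}, the $\theta$-block endomorphism being the transpose of the $x$-block one.
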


For our purposes, it is also important to consider the Riemann curvature of $g$, its first and second covariant derivatives and the norms of these three objects. These formulae, as well as their proofs, are also presented in Appendix \ref{CDE}.

\subsection{The $i\partial\overline{\partial}$-operator for toric K\"ahler metrics}
One can use Corollary \ref{Riccibounds} to determine when a Riemannian metric of the form \cref{toricKahlermetric} is Einstein, but since such metrics are all K\"ahler with respect to the associated complex structure $J$ from \cref{newcomplex}, there are other ways to simplify the situation. Indeed, a composition of the differential operators $\partial=\partial_J$ and $\bar{\partial}=\bar{\partial}_J$ (both taken with respect to complex structure $J$) is known to be enormously helpful when formulating the Einstein condition for K\"ahler metrics, and even more can be said in the presence of our $T^2$-symmetries. In this subsection, we examine these simplifications, and start by giving a formula for the $i\partial\overline{\partial}$ operator. 

\begin{lemma}\label{deldelbar}
    \leavevmode
   For any $T^2$-invariant smooth function $\varphi:M_P\to \mathbb{R}$, the smooth $2$-form $i\partial \overline{\partial} \varphi$ on $M_P$ is given by 
    \begin{align}\label{ddb}
        i\partial \overline{\partial} \varphi=\frac{1}{2}\sum_{j,l} A_{jl}dx_l\wedge d\theta_j, \qquad A_{jl}=\sum_{k} \partial_{x_l} (u^{jk}\varphi_k).
    \end{align}
\end{lemma}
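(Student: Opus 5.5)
The plan is to compute $i\partial\bar\partial\varphi$ on the principal part $M_P^o\cong P^o\times T^2$ through the real operator $d^c$, and then extend to all of $M_P$ by continuity. I use the standard identity $2i\partial\bar\partial = dd^c$, where $d^c=i(\bar\partial-\partial)$. On functions this reads $d^c\varphi = -d\varphi\circ J$, up to the sign convention for how $J$ acts on $1$-forms. This reduces the lemma to two steps: computing $d\varphi\circ J$ in the basis $\{\partial_{x_1},\partial_{x_2},\partial_{\theta_1},\partial_{\theta_2}\}$, and then taking one exterior derivative.

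\textbf{Step 1 (the action of $J$).} The columns of the matrix \eqref{newcomplex} give
\[
J\partial_{x_i}=\sum_j u_{ij}\partial_{\theta_j}.
\]
Since $J^2=-\mathrm{Id}$ and $(u^{jk})$ is the inverse of $(u_{ij})$ by \eqref{inverse_metric}, the starred block is forced to be
\[
J\partial_{\theta_j}=-\sum_k u^{jk}\partial_{x_k}.
\]

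\textbf{Step 2 (computing $d^c\varphi$).} Since $\varphi$ is $T^2$-invariant, $d\varphi=\sum_k\varphi_k\,dx_k$ with $\varphi_k=\partial_{x_k}\varphi$. Then $(d\varphi\circ J)(\partial_{x_i})=0$ and $(d\varphi\circ J)(\partial_{\theta_j})=-\sum_k u^{jk}\varphi_k$. Hence
\[
d^c\varphi=\sum_{j,k}u^{jk}\varphi_k\,d\theta_j .
\]
The coefficients depend only on $x$, because $u^{jk}$ and $\varphi_k$ are functions on $P$. Therefore
\[
dd^c\varphi=\sum_{j,l}\partial_{x_l}\Big(\sum_k u^{jk}\varphi_k\Big)dx_l\wedge d\theta_j,
\]
and dividing by $2$ gives \eqref{ddb} on $M_P^o$.

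\textbf{Step 3 (extension to $M_P$).} Both sides are smooth $2$-forms on $M_P$: the left side because $\varphi$ and $J$ are smooth by \cref{smoothnessthm}, and the right side because it agrees with the left on $M_P^o$. Since $M_P^o$ is dense in $M_P$, the identity holds everywhere by continuity, with \eqref{ddb} understood in the coordinates on the principal part.

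\textbf{Main obstacle: fixing the sign and normalisation conventions.} These are the identification $d^c=-d\varphi\circ J$ versus $+d\varphi\circ J$, and whether $J$ acts on covectors by precomposition or by its inverse. I would settle both with a consistency check rather than by tracking conventions abstractly. Take $\varphi$ to be the Legendre-dual Kähler potential $\psi(x)=\sum_m x_m u_m-u$, so that $\psi_k=\sum_m u_{km}x_m$. The formula then gives
\[
\sum_k u^{jk}\psi_k=x_j,\qquad A_{jl}=\delta_{jl},
\]
so \eqref{ddb} returns a positive multiple of $\omega_P=\sum_j dx_j\wedge d\theta_j$. This is consistent with the standard fact that $\psi$ is a Kähler potential for $\omega_P$ with the positive orientation. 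The check confirms the sign and the factor $\tfrac12$ under the paper's normalisation of $i\partial\bar\partial$; if the convention differed, this check would expose it.
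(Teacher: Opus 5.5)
Your argument is correct and is essentially the paper's proof: you read off $J\partial_{\theta_j}=-\sum_k u^{jk}\partial_{x_k}$, compute $d^c\varphi$ as a combination of the $d\theta_j$ with $x$-dependent coefficients, apply $d$, and divide by the factor in $dd^c=\pm 2i\partial\bar\partial$. The paper instead takes $d^c=i(\partial-\bar\partial)$, so that $d^c\varphi=d\varphi\circ J$ and $dd^c=-2i\partial\bar\partial$, and these two sign flips relative to your convention cancel. Your Legendre-potential consistency check is not needed, since your Steps 1--2 already fix the signs unambiguously. Note also that in the paper the identity $\omega_P=2i\partial\bar\partial(u_1x_1+u_2x_2-u)$ is deduced as a corollary of this lemma, not used to verify it.
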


\begin{proof}
\leavevmode
    Let $d^c:=i(\partial-\bar\partial)$ be the \textit{complex differential}, so that $(d^c \varphi )(X)=(d\varphi)(J X)$.
    One then sees that $d^c\varphi(\partial_{x_l})=0$ by the $T^2$-invariance of $\varphi$ and $d^c\varphi(\partial_{\theta_l})=-\sum_{k=1}^2 u^{jk} \varphi_k$ by the definition of $J$ in \cref{newcomplex}.
    Thus, 
    \begin{align*}
        d^c \varphi=-\sum_j \left(\sum_k u^{jk} \varphi_k \right) d\theta_j.
    \end{align*}
    Applying $d$ to both sides and using $dd^c=-2i \partial \bar \partial$ gives \cref{ddb}. \qedhere
\end{proof}
This formula gives us a convenient way to generate the symplectic form $\omega_P$ from a scalar potential function. 
\begin{corollary}
    The canonical symplectic form $\omega_P=dx_1\wedge d\theta_1+dx_2\wedge d\theta_2$ can be expressed on $M^o_P$ as 
    \begin{align}
        \label{equation:omegaP-potential}
        \omega_P=2i \partial \overline{\partial} \left( u_1x_1+u_2x_2-u\right).
    \end{align}
\end{corollary}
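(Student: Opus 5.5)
We apply \cref{deldelbar} to the $T^2$-invariant function $\varphi = u_1x_1+u_2x_2-u$ on $M_P^o$, where $u_k=\partial u/\partial x_k$. This $\varphi$ is the Legendre transform of $u$ written in the $x$-coordinates. \Cref{deldelbar} is a local computation involving only $J$ and the invariance of $\varphi$, so it applies on $M_P^o$ even though $\varphi$ need not extend smoothly to all of $M_P$.

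The first step is to compute the derivatives of $\varphi$. By the product rule,
\begin{align*}
\varphi_k=\partial_{x_k}\varphi = u_k+\sum_{i}x_i u_{ik}-u_k=\sum_i x_i u_{ik}.
\end{align*}
Next we form the quantity appearing in \cref{ddb}:
\begin{align*}
\sum_k u^{jk}\varphi_k=\sum_{k,i} u^{jk}u_{ki}x_i = x_j,
\end{align*}
using that $u^{jk}$ is the inverse matrix of $u_{ki}$, as in \eqref{inverse_metric}. This cancellation is the key step, and it explains the choice of potential: the second-derivative singularities of $u$ are absorbed exactly by the inverse Hessian.

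It follows that
\begin{align*}
A_{jl}=\partial_{x_l}(x_j)=\delta_{jl},
\qquad\text{so}\qquad
i\partial\bar\partial\varphi=\tfrac12\left(dx_1\wedge d\theta_1+dx_2\wedge d\theta_2\right)=\tfrac12\,\omega_P .
\end{align*}
Multiplying by $2$ gives \eqref{equation:omegaP-potential}.

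The only point needing care is the sign and normalisation conventions, namely $dd^c=-2i\partial\bar\partial$ and the form $\tfrac12\sum A_{jl}\,dx_l\wedge d\theta_j$. We take these exactly as stated in \cref{deldelbar}, so no further obstacle arises. The identity holds only on $M_P^o$, where the coordinates $x$ and the potential $u$ are defined, and that is precisely what the statement asserts.
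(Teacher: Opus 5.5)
Your proof is correct and is essentially the paper's own argument: you compute $\varphi_k=\sum_i x_i u_{ik}$, use the inverse Hessian to get $\sum_k u^{jk}\varphi_k=x_j$, and conclude that $A_{jl}=\delta_{jl}$ in \cref{ddb}. This gives $i\partial\bar\partial\varphi=\tfrac12\omega_P$.
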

\begin{proof}
    
    Letting $\varphi=u_1x_1+u_2x_2-u$, we see that $\varphi_k=\sum_{i=1}^{2}u_{ik}x_i$, so that 
    \begin{align*}
        A_{jl}=\sum_{i,k=1}^{2}\partial_{x_l} (u^{jk}u_{ik}x_i)=\sum_{i=1}^{2}\partial_{x_l}(\delta_i^j x_i)=\delta_i^j \delta_i^l=\delta_{jl},
        \end{align*} 
        as required. \qedhere
\end{proof}
The formula for $i\partial\overline{\partial}$ also gives us a convenient way to generate the Ricci curvature from a scalar potential. Indeed, in K\"ahler geometry, it is common to express the Ricci curvature as the \textit{Ricci form}, which is the $2$-form defined by $\rho(X,Y)=\text{Ric}(JX,Y)$. The following lemma demonstrates that this $2$-form describing the Ricci curvature also appears as $i\partial \overline{\partial}$ applied to a scalar function (cf. \cite[Eqn. 4.64]{Ballmann2006}). 
\begin{lemma}
   \label{lemma:ricci-potential}
   The Ricci form $\rho$ on $M^o_P$ appears as 
\begin{align}
\label{equation:ricci-potential-equation}
    \rho =i\partial \overline{\partial}\ln (U),
\end{align}
where $U=u_{11}u_{22}-u_{12}u_{21}$.
\end{lemma}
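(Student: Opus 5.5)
The plan is to compare two explicit 2-forms in the coordinates $(x_1,x_2,\theta_1,\theta_2)$ on $M^o_P$. One is the right-hand side, obtained from \cref{deldelbar} with $\varphi=\ln U$. The other is the Ricci form $\rho(X,Y)=\Ric(JX,Y)$, obtained from \cref{toricKahlerRicci} together with the matrix \eqref{newcomplex} for $J$.

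\textbf{Computing the right-hand side.} Since $U$ depends only on $x$, $\ln U$ is a $T^2$-invariant function on $M^o_P$, so \cref{deldelbar} applies. (The lemma is stated for smooth functions on $M_P$, but its proof is purely local and works on $M_P^o$.) This gives
\begin{align*}
i\partial\bar\partial \ln U=\tfrac12\sum_{j,l}\partial_{x_l}\Big(\sum_k u^{jk}\partial_{x_k}\ln U\Big)\,dx_l\wedge d\theta_j .
\end{align*}
Jacobi's formula for the derivative of a determinant gives $\partial_{x_k}\ln U=u^{ab}u_{abk}$. Differentiating $u^{ja}u_{ab}=\delta^j_b$ gives $u^{ja}_k=-u^{jm}u_{mnk}u^{na}$. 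Combining these with the symmetry of third derivatives $u_{abk}$, we get $\sum_k u^{jk}\partial_k\ln U=-\sum_k u^{jk}_k$. Hence
\begin{align*}
i\partial\bar\partial\ln U=-\tfrac12\sum_{j,l,k}u^{jk}_{kl}\,dx_l\wedge d\theta_j .
\end{align*}

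\textbf{Computing the Ricci form.} From \eqref{newcomplex} and $J^2=-\Id$ we read off $J\partial_{x_i}=\sum_j u_{ij}\partial_{\theta_j}$ and $J\partial_{\theta_j}=-\sum_i u^{ji}\partial_{x_i}$. Since $\Ric$ in \eqref{toricKahlerRicci} has no mixed $dx\,d\theta$ terms, the only nonzero components of $\rho$ are $\rho(\partial_{x_l},\partial_{\theta_j})=\Ric(J\partial_{x_l},\partial_{\theta_j})=\sum_m u_{lm}R^{mj}$. Substituting $R^{mj}=u^{ma}u^{jb}R_{ab}$ simplifies this to $\sum_b u^{jb}R_{lb}$. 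Using $R_{lb}=-\tfrac12 u_{lm}u_{bn}u^{nr}_{sr}u^{ms}$, the products $u^{jb}u_{bn}$ and $u_{lm}u^{ms}$ collapse to Kronecker deltas, leaving
\begin{align*}
\rho(\partial_{x_l},\partial_{\theta_j})=-\tfrac12\sum_r u^{jr}_{lr}.
\end{align*}
Finally, $\tfrac12\sum A_{jl}\,dx_l\wedge d\theta_j$ evaluated on $(\partial_{x_l},\partial_{\theta_j})$ gives $\tfrac12 A_{jl}$. So both sides have the same $(x_l,\theta_j)$ components, and all their other components vanish.

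\textbf{Main obstacle.} The step most likely to go wrong is bookkeeping of conventions rather than anything conceptual. Three conventions must be checked for consistency: the sign in $d^c$, the normalisation of the $\tfrac12$ in \eqref{ddb} against the form convention $dx\wedge d\theta(\partial_x,\partial_\theta)=1$, and the sign of $\rho$. I would fix these by testing the same computation on the known identity \eqref{equation:omegaP-potential}: it relates $\omega_P$ to $g$ via $\omega(X,JY)=g(X,Y)$, which is structurally the same relation as the one between $\rho$ and $\Ric$. As an independent check, I would use the standard identity $\rho=-i\partial\bar\partial\log\det(g_{i\bar j})$ from \cite{Ballmann2006} together with the fact that in complex (symplectic-dual) coordinates the Hermitian matrix is $u^{ij}$, whose determinant is $U^{-1}$.
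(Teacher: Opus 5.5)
Your proposal is correct and follows essentially the paper's route. Like the paper, you evaluate both $\rho$ and $i\partial\bar\partial\ln U$ on the mixed pairs $(\partial_{x_l},\partial_{\theta_j})$, using \cref{deldelbar} for the latter and, for the former, a coordinate formula for $\Ric$ together with $J\partial_{\theta_j}=-u^{jk}\partial_{x_k}$. The only difference is bookkeeping: the paper starts from the Ricci formula of \cite[Eqn.~3.30]{Wiseman}, already expressed through $\varphi=\ln U$, and uses the derivative-of-inverse identity to rewrite it as $-\tfrac12\partial_{x_l}(u^{jk}\varphi_k)$. You instead start from the paper's own \eqref{toricKahlerRicci} and apply Jacobi's formula on the $i\partial\bar\partial$ side to get $u^{jk}\partial_{x_k}\ln U=-\partial_{x_k}u^{jk}$, which keeps your argument self-contained within the paper.
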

\begin{proof}
       Let $\varphi=\ln U$.
    We have
    \begin{align*}
        \rho(\partial_{\theta_j}, \partial _{x_l})
        &=
        -u^{jk} \Ric(\partial _{x_k}, \partial_{x_l})
        =
        -u^{jk} \cdot \frac{1}{2}
        \left(
        \partial_{x_k} \partial_{x_l} 
        -
        u^{mn} 
        \frac{\partial u_{kl}}{\partial_{x_m}} \partial_{x_n}
        \right)
        \varphi
        \\
        &=
        -\frac{1}{2}
        u^{jk} \varphi_{kl}
        +
        \frac{1}{2}
        u^{jk}u^{mn}u_{klm} \varphi_n
        =
        -\frac{1}{2}
        u^{jk} \varphi_{kl}
        -
        \frac{1}{2}
        (\partial_{x_l} u^{jk}) \varphi_k
        =
        -\frac{1}{2}
        \partial_{x_l} (u^{jk} \varphi_k),
    \end{align*}
    where in the first step we used the definition of $\rho$ and the equation for $J$ from \cref{newcomplex};
    in the second step we used \cite[Eqn. 3.30]{Wiseman};
    in the fourth step we used $u_{klm}=u_{kml}$ because derivatives commute and the formula for the derivative of an inverse matrix $\partial_{x_l} u^{jn}=-u^{jk} u_{kml} u^{mn}$.
    
    Comparing this with the formula for $i \partial \bar \partial \varphi$ from \cref{ddb} proves the claimed equation \cref{equation:ricci-potential-equation} evaluated on the vectors $(\partial_{\theta_j}, \partial _{x_l})$.
    The claim then follows because the non-mixed entries of $\rho$ and $-i \partial \bar \partial \varphi$ are zero by \cref{toricKahlerRicci} and \cref{ddb} respectively.
\end{proof}

It follows from the $dd^c$-lemma that there exists some function $F$ such that
$\Ric(\omega_P)=\omega_P+i \partial \bar \partial F$, see \cite[Eqn. 5.1]{Joyce}.
In the toric case, one can describe $F$ explicitly. 

\begin{lemma}\label{KEMA}
  The function $F:M_P\to \mathbb{R}$ given by $F=\ln(U)-2(u_1x_1+u_2x_2-u)$ is smooth, and satisfies \[
\rho
=
\omega_P
+
i \partial \bar \partial F.
\] 
\end{lemma}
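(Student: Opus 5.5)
The identity $\rho = \omega_P + i\partial\bar\partial F$ follows at once on the principal part $M_P^o$. By \cref{lemma:ricci-potential} we have $\rho = i\partial\bar\partial \ln U$. By \cref{equation:omegaP-potential} we have $\omega_P = 2i\partial\bar\partial(u_1x_1+u_2x_2-u)$. Since $i\partial\bar\partial$ is linear,
\[
\omega_P + i\partial\bar\partial F = 2i\partial\bar\partial(u_1x_1+u_2x_2-u) + i\partial\bar\partial\ln U - 2i\partial\bar\partial(u_1x_1+u_2x_2-u) = \rho
\]
on $M_P^o$. Once $F$ is known to be smooth on all of $M_P$, both sides are smooth $2$-forms on $M_P$ that agree on the dense open set $M_P^o$, so they agree everywhere.

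The real content is therefore the smoothness of $F$, and this is the step I expect to be the main obstacle. Individually, $\ln U$ and $u_1x_1+u_2x_2-u$ blow up logarithmically at $\partial P$; the claim is that these singularities cancel. I would write $u=u_P+h$. By \cref{equation:delta-smoothness}, $\ln U = -\ln\delta - \sum_{|r|=1,2,3}\ln l_r$, and $\ln\delta$ is smooth because $\delta$ is smooth and positive on $P$. For the other term, write $l_r(x)=c_r+\langle \nu_r,x\rangle$, where $c_r=1$ for every $r$ and $\nu_r$ is the primitive inward normal. Then
\[
\nabla u_P = \tfrac12\sum_r \nu_r(\ln l_r+1),
\]
so
\[
x\cdot\nabla u_P - u_P = \tfrac12\sum_r\big(\langle\nu_r,x\rangle(\ln l_r+1) - l_r\ln l_r\big) = \tfrac12\sum_r\big(-\ln l_r + \langle \nu_r,x\rangle\big),
\]
using $\langle\nu_r,x\rangle - l_r = -1$. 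Hence
\[
-2(u_1x_1+u_2x_2-u) = \sum_r \ln l_r - \sum_r\langle\nu_r,x\rangle - 2(x\cdot\nabla h - h).
\]
The logarithmic terms cancel exactly against those in $\ln U$, leaving
\[
F = -\ln\delta - \sum_r\langle\nu_r,x\rangle - 2(x\cdot\nabla h - h),
\]
which is a smooth function of $x\in P$ up to the boundary. For this hexagon $\sum_r\nu_r=0$, so the linear term in fact vanishes. The cancellation hinges on the normalisation $c_r=1$, that is, on $[\omega_P]=2\pi c_1$; this is exactly what makes the identity work, and I would check the sign conventions carefully here.

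Finally, I would note that a $T^2$-invariant function on $M_P$ given by $f\circ\mu_P$, with $f$ smooth on $P$ up to the boundary, is smooth on $M_P$. This is the standard fact for toric manifolds: near a singular orbit the action coordinates $x$ are smooth functions on $M_P$, being components of the moment map. Hence $F=f\circ\mu_P$ is smooth on $M_P$, and combined with the density argument above this proves the lemma.
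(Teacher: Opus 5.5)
Your proposal is correct and follows essentially the same route as the paper: you verify the identity on $M_P^o$ from \cref{lemma:ricci-potential} and \cref{equation:omegaP-potential}, then combine \cref{equation:delta-smoothness} with the explicit formula for $x\cdot\nabla u_P-u_P$ so that the logarithmic singularities cancel, leaving $F=-\ln\delta+\sum_r(1-l_r)+2h-2x\cdot\nabla h$. Your additional remarks are correct refinements of points the paper leaves implicit: extending the identity from the dense set $M_P^o$, smoothness of $f\circ\mu_P$, and $\sum_r\nu_r=0$.
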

\begin{proof}
On $M_P^o$, we compute
    \[
    \omega_P+i \partial \bar \partial F
    =
    2i \partial \bar \partial(u_1x_1+u_2x_2-u)+i \partial \bar \partial (\ln U-2(u_1x_1+u_2x_2-u))
    =
    i\partial \bar \partial \ln U
    =
    \rho,
    \]
    where we used \cref{equation:omegaP-potential} in the first step and use \cref{equation:ricci-potential-equation} in the third step.

    It remains to check that $F$ is smooth on $M_P$.
    By \cref{smoothnessthm}, the smooth function $u:P\to \mathbb{R}$ from \cref{perturbedpotential} satisfies \cref{equation:delta-smoothness} for some smooth and positive function $\delta:P\to \mathbb{R}$. 
    Furthermore,  \cref{canonicalpotential} gives 
    \begin{align}
    \label{equation:uP-grad}
    2u_P-2x \cdot \nabla u_P=\sum_{|r|=1,2,3} \log l_r-l_r+1.
    \end{align}
    Combining gives 
    \begin{align*}
        F
        &=
        \ln \det \Hess u - 2(x \cdot \nabla u-u)
        \\
        &=
        \underbrace{-\ln \delta - \sum_{|r|=1,2,3}
        \ln l_r}
        +
        \underbrace{
        \sum_{|r|=1,2,3}
        (\ln l_r-l_r+1)
        +2h-2x \cdot \nabla h}
        \\
        &=
        -\ln \delta 
        +
        \sum_{|r|=1,2,3}
        (-l_r+1)
        +2h-2x \cdot \nabla h,
    \end{align*}
    where in the first step we used \cref{equation:delta-smoothness} and \cref{equation:uP-grad}.
    All terms on the right hand side are smooth on $P$ up to the boundary, hence smooth on $M_P$.
\end{proof}

\subsection{The Einstein condition for toric K\"ahler metrics}
\label{subsection:einstein-condition-for-toric-kaehler}

The goal of this paper is to produce and study a K\"ahler-Einstein metric on $\mathbb{CP}^2\# 3\overline{\mathbb{CP}^2}$ in the cohomology class of $\omega_P$. It is known by \cite{BandoMabuchi} and \cite{TianYau} that there is precisely one such K\"ahler metric $g$ (up to scaling and complex automorphism) and that it is toric. Thus, a sensible approach would be to search for a function $u:P\to \mathbb{R}$ (as in \cref{perturbedpotential}) for which the scalar function $F$ of \cref{KEMA} vanishes uniformly. 

Abreu \cite{Abreu} suggests that it should be possible to find this function \textit{explicitly}; while we were not able to accomplish this task, the approach of this paper is to explicitly (with computational assistance) find a function $u:P\to \mathbb{R}$ for which $F$ is \textit{almost} zero. Specifically, we are able to find $u$ for which $||F||_{H^5(M_P,g)}$ is small. In this situation, if $\phi:M_P\to \mathbb{R}$ is smooth and toric, then the Kähler-Einstein condition for the perturbed symplectic form $\omega=\omega_P+i\partial\overline{\partial}\phi$ is equivalent to $(\omega_P +i \partial \bar \partial \phi)^2=e^{F-\phi} \omega_P^2$ (see \cite[p.51]{Szekelyhidi2014} for more details). 
This condition is called a \emph{complex Monge–Ampère equation} for the unknown scalar function $\phi$, and is often written as 
\begin{align}
   \label{equation:cx-Monge-Ampere}
  \mathcal{F}(\phi)= \frac{(\omega_P +i \partial \bar \partial \phi)^2}{\omega_P^2}-e^{F-\phi}=0.
\end{align}
If $F$ is small enough, then this equation can be studied using classical perturbation techniques. 
\begin{lemma}\label{Taylordecomp}
   Using a Taylor series expansion, \cref{equation:cx-Monge-Ampere} is equivalent to
\[
0
=
\mathscr{E}+\mathscr{L}\phi+\mathscr{N}\phi,
\]
where
\begin{align*}
  \mathscr{E}
    &
    =
    1-e^F,
    \\
    \mathscr{L}\phi
    &
    =
    -\frac{1}{2} \Delta \phi + e^F \cdot \phi
    \text{ is a linear differential operator},
    \\
    \mathscr{N} \phi
    &
    =
    (i \partial \bar \partial \phi)^2/\omega_P^2
    -
    e^F \cdot (e^{-\phi}-1+\phi)
    =
    \frac12\star (i\partial\bar\partial \phi)^2
    -e^F(e^{-\phi}-1+\phi)
    \text{ are the higher order terms.}
\end{align*}
\end{lemma}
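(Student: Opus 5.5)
The plan is to expand both terms of \cref{equation:cx-Monge-Ampere} exactly and then sort the resulting pieces by order in $\phi$. No genuine Taylor remainder estimate is needed, since the identity is algebraic. Throughout I take $\phi$ smooth and $T^2$-invariant, as in the surrounding discussion, and I work on the principal part $M_P^o$. Both sides are smooth on $M_P$, so the identity then extends by continuity.

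\emph{Step 1 (the wedge square).} Since $2$-forms commute, I have
\[
(\omega_P+i\partial\bar\partial\phi)^2=\omega_P^2+2\,\omega_P\wedge i\partial\bar\partial\phi+(i\partial\bar\partial\phi)^2 .
\]
Dividing by $\omega_P^2$ gives $1+2\,\omega_P\wedge i\partial\bar\partial\phi/\omega_P^2+(i\partial\bar\partial\phi)^2/\omega_P^2$.

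\emph{Step 2 (the linear term is the Laplacian).} This is the only step with real content. I will use \cref{deldelbar} to write $i\partial\bar\partial\phi=\tfrac12\sum_{j,l}A_{jl}\,dx_l\wedge d\theta_j$ with $A_{jl}=\partial_{x_l}(u^{jk}\phi_k)$. Wedging with $\omega_P=dx_1\wedge d\theta_1+dx_2\wedge d\theta_2$, only the terms with $j=l$ survive. This gives
\[
\omega_P\wedge i\partial\bar\partial\phi=\tfrac12(A_{11}+A_{22})\,dx_1\wedge d\theta_1\wedge dx_2\wedge d\theta_2,
\qquad
\omega_P^2=2\,dx_1\wedge d\theta_1\wedge dx_2\wedge d\theta_2 .
\]
Hence $2\,\omega_P\wedge i\partial\bar\partial\phi/\omega_P^2=\tfrac12\sum_j\partial_{x_j}(u^{jk}\phi_k)$. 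By \cref{toricLaplacian} this equals $-\tfrac12\Delta\phi$. The main care needed here is bookkeeping of signs and orientation: the orientation is the one induced by $\omega_P^2/2$, which by \cref{volumeform} equals the Riemannian volume form $dV_g$. The same observation gives $(i\partial\bar\partial\phi)^2/\omega_P^2=\tfrac12\star(i\partial\bar\partial\phi)^2$, because $\star$ of a top form $\eta$ is $\eta/dV_g$ and $dV_g=\omega_P^2/2$.

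\emph{Step 3 (the exponential).} I write
\[
e^{F-\phi}=e^F\bigl(1-\phi+(e^{-\phi}-1+\phi)\bigr)=e^F-e^F\phi+e^F(e^{-\phi}-1+\phi).
\]
Substituting Steps 1--3 into $\mathcal F(\phi)$ gives
\[
\mathcal F(\phi)=(1-e^F)+\Bigl(-\tfrac12\Delta\phi+e^F\phi\Bigr)+\Bigl(\tfrac{(i\partial\bar\partial\phi)^2}{\omega_P^2}-e^F(e^{-\phi}-1+\phi)\Bigr)=\mathscr E+\mathscr L\phi+\mathscr N\phi .
\]
So $\mathcal F(\phi)=0$ if and only if $0=\mathscr E+\mathscr L\phi+\mathscr N\phi$.

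Finally, I note that $\mathscr L$ is linear and that $\mathscr N$ is at least quadratic in $\phi$: the wedge square is quadratic, and $e^{-\phi}-1+\phi=O(\phi^2)$. This justifies calling the decomposition a Taylor expansion of $\mathcal F$ at $\phi=0$, with $\mathscr E=\mathcal F(0)$ and $\mathscr L=D\mathcal F(0)$.
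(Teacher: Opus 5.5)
Your proposal is correct and follows essentially the same route as the paper: both rest on the identity $2\,\omega_P\wedge i\partial\bar\partial\phi=-\tfrac12(\Delta\phi)\,\omega_P^2$, obtained from \cref{deldelbar} and \cref{toricLaplacian}, and on $\star\omega_P^2=2$ for the second expression for $\mathscr N$. The only difference is presentational. The paper computes $\mathcal F(0)$ and the linearisation, then defines $\mathscr N$ as the remainder $\mathcal F(\phi)-\mathscr E-\mathscr L\phi$. You instead write out the exact algebraic expansion (the wedge square plus $e^{F-\phi}=e^F-e^F\phi+e^F(e^{-\phi}-1+\phi)$), which makes explicit that no Taylor remainder estimate is involved.
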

\begin{proof}
Clearly $\mathcal{F}(0)=1-e^{F}$. The linearisation of $\mathcal{F}$ at $0$ in the direction $\phi$ is given by
\begin{align}
\mathscr{L}(\phi)
&=
2\,\frac{\omega_P\wedge (i\partial\bar\partial\phi)}{\omega_P^2}
\;+\; e^F\,\phi
=
-\frac{1}{2}\,\Delta\phi+e^F\,\phi,
\end{align}
where in the last step we used
\[
2\omega_P\wedge (i \partial \overline{\partial}\phi)=
-\left(\frac{1}{2}\Delta \phi \right)\omega_P^2,
\]
which follows from \cref{toricLaplacian} and \cref{deldelbar}, combined with the definition $\omega_P=dx_1\wedge d\theta_1+dx_2\wedge d\theta_2$. 

The equality $\mathscr{N} \phi = (i \partial \bar \partial \phi)^2/\omega_P^2 - e^F \cdot (e^{-\phi}-1+\phi)$ follows from  $\mathscr{N} \phi=\mathcal{F}(\phi)-\mathscr{E}-\mathscr{L}\phi$.
The identity $(i \partial \bar \partial \phi)^2/\omega_P^2=\frac12\star (i\partial\bar\partial \phi)^2$ follows from $\star \omega_P^2=\star (2! dV_g)=2!$, which is \cite[Eqn. 1.45, Eqn. 4.20]{Ballmann2006}.
\end{proof}

Thus, if $\mathscr{E}$ is sufficiently small, $\mathscr{L}$ is invertible, and the norm of $\mathscr{L}^{-1}$ is not too large, then a standard perturbation argument reveals that there is a toric K\"ahler-Einstein metric close to $(M_P,\omega_P,J)$. Since $(M_P,\omega_P,J)$ is known explicitly, we are therefore in a position to make reliable geometric conclusions about the true Einstein metric.

\section{Approximation}\label{approx}

In this section, we use numerical methods to construct an approximate toric K\"ahler Einstein metric $g$ on $\mathbb{CP}^2\# 3\overline{\mathbb{CP}^2}$, then quantify the extent to which $g$ actually satisfies the Einstein condition, and describe some estimates on the first eigenvalue of the associated Laplace-Beltrami operator. 

\subsection{Numerical approach}
\label{subsection:numerical-approach}

We see from \cref{KEMA} that the toric K\"ahler-Einstein metric on $\mathbb{CP}_2\# 3 \overline{\mathbb{CP}_2}$ is given by solving the second order differential equation $F=\ln(U)-2(u_1x_1+u_2x_2-u) = c$ for the symplectic potential $u$ over the polytope $P$ where $c$ is any constant. Our first task is to find a very good approximation to this solution, and then our second is to write this approximation in a convenient form amenable to the analysis of its geometric properties. For the first step we solve the differential equation using pseudospectral numerical methods. For the second, we rationalize this numerical solution, and write it in a Chebychev polynomial basis.

Writing the potential $u$ as in~\eqref{perturbedpotential} as a canonical part $u_P$ and unknown part $h$, then given the canonical potential in~\eqref{canonicalpotential}, this gives the following Monge–Ampère differential equation for the unknown $h$,
\begin{align}\label{MongeAmpereSymplectic}
F = \log\left(  c_1 \left( ( \partial_{x_1}^2 h  )(  \partial_{x_2}^2 h ) -  ( \partial_{x_1} \partial_{x_2} h )^2 \right) + c_2 \partial_{x_1}^2 h + c_3 \partial_{x_2}^2 h + c_4 \partial_{x_1} \partial_{x_2} h + c_5 \right) - 2 \left( x_1 \partial_{x_1} h + x_2 \partial_{x_2} h  - h \right) = c
\end{align}
where,
\begin{align*}
& c_1 = (1 - x_1^2) (1 - x_2^2) ( 1 - (x_1+x_2)^2 ) \; , \quad 
c_2 = (1 - x_1^2) ( 2 - x_1^2 - 2 x_2^2 - 2 x_1 x_2 ) \; , \quad 
c_3 = (1 - x_2^2) ( 2 - 2 x_1^2 - x_2^2 - 2 x_1 x_2 ) \\ \nonumber
& c_4 = - 2 (1 - x_1^2) (1 - x_2^2) \; , \quad 
c_5 = 3 - 2 (x_1^2 + x_2^2 + x_1 x_2 ) \; .
\end{align*}
Different values of $c$ give the same geometry, simply translating into a shift of the potential $h$ by a constant. 

The equation $F = 0$ for $u$ is invariant under the $D_6$ hexagonal symmetry that leaves the polytope $P$ invariant, and since our canonical potential is also invariant, our equation above for $h$ shares this symmetry. Explicitly, the group $D_6$ is generated by the $\mathbb{Z}_2$ reflection symmetry and $\mathbb{Z}_6$ rotation symmetry 
\begin{align*}
    R_1=\begin{pmatrix}0&1\\
    1&0
    \end{pmatrix}, \qquad R_2=\begin{pmatrix}
        1&1\\
        -1&0
    \end{pmatrix}
\end{align*}
acting on $(x_1, x_2)$. 
By \cite[Theorem C]{BandoMabuchi} there exists a $D_6$-invariant solution $u$ to \cref{MongeAmpereSymplectic}, hence also $D_6$-invariant $h$.
While this motivates computing a $D_6$-invariant approximate solution, this would not be strictly necessary.
The subsequent analysis would work identically with a non-invariant solution, except for when we use symmetry to speed up eigenvalue computations on the hexagon.
Regardless, \cref{maintheorem} proves that our solution is $D_6$-invariant without appealing to \cite[Theorem C]{BandoMabuchi}. 

One approach to pursing an approximate solution is to observe that the Taylor expansion of $h$ at the origin is composed of a sum of $D_6$ invariant  polynomials as a consequence of the symmetry (these are generated by $I_2=x_1^2+x_1x_2+x_2^2$ and $I_6=x_1^2x_2^2(x_1+x_2)^2$.)
In~\cite{Wiseman} one method to find numerical approximations was to solve \cref{MongeAmpereSymplectic}   in a finite basis of these invariant polynomials. However, as monomials are a poor basis to approximate functions on the interval, it is unclear that such an approach will yield good behaviour for high order approximation.

The approach we take here, as also previously used in~\cite{Wiseman}, is to solve the equation using real space differencing. The symmetry invariance of $h$ implies that we need only consider the equation on a fundamental domain of the hexagon, and from this we may reconstruct the full solution using the $D_6$ transformations.
In fact we will find it convenient to work with the fundamental domain given by,\
\begin{align*}
P_{comp} = \{ (x_1 , x_2) | 0 \le x_1 \le 1\, , \; -1 \le x_2 \le 0 \, , \; x_1 \le | x_2 |  \} \; .
\end{align*}

We then require conditions at the boundaries of the domain, $x_1 = 0$ and $x_2 = - x_1$, to ensure the function $h$ extends to an appropriately differentiable function on the full polytope $P$. 
Noting that $R_1R_2=\begin{pmatrix}
    -1&0\\
    1&1
\end{pmatrix}$ 
is a reflection across the $x=0$ line (the $y$-axis) with normal $(-2,1)$, and $R_1 (R_3)^3=\begin{pmatrix}
    0&-1\\
    -1&0
\end{pmatrix}$ is a reflection across $y=-x$ with normal $(1,1)$, then a solution $h$ must obey the following conditions if it is to be $D_6$ invariant;
\begin{itemize}
    \item at $x_1=0$ then $(-2\partial_{x_1}+\partial_{x_2})^m h=0$ for odd $m$;
    \item at $x_2=-x_1$, the condition is that $(\partial_{x_1}+\partial_{x_2})^m h=0$ for odd $m$.
\end{itemize}
Hence we may solve for $h$ on $P_{comp}$ subject to the above conditions at the boundaries of this domain.

In order to achieve high accuracy we employ pseudospectral differencing. For smooth functions, such as we expect to find for $h$, this should yield exponential convergence as the order of approximation is increased, and we indeed see this. We discretise the interval $[0,1]$ using a Chebychev grid $\{ y_a = \frac{1}{2} \left( 1- \cos\left( \frac{ \pi n}{N-1} \right) \right) | n = 0, 1, \ldots N-1 \}$ with $N$ points, and 
take a product of these to cover the unit square $[0,1] \times [0,1]$. We then represent the function $h$ as the discrete set of values, $h_{disc} = \{ h_{a,b} | a,b \in 0, 1, \ldots N-1 \; , \quad a \le b \}$, which map to our computational domain $P_{comp}$ in the obvious manner, $h_{a,b} =  h(y_a, - y_b)$ for $a \le b$. We fill the remainder of the unit square in using the $D_6$ symmetry $R_1 (R_3)^3$, so $h_{a,b} =  h(y_b, - y_a)$, and then use the usual Chebychev differencing matrices to evaluate derivatives of $h$, and hence compute  the value of the Monge-Ampere equation $F$ at the grid points $F_{a,b}$.

We must impose the $D_6$ invariance. We have already imposed $R_1 (R_3)^3$ invariance in filling the grid, but have yet to require full invariance. We do this by imposing the leading first order derivative condition for $R_1R_2$,  so $Dh = (-2\partial_{x_1}+\partial_{x_2}) h = 0$ at grid points where $x_1 = 0$, rather than the Monge–Ampère equation at those points.
Since the differential equation is second order it is natural to impose this Robin-like boundary condition, and then hope that the solution found respects the full invariance (i.e.  the conditions for $R_1 R_2$ above for $m \ge 3$) to good accuracy. 

We solve the system of equations $\{ F_{a,b} = 0  | a \in  1, \ldots N-1 \; ,\quad b \in 0,  \ldots N-1 \; , \quad a \le b \}$ together with the derivative conditions $\{ (Dh)_{0,b} = 0 |  b \in 0,  \ldots N-1 \}$ at the domain boundary for the data $h_{disc}$ noting that we have the same number of equations as unknowns. 
We leave $c$ free and fix the value of $h$ at one collocation point, thereby choosing the additive normalization of $h$.
In practice we choose to fix the value of $h$ at $x_{1,1}$, and replace the equation $F_{1,1}$ there by the condition that $h$ remains equal to the value of its initial guess there. This then yields a system that may be solved straightforwardly using the Newton method.
As an example, we quickly converge to an approximate solution taking as initial data the D6 invariant starting guess,
\begin{align*}
h_{guess} = - \frac{1}{4} \left( x_1^2 + x_1 x_2 + x_2^2\right) \; .
\end{align*}
We use Mathematica to implement the Newton method utilizing its higher precision arithmetic. With $~140$ digits of precision and a Chebychev grid with $N = 90$ we obtain a solution after $10$ Newton steps with a residual of $< 10^{-100}$ after a few hours of computation on a laptop.

Having found this numerical approximation we then rewrite the solution in a order $(N-1) \times (N-1)$ two-dimensional Chebychev polynomial basis suitable for approximating functions on the unit square $[0,1]^2$. 
For a polynomial approximation on the interval $[0,1]$, an optimal basis of orthogonal polynomials is given by $\{ T_n( 2 x - 1 ) \}$, with $T_n$ a Chebyshev polynomial of the first kind.
Hence we solve for the coefficients $\{ c_{m,n}  | m,n \in 0, 1, \ldots, N-1 \}$ such that, $h_{ab} = \sum_{m,n = 0}^{N-1} c_{m,n} T_m(2 x_a - 1) T_n(1 - 2 x_b)$ for all $(x_a, x_b)$ in the unit square. The transformation is conveniently implemented using discrete cosine transforms with very high precision arithmetic ($1000$ digits precision).
We note that the $D_6$-symmetry $R_1 (R_3)^3$ implies $h_{ab} = h_{ba}$, and hence $c_{m, n} = (-1)^{n+m} c_{n, m}$. 
Then our approximate potential over the computational domain $P_{comp}$ is,
\begin{align}
\label{eq:approx}
h_{\mathrm{approx}}(x_1, x_2)  = \sum_{m,n = 0}^{N-1} c_{m,n} T_m(2 x_1 - 1) T_n(2 x_2 + 1) \; .
\end{align}
As a polynomial expression this is smooth over $P_{comp}$. 
However we must be careful that it extends sufficiently smoothly to a  function over the entire polytope $P$ under the action of the D6 symmetry. 
We do not require full $C^\infty$-smoothness, but we choose to guarantee $C^5$ so that five derivatives are continuous -- since we later will use $H^5$ norms in various estimates. We have only imposed the condition $(-2\partial_{x_1}+\partial_{x_2})^m h=0$ at $x_1 = 0$ for $m = 1$ which should hold to the numerical precision used. However it does not hold exactly, and we require this for our approximation. Furthermore 
the conditions with $m = 3,5$ would not be expected to hold even to numerical precision. In fact in practice they hold to good accuracy in our approximate solution, but we require all these boundary conditions hold exactly. Hence we must correct our approximation to impose these exactly.

To do this we rationalize the coefficients $c_{m,n}$ in our approximation. We impose $c_{m, n} = (-1)^{n+m} c_{n, m}$ exactly and then compute the conditions $B_m = \left. (-2\partial_{x_1}+\partial_{x_2})^m h \right|_{x_1 = 0} =0$ for $m = 1,3,5$. The $B_m$ are polynomials of degree at most $N-1$ in $x_2$
with rational coefficients, so
$B_m=\sum_{n=0}^{N-1}b_{m,n}x_2^n$.
We impose $b_{m,n}=0$ for $m=1,3,5$ and $0\le n\le N-1$,
solving these linear conditions in exact rational arithmetic
for a subset of the coefficients $c_{m,n}$ while preserving
the coefficient symmetry. 
In practice we may solve them for $\{ c_{m, N-n} | m \le N-n \, , \; n = 1,2,3 \}$. Let us call this new set of rational valued coefficients  with these values updated $c'_{m,n}$,  and the associated approximation $h_{\mathrm{approx}}'$ takes the form above in~\eqref{eq:approx} except with $c \to c'$.
The values $c'_{m,n}$ numerically differ slightly from the $c_{m,n}$ extracted from the original numerical solution, but the change is small since while these conditions were not exact, they were satisfied to high accuracy.

Now the function $h_{\mathrm{approx}}'$ on $P_{comp}$  is a polynomial with rational coefficients, and extends to a $C^5$ function on the full polytope $P$, giving a very good approximation to the K\"ahler-Einstein metric. 
We will quantify this in considerably more detail in what follows but let us gain some intuition for the time being. 
Taking $N = 90$ and evaluating one derivative of the Monge–Ampère condition, $\partial_{x_1} F$, we find the non-rigorous $C^0$-bound of $\sim 10^{-36}$ over $P$. 
For each further derivative of the condition we lose several orders of magnitude such that third partial derivatives are bounded by $\sim 10^{-29}$.

\subsection{A posteriori geometric bounds}
\label{subsection:a-posteriori-geometric-bounds}
The rational Chebyshev approximation $h=h_{\mathrm{approx}}'$ constructed in the previous section determines the approximate symplectic potential
\[
    u=u_P+h
\]
and hence the approximate toric K\"ahler--Einstein metric of the form \eqref{toricKahlermetric}.
We require two types of estimates. First, the Sobolev estimates below (\cref{subsection:a-posteriori-error}) use componentwise bounds for $u^{ij}$ and its first two coordinate
derivatives. Second, the elliptic estimates of (\cref{GAE}) require invariant $C^0$-bounds for the Ricci and Riemann tensors and for the first two covariant derivatives of the Riemann tensor.

Although the entries $u_{ij}$ are singular at the boundary of the moment polytope, the entries of the inverse Hessian $u^{ij}$ extend smoothly to the closed polytope $P$. To obtain reasonable bounds near $\partial P$, it is important to perform the cancellation of the singular terms algebraically before applying interval arithmetic.
 
As described in \cref{prelim}, the entries of $u_{ij}$ diverge as one approaches a facet $l_a=0$. The entries of $u^{ij}$, however, remain smooth because the singular terms cancel when the matrix is inverted. Directly inverting $u_{ij}$ on a computer is poorly suited to rigorous estimation. The interval arithmetic treats the singular terms separately
and generally fails to recover these cancellations near the boundary. This difficulty becomes more severe as higher derivatives of the inverse matrix are taken. We therefore write $u^{ij}$ in the form  \eqref{inverse_metric} and cancel the $1/l_r$ terms explicitly.
Define
\[
D_0=U\prod_{|r|=1,2,3}l_r=\delta^{-1},
\qquad
(A^{ij})=
\left(\prod_{|r|=1,2,3}l_r\right)
\begin{pmatrix}
u_{22}&-u_{12}\\
-u_{21}&u_{11}
\end{pmatrix}.
\]
Then $u^{ij}=A^{ij}/D_0$, and $D_0$ and $A^{ij}$
are polynomials on the computational domain.
For higher derivatives of $u^{ij}$, we use this factorisation and apply quotient rule explicitly, i.e., $
    \partial^\alpha u^{ij}
    =
    \frac{N^{ij}_\alpha}{D_0^{|\alpha|+1}},
$
where the numerators $N^{ij}_\alpha$ are computed recursively. Constructing the numerator recursively avoids the
large intermediate expressions produced by repeatedly
differentiating a fully expanded quotient. This is especially
important for the third and fourth coordinate derivatives of
$u^{ij}$, which occur in the estimates for
$\nabla\Riem$ and $\nabla^2\Riem$, respectively.

\medskip

We briefly describe how the resulting rational functions are bounded.
The functions $D_0$ and $N^{ij}_\alpha$ are represented
in the tensor-product Chebyshev basis. Their coefficients are obtained
from the exact rational coefficients of $h$, so the initial interval
representations contain no error from floating-point conversion.
Products are truncated at a fixed Chebyshev degree, with the discarded
coefficient tails included in the resulting interval enclosures. The
computational square is then divided into smaller boxes. On each box,
directed-rounding interval arithmetic is used to certify $D_0>0$ and
to enclose the quotient $\frac{N^{ij}_\alpha}{D_0^{|\alpha|+1}}$.
Taking the largest upper endpoint over all boxes gives a global bound. The $D_6$-symmetry then extends the bounds from the computational region to the whole polytope $P$. The certified componentwise estimates for $u^{ij}$ are recorded in \cref{Data}.

The same factorisation is used to estimate the curvature. The formulae
in Appendix \ref{CDE} express $\Riem$, $\nabla\Riem$, and
$\nabla^2\Riem$ in terms of coordinate derivatives of $u^{ij}$ of
orders two, three, and four, respectively. For each curvature
quantity, the numerator of its squared pointwise norm is first
constructed in Chebyshev coefficient space. The resulting rational
function is then bounded on each subdivision box using the same
interval procedure as above. The certified curvature bounds are recorded in Appendix \cref{Data}.

For the inverse-Hessian estimates, we use
a Chebyshev truncation of degree $19$ and an $8\times8$
subdivision. The $\|{\Ric(g)-g}_{C^0_g}$ estimate uses degree $19$ and a
$30\times30$ subdivision. The direct Ricci estimate and the
estimates for $\Riem$, $\nabla\Riem$, and
$\nabla^2\Riem$ use degree $19$ and a $8\times8$
subdivision. These computations were implemented in Julia using the
\texttt{IntervalArithmetic.jl} package, with
\texttt{Interval\{BigFloat\}} endpoints evaluated at $256$-bit
precision. The input Chebyshev coefficients are read from their exact
rational representations and enclosed at the active precision using
directed rounding.

\subsection{A posteriori error}
\label{subsection:a-posteriori-error}

In this section, we will compute an explicit $H^3$-bound for the residual $\mathscr{E}$ from \cref{Taylordecomp}.
Recall that from \cref{KEMA} we have the explicit equation $F=\ln(U)-2(u_1x_1+u_2x_2-u)$ for the Ricci potential.
Thus, bounding $\|{\mathscr{E}}_{H^3}$ reduces to rigorously computing a bound for a known function.
Throughout, we write $\mathscr{E}=1-e^{F}=1-Ge^H$ for $G=D_0$ and $H=-2(h_1x_1+h_2x_2-h)$.
Although $\mathscr{E}$ is not a polynomial, $G$ and $H$
are polynomials on the computational domain.

\textbf{Bounds for Chebyshev polynomials.}
From \cref{eq:approx}, the approximate solution $h:P \rightarrow \R$ is given as a sum of Chebyshev polynomials, i.e. $h_{\mathrm{approx}}'=\sum_{m,n = 0}^{N-1} c_{m,n}' T_m(2 x_1 - 1) T_n(2 x_2 + 1)$.
Because $\|{T_i}_{C^0([0,1])}=1$ for $i \geq 0$, we have that
\begin{align}
    \label{eqn:chebyshev-bound}
    \|{h}_{C^0} \leq \sum_{m,n=0}^{N-1} |c_{m,n}'|.
\end{align}
The basic strategy is then to expand $\mathscr{E}$ in Chebyshev polynomials and apply that same bound.
This is not literally possible, because $\mathscr{E}$ contains an exponential, so is not a polynomial, so we will apply a short workaround.

\textbf{Euclidean derivatives of $\mathscr{E}$.}
We have $\partial_{x_k} \mathscr{E}=-\partial_{x_k} G e^H = -(\partial_{x_k}G+G\partial_{x_k}H)e^H$ for $k \in \{1,2\}$.
Here, $(\partial_{x_k}G+G\partial_{x_k}H)$ and $H$ are polynomials, so we can expand them in the Chebyshev basis and, using \cref{eqn:chebyshev-bound}, compute upper bounds $\|{\partial_{x_k}G+G\partial_{x_k}H}_{C^0}$ and $\|{H}_{C^0}$.
We obtain the upper bound
\[
\|{\partial_{x_k} \mathscr{E}}_{C^0}
\leq
\|{\partial_{x_k}G+G\partial_{x_k}H}_{C^0}
e^{\|{H}_{C^0}}.
\]
Using this, we can compute $C^0$-bounds for all partial derivatives of $\mathscr{E}$, but not for $\mathscr{E}$ itself.
The mean value theorem gives a $C^0$-bound for $\mathscr{E}$:
\begin{align*}
\|{\mathscr{E}}_{C^0}
&\leq
|\mathscr{E}(1/2,1/2)|+d((0,0),(1/2,1/2)) \cdot \|{|d \mathscr{E}|_{g_{\text{Eucl}}}}_{C^0}
\\
&\leq
|\mathscr{E}(1/2,1/2)|+\frac{1}{\sqrt{2}} \cdot \sqrt{\|{\partial_{x_1} \mathscr{E}}_{C^0}^2+\|{\partial_{x_2} \mathscr{E}}_{C^0}^2}.
\end{align*}

\textbf{$L^2$-bounds.}
So far, we computed $C^0$-bounds for the Euclidean derivatives of $\mathscr{E}$.
For our desired $H^3$-bound, we need to convert these into integral bounds, measuring vectors and the volume element in the approximate metric induced by $u$.
One estimate we use for this are uniform \emph{upper} bounds for $u^{ij}$, $\partial_k u^{ij}$, $\partial_l \partial_k u^{ij}$ on all of $[0,1]^2$.
Note that some eigenvalues of $u^{ij}$ go to zero at the boundary of $\partial P$, so no uniform \emph{lower} bound for these quantities hold.
This is a major difficulty occurring in \cref{subsection:non-sharp-eigenvalue-bound}, but not here.
The following proposition reduces the computation of the $H^2$-bound to Euclidean derivatives of $\mathscr{E}$ and $u^{ij}$:

\begin{proposition}
    We have
    \begin{align*}
        \|{\mathscr{E}}_{L^2}
        &\leq
        \sqrt{12}\,\pi\,\|{\mathscr{E}}_{C^0},
        \\
        \|{\nabla_g\mathscr{E}}_{L^2}
        &\leq
        \sqrt{12}\,\pi
        \left(
            \sum_{i,j=1}^2
            \|{u^{ij}}_{C^0}
            \|{\partial_{x_i}\mathscr{E}}_{C^0}
            \|{\partial_{x_j}\mathscr{E}}_{C^0}
        \right)^{1/2},
        \\
        \|{\nabla_g^2\mathscr{E}}_{L^2}
        &\leq
        \left(
            12\pi^2\|{\Delta_g\mathscr{E}}_{C^0}^2
            +
            K_1\|{\nabla_g\mathscr{E}}_{L^2}^2
        \right)^{1/2},
        \\
        \|{\nabla_g^3\mathscr{E}}_{L^2}
        &\leq
        \left(
            24\pi^2
            \sum_{k,l=1}^2
            \|{u^{kl}}_{C^0}
            \|{\partial_{x_k}\Delta_g\mathscr{E}}_{C^0}
            \|{\partial_{x_l}\Delta_g\mathscr{E}}_{C^0}
            +
            \Cr{D^3u-estimate-second-summand}
            \|{\mathscr{E}}_{L^2_2}^2
        \right)^{1/2}.
    \end{align*}
    Moreover,
    \[
        \|{\Delta_g\mathscr{E}}_{C^0}
        \leq
        \sum_{i,j=1}^2
        \left(
            \|{\partial_{x_i}u^{ij}}_{C^0}
            \|{\partial_{x_j}\mathscr{E}}_{C^0}
            +
            \|{u^{ij}}_{C^0}
            \|{\partial_{x_i}\partial_{x_j}\mathscr{E}}_{C^0}
        \right),
    \]
    and, for $k=1,2$,
    \begin{align*}
        \|{\partial_{x_k}\Delta_g\mathscr{E}}_{C^0}
        \leq
        \sum_{i,j=1}^2
        \bigl(
        &\|{\partial_{x_k}\partial_{x_i}u^{ij}}_{C^0}
         \|{\partial_{x_j}\mathscr{E}}_{C^0}
        +
        \|{\partial_{x_i}u^{ij}}_{C^0}
         \|{\partial_{x_k}\partial_{x_j}\mathscr{E}}_{C^0}
        \\
        &+
        \|{\partial_{x_k}u^{ij}}_{C^0}
         \|{\partial_{x_i}\partial_{x_j}\mathscr{E}}_{C^0}
        +
        \|{u^{ij}}_{C^0}
         \|{\partial_{x_k}\partial_{x_i}\partial_{x_j}
         \mathscr{E}}_{C^0}
        \bigr).
    \end{align*}
\end{proposition}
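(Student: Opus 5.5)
The argument is a chain of pointwise estimates integrated against the volume form, combined with two integrations by parts (Bochner-type identities). All integrals use \eqref{volumeform}, so $\vol(M_P,g)=12\pi^2$. Since every integrand is $T^2$-invariant, each integral over $M_P$ equals $4\pi^2$ times an integral over the hexagon $P$ of area $3$. For the first bound, $\|{\mathscr{E}}_{L^2}^2\le \vol\cdot\|{\mathscr{E}}_{C^0}^2$, which gives $\sqrt{12}\,\pi\|{\mathscr{E}}_{C^0}$. For the gradient, $\mathscr{E}$ is toric, so by \eqref{toricKahlermetric} $|\nabla_g\mathscr{E}|_g^2=\sum_{i,j}u^{ij}\partial_{x_i}\mathscr{E}\,\partial_{x_j}\mathscr{E}$. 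I bound each term by $\|{u^{ij}}_{C^0}\|{\partial_{x_i}\mathscr{E}}_{C^0}\|{\partial_{x_j}\mathscr{E}}_{C^0}$ and integrate. The pointwise bound on $\Delta_g\mathscr{E}$ follows by expanding \eqref{toricLaplacian} with the product rule, $\Delta_g\mathscr{E}=-\sum_{i,j}(\partial_{x_i}u^{ij}\,\partial_{x_j}\mathscr{E}+u^{ij}\partial_{x_i}\partial_{x_j}\mathscr{E})$, and applying the triangle inequality. Differentiating once more in $x_k$ gives four terms, which yields the bound on $\partial_{x_k}\Delta_g\mathscr{E}$. Because the $u^{ij}$ extend smoothly to $P$, all these $C^0$ norms are finite and are already certified.

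For the Hessian I use the Bochner identity on the closed manifold $M_P$:
\[
\int|\nabla^2\mathscr{E}|^2=\int(\Delta\mathscr{E})^2-\int\Ric(\nabla\mathscr{E},\nabla\mathscr{E}).
\]
I bound the Ricci term by $K_1\|{\nabla\mathscr{E}}_{L^2}^2$, where $K_1$ is a certified bound for $-\Ric$, i.e. for $|\Ric|_g$ from \cref{subsection:a-posteriori-geometric-bounds}. The Laplacian term I bound by $\vol\cdot\|{\Delta\mathscr{E}}_{C^0}^2$. The only regularity needed is that $\mathscr{E}$ is $C^3$ on $M_P$; this holds because $h$ is $C^5$ and $F$ is smooth in $h$ by \cref{KEMA}.

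For the third derivatives I commute derivatives once more. Integrating by parts and commuting covariant derivatives of the Hessian gives $\int|\nabla^3\mathscr{E}|^2=\int|\nabla\Delta\mathscr{E}|^2+(\text{curvature terms})$. The curvature terms have the schematic form $\Riem*\nabla^2\mathscr{E}*\nabla^2\mathscr{E}$, $\nabla\Riem*\nabla\mathscr{E}*\nabla^2\mathscr{E}$ and $\Ric*\nabla\mathscr{E}*\nabla\mathscr{E}$. Using Cauchy–Schwarz together with the certified $C^0$ bounds on $\Riem$, $\nabla\Riem$ and $\Ric$, I absorb all of them into $\Cr{D^3u-estimate-second-summand}\|{\mathscr{E}}_{L^2_2}^2$, possibly after absorbing a small multiple of $\|{\nabla^3\mathscr{E}}_{L^2}^2$ into the left-hand side. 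The leading term $\int|\nabla\Delta\mathscr{E}|^2$ is handled exactly like the gradient, with $\Delta_g\mathscr{E}$ in place of $\mathscr{E}$. This costs a pointwise factor $\sum u^{kl}\,\partial_k\Delta\mathscr{E}\,\partial_l\Delta\mathscr{E}$ times the volume. The prefactor $24\pi^2$, rather than $12\pi^2$, presumably comes from a Young-inequality split $|a+b|^2\le2|a|^2+2|b|^2$.

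The main obstacle is this last step: deriving the third-order commutation identity carefully and tracking its constants so that the result matches the paper's $\Cr{D^3u-estimate-second-summand}$ from Appendix \ref{GAE}. I would do it concretely, differentiating the Bochner formula applied to $\nabla\mathscr{E}$, so as to get an identity of the form $\int|\nabla^3 f|^2\le 2\int|\nabla\Delta f|^2+C\|f\|_{L^2_2}^2$.
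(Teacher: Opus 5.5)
Your proposal is correct and follows the paper's proof. The ingredients are the same: the volume bound, the toric formulas $|\nabla_g f|_g^2=\sum u^{ij}\partial_{x_i}f\,\partial_{x_j}f$ and $\Delta_g f=-\sum(\partial_{x_i}u^{ij}\,\partial_{x_j}f+u^{ij}\partial_{x_i}\partial_{x_j}f)$, and the Bochner-type inequalities $\|{\nabla^2 f}_{L^2}^2\le\|{\Delta f}_{L^2}^2+K_1\|{\nabla f}_{L^2}^2$ and $\|{\nabla^3 f}_{L^2}^2\le 2\|{\nabla\Delta f}_{L^2}^2+\Cr{D^3u-estimate-second-summand}\|{f}_{L^2_2}^2$. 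The paper quotes these last two from \cref{proposition:estimates-from-commutator-formula} rather than re-deriving them, and that derivation needs no absorption of $\nabla^3\mathscr{E}$: one writes $\|{\nabla^3 f}_{L^2}^2=\|{\Delta\nabla f}_{L^2}^2+\langle[\Delta,\nabla]\nabla f,\nabla^2 f\rangle$ with $\Delta\nabla f=\nabla\Delta f+[\Delta,\nabla]f$, both commutators involve at most $\nabla^2 f$, and the factor $2$ (hence $24\pi^2$) is exactly your Young split.
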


\begin{proof}
    The first estimate is immediate from $dV_g
        =
        dx_1\,dx_2\,d\theta_1\,d\theta_2$,
    the fact that the determinant of the metric is $1$, and
    $\vol_g(M)=12\pi^2$.

    Because $\mathscr{E}$ is $T^2$-invariant, we have
    \[
        |\nabla_g\mathscr{E}|_g^2
        =
        \sum_{i,j=1}^2
        u^{ij}
        \partial_{x_i}\mathscr{E}
        \partial_{x_j}\mathscr{E}.
    \]
    Taking coordinatewise absolute upper bounds for $u^{ij}$ and
    $\partial_{x_i}\mathscr{E}$ and integrating this gives the second estimate.

    By \cref{proposition:estimates-from-commutator-formula},
    \[
        \|{\nabla_g^2\mathscr{E}}_{L^2}^2
        \leq
        \|{\Delta_g\mathscr{E}}_{L^2}^2
        +
        K_1\|{\nabla_g\mathscr{E}}_{L^2}^2.
    \]
    Since $\Delta_g\mathscr{E}
        =
        -(\partial_{x_i}u^{ij})\partial_{x_j}\mathscr{E}
        +
        u^{ij}\partial_{x_i}\partial_{x_j}\mathscr{E}$,
    taking coordinatewise absolute upper bounds and using
    $\vol_g(M)=12\pi^2$ gives the third estimate.

        Again by
    \cref{proposition:estimates-from-commutator-formula},
    \[
        \|{\nabla_g^3\mathscr{E}}_{L^2}^2
        \leq
        2\|{\nabla_g\Delta_g\mathscr{E}}_{L^2}^2
        +
        \Cr{D^3u-estimate-second-summand}
        \|{\mathscr{E}}_{L^2_2}^2.
    \]
    Since $\Delta_g\mathscr{E}$ is $T^2$-invariant, $|\nabla_g\Delta_g\mathscr{E}|_g^2
        =
        \sum_{k,l=1}^2
        u^{kl}
        \partial_{x_k}\Delta_g\mathscr{E}
        \partial_{x_l}\Delta_g\mathscr{E}$.
    Taking coordinatewise absolute upper bounds and integrating gives
    \[
        \|{\nabla_g\Delta_g\mathscr{E}}_{L^2}^2
        \leq
        12\pi^2
        \sum_{k,l=1}^2
        \|{u^{kl}}_{C^0}
        \|{\partial_{x_k}\Delta_g\mathscr{E}}_{C^0}
        \|{\partial_{x_l}\Delta_g\mathscr{E}}_{C^0}.
    \]
    Moreover, differentiating the formula for $\Delta_g\mathscr{E}$ gives
    \begin{align*}
       - \partial_{x_k}\Delta_g\mathscr{E}
        ={}&
        (\partial_{x_k}\partial_{x_i}u^{ij})
        \partial_{x_j}\mathscr{E}
        +
        (\partial_{x_i}u^{ij})
        \partial_{x_k}\partial_{x_j}\mathscr{E}
        \\
        &+
        (\partial_{x_k}u^{ij})
        \partial_{x_i}\partial_{x_j}\mathscr{E}
        +
        u^{ij}
        \partial_{x_k}\partial_{x_i}\partial_{x_j}\mathscr{E}.
    \end{align*}
    Taking coordinatewise absolute upper bounds gives the stated estimate for
    $\|{\partial_{x_k}\Delta_g\mathscr{E}}_{C^0}$. Substituting these bounds gives the final estimate.
\end{proof}

Plugging in the numerical values of our particular approximate solution, we obtain:

\begin{corollary}\label{H3aposteriorierror}
    Let $u$ be the Kähler potential from \cref{subsection:numerical-approach}.
    Then $\|{\mathscr{E}}_{H^3}$ satisfies the bound from \cref{tab:numerical-constants}.
\end{corollary}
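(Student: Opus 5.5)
The bound on $\|{\mathscr{E}}_{H^3}$ is obtained by feeding certified numerical inputs into the preceding proposition. We take $\|{\mathscr{E}}_{H^3}^2=\|{\mathscr{E}}_{L^2}^2+\|{\nabla_g\mathscr{E}}_{L^2}^2+\|{\nabla_g^2\mathscr{E}}_{L^2}^2+\|{\nabla_g^3\mathscr{E}}_{L^2}^2$ and bound each summand by the corresponding estimate of the proposition. The right-hand sides involve only four kinds of quantity:
\begin{itemize}
\item the componentwise bounds $\|{u^{ij}}_{C^0}$, $\|{\partial_{x_k}u^{ij}}_{C^0}$ and $\|{\partial_{x_k}\partial_{x_l}u^{ij}}_{C^0}$, which are recorded in \cref{Data} and were certified in \cref{subsection:a-posteriori-geometric-bounds};
\item the curvature constants $K_1$ and $\Cr{D^3u-estimate-second-summand}$ from \cref{proposition:estimates-from-commutator-formula}, which depend only on the certified $C^0$ bounds for $\Ric$, $\Riem$ and $\nabla\Riem$;
\item the Euclidean $C^0$ bounds on $\mathscr{E}$ and on its partial derivatives up to order three;
\item the value $\mathscr{E}(1/2,1/2)$.
\end{itemize}

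The new work is certifying the Euclidean derivative bounds. We write $\mathscr{E}=1-Ge^H$ with $G=D_0$ and $H=-2(h_1x_1+h_2x_2-h)$, both exact rational polynomials on $P_{comp}$. For each multi-index $\alpha$ with $1\le|\alpha|\le3$, the Leibniz and chain rules give $\partial^\alpha\mathscr{E}=-Q_\alpha e^H$. Here $Q_\alpha$ is an explicit polynomial in $G$, $H$ and their derivatives; for instance
\begin{align*}
Q_{(1,1)}=G_{12}+G_1H_2+G_2H_1+G(H_{12}+H_1H_2).
\end{align*}
We expand each $Q_\alpha$ and $H$ in the tensor Chebyshev basis in exact rational arithmetic. \Cref{eqn:chebyshev-bound} then gives $\|{\partial^\alpha\mathscr{E}}_{C^0}\le\big(\sum|q^\alpha_{m,n}|\big)e^{\sum|\eta_{m,n}|}$. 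For $\mathscr{E}$ itself we use the mean value estimate already stated, with $\mathscr{E}(1/2,1/2)$ enclosed by interval arithmetic. All these bounds are on $P_{comp}$. They transfer to all of $P$ because $\mathscr{E}$ is $D_6$-invariant: the rationalised $h$ was constructed to extend as a $C^5$ $D_6$-invariant function, and $u_P$ is $D_6$-invariant. The transfer also needs the $D_6$ action to map coordinate derivatives to linear combinations of coordinate derivatives with bounded coefficients; the $R_i$ are integer matrices, so this holds. The resulting constant loss should either be absorbed or avoided by working with the rotation-invariant quantities $|\nabla_g^k\mathscr{E}|_g$ directly.

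With these inputs we proceed in order. First we bound $\|{\mathscr{E}}_{L^2}$ and $\|{\nabla_g\mathscr{E}}_{L^2}$. Next we bound $\|{\Delta_g\mathscr{E}}_{C^0}$ and hence $\|{\nabla^2_g\mathscr{E}}_{L^2}$, which together give $\|{\mathscr{E}}_{L^2_2}$. Then we bound $\|{\partial_{x_k}\Delta_g\mathscr{E}}_{C^0}$ and hence $\|{\nabla_g^3\mathscr{E}}_{L^2}$. All of this arithmetic is done with directed rounding, and the total is compared with the entry in \cref{tab:numerical-constants}.

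The main obstacle is sharpness, not validity. The triangle-inequality bound $\sum|c_{m,n}|$ is crude, and the residual is of size roughly $10^{-29}$ to $10^{-36}$, whereas the coefficients of $G$ and $Q_\alpha$ individually are $O(1)$. We should therefore first cancel in exact rational arithmetic, so that the tiny residual is visible in the coefficients themselves rather than lost in interval overestimation; this is the step we would implement first. A second difficulty is that the $e^{\|{H}_{C^0}}$ factor could be wasteful. If it proves so, we would instead bound $\partial^\alpha\mathscr{E}$ directly on subdivision boxes, as in \cref{subsection:a-posteriori-geometric-bounds}.
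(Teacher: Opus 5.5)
Your proposal is essentially the paper's argument: form each $Q_\alpha$ as one exact polynomial, bound $\partial^\alpha\mathscr{E}=-Q_\alpha e^H$ by its Chebyshev coefficient sum times $e^{\|{H}_{C^0}}$, treat $\mathscr{E}$ itself by the mean value theorem, and substitute these, with the certified $u^{ij}$ and curvature bounds, into the preceding proposition; your transfer worry is moot, since every pointwise quantity bounded there ($u^{ij}\partial_{x_i}\mathscr{E}\,\partial_{x_j}\mathscr{E}$, $\Delta_g\mathscr{E}$, $u^{kl}\partial_{x_k}\Delta_g\mathscr{E}\,\partial_{x_l}\Delta_g\mathscr{E}$) is a $D_6$-invariant scalar, so bounds on the computational square suffice with no loss. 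One correction: the paper's convention (Appendix \ref{GAE}) is $\|{\mathscr{E}}_{H^3}=\sum_{k=0}^{3}\|{\nabla_g^k\mathscr{E}}_{L^2}$, not the root-sum-of-squares you wrote, and the downstream multiplication theorem and a priori estimates use that convention, so you must add the four bounds linearly to certify the tabulated value.
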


\subsection{A non-sharp first eigenvalue}
\label{subsection:non-sharp-eigenvalue-bound}

In this section we derive a lower bound for the smallest absolute value of an eigenvalue of the linearised operator $\mathscr{L}\phi=-\frac{1}{2} \Delta \phi + e^F \cdot \phi$ from \cref{subsection:einstein-condition-for-toric-kaehler} acting on $T^2 \rtimes D_6$-invariant functions (with the Laplacian taken with respect to the approximate Kähler-Einstein metric).
We achieve this by proving an explicit spectral gap for $\Delta$ that is bigger than $4$.
Because $F$ is close to zero, this implies a spectral gap for $\mathscr{L}$, say $|\lambda_{\min}(\mathscr{L})|$.
That is:
$|\lambda_{\min}(\mathscr{L})|$ is the smallest absolute value of any eigenvalue of $\mathscr{L}$.
The numerical values obtained from the result of this section are recorded in \cref{tab:numerical-constants}.

Throughout, we denote the smallest $T^2 \rtimes D_6$-invariant non-zero eigenvalue  of $\Delta$ by $\lambda_{\min}$.
We then have a Rayleigh-Ritz characterisation of $\lambda_{\min}$, namely
\begin{align}
\label{equation:rayleigh-ritz}
\lambda_{\min}
=
\inf_{f \in C^\infty_0(M)^{T^2 \rtimes D_6} \setminus \{0\}}
\frac{\int_{M} |\nabla f|_{g}^2dV_g}{\int_M f^2 dV_g}
=
\inf_{f\in C^\infty_0(P)^{D_6} \setminus \{0\}}
\frac{\int_{P} |\nabla f|_{g}^2 dx_1 dx_2}{\int_P f^2 dx_1 dx_2},
\end{align}
where the subscript $0$ in the function space indicates restriction to functions with mean zero (with respect to the appropriate measure). 
The second equality uses the $T^2$-invariance of the relevant metrics and functions to reduce the computation to integration over the hexagon $P$. In fact, if we let $Q=\operatorname{conv} (\{(0,0),(1,-1),(1,0)\})$ be a fundamental domain, then since our approximate K\"ahler-Einstein metric $g$ is $D_6$-invariant, we further obtain 
\begin{align}\label{Qformulation}
    \lambda_{\min}= \inf_{f\in C^\infty_0(P)^{D_6} \setminus \{0\}}
\frac{\int_{Q} |\nabla f|_{g}^2 dx_1 dx_2}{\int_Q f^2 dx_1 dx_2}.
\end{align}
Note that the only way in which the approximate metric $g$ enters is through $|\cdot|_{g}$. Indeed the volume form $dV_g$, as described in \cref{volumeform}, does not depend on the specific choice of $g$ coming from \cref{toricKahlermetric}, and consequently, the relevant volume form on $P$ and $Q$ is the ordinary Euclidean volume form. 

Bounding $\lambda_{\min}$ from below by $4$ is accomplished in three steps:
\begin{enumerate}
    \item 
    On an \emph{inset hexagon}, $P_\delta$, with $\delta>0$ describing closeness to $P$, compute a spectral gap for a finite element Laplacian with respect to some triangulation $\mathcal{T}$;

    \item 
    Adapt a result from \cite{Liu} to deduce from this a bound for the Laplacian acting on smooth functions on $P_\delta$;

    \item 
    Use quantitative estimates to bound the difference between the spectral gaps on $P_\delta$ and $P$.
\end{enumerate}

This approach using an inset hexagon $P_{\delta}$ is necessary because the metric $|\cdot|_{g}$ on $P$ is generated by the inverse matrix $u^{ij}$, which degenerates on the boundary (cf.  \cref{canonicalpotential} and \cref{perturbedpotential}). This is problematic, because adapting the \cite{Liu} method for this  finite-element approach relies on uniform metric lower bounds on any element; any element which includes any part of the boundary will have a useless lower bound. Performing a finite-element approach on $P_{\delta}$ for some $\delta>0$ instead of $P$ circumvents this issue. If $\delta>0$ is sufficiently small, it is possible to draw conclusions about the spectral gap on $P$ from finite-element information on $P_{\delta}$, but if $\delta$ is too small, then boundary degeneracy becomes an issue again; we need to balance these two competing issues cautiously. 

\subsubsection{Steps 1 and 2: FEM and smooth spectral gaps on $P_\delta$}

For $0<\delta<1$, define the inset hexagon
\[
P_\delta
:=
\operatorname{conv} (\{(-1+\delta,1-\delta),(-1+\delta,0),(0,-1+\delta),(1-\delta,-1+\delta),(1-\delta,0),(0,1-\delta)\}) \subset P
\]
and let $Q_\delta = \operatorname{conv} (\{(0,0),(1-\delta,-1+\delta),(1-\delta,0)\})$ be one of its fundamental sectors. 
Let $\sigma\in D_6$ be the reflection $$\sigma=\begin{pmatrix}1&0\\-1&-1\end{pmatrix},$$ the unique
non-trivial element of $D_6$ preserving $Q_\delta$; it exchanges the vertices $(1-\delta,0)$ and
$(1-\delta,-1+\delta)$ and fixes the median
$\ell_\delta=\{(t,-t/2):0\le t\le 1-\delta\}$ pointwise. 
Let $\mathcal{T}$ be some triangulation of $Q_\delta$ whose boundary edges are invariant under $\sigma$. 
The situation is shown in \cref{fig:triangulation}.

\begin{figure}
    \centering
    \begin{minipage}{0.45\textwidth}
        \includegraphics[width=\linewidth]{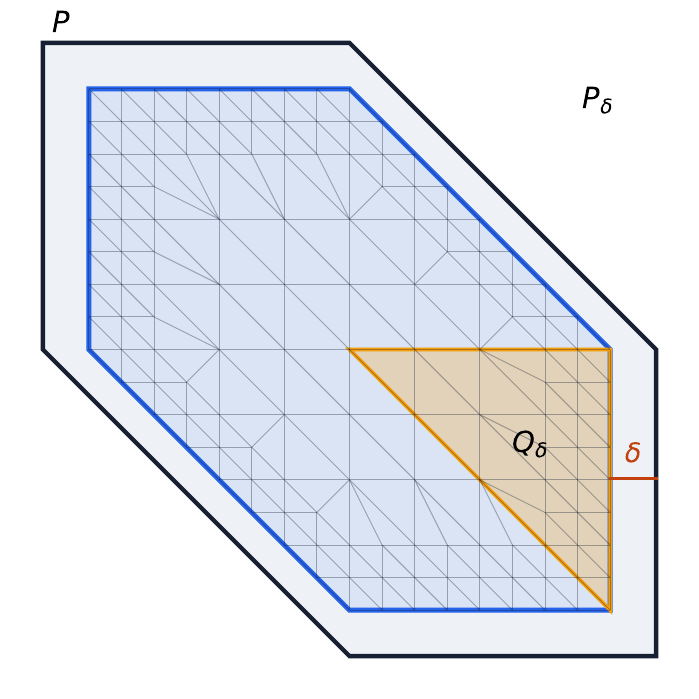}    
    \end{minipage}
    \begin{minipage}{0.45\textwidth}
        \includegraphics[width=\linewidth]{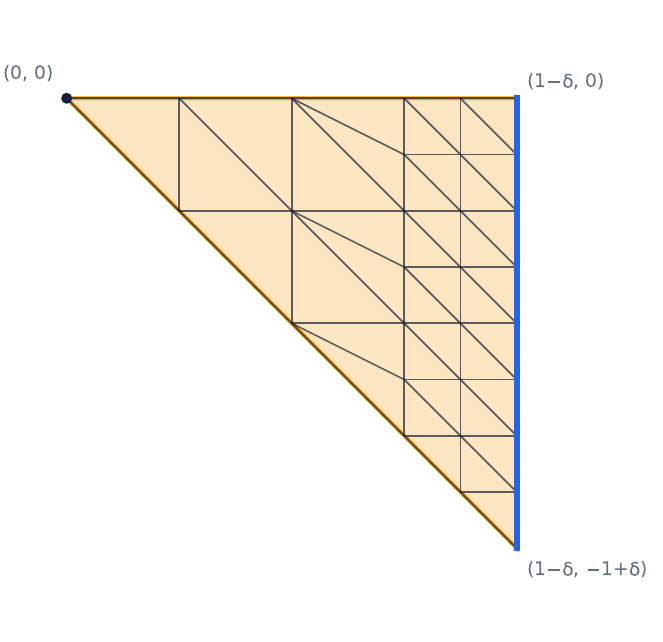}
    \end{minipage}
    \caption{The domains and triangulations in question.
    Left: the hexagon $P$ with inset hexagon $P_\delta$ in blue. The fundamental sector $Q_\delta$ is marked in orange.
    Right: the fundamental sector $Q_\delta$ with its triangulation. The triangulation is $D_6$-invariant and then by $D_6$-symmetry extended to all of $P_\delta$.
    Note that neither $\delta$ nor the triangulation are drawn to scale.
    Our chosen value of $\delta$ is too small to see in a figure, and the true triangulation has very small triangles near the boundary which are likewise too small to see, so we draw a coarser version of it.}
    \label{fig:triangulation}
\end{figure}

Let us now set up our function spaces. We define the Sobolev space 
\[
H^1(Q_\delta)^{D_6}
:=
\left\{
U|_{Q_\delta}:
U\in H^1(P_\delta)
\text{ is $D_6$-invariant}
\right\},
\]
as well as its mean zero version 
\begin{align}
\label{eq:V-space-def}
V
:=
\left\{
u\in H^1(Q_\delta)^{D_6}:
\int_{Q_\delta}u\,dx_1dx_2=0
\right\}.
\end{align}
The eigenvalue formulation in \cref{Qformulation} shows that $V$ should be highly-relevant to estimating the eigenvalue, but our finite element method also requires us to consider the Crouzeix--Raviart finite-element space of our triangulation $\mathcal{T}$, which is the space $\operatorname{CR}(Q_{\delta},\mathcal{T})$
of functions $v:Q_\delta\to\mathbb{R}$ (defined up to the edges of $\mathcal{T}$) such that
\begin{enumerate}
  \item $v|_T$ is affine, that is $v|_T(x)=a_T+b_T\cdot x$ for some $a_T\in\mathbb{R}$ and
        $b_T\in\mathbb{R}^2$, for every $T\in\mathcal{T}$; and
  \item for every interior edge $e=T_1\cap T_2$ of $\mathcal{T}$,
        \[
        \frac{1}{|e|}\int_e v|_{T_1}\,ds
        =
        \frac{1}{|e|}\int_e v|_{T_2}\,ds,
        \]
        equivalently $v|_{T_1}(m_e)=v|_{T_2}(m_e)$, where $m_e$ is the midpoint of $e$.
\end{enumerate}
Its $D_6$-invariant part is
\[
\operatorname{CR}(Q_\delta,\mathcal{T})^{D_6}
:=
\left\{
v\in\operatorname{CR}(Q_{\delta},\mathcal{T}):
v\circ\sigma=v
\right\}.
\]

We then define \begin{align}
\label{eq:fem-space-def}
V^h :=
\left\{
v_h\in\operatorname{CR}(Q_\delta,\mathcal T):
\begin{aligned}
&\int_{Q_\delta}v_h\,dx_1dx_2=0,\\
&v_h(m_e)=v_h(\sigma m_e)
\quad\text{whenever }e,\sigma e\text{ are edges of }\mathcal T
\end{aligned}
\right\},
\end{align}
and finally, define $V(h)=V+V^h$ (note that $V^h$ is \textit{not} necessarily a subset of $V$). 

To proceed with our finite element approach, we write $A=u^{ij}:Q_{\delta}\to \mathbb{R}^{2\times 2}$ for the positive and symmetric matrix function appearing in the stiffness form. Then for any $T \in \mathcal{T}$ we choose a positive-definite matrix $B_T \in \R^{2 \times 2}$ such that $A(x) \succeq B_T$ for all $x \in T$, where $\succeq$ denotes the Loewner order, i.e. $A(x)-B_T$ is positive semidefinite.
The matrices $B_T$ are auxiliary matrices and need not be $D_6$-invariant. Note that for any $f,g\in V(h)$, it is possible to make sense of the expressions 
\[
M_A(f,g)
:=
\int_{Q_{\delta}} \nabla f\cdot A\cdot\nabla g\,dx_1dx_2,
\qquad
M_B(f,g)
:=
\sum_{T\in\mathcal{T}}\int_T \nabla f\cdot B_T\cdot\nabla g\,dx_1dx_2,
\]
and
\[
N(f,g)
:=
\int_{Q_{\delta}}fg\,dx_1dx_2.
\]

The first positive $D_6$-invariant eigenvalue on the full inset hexagon is
\begin{equation}
    \label{equation:d6-sector-rayleigh-quotient}
    \lambda^{D_6}_{A,1}(Q_\delta)
    :=
    \inf_{u\in V\setminus\{0\}}
    \frac{M_A(u,u)}{N(u,u)}.
\end{equation}
 Since the mesh is connected and every $B_T$ is positive definite, $M_B(\cdot,\cdot)$ is an inner product on $V(h)$.
Let $\Pi:V(h)\to \operatorname{CR}(Q_\delta,\mathcal{T})^{D_6}$ be the Crouzeix--Raviart interpolation operator, defined by the requirement that for every edge $e$ with mid-point $m_e$, we have 
\begin{equation}
    \label{equation:cr-interpolation}
    (\Pi u)(m_e)
    =
    \frac{1}{|e|}\int_e u\,ds
\end{equation}
where $ds$ is the Euclidean length measure; $\Pi u$ is then uniquely determined by the requirement that it is affine on each triangle $T\in \mathcal{T}$. 
Consider now the function $p: V(h)\to CR(Q_{\delta},\mathcal{T})^{D_6}$ with 
\begin{equation}
    \label{equation:mean-zero-cr-projection}
    p u
    :=
    \Pi u
    -
    \frac{1}{|Q_\delta|}\int_{Q_\delta}\Pi u\,dx_1dx_2.
\end{equation}

\begin{lemma}
    \label{lemma:projection-equals-interpolation}
    The image of $p$ is contained in $V^h$. Furthermore, $p$ is the orthogonal projection with respect to $M_B(\cdot,\cdot)$; that is,
    \[
        M_B(u-pu,v_h)=0
        \qquad
        \text{for all $u\in V(h)$ and $v_h\in V^h$.}
    \]
\end{lemma}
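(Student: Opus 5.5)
The plan is to verify the two claims directly from the defining property \eqref{equation:cr-interpolation} of $\Pi$. The key observation is that $\Pi$ preserves edge averages, and that on each triangle both $B_T$ and $\nabla v_h$ are constant.

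\emph{Well-definedness and membership in $V^h$.} Write $u=w+v_h'$ with $w\in V$ and $v_h'\in V^h$. Since $\Pi$ is linear, it suffices to treat the two summands separately.
\begin{itemize}
\item For $w\in H^1(Q_\delta)$, the trace on each edge $e$ is well defined and single-valued. Hence the edge average $\frac{1}{|e|}\int_e w\,ds$ is unambiguous. The resulting piecewise affine function has matching midpoint values across every interior edge, so it lies in $\operatorname{CR}(Q_\delta,\mathcal T)$.
\item For $v_h'$, which is affine on each $T$, the edge average over $e$ equals the midpoint value $v_h'(m_e)$. By the Crouzeix--Raviart continuity condition this value is the same from both sides, so $\Pi v_h'=v_h'$.
\end{itemize}

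\emph{Symmetry condition.} Suppose $e$ and $\sigma e$ are both edges of $\mathcal T$. I will use that the edge average can be written as $\int_0^1 u(a+t(b-a))\,dt$ for $e=[a,b]$. This expression is invariant under any affine map, so it does not matter that $\sigma$ is not a Euclidean isometry in these coordinates. Therefore
\[
(\Pi w)(m_{\sigma e})=\int_0^1 w\circ\sigma(a+t(b-a))\,dt=(\Pi w)(m_e),
\]
using $w\circ\sigma=w$, which holds because $w$ is the restriction of a $D_6$-invariant function. For the $v_h'$ part, the condition holds by the definition of $V^h$.

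\emph{Mean zero.} Subtracting the mean in \eqref{equation:mean-zero-cr-projection} gives zero integral. It preserves all the midpoint conditions, since a constant is added uniformly. Hence $pu\in V^h$.

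\emph{Orthogonality.} Since $\nabla(pu)=\nabla(\Pi u)$ on each $T$, it suffices to show $M_B(u-\Pi u,v_h)=0$. Fix $T\in\mathcal T$. Both $B_T$ and $\nabla v_h|_T$ are constant, so
\[
\int_T\nabla(u-\Pi u)\cdot B_T\nabla v_h\,dx=\Big(\int_T\nabla(u-\Pi u)\,dx\Big)\cdot B_T\nabla v_h|_T .
\]
Next apply the divergence theorem on $T$, which is valid for $H^1(T)$ functions and for the affine functions involved:
\[
\int_T\nabla(u-\Pi u)\,dx=\sum_{e\subset\partial T}n_e\int_e (u|_T-\Pi u|_T)\,ds .
\]
Here $n_e$ is the constant outward normal on $e$. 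Each edge integral vanishes, because $\Pi u|_T$ is affine and so its average over $e$ equals its midpoint value, which by \eqref{equation:cr-interpolation} equals the average of $u|_T$ over $e$. For the $V^h$ component of $u$, the traces from $T$ are what appear, and these agree with the midpoint values as noted above. Summing over $T$ gives the claim.

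\emph{Main obstacle.} The delicate point is bookkeeping rather than analysis. One must check that $\Pi$ is genuinely well defined on the sum $V(h)=V+V^h$, which is handled by linearity and the two cases above. One must also check that the $\sigma$-invariance of edge averages holds despite $\sigma$ not being orthogonal, which is why I use the affine-invariant parametrisation of the edge average rather than Euclidean arc length.
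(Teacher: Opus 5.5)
Your proposal is correct and follows the same route as the paper. The mean-zero correction leaves gradients unchanged, and on each triangle the constant vector $B_T\nabla v_h$ lets integration by parts reduce $M_B(u-\Pi u,v_h)$ to edge integrals of $u-\Pi u$, which vanish by \eqref{equation:cr-interpolation}. You also check that $\Pi$ is well defined on $V+V^h$, and you verify the $\sigma$-midpoint condition via the affine-invariant form of the edge average, which matters because $\sigma$ is not a Euclidean isometry. These checks fill in what the paper leaves implicit, since it only addresses the mean-zero part of $pu\in V^h$ explicitly.
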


\begin{proof}
The constant correction in \eqref{equation:mean-zero-cr-projection} gives the mean zero property required of elements of $V^h$, but does not change the gradient, so it is enough to prove that 
\begin{align*}
    M_B(u-\Pi u,v_h) = 0 \text{ for all $u\in V(h)$ and $v_h\in V^h$.}
\end{align*} On each triangle, both $B_T$ and $\nabla v_h$ are constant. Integration by parts therefore gives
\begin{align*}
M_B(u-\Pi u,v_h)
&=
\sum_{T\in\mathcal{T}}
\int_T \nabla(u-\Pi u)\cdot B_T\cdot\nabla v_h\,dx
\\
&=
\sum_{T\in\mathcal{T}}
\sum_{e\subset\partial T}
\bigl(B_T\nabla v_h\cdot n_{T,e}\bigr)
\int_e(u-\Pi u)\,ds.
\end{align*}
Every edge integral in the last expression vanishes by \eqref{equation:cr-interpolation}, and the result follows. 
\end{proof}

This orthogonal projection property is used in \cite{Liu} to find a convenient way to compare the eigenvalues on smooth functions with eigenvalues of finite elements:
\begin{theorem}
    \label{theorem:liu-theorem}
     Define
    \[
    \lambda_{B,1}
    :=
    \inf_{u\in V\setminus\{0\}}
    \frac{M_B(u,u)}{N(u,u)},
    \qquad
    \text{ and }
    \qquad
    \lambda_{B,\mathcal{T},1}
    :=
    \inf_{u_h\in V^h\setminus\{0\}}
    \frac{M_B(u_h,u_h)}{N(u_h,u_h)}.
    \]
    If $C(\mathcal{T})>0$ is any number satisfying 
    \begin{align}
        \label{equation:liu-necessary-error-estimate}
        \|{u-pu}_N
        \leq
        C(\mathcal{T})\|{u-pu}_{M_B} \text{ for all } u\in V,
    \end{align}
   then
    \begin{equation}
        \label{equation:combined-fem-lower-bound}
        \lambda^{D_6}_{A,1}(Q_\delta)
        \geq
        \lambda_{B,1}
        \geq
        \frac{\lambda_{B,\mathcal{T},1}}
        {1+\lambda_{B,\mathcal{T},1}C(\mathcal{T})^2}.
    \end{equation}
\end{theorem}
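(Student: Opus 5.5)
The theorem has two inequalities, and I will prove them separately. The first, $\lambda^{D_6}_{A,1}(Q_\delta)\geq\lambda_{B,1}$, is a pointwise comparison. For $u\in V$, the pointwise Loewner bound $A(x)\succeq B_T$ on each triangle $T$ gives $\nabla u\cdot A\cdot\nabla u\ge \nabla u\cdot B_T\cdot\nabla u$ almost everywhere on $T$. Summing over $T\in\mathcal{T}$ yields $M_A(u,u)\ge M_B(u,u)$. The denominator $N(u,u)$ is the same in both Rayleigh quotients, so taking the infimum over $V\setminus\{0\}$ gives the claim. This step is routine.

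For the second inequality I will follow the argument of \cite{Liu}, using \cref{lemma:projection-equals-interpolation}. Fix $u\in V\setminus\{0\}$ and write $u_h:=pu\in V^h$.

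\emph{Step one.} Orthogonality $M_B(u-pu,u_h)=0$ gives the Pythagorean identity
\[
M_B(u,u)=\|{u-u_h}_{M_B}^2+\|{u_h}_{M_B}^2 .
\]

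\emph{Step two.} By the definition of $\lambda_{B,\mathcal{T},1}$ we have $\|{u_h}_{M_B}^2\ge\lambda_{B,\mathcal{T},1}\|{u_h}_N^2$. If $u_h=0$ this still holds trivially, since then $\|{u}_N\le C\|{u}_{M_B}$ directly and the final bound below also follows.

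\emph{Step three.} The triangle inequality and \eqref{equation:liu-necessary-error-estimate} give
\[
\|{u}_N\le \|{u_h}_N+\|{u-u_h}_N\le \lambda_{B,\mathcal{T},1}^{-1/2}\|{u_h}_{M_B}+C(\mathcal{T})\|{u-u_h}_{M_B}.
\]
Applying Cauchy--Schwarz in $\mathbb{R}^2$ to the right-hand side, I get
\[
\|{u}_N^2\le\bigl(\lambda_{B,\mathcal{T},1}^{-1}+C(\mathcal{T})^2\bigr)\bigl(\|{u_h}_{M_B}^2+\|{u-u_h}_{M_B}^2\bigr)=\bigl(\lambda_{B,\mathcal{T},1}^{-1}+C(\mathcal{T})^2\bigr)M_B(u,u).
\]

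\emph{Step four.} Rearranging gives $M_B(u,u)/N(u,u)\ge \lambda_{B,\mathcal{T},1}/(1+\lambda_{B,\mathcal{T},1}C(\mathcal{T})^2)$. Taking the infimum over $u$ completes the second inequality.

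The main point needing care is that all the bilinear forms are well defined on $V(h)=V+V^h$. This requires two facts. First, $M_B$ is an inner product there, which the paper notes. Second, $p$ maps $V$ into $V^h$, including the symmetry constraint at the edge midpoints. That constraint holds because $\sigma$ preserves the boundary edges and $u$ is $\sigma$-invariant, so the edge averages of $u$ over $e$ and $\sigma e$ agree. Both facts are supplied by \cref{lemma:projection-equals-interpolation} and the setup above it. Beyond that, the argument is the elementary Hilbert-space computation, so I expect no real obstacle.
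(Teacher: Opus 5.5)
Your proof is correct and follows the paper's route: the Loewner comparison $A(x)\succeq B_T$ gives the first inequality, and the second is Liu's Theorem 2.1, which the paper only cites and whose $k=1$ case you have written out (Pythagoras from the $M_B$-orthogonality of $p$, the definition of $\lambda_{B,\mathcal{T},1}$, then Cauchy--Schwarz). The one implicit point is $\lambda_{B,\mathcal{T},1}>0$, which you use when writing $\lambda_{B,\mathcal{T},1}^{-1/2}$. It holds because $M_B$ is an inner product on the finite-dimensional space $V^h$, and the claimed bound would be trivial if it were $0$.
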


\begin{proof}
The first inequality follows immediately from $A(x) \succeq B_T$. 
The second is \cite[Thm. 2.1]{Liu}, applied to the inner products $M_B$ and $N$, the spaces $V$ and $V^h$, and the projection from \cref{lemma:projection-equals-interpolation}.
\end{proof}

Thus, reliable estimates using the finite-element method are achievable once we find a constant $C(\mathcal{T})$ satisfying \cref{equation:liu-necessary-error-estimate}. This is possible because the projection is, up to its mean-zero correction, the Crouzeix--Raviart interpolation operator.

\begin{corollary}
    \label{corollary:Lius-constant-for-A-norm}
    For each $T\in\mathcal{T}$, let $\alpha_T>0$ such that $\alpha_T\leq\lambda_{\min}(B_T)$.
    Then \eqref{equation:liu-necessary-error-estimate} holds with
    \begin{equation}
        \label{equation:liu-constant}
        C(\mathcal{T})
        =
        \max_{T\in\mathcal{T}}
        \frac{0.1893h(T)}{\sqrt{\alpha_T}},
    \end{equation}
    where $h(T)$ denotes the longest side of $T$.
\end{corollary}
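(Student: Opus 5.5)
The plan is to reduce the estimate to the classical Crouzeix--Raviart interpolation error bound on each triangle, with $B_T$ compared to a multiple of the identity. Recall the known result (Liu, Kikuchi--Liu): for a triangle $T$ and $w\in H^1(T)$ whose edge averages vanish on all three edges of $T$,
\[
\|{w}_{L^2(T)}\le 0.1893\,h(T)\,\|{\nabla w}_{L^2(T)}.
\]
We apply this to $w=(u-\Pi u)|_T$ for $u\in V$. By \eqref{equation:cr-interpolation}, $\int_e (u-\Pi u)\,ds=0$ on every edge $e$ of $T$, because the trace of $u\in H^1$ is single valued and $\Pi u$ is affine on $T$. Squaring and summing over $T\in\mathcal T$ then gives
\[
\|{u-\Pi u}_N^2\le \sum_T (0.1893\,h(T))^2\int_T|\nabla(u-\Pi u)|^2 .
\]

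Next we pass from the Euclidean gradient to the $B_T$-weighted one. Since $B_T\succeq \lambda_{\min}(B_T)\Id\succeq \alpha_T\Id$, we have $\int_T|\nabla w|^2\le \alpha_T^{-1}\int_T\nabla w\cdot B_T\nabla w$. Pulling out the maximum over $T$ yields
\[
\|{u-\Pi u}_N\le C(\mathcal T)\,\|{u-\Pi u}_{M_B},
\]
with $C(\mathcal T)$ exactly as in \eqref{equation:liu-constant}.

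It remains to replace $\Pi u$ by $pu$. Set $c=\frac{1}{|Q_\delta|}\int_{Q_\delta}\Pi u$. Then $u-pu=(u-\Pi u)+c$, and gradients are unchanged, so $\|{u-pu}_{M_B}=\|{u-\Pi u}_{M_B}$. For the $N$-norm, $u\in V$ has mean zero, so $\int_{Q_\delta}(u-pu)=0$. Hence $u-pu$ is the $L^2$-orthogonal projection of $u-\Pi u$ onto mean-zero functions, which gives $\|{u-pu}_N\le\|{u-\Pi u}_N$. Combining the three steps gives \eqref{equation:liu-necessary-error-estimate}.

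The main obstacle is justifying the sharp constant $0.1893$ in the local estimate. It comes from an optimization over triangle shapes of a Neumann-type eigenvalue problem with edge-mean constraints, and we would cite it from \cite{Liu} and the related Kikuchi--Liu work rather than reprove it. A secondary point to check is that edges meeting the boundary of $Q_\delta$, including the symmetry lines, pose no problem. This holds because the local estimate only uses the three edge means of $T$ itself, which vanish for every edge, interior or boundary, by the definition of $\Pi$.
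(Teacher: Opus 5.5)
Your proposal is correct and follows the paper's proof essentially step for step. You apply the local Crouzeix--Raviart interpolation estimate with constant $0.1893\,h(T)$ from \cite{Liu} to $u-\Pi u$, use the comparison $B_T\succeq\alpha_T I$, and observe that $u-pu$ is the mean-zero $L^2$-projection of $u-\Pi u$ with the same elementwise gradient; your remark that boundary and symmetry edges cause no trouble, because the local estimate only uses the vanishing edge means of $T$ itself, is a correct clarification that the paper leaves implicit.
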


\begin{proof}
Let $e=u-\Pi u$. By Section 4 of \cite{Liu}, on each triangle we have
\[
    \int_Te^2\,dx
    \leq
    \bigl(0.1893h(T)\bigr)^2
    \int_T|\nabla e|^2\,dx.
\]
Since $B_T\succeq\alpha_T I$, this gives
\[
    \int_Te^2\,dx
    \leq
    \frac{\bigl(0.1893h(T)\bigr)^2}{\alpha_T}
    \int_T\nabla e\cdot B_T\cdot\nabla e\,dx.
\]
For $u\in V$, formula \eqref{equation:mean-zero-cr-projection} gives
\[
    u-p u
    =
    e-\frac{1}{|Q_\delta|}\int_{Q_\delta}e\,dx_1dx_2.
\]
Subtracting the mean is the $L^2$-orthogonal projection onto the mean-zero functions, and hence
\[
    \|{u-p u}_N\leq\|{e}_N.
\]
Moreover, $u-p u$ and $e$ have the same elementwise gradient. Summing the local estimates over the triangulation and taking the largest local constant therefore proves \eqref{equation:liu-necessary-error-estimate} with the value in \eqref{equation:liu-constant}.
\end{proof}

\begin{remark}
    We impose no boundary condition at the hexagon edge $\operatorname{conv}(\{(1-\delta,0),(1-\delta,-1+\delta)\})$ of $Q_\delta$.
    This is the case for our smooth function space $V$ from \cref{eq:V-space-def} as well as for our finite element space from \cref{eq:fem-space-def}.
    Generally speaking, the minimum value of the Rayleigh-Ritz quotient on a space is equal to the smallest eigenvalue of the Laplace operator with Neumann boundary conditions, see \cite[Thm. 6.23]{Borthwick2020} for a description of this principle.
    This holds true in our situation, except that we impose the $D_6$-symmetry boundary condition on two of the edges of $Q_\delta$.
\end{remark}

\begin{remark}
    Computing the smallest finite element eigenvalue $\lambda_{B,\mathcal{T},1}$ from \cref{theorem:liu-theorem} is a standard finite-dimensional linear algebra problem.
    We compute rigorous eigenvalue enclosures using the \texttt{veigs} package from \url{https://github.com/yuuka-math/veigs} for INTLAB \cite{Ru99a}.
    Our implementation slightly differs from what is written in that we compute the \emph{second} smallest eigenvalue on $\operatorname{CR}(Q_\delta)^{D_6}$ rather than the smallest eigenvalue on $V^h \subset \operatorname{CR}(Q_\delta)^{D_6}$.
    The smallest eigenvalue is then zero.
    The reason for the discrepancy between implementation and the description in this section is that \cite[Thm. 2.1]{Liu} applies to positive definite forms, but not to positive semi-definite forms.
    Instead of this ad-hoc workaround, one could have used the more general framework from \cite{LIU2020112666} for semi-definite forms.
\end{remark}

\subsubsection{Step 3: comparing spectral gaps on $P$ and $P_\delta$}

We use $\mu$ to denote a positive number with $\text{Ric}(u,u)\ge \mu g(u,u)$ for all $u\in TM$, where $g$ denotes the approximate Kähler-Einstein metric. 

Let $M_{\delta}=P_{\delta}\times \mathbb{T}^2 \subset M$ be the corresponding subset of $M$. 
In \cref{eq:V-space-def}, a space of $D_6$-invariant functions on $Q_\delta$ was defined.
In this section, we need the analog function space on the hexagon $P_\delta$ for bookkeeping reasons, say $V'$ defined as
\begin{align}
V'
:=
\left\{
U \in H^1(P_\delta)^{D_6}:
\int_{P_\delta}U\,dx_1dx_2=0
\right\}.
\end{align}

\begin{theorem}\label{theorem:eigenvalue-inset-comparison}
   Let $\lambda_{\min}>0$ be the positive first eigenvalue of $\Delta$, restricted to toric and $D_6$-invariant scalar functions on $M$. 
   For any $0<\delta<3/7$, we have \begin{align}\label{deltaenergy}
       \lambda_{\min}^\delta
       :=
       \inf_{u\in V' \setminus \{0\}} \frac{\int_{M_{\delta}}\left|\nabla u\right|_{g}^2dV_g}{\int_{M_{\delta}}u^2 dV_g}\le \frac{\lambda_{\min}}{1-\sqrt{\frac{7\delta}{3}} \left(\frac{3\lambda_{\min}}{2\mu}+1\right)-\frac{7\delta}{3-7\delta}},
   \end{align}
    provided the right hand side is defined and positive. 
    In particular,
    \begin{align}
    \label{equation:lambda-min-lower-bound-final}
    \lambda_{\min}
    \ge
    \frac{
    \left(
    1-\sqrt{\frac{7\delta}{3}}-\frac{7\delta}{3-7\delta}
    \right)
    \lambda_{\min}^\delta
    }{
    1+
    \frac{3}{2\mu}\sqrt{\frac{7\delta}{3}}
    \displaystyle
    \lambda_{\min}^\delta
    }.
    \end{align}
\end{theorem}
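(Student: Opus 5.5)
The plan is to use the first $T^2\rtimes D_6$-invariant eigenfunction $f$ on $M$ (with $\Delta f=\lambda_{\min}f$ and $\int_M f^2=1$) as a test function for $\lambda_{\min}^\delta$. We restrict $f$ to $M_\delta$ and subtract its mean over $P_\delta$. Let $\bar f$ be the mean of $f$ over $M_\delta$. Then $u=f|_{M_\delta}-\bar f\in V'$, and
\[
\int_{M_\delta}|\nabla u|^2\le \int_M|\nabla f|^2=\lambda_{\min}.
\]
So it suffices to bound $\int_{M_\delta}u^2=\int_{M_\delta}f^2-\vol(M_\delta)\bar f^2$ from below by $1-\sqrt{7\delta/3}\,(3\lambda_{\min}/(2\mu)+1)-7\delta/(3-7\delta)$. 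The geometric input is the volume of the collar. $P\setminus P_\delta$ is a union of six trapezoids of Euclidean area roughly $\delta$ each, minus a corner correction. Since $\vol(M)=12\pi^2$ corresponds to area $3$, the collar fraction should be at most $7\delta/3$ (I would check $|P\setminus P_\delta|\le 7\delta$, or a similar crude bound). This is also where $\delta<3/7$ enters, ensuring $|P_\delta|\ge 3-7\delta>0$.

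\textbf{Step 1 (the mean).} Since $\int_M f=0$, we have $\int_{M_\delta}f=-\int_{M\setminus M_\delta}f$. Cauchy--Schwarz then gives
\[
\vol(M_\delta)\bar f^2\le \frac{\vol(M\setminus M_\delta)}{\vol(M_\delta)}\int_{M\setminus M_\delta}f^2\le \frac{7\delta}{3-7\delta}\,\|f\|_{C^0}^2\cdot\frac{\vol(M\setminus M_\delta)}{\ldots}.
\]
More simply, it is at most $\frac{|P\setminus P_\delta|}{|P_\delta|}\int_M f^2\le \frac{7\delta}{3-7\delta}$. This produces the last term.

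\textbf{Step 2 (mass lost in the collar).} We need $\int_{M\setminus M_\delta}f^2\le \sqrt{7\delta/3}\,(3\lambda_{\min}/(2\mu)+1)$. By H\"older,
\[
\int_{M\setminus M_\delta}f^2\le \left(\tfrac{\vol(M\setminus M_\delta)}{\vol M}\right)^{1/2}\vol(M)^{1/2}\|f\|_{L^4}^2,
\]
so the task is to bound $\|f\|_{L^4}^2$ (normalised by volume) by $1+3\lambda_{\min}/(2\mu)$. I would obtain this from Bochner's formula with the Ricci lower bound $\mu$. Integrating
\[
\tfrac12\Delta|\nabla f|^2=-|\nabla^2 f|^2-\langle\nabla f,\nabla\Delta f\rangle-\Ric(\nabla f,\nabla f)
\]
against suitable weights, together with the identity $\int f^2|\nabla f|^2=\frac13\int\langle\nabla f^3,\nabla f\rangle=\frac{\lambda_{\min}}{3}\int f^4$, should give an inequality of the form $\int f^4\le \frac{1}{\vol M}\left(\int f^2\right)^2+\frac{3\lambda_{\min}}{2\mu}(\ldots)$. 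An alternative is a direct estimate: write $f^2=(f^2-\overline{f^2})+\overline{f^2}$ and bound the oscillating part by the Poincar\'e inequality with Lichnerowicz constant. Concretely, $\lambda_1\ge\frac{4}{3}\mu$ in dimension $4$ (Lichnerowicz gives $\frac{n}{n-1}\mu$), so
\[
\|f^2-\overline{f^2}\|_{L^2}^2\le \tfrac{3}{4\mu}\int|\nabla f^2|^2=\tfrac{3}{\mu}\int f^2|\nabla f|^2,
\]
and this combines with the cubic identity above to yield the factor $3\lambda_{\min}/(2\mu)$. I expect getting exactly the constants $\frac{3}{2\mu}$ and $+1$ to be the main obstacle. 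My first attempt would use this Lichnerowicz--Poincar\'e route, absorbing $\|f^2\|_{L^2}$ on both sides via $\sqrt{ab}\le$ or a linear bound after normalising.

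\textbf{Step 3 (conclusion).} Combining the two steps gives the first inequality \eqref{deltaenergy} whenever the denominator is positive. The second inequality \eqref{equation:lambda-min-lower-bound-final} is pure algebra: write $s=\sqrt{7\delta/3}$ and $c=7\delta/(3-7\delta)$. From $\lambda^\delta_{\min}(1-s-c-\tfrac{3s}{2\mu}\lambda_{\min})\le\lambda_{\min}$, we solve linearly for $\lambda_{\min}$. If the denominator in the first inequality is nonpositive, the second bound still holds, since its right-hand side is then at most $\lambda_{\min}$ by the same rearrangement.
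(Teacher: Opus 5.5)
Your overall plan is the paper's. You restrict the $L^2$-normalised invariant eigenfunction $f$ to $M_\delta$ and subtract its mean. You control the mean via $\int_M f=0$ and Cauchy--Schwarz; your bound $\frac{|P\setminus P_\delta|}{|P_\delta|}\le\frac{7\delta}{3-7\delta}$ is correct, since $P_\delta=(1-\delta)P$ has area $3(1-\delta)^2$. You control the collar mass by H\"older against $\|{f}_{L^4}^2$ and then rearrange; your Step 3 algebra is also fine.

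The gap is Step 2, the only genuinely analytic input. You need
\[
\|{f}_{L^4}^2\le \vol(M)^{-1/2}\Bigl(\|{f}_{L^2}^2+\tfrac{3}{2\mu}\|{\nabla f}_{L^2}^2\Bigr),
\]
and neither route you propose produces it. Carried out exactly as you describe, the Lichnerowicz--Poincar\'e route yields
\[
\int_M f^4-\frac{1}{\vol(M)}\le\frac{3}{\mu}\int_M f^2|\nabla f|^2=\frac{\lambda_{\min}}{\mu}\int_M f^4,
\]
that is, $\bigl(1-\lambda_{\min}/\mu\bigr)\int_M f^4\le\vol(M)^{-1}$. But Lichnerowicz itself gives $\lambda_{\min}\ge\lambda_1\ge\frac43\mu>\mu$. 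So the coefficient on the left is negative, the inequality says nothing, and there is no other term to absorb into. Using the larger invariant gap $\lambda_{\min}$ for the invariant function $f^2$ does not help: it gives the coefficient $1-\frac43<0$. In general this scheme closes only if $f^2-\overline{f^2}$ has a spectral gap exceeding $\frac43\lambda_{\min}$, which is not available. The Bochner alternative is only asserted (``should give''). The sharp constant $\frac{3}{2\mu}$ together with the additive $1$ is precisely the content of a nontrivial theorem, not something a quick weighted Bochner identity delivers.

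The missing ingredient is the sharp Sobolev inequality under a positive Ricci lower bound, due to Ilias (equivalently Bakry's Sobolev inequality under $\mathrm{CD}(\mu,n)$). It holds for every $u\in H^1(M)$, not only for eigenfunctions:
\[
\|{u}_{L^{2n/(n-2)}}^2\le\vol(M)^{-2/n}\Bigl(\|{u}_{L^2}^2+\frac{4(n-1)}{n(n-2)\mu}\|{\nabla u}_{L^2}^2\Bigr).
\]
For $n=4$ the constant is exactly $\frac{3}{2\mu}$, and this is what the paper invokes in the proof of \cref{efuncest}. With it,
\[
\int_{M\setminus M_\delta}f^2\le\sqrt{28\pi^2\delta}\,\|{f}_{L^4}^2\le\frac{\sqrt{28\pi^2\delta}}{2\sqrt3\,\pi}\Bigl(1+\frac{3\lambda_{\min}}{2\mu}\Bigr)=\sqrt{\tfrac{7\delta}{3}}\,\Bigl(1+\frac{3\lambda_{\min}}{2\mu}\Bigr),
\]
and the rest of your argument goes through unchanged.
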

The proof of this Theorem relies crucially on a few elementary observations. The first of these is a simple volume estimate.  
\begin{lemma}\label{basicvolume}
The Euclidean area of $P_{\delta}\subseteq \mathbb{R}^2$, denoted $A(P_{\delta})$, satisfies 
\begin{align*}
   3-7\delta\le A(P_{\delta})\le 3; 
\end{align*}
    the volume of $M_{\delta}$ satisfies 
    \begin{align*}
        12\pi^2-28\pi^2 \delta\le \vol(M_\delta)\le 12\pi^2. 
    \end{align*}
\end{lemma}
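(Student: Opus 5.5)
The plan is to compute the area of $P_\delta$ exactly as a function of $\delta$, compare it with the area $3$ of $P$, and then transfer the result to $M_\delta$ using the product structure of the volume form.

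\textbf{Area of $P_\delta$.} By construction $P_\delta=(1-\delta)P$: each vertex of $P_\delta$ is $(1-\delta)$ times the corresponding vertex of $P$ listed after \eqref{dP3polytope}. Area scales by the square of the factor, so
\[
A(P_\delta)=(1-\delta)^2A(P)=3(1-\delta)^2=3-6\delta+3\delta^2 .
\]
As a check, we can split the hexagon into six triangles with vertex at the origin and apply the shoelace formula; it gives the same value. The upper bound $A(P_\delta)\le 3$ is immediate, either from $P_\delta\subset P$ or from $(1-\delta)^2\le1$. For the lower bound, $3\delta^2\ge0$ gives $3-6\delta+3\delta^2\ge 3-6\delta\ge 3-7\delta$. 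This is a deliberately non-sharp constant, matching the $7\delta/3$ that appears in \cref{theorem:eigenvalue-inset-comparison}.

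\textbf{Volume of $M_\delta$.} By \eqref{volumeform}, on the principal part $dV_g=dx_1\,dx_2\,d\theta_1\,d\theta_2$. Since $M_\delta=P_\delta\times T^2$ with $T^2=(\mathbb{R}/2\pi\mathbb{Z})^2$ of coordinate volume $4\pi^2$, Fubini gives
\[
\vol(M_\delta)=4\pi^2A(P_\delta).
\]
Multiplying the area bounds by $4\pi^2$ yields $12\pi^2-28\pi^2\delta\le\vol(M_\delta)\le12\pi^2$. This normalisation of $T^2$ is consistent with the paper's earlier statement that the total volume is $12\pi^2$ for area $3$.

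\textbf{Expected difficulty.} The only point needing care is this normalisation of the torus (angles of period $2\pi$). No step is a genuine obstacle; the lemma is elementary.
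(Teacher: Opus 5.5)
Your proof is correct, but it reaches the area bound by a different route than the paper. The paper never computes $A(P_\delta)$. It bounds the area of $P\setminus P_\delta$ by six thin strips of width at most $\delta$ along the edges of $P$, whose total length is at most the perimeter $4+2\sqrt2\approx 6.83<7$. That covering is where the constant $7$ in the lemma, and in the later $7\delta/3$, actually comes from. You instead observe that $P_\delta=(1-\delta)P$ is a homothetic copy of $P$, which gives the exact value $A(P_\delta)=3(1-\delta)^2$ and hence the sharper bound $3-6\delta$.

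Your route is shorter and fully rigorous, and it avoids the slightly informal strip-covering step. Making that step precise takes a moment: the diagonal edges only move inward by $\delta/\sqrt2$, and the pieces of $P\setminus P_\delta$ are trapezoids rather than rectangles, each of area at most (edge length)$\times\delta$. The paper's route, on the other hand, does not rely on the homothety and would work for any inset polygon whose edges are pushed inward by at most $\delta$.

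The passage to $\vol(M_\delta)=4\pi^2A(P_\delta)$ via $dV_g=dx_1dx_2d\theta_1d\theta_2$ is the same in both arguments.
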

\begin{proof}
    Clearly the Euclidean area of the hexagon $P$ is $3$. The Euclidean area of the difference $P\setminus P_{\delta}$ is no greater than that of six thin strips with width $\delta$. The total Euclidean length of these six strips is no greater than the Euclidean length of the perimeter of the hexagon, which is $4+2\sqrt{2}$. 
    The result about the volume of $M_{\delta}$ then follows from the formula for the volume form $dV_g=dx_1dx_2d\theta_1d\theta_2$. 
\end{proof}
The second elementary observation concerns estimates of various integrals of $u$ and its gradient over $M_{\delta}$. The estimates are found by combining the definition of $\lambda_{\min}$, the previous volume estimates, and H\"older's inequality. 
\begin{lemma}\label{efuncest}
    Let $u \in H^1(M)^{T^2 \rtimes D_6}$ be an $L^2$-normalised mean-zero weak eigenfunction of $\Delta$ for the eigenvalue $\lambda_{\min}$, i.e. 
    \[
    \int_M |\nabla u|_g^2 dV_g
    =
    \lambda_{\min}
    \int_M u^2 dV_g
    =
    \lambda_{\min}.
    \]
    Then the restricted function $u:M_{\delta}\to \mathbb{R}$ satisfies 
    \begin{align*}
    \int_{M_{\delta}}\left|\nabla u\right|_{g}^2dV_g\le \lambda_{\min}, \qquad \int_{M_{\delta}}u^2 dV_g\ge 1-\sqrt{\frac{7\delta}{3}} \left(\frac{3\lambda_{\min}}{2\mu}+1\right), \qquad \left|\int_{M_{\delta}}udV_g\right|\le 2\pi \sqrt{7\delta}. 
    \end{align*}
\end{lemma}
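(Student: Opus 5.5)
The three bounds come from three separate elementary arguments. Throughout, write $M\setminus M_\delta$ for the complement and use \cref{basicvolume}, which gives $\vol(M\setminus M_\delta)\le 28\pi^2\delta$. Since $\vol(M)=12\pi^2$, this means $\vol(M\setminus M_\delta)/\vol(M)\le 7\delta/3$.

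\emph{Gradient bound.} This is immediate. The integrand $|\nabla u|_g^2$ is non-negative and $M_\delta\subseteq M$, so $\int_{M_\delta}|\nabla u|_g^2\,dV_g\le\int_M|\nabla u|_g^2\,dV_g=\lambda_{\min}$.

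\emph{Mean bound.} Since $u$ has mean zero on $M$, we have $\int_{M_\delta}u\,dV_g=-\int_{M\setminus M_\delta}u\,dV_g$. Cauchy--Schwarz together with $\|u\|_{L^2(M)}=1$ then gives
\[
\left|\int_{M_\delta}u\,dV_g\right|\le \vol(M\setminus M_\delta)^{1/2}\le \sqrt{28\pi^2\delta}=2\pi\sqrt{7\delta}.
\]

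\emph{Lower bound on $\int_{M_\delta}u^2$.} This is the main step. We write $\int_{M_\delta}u^2=1-\int_{M\setminus M_\delta}u^2$ and must bound the complement integral. Hölder's inequality applied to $u^2\cdot 1$ gives
\[
\int_{M\setminus M_\delta}u^2\,dV_g\le \vol(M\setminus M_\delta)^{1/2}\,\|u\|_{L^4(M)}^2 .
\]
To control the $L^4$ norm we use the assumed Ricci lower bound $\Ric\ge\mu g$ through the sharp Sobolev inequality on compact manifolds with positive Ricci curvature (Ilias; Bakry--Ledoux). In dimension $n=4$ with $\Ric\ge (n-1)K g$, this inequality reads
\[
\|f\|_{L^{4}}^2\le \vol(M)^{-1/2}\left(\frac{4}{n(n-2)K}\|\nabla f\|_{L^2}^2+\|f\|_{L^2}^2\right).
\]
Taking $K=\mu/3$ makes the coefficient equal to $\frac{4}{8\mu/3}=\frac{3}{2\mu}$.

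Applying this to $u$, with $\|\nabla u\|_{L^2}^2=\lambda_{\min}$ and $\|u\|_{L^2}=1$, gives $\|u\|_{L^4}^2\le \vol(M)^{-1/2}\bigl(\tfrac{3\lambda_{\min}}{2\mu}+1\bigr)$. Combining with the volume-ratio bound, we get
\[
\int_{M\setminus M_\delta}u^2\,dV_g\le \sqrt{\tfrac{7\delta}{3}}\left(\tfrac{3\lambda_{\min}}{2\mu}+1\right),
\]
which is the claimed estimate.

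\emph{Main obstacle.} The delicate point is justifying the Sobolev inequality with exactly the constant $3/(2\mu)$. It must be applied to an $H^1$ function with the volume normalisation as stated, and we need to check that the cited form is valid for a smooth compact Riemannian manifold satisfying only $\Ric\ge\mu g$, with no Einstein assumption. One subtlety is that $g$ is only $C^5$ across the $D_6$-reflection lines. However, the Bakry--Émery $\Gamma_2$ argument only uses the Bochner formula pointwise, so this regularity suffices. I would cite the inequality in its $\Gamma_2$/curvature-dimension form $CD(\mu,4)$, which $\Ric\ge\mu g$ on a $4$-manifold implies, so that no Einstein condition is needed.
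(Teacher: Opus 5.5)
Your proposal is correct and is essentially the paper's proof: the gradient bound from $M_\delta\subseteq M$, the mean bound by Cauchy--Schwarz on $M\setminus M_\delta$ together with $\int_M u\,dV_g=0$, and the mass bound by Hölder against $\|u\|_{L^4}^2$, which is controlled by Ilias's sharp Sobolev inequality (constant $\tfrac{3}{2\mu}$, normalisation $\vol(M)^{-1/2}$) combined with \cref{basicvolume}. The paper simply cites Ilias and has no regularity discussion; if you keep your remark, note that the paper asserts $C^5$ regularity for the potential $h$, not for the metric $g$, which involves second derivatives of $h$.
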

\begin{proof}
    The first inequality holds since $M_{\delta}\subset M$ and $u$ is $L^2$-normalised on $M$, i.e. $\int_{M}u^2 dV_g=1$.
Next, the version of the Sobolev embedding theorem given on the first page of \cite{Ilias1983} combined with Lemma \ref{basicvolume} gives 
\begin{align*}
    \left|\left|u\right|\right|_{L^4}^2&\le \frac{3\left|\left|\nabla u\right|\right|_{L^2}^2}{2\mu\sqrt{\vol(M)}}+\frac{\left|\left|u\right|\right|_{L^2}^2}{\sqrt{\vol(M)}}
    =\frac{1}{2\sqrt{3}\pi}\left(\frac{3\lambda_{\min}}{2\mu}+1\right).
\end{align*}

    Applying Hölder's inequality to $u^2\cdot 1_{M\setminus M_{\delta}}$ then gives
    \begin{align*}
        \left|\left|u\right|\right|^2_{L^2 (M\setminus M_{\delta})}&= || u^2\cdot 1_{M\setminus M_{\delta}}||_{L^1(M)}\\&\le \left|\left| u^2 \right|\right|_{L^2(M)} \sqrt{\vol(M\setminus M_{\delta})}\\
&=\left|\left|u\right|\right|^2_{L^4}\sqrt{\vol(M)-\vol(M_{\delta})} \\
        &\le \frac{1}{2\sqrt{3}\pi}\left(\frac{3\lambda_{\min}}{2\mu}+1\right)\sqrt{28\pi^2\delta} \\
        &=\sqrt{\frac{7\delta}{3}} \left(\frac{3\lambda_{\min}}{2\mu}+1\right).
    \end{align*}

 Thus, the second inequality holds. Finally, H\"olders inequality applied to $\left|u\right|\cdot 1_{M\setminus M_{\delta}}$  gives 
 \begin{align*}
     \left|\int_{M\setminus M_{\delta}}udV_g\right|
     \leq
     \left|\left|u\cdot 1_{M\setminus M_{\delta}}\right|\right|_{L^1(M)}\le \left|\left|u\right|\right|_{L^2(M)}\sqrt{28\pi^2 \delta}=2\pi \sqrt{7\delta};
 \end{align*}
 the final inequality thus follows from the fact that $\int_{M}udV_g=0$. 
\end{proof}

\begin{proof}[Proof of Theorem \ref{theorem:eigenvalue-inset-comparison}]
    Let $u$ be the normalised eigenfunction from Lemma \ref{efuncest} and define $\tilde{u}:M_{\delta}\to \mathbb{R}$ to be $\tilde{u}(x)=u(x)+c$, where $c$ is chosen so that $\tilde{u}$ has mean-zero over $M_{\delta}$. Clearly $\int_{M_{\delta}}\left|\nabla \tilde{u}\right|_{g}^2dV_g=\int_{M_{\delta}}\left|\nabla u\right|_{g}^2dV_g\le \lambda_{\min}$. Furthermore, $c^2=\frac{\left(\int_{M_{\delta}}udV_g\right)^2}{\vol(M_\delta)^2}$, which gives  
    \begin{align}
    \label{eqn:delta-comparison-proof-step}
    \begin{split}
        \int_{M_{\delta}}\tilde{u}^2 dV_g&=\int_{M_{\delta}}u^2 dV_g+2c \int_{M_{\delta}}u dV_g+c^2\vol(M_\delta)\\
        &\ge  1-\sqrt{\frac{7\delta}{3}} \left(\frac{3\lambda_{\min}}{2\mu}+1\right)+\frac{\left(\int_{M_{\delta}} u d \mu_g\right)^2}{\vol(M_\delta)}-2\frac{\left(\int_{M_{\delta}} u d \mu_g\right)^2}{\vol(M_\delta)}\\
        &= 1-\sqrt{\frac{7\delta}{3}} \left(\frac{3\lambda_{\min}}{2\mu}+1\right)-\frac{\left(\int_{M_{\delta}} u d \mu_g\right)^2}{\vol(M_\delta)}\\
        &\ge 1-\sqrt{\frac{7\delta}{3}} \left(\frac{3\lambda_{\min}}{2\mu}+1\right)-\frac{ 7\delta }{3-7\delta}.
    \end{split}
    \end{align}
    Thus, 
    \begin{align*}
         \inf_{u\in V' \setminus \{0\}} \frac{\int_{M_{\delta}}\left|\nabla u\right|_{g}^2dV_g}{\int_{M_{\delta}}u^2 dV_g}&\le \frac{\int_{M_{\delta}}\left|\nabla \tilde{u}\right|_{g}^2dV_g}{\int_{M_{\delta}}\tilde{u}^2 dV_g}
         \le \frac{\lambda_{\min}}{1-\sqrt{\frac{7\delta}{3}} \left(\frac{3\lambda_{\min}}{2\mu}+1\right)-\frac{7\delta}{3-7\delta}}.
         \qedhere
    \end{align*}

    The final claim follows from
    \begin{align*}
    &\lambda_{\min}^{\delta}
    \left(
    1-\sqrt{\frac{7\delta}{3}}
    \left(\frac{3\lambda_{\min}}{2\mu}+1\right)
    -\frac{7\delta}{3-7\delta}
    \right)
    \le
    \lambda_{\min}^{\delta}
    \int_{M_\delta}\tilde u^2\,dV_g
    \le
    \int_{M_\delta}|\nabla\tilde u|_g^2\,dV_g
    \le \lambda_{\min},
    \end{align*}
    where we used \cref{eqn:delta-comparison-proof-step} in the first step, the definition of $\lambda_{\min}^\delta$ as the infimum over functions including $\widetilde{u}$, and in the last step we used the inequality from the beginning of the proof.
    Expanding the left-hand side, collecting the terms containing
    $\lambda_{\min}$, and dividing by the resulting positive factor gives the claimed lower bound.
\end{proof}

\begin{corollary}
    \label{corollary:L-spectral-gap}
    Let $u$ be the Kähler potential from \cref{subsection:numerical-approach}.
    Then the minimum absolute value of $T^2 \rtimes D_6$-invariant eigenvalues of $\mathscr{L}$, say $|\lambda_{\min}(\mathscr{L})|$, is bounded from below by the value in \cref{tab:numerical-constants}.
\end{corollary}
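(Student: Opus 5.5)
The plan is to chain together the three steps of this subsection and then pass from $\Delta$ to $\mathscr{L}$ by a perturbation argument.

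\textbf{Step 1: a certified finite element eigenvalue.} I would fix a concrete small $\delta$ (comfortably below $3/7$) and a $\sigma$-invariant triangulation $\mathcal{T}$ of $Q_\delta$ that is refined near the outer edge. On each triangle $T$ I would choose $B_T$ as a scalar multiple $\alpha_T I$. Here $\alpha_T$ is a certified lower bound for the smallest eigenvalue of $A=(u^{ij})$ on $T$. It is obtained with the same interval-arithmetic machinery as in \cref{subsection:a-posteriori-geometric-bounds}, applied to $A^{ij}/D_0$. To get it, one bounds $\operatorname{tr} A$ and $\det A = 1/U = \delta\prod l_r$ on $T$ from both sides. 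Then $\lambda_{\min}(A)\ge \det A/\operatorname{tr}A$, or the exact smaller root of the characteristic polynomial, taken with interval bounds.

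\textbf{Step 2: from the finite element problem to $\lambda_{\min}$.} A rigorous eigenvalue enclosure (via \texttt{veigs}) gives a lower bound for $\lambda_{B,\mathcal{T},1}$. \Cref{corollary:Lius-constant-for-A-norm} then supplies $C(\mathcal{T})$, and \cref{theorem:liu-theorem} gives a lower bound for $\lambda^{D_6}_{A,1}(Q_\delta)$. By $D_6$-symmetry this equals $\lambda_{\min}^\delta$, since a $D_6$-invariant mean-zero function on $P_\delta$ restricts to a mean-zero element of $V$ with the same Rayleigh quotient. Next I need $\mu>0$, a lower Ricci bound for $g$. I would get it from the certified bound on $\|{\Ric(g)-g}_{C^0_g}$: that is, $\mu = 1-\|{\Ric(g)-g}_{C^0_g}$. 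Feeding $\lambda_{\min}^\delta$ and $\mu$ into \eqref{equation:lambda-min-lower-bound-final} of \cref{theorem:eigenvalue-inset-comparison} yields a certified lower bound $\lambda_{\min}\ge\Lambda$, and the aim is $\Lambda>4$. The competition on $\delta$ is explicit: the $\sqrt{7\delta/3}$ loss wants $\delta$ small, while the factor $\sqrt{\alpha_T}$ in $C(\mathcal{T})$ degenerates like $\sqrt{\delta}$ near the boundary. So triangles there must have $h(T)\lesssim\sqrt{\delta}$.

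\textbf{Step 3: from $\Delta$ to $\mathscr{L}$.} Write $\mathscr{L}=-\tfrac12\Delta+1+(e^F-1)$ on $T^2\rtimes D_6$-invariant functions. The operator $-\tfrac12\Delta+1$ is self-adjoint with spectrum contained in $\{1\}\cup(-\infty,1-\Lambda/2]$. The constant mode gives $1$, and the rest lies at or below $1-\Lambda/2<-1$. Hence its distance to zero is at least $\min(1,\Lambda/2-1)$. Multiplication by $e^F-1=-\mathscr{E}$ is a bounded self-adjoint perturbation with operator norm at most $\|{\mathscr{E}}_{C^0}$, which is already certified in \cref{subsection:a-posteriori-error}. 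By Weyl's inequality, or simply the min–max principle, every eigenvalue of $\mathscr{L}$ then satisfies
\[
|\lambda(\mathscr{L})|\ge \min\bigl(1,\tfrac{\Lambda}{2}-1\bigr)-\|{\mathscr{E}}_{C^0}.
\]
This is the value to be recorded in \cref{tab:numerical-constants}.

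\textbf{Main obstacle.} I expect the hardest part to be Steps 1–2 done jointly: making the certified finite element bound survive both Liu's correction and the $\delta$-loss while staying above $4$. The invariant eigenvalue is numerically about $6.3$, so there is some room, but the boundary degeneracy of $A$ forces a strongly graded mesh. First I would try $\delta\sim10^{-4}$, with a mesh graded geometrically toward the edge $x_1=1-\delta$, and check whether $\lambda_{B,\mathcal{T},1}/(1+\lambda_{B,\mathcal{T},1}C^2)$ stays near $6$.
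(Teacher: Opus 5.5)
Your proposal is correct and follows the paper's proof. The chain is the same: a certified lower bound for $\lambda^{D_6}_{A,1}(Q_\delta)$ from \cref{theorem:liu-theorem} and \cref{corollary:Lius-constant-for-A-norm}, transferred to $\lambda_{\min}>4$ via \eqref{equation:lambda-min-lower-bound-final} with $\mu=1-\|{\Ric(g)-g}_{C^0_g}$, and then a bounded self-adjoint perturbation by $e^F-1$ (the paper cites Kato's Theorem V.4.10 where you use Weyl/min--max), giving $|\lambda_{\min}(\mathscr{L})|\ge\min\{1,\tfrac12\lambda_{\min}-1\}-\|{\mathscr{E}}_{C^0}$.

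One practical caveat: your isotropic choice $B_T=\alpha_T I$ is legitimate but lossy, because $A=(u^{ij})$ is already anisotropic at the centre, with eigenvalue ratio exactly $3$ since $D_6$ acts non-orthogonally in the $x$-coordinates. So expect a certified bound noticeably below the paper's $6.28$ rather than ``near $6$''; this costs only margin, since the tabulated value needs just $\lambda_{\min}\ge 4$.
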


\begin{proof}
    Plugging the value for $C(\mathcal{T})$ from \cref{corollary:Lius-constant-for-A-norm} into \cref{equation:combined-fem-lower-bound} gives the lower bound for $\lambda^{D_6}_{A,1}$ recorded in \cref{table:quadratic-estimate-coefficients}.
    Plugging this into \cref{deltaenergy} gives the lower bound for $\lambda_{\min}$ provided by \cref{equation:combined-fem-lower-bound}.
    Here $\lambda_{\min}$ is the smallest positive
    $T^2\rtimes D_6$-invariant eigenvalue of $\Delta$.
    On the invariant $L^2$-space, we have
    \[
    \begin{aligned}
    \Spec(\Delta)
    &\subset \{0\}\cup[\lambda_{\min},\infty),\\
    \Spec\left(-\frac12\Delta+1\right)
    &\subset \{1\}\cup
    \left(-\infty,1-\frac12\lambda_{\min}\right].
    \end{aligned}
    \]
    Multiplication by $e^F-1$ is bounded and self-adjoint on this
    space, with operator norm at most
    $\|{e^F-1}_{C^0}=\|{\mathscr{E}}_{C^0}$.
    Thus \cite[Theorem V.4.10]{Kato1966} gives
    \[
    \Spec(\mathscr{L})
    \subset
    \left[1-\|{\mathscr{E}}_{C^0},1+\|{\mathscr{E}}_{C^0}\right]
    \cup
    \left(-\infty,
    1-\frac12\lambda_{\min}+\|{\mathscr{E}}_{C^0}\right].
    \]
    Consequently,
    \[
    |\lambda_{\min}(\mathscr{L})|
    \ge
    \min\left\{1,\frac12\lambda_{\min}-1\right\}
    -\|{\mathscr{E}}_{C^0}.
    \]
    Substituting the bounds from \cref{tab:numerical-constants}
    gives the claimed positive lower bound.
\end{proof}

\section{Perturbation}\label{perturb}

It is in this section that we will perturb the \emph{approximate solution} $(M,g,J,\omega_P)$ of the Einstein equation \cref{GRS} (more precisely, the complex Monge–Ampère equation \cref{equation:cx-Monge-Ampere}) that we constructed in \cref{approx} to a \emph{genuine solution}.
This is an application of the following simple consequence of the Banach fixed-point theorem.

\begin{theorem}
\label{theorem:fixed-point}
Let \(X,Y\) be Banach spaces, let \(u_0\in X\), and let \(r>0\). Let $B_r:=\{v\in X:\|{v}_X\le r\}$.
Let \(\mathscr{F}:u_0+B_r\to Y\) be a map admitting a decomposition $\mathscr{F}(u_0+v)=\mathscr{E}+\mathscr{L}v+\mathscr{N}(v)$ for $v \in B_r$,
where \(\mathscr{E} \in Y\), \(\mathscr{L}:X\to Y\) is an invertible linear operator and $\mathscr{N}(0)=0$.

Assume that there are constants \(\alpha_1,\alpha_2,\alpha_3\geq 0\) such that
\[
\|{\mathscr{E}}_{Y}\le \alpha_1,\qquad
\|{\mathscr{L}^{-1}}_{\mathcal{L}(Y,X)} \le \alpha_2,\qquad
\|{\mathscr{N}(v)-\mathscr{N}(w)}_{Y}
\le \alpha_3\bigl(\|{v}_{X}+\|{w}_{X}\bigr)\|{v-w}_{X}
\]
for all \(v,w\in B_r\). If
$\alpha_2 \alpha_1+\alpha_2 \alpha_3r^2\le r$,
$2\alpha_2 \alpha_3r<1$,
then there exists a unique \(v\in B_r\) such that
\[
\mathscr{F}(u_0+v)=0.
\]
Moreover, for every \(v^{(0)}\in B_r\), the iteration
\[
v^{(k+1)}
:=-\mathscr{L}^{-1}\bigl(\mathscr{E}+\mathscr{N}(v^{(k)})\bigr)
\]
converges in \(X\) to this solution.
\end{theorem}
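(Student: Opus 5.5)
The plan is to recast the equation $\mathscr{F}(u_0+v)=0$ as a fixed-point problem and apply the Banach fixed-point theorem on the closed ball $B_r$. Since $\mathscr{L}$ is invertible, $\mathscr{F}(u_0+v)=0$ is equivalent to $v=T(v)$, where
\[
T(v):=-\mathscr{L}^{-1}\bigl(\mathscr{E}+\mathscr{N}(v)\bigr), \qquad v\in B_r .
\]
This map $T$ is exactly the iteration map in the statement. So it suffices to show two things: that $T$ maps $B_r$ into itself, and that $T$ is a strict contraction on $B_r$. The closed ball $B_r$ is a complete metric space, being a closed subset of the Banach space $X$.

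For the self-map property, take $v\in B_r$. Using $\mathscr{N}(0)=0$ and the Lipschitz-type hypothesis with $w=0$, we get $\|{\mathscr{N}(v)}_Y\le \alpha_3\|{v}_X^2\le\alpha_3 r^2$. Hence
\[
\|{T(v)}_X\le \alpha_2\bigl(\alpha_1+\alpha_3 r^2\bigr)\le r
\]
by the first smallness assumption.

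For the contraction property, take $v,w\in B_r$. Then
\[
\|{T(v)-T(w)}_X\le \alpha_2\|{\mathscr{N}(v)-\mathscr{N}(w)}_Y\le \alpha_2\alpha_3\bigl(\|{v}_X+\|{w}_X\bigr)\|{v-w}_X\le 2\alpha_2\alpha_3 r\,\|{v-w}_X .
\]
The constant $q:=2\alpha_2\alpha_3 r$ is strictly less than $1$ by the second assumption, so $T$ is a contraction.

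Banach's theorem now gives a unique fixed point $v\in B_r$, and it also gives convergence of the iterates $v^{(k+1)}=T(v^{(k)})$ from any starting point $v^{(0)}\in B_r$. To conclude, observe that any solution of $\mathscr{F}(u_0+v)=0$ in $B_r$ is a fixed point of $T$, because we may apply $\mathscr{L}^{-1}$ to $\mathscr{E}+\mathscr{L}v+\mathscr{N}(v)=0$. This yields uniqueness of the solution within $B_r$.

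There is no real obstacle here. The only point needing care is to use $\mathscr{N}(0)=0$ when bounding $\|{\mathscr{N}(v)}_Y$, and to note that $B_r$ is complete.
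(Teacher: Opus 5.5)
Your proof is correct and is exactly the standard contraction-mapping argument the paper intends; the paper simply states this result as a simple consequence of the Banach fixed-point theorem and writes out no proof. As you show, $T(v)=-\mathscr{L}^{-1}(\mathscr{E}+\mathscr{N}(v))$ maps the complete ball $B_r$ into itself by the first smallness condition together with $\mathscr{N}(0)=0$, is a contraction with constant $2\alpha_2\alpha_3 r<1$, and has as its fixed points in $B_r$ precisely the zeros of $v\mapsto\mathscr{F}(u_0+v)$ there, since $\mathscr{L}$ is invertible.
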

Recall that the operator of interest $\mathscr{F}$ is defined in \cref{equation:cx-Monge-Ampere}. Furthermore, the decomposition of $\mathscr{F}$ required by \cref{theorem:fixed-point} has already been found with \cref{Taylordecomp}; it is conveniently broken down as $\mathscr{F}(\phi)=\mathscr{E}+\mathscr{L}(\phi)+\mathscr{N}(\phi)$,  
where
\begin{align*}
  \mathscr{E}
    &
    =
    1-e^F,
    \\
    \mathscr{L}\phi
    &
    =
    -\frac{1}{2} \Delta \phi + e^F \cdot \phi
    \text{ is a linear differential operator},
    \\
    \mathscr{N} \phi
    &
    =
    (i \partial \bar \partial \phi)^2/\omega_P^2
    -
    e^F \cdot (e^{-\phi}-1+\phi)
    =
    \frac12\star (i\partial\bar\partial \phi)^2
    -e^F(e^{-\phi}-1+\phi)
    \text{ are the higher order terms.}
\end{align*}

For reasons related to the dimension of the problem and the non-linear structure of this operator, we will choose 
\begin{align*}
    X=H^5(M,g)^{T^2 \rtimes D_6}, \qquad Y=H^3(M,g)^{T^2 \rtimes D_6}
\end{align*}
i.e. we are considering Sobolev spaces of $D_6$ and $T^2$-invariant scalar functions on $M$. Thus, the existence of a nearby toric K\"ahler-Einstein metric will follow by estimating the three numbers $\alpha_1,\alpha_2,\alpha_3$ (with norms evaluated with respect to the correct Banach spaces), and then showing that there is an $r>0$ for which $\alpha_2\alpha_1+\alpha_2\alpha_3 r^2\le r$, and $2\alpha_2\alpha_3r<1$. In this case, $r$ will tell us how close our approximate toric K\"ahler-Einstein metric is to the true solution.

\begin{proof}[Proof of \cref{maintheorem}]
    Choose the function $u:P\to \mathbb{R}$ from \cref{approx}, and consider the associated approximate K\"ahler-Einstein geometry $(M_P,g,\omega_P,J)$ described in \cref{othertoricKahler}. As we saw in \cref{subsection:einstein-condition-for-toric-kaehler} (especially  \cref{Taylordecomp}), the K\"ahler-Einstein condition for the perturbed K\"ahler geometry $(M_P,\omega,J)$ with $\omega=\omega_P+i\partial \overline{\partial \phi}$ is equivalent to 
    \[
0
=
\mathscr{E}+\mathscr{L}\phi+\mathscr{N}\phi.
\]

By \cref{H3aposteriorierror}, an appropriate choice for $\alpha_1$ is given in \cref{tab:numerical-constants}. By \cref{corollary:inj-estimate}, we can choose $\alpha_2$ to coincide with $\Cr{const:injectivity-estimate}=\left(
    \Cr{H^5-estimate-Laplace-term}
    +
    \frac{\Cr{H^5-estimate-u-term}}{\lambda_{\min}(\mathscr{L})}
    \right)$, after replacing $\lambda_{\min}(\mathscr{L})$ by its lower bound given in \cref{tab:numerical-constants}, as certified in \cref{corollary:L-spectral-gap}. Finally, for each $r>0$, we can choose $\alpha_3=\Cr{const:multiplication-theorem} \left[
        \frac{1}{2}+
        \left(
        1+
        \Cr{const:multiplication-theorem}\alpha_1
        \right)
        e^{2r\Cr{const:multiplication-theorem}
      }
        \right]$, which follows immediately from \cref{proposition:non-linear-estimate}. The existence and closeness of the required K\"ahler-Einstein metric then follows from applying \cref{theorem:fixed-point}, with the value of $r$ listed in \cref{tab:numerical-constants}.
\end{proof}

\section{Applications}\label{applications}

\subsection{Holomorphic sectional curvature}\label{HSC}

\cite[Conjecture 1.3]{broder2023some} suggests that if $\Ric >0$ for a Kähler manifold, then there exists a cohomologous metric with positive holomorphic sectional curvature.
We know by \cite{zhang2026positiveholomorphicsectionalcurvature} that this is the case for the third del Pezzo surface.
However, from general theory alone it is not known whether the Kähler-Einstein metric has positive sectional curvature.
In this section we prove that the sectional curvature is indeed \emph{not} positive everywhere.

We achieve this in first steps:
first, we prove a quantitative comparison theorem.
If the two Kähler metrics $g$ and $g^{KE}$ are close, then their holomorphic sectional curvatures are also close.
And closeness of the metrics is guaranteed by our construction \cref{maintheorem}.
Second, using certified numerics, we exhibit an explicit point at which the holomorphic sectional curvature with respect to $g$ is negative.
Taking both together, we deduce that also $g^{KE}$ must be negative near that point.
For computational reasons it is easier to certify that the integral of the curvature of $g^{KE}$ in a small open set is negative.
This implies that the curvature must be negative \emph{somewhere}, though we cannot say at which point exactly.

\subsubsection{Metric comparison lemmas}

Throughout the section, assume we are given Kähler forms $\omega_1,\omega_2$ with corresponding metrics $g_1,g_2$ on the same compact complex manifold $(M,J)$.
Furthermore, assume that $\omega_2=\omega_1+i \partial \bar \partial \varphi$ and 
\begin{align*}
    ||\varphi||_{H^5(M,g_1)}\le \varepsilon. 
\end{align*}
Later on, the role of $\omega_1$ will be played by $\omega_P$, and $\omega_2$ is the unknown true Kähler-Einstein metric for which we obtained the estimate for its Kähler potential from our application of the fixed point theorem.

The goal of this section is to compare the curvatures of $\omega_1$ and $\omega_2$, which will allow us to make statements about the curvature of the true Kähler-Einstein metric, even though it is only known implicitly.

\begin{lemma}
    \label{lemma:metric-comparison}
    For all $x \in M$ and $X,Y \in T_xM$ we have
    $|(g_2-g_1)(X,Y)| \leq \Cl{const:metric-difference}\varepsilon|X|_{g_1} |Y|_{g_1} := 4\Cr{const:emb-C4}
        \Cr{const:emb-C3}
        \Cr{const:emb-C2}
        \Cr{const:emb-C1}\varepsilon |X|_{g_1} |Y|_{g_1}$.
\end{lemma}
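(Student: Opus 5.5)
The plan is to write $g_2-g_1$ in terms of the Hessian of $\varphi$ and then bound $\nabla^2\varphi$ pointwise from $\|\varphi\|_{H^5}$ through a chain of Sobolev embeddings. The constant $4\Cr{const:emb-C4}\Cr{const:emb-C3}\Cr{const:emb-C2}\Cr{const:emb-C1}$ suggests this shape: four embedding constants and a numerical factor $4$ from the pointwise algebra.

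The first step is purely algebraic. Since $\omega_j(X,JY)=g_j(X,Y)$, we have
\[
(g_2-g_1)(X,Y)=(i\partial\bar\partial\varphi)(X,JY).
\]
Next we use $dd^c=-2i\partial\bar\partial$ and $d^c\varphi=d\varphi\circ J$, as in the proof of \cref{deldelbar}. Because $\nabla^{g_1}J=0$ and the Levi-Civita connection is torsion free, we get
\[
i\partial\bar\partial\varphi(X,Y)=-\tfrac12\bigl(\nabla^2\varphi(X,JY)-\nabla^2\varphi(Y,JX)\bigr).
\]
Substituting $JY$ for $Y$ gives
\[
(g_2-g_1)(X,Y)=\tfrac12\bigl(\nabla^2\varphi(X,Y)+\nabla^2\varphi(JX,JY)\bigr),
\]
so $|(g_2-g_1)(X,Y)|\le |\nabla^2\varphi|_{g_1}|X|_{g_1}|Y|_{g_1}$, using that $J$ is a $g_1$-isometry. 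If the signs or the Frobenius/operator-norm comparison are handled crudely, this only gives a factor like $2$ or $4$, which is presumably where the $4$ comes from. I would not try to sharpen it.

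The second step is to bound $\|\nabla^2\varphi\|_{C^0}$ by $\|\varphi\|_{H^5}$. The natural route in real dimension four is as follows:
\begin{itemize}
\item Apply the Kato inequality $|d|T||\le|\nabla T|$ to $T=\nabla^2\varphi$.
\item Use the embedding $H^3\hookrightarrow C^0$, itself assembled as $H^1\hookrightarrow L^4$ (the Ilias-type inequality already used in \cref{efuncest}, with constant depending on the Ricci lower bound $\mu$ and the volume), applied iteratively.
\end{itemize}
Concretely, I would chain the embeddings as $H^5\to W^{4,4}$ (constant $\Cr{const:emb-C1}$), then $W^{4,4}\to W^{3,8}$ or a similar step (constant $\Cr{const:emb-C2}$), then to $W^{2,p}$ with $p>4$ (constant $\Cr{const:emb-C3}$), and finally Morrey/$C^0$ (constant $\Cr{const:emb-C4}$). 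Each step applies a scalar Sobolev inequality to $|\nabla^k\varphi|$ via Kato, and absorbs lower-order terms using the curvature bounds from \cref{subsection:a-posteriori-geometric-bounds} when commuting derivatives. Presumably these constants are established in the appendix \ref{GAE} and can be cited.

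The main obstacle is making the final embedding into $C^0$ explicit with a computable constant on a manifold whose metric degenerates in coordinates near $\partial P$. Euclidean-coordinate arguments fail there, so one must use intrinsic tools: Sobolev inequalities depending only on $\mu$, the volume $12\pi^2$ and the diameter, or a Moser iteration applied to $|\nabla^2\varphi|$. If the appendix's embedding constants are available, the lemma follows by combining them with the algebraic identity above and the hypothesis $\|\varphi\|_{H^5(M,g_1)}\le\varepsilon$.
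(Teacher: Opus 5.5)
Your first step matches the paper: $(g_2-g_1)(X,Y)=(i\partial\bar\partial\varphi)(X,JY)=\tfrac12\bigl(\nabla^2\varphi(X,Y)+\nabla^2\varphi(JX,JY)\bigr)$. Since $J$ is a $g_1$-isometry, this gives $|(g_2-g_1)(X,Y)|\le|\nabla^2\varphi|_{g_1}|X|_{g_1}|Y|_{g_1}$ with factor exactly $1$. So the $4$ does not come from the pointwise algebra, and the substance of the lemma is the explicit bound $\|{\nabla^2\varphi}_{L^\infty}\le 4\Cr{const:emb-C4}\Cr{const:emb-C3}\Cr{const:emb-C2}\Cr{const:emb-C1}\|{\varphi}_{H^5}$.

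That bound is where your proposal has a genuine gap.
\begin{itemize}
\item Your chain $H^5\to W^{4,4}\to W^{3,8}\to W^{2,p}\to C^0$ loses one derivative at every arrow, including the Morrey step. It therefore ends with $C^0$ control of $\nabla\varphi$ only, not of $\nabla^2\varphi$.
\item The arrow $W^{4,4}\to W^{3,8}$ needs $W^{1,4}\hookrightarrow L^8$ in dimension $4$. This is the borderline case $q=n$, which \cref{SETp} excludes, so no explicit constant is available for it. Iterating $H^1\hookrightarrow L^4$, as you also suggest, runs into the same wall.
\item The constants do not play the roles you assign them. $\Cr{const:emb-C2}=V^{1/12}$ is only the H\"older constant for $L^4\subset L^3$ (\cref{proposition:L4-L3-embedding}), and $\Cr{const:emb-C3}$ is the constant for $W^{1,3}\hookrightarrow L^{12}$. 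Your assignment would not produce the stated product.
\item No curvature bounds or commutation of derivatives are needed.
\end{itemize}

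The missing idea is to step \emph{down} in integrability before stepping up. Apply the chain \cref{equation:L-infty-embedding} to the tensor $T=\nabla^2\varphi$, using Kato's inequality at each stage:
\[
\begin{aligned}
\|{T}_{L^\infty}
&\le
\Cr{const:emb-C4}\bigl(\|{T}_{L^{12}}+\|{\nabla T}_{L^{12}}\bigr)
\le
2\Cr{const:emb-C4}\Cr{const:emb-C3}\|{T}_{W^{2,3}}
\\
&\le
2\Cr{const:emb-C4}\Cr{const:emb-C3}\Cr{const:emb-C2}\|{T}_{W^{2,4}}
\le
4\Cr{const:emb-C4}\Cr{const:emb-C3}\Cr{const:emb-C2}\Cr{const:emb-C1}\|{T}_{H^3}.
\end{aligned}
\]
The H\"older step $W^{2,4}\to W^{2,3}$ costs no derivative and returns you to the subcritical range. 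There $W^{1,3}\hookrightarrow L^{12}$ holds by \cref{SETp}, and $12>4$ puts you in the Morrey range, so only three derivatives are lost in total. The two factors of $2$ come from regrouping norms: $\|{T}_{W^{1,3}}+\|{\nabla T}_{W^{1,3}}\le 2\|{T}_{W^{2,3}}$ and $\|{T}_{H^1}+\|{\nabla T}_{H^1}+\|{\nabla^2 T}_{H^1}\le 2\|{T}_{H^3}$. Since $\|{\nabla^2\varphi}_{H^3}\le\|{\varphi}_{H^5}\le\varepsilon$, this together with your first step gives the lemma with the stated constant.
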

\begin{proof}
   For any non-zero $X,Y$, 
    \begin{align}
    \label{equation:metric-difference-phi-identity}
        g_2(X,Y)-g_1(X,Y)
        =(i \partial \overline{\partial}\varphi)(X,JY)
        =\frac{1}{2} \left( \nabla^2 \varphi(X,Y)+\nabla^2 \varphi(JX,JY) \right)
    \end{align}
    where in the second step we used \cite[Exercise 7.50]{Ballmann2006}.
    Since $g_1$ is Kähler, the complex structure $J$ is an isometry.
    Then:
    \begin{align*}
        \frac{|g_2(X,Y)-g_1(X,Y)|}{|X|_{g_1} |Y|_{g_1}}
        \le ||\nabla^2_{g_1}\varphi||_{L^{\infty}(M,g_1)}
        \le 2
        \Cr{const:emb-C4}
        \Cr{const:emb-C3}
        \Cr{const:emb-C2}
        \Cr{const:emb-C1} ||\nabla^2_{g_1} \varphi ||_{L_3^2(M,g_1)}
        \le 2
        \Cr{const:emb-C4}
        \Cr{const:emb-C3}
        \Cr{const:emb-C2}
        \Cr{const:emb-C1}||\varphi ||_{L_5^2(M,g_1)},
    \end{align*}
    where in the second step we used \cref{equation:L-infty-embedding}.
    Here, the constants quietly depend on the metric $g_1$, and the inequality was stated for functions, but the same proof shows it for tensors and so we may apply it to the tensor $\nabla^2_{g_1} \varphi$ here.
    This proves the result.
\end{proof}

\begin{lemma}
\label{lemma:curvature-comparison}
    Let $R_i \in \Omega^{1,1}(\End(T^{1,0} M))$ be the Chern curvature operator of $g_i$ for $i \in \{1,2\}$.
    If $\Cr{const:metric-difference} \varepsilon<1$, then
    \[
    \|{\, |R_2-R_1|_{\text{op},g_1} \,}_{L^2(M,g_1)}
    \leq
    \frac{\varepsilon}{1-\Cr{const:metric-difference} \varepsilon}
    +
    \frac{\Cr{const:emb-C1}^2 \varepsilon^2}{(1-\Cr{const:metric-difference} \varepsilon)^2}.
    \]
\end{lemma}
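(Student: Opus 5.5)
The plan is to write the Chern curvature in holomorphic coordinates as $R_i=\bar\partial(h_i^{-1}\partial h_i)$, where $h_i$ is the Hermitian matrix of $g_i$ on $T^{1,0}M$. We then expand $R_2-R_1$ in terms of the difference tensor $S:=h_2-h_1=\partial\bar\partial\varphi$. Equivalently, and more invariantly, we use the Chern connection of $g_1$, $\nabla=\nabla^{g_1}$, and write $\nabla^{g_2}=\nabla+\Theta$, where $\Theta=h_2^{-1}\nabla h_2=h_2^{-1}\nabla S$ is a $(1,0)$-form with values in $\End(T^{1,0}M)$; here we used $\nabla h_1=0$. Then
\[
R_2-R_1=\bar\partial\Theta=\bar\partial\bigl(h_2^{-1}\nabla S\bigr)=h_2^{-1}\,\bar\partial\nabla S-h_2^{-1}(\bar\partial S)h_2^{-1}\wedge\nabla S,
\]
using $\bar\partial h_2^{-1}=-h_2^{-1}(\bar\partial h_2)h_2^{-1}$ and $\bar\partial h_2=\bar\partial S$ in $g_1$-normal frames. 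The first term is linear in $\varphi$ and carries four derivatives of $\varphi$; the second is quadratic, with three derivatives on each factor.

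\textbf{Step 1.} We control $h_2^{-1}$ in the $g_1$-operator norm. By \cref{lemma:metric-comparison}, $|S|_{\mathrm{op},g_1}\le \Cr{const:metric-difference}\varepsilon<1$ pointwise. A Neumann series then gives $|h_2^{-1}|_{\mathrm{op},g_1}\le (1-\Cr{const:metric-difference}\varepsilon)^{-1}$, which produces the denominators in the claimed bound.

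\textbf{Step 2.} We bound the linear term pointwise by $(1-\Cr{const:metric-difference}\varepsilon)^{-1}|\nabla^4\varphi|_{g_1}$, up to the normalisation implicit in the statement. Integrating, its $L^2$ norm is at most $\|{\nabla^4\varphi}_{L^2}/(1-\Cr{const:metric-difference}\varepsilon)\le \varepsilon/(1-\Cr{const:metric-difference}\varepsilon)$, since $\|{\varphi}_{H^5}\le\varepsilon$.

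\textbf{Step 3.} We bound the quadratic term pointwise by $(1-\Cr{const:metric-difference}\varepsilon)^{-2}|\nabla^3\varphi|^2$. Its $L^2$ norm is then $\|{\nabla^3\varphi}_{L^4}^2/(1-\Cr{const:metric-difference}\varepsilon)^2$. Here we invoke the Sobolev embedding constant $\Cr{const:emb-C1}$, which I take to be the $H^1\hookrightarrow L^4$ embedding from the appendix used in \cref{equation:L-infty-embedding}, applied to the tensor $\nabla^3\varphi$ as in the proof of \cref{lemma:metric-comparison}. This gives $\|{\nabla^3\varphi}_{L^4}\le\Cr{const:emb-C1}\|{\nabla^3\varphi}_{H^1}\le\Cr{const:emb-C1}\varepsilon$. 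Summing Steps 2 and 3 yields the claim.

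The main obstacle is the bookkeeping of norms and constants in these steps. First, we must check that the operator norm of a curvature-valued form, measured in $g_1$, is dominated by the Euclidean (Frobenius) norm of $\nabla^4\varphi$ and of $\nabla^3\varphi\otimes\nabla^3\varphi$ with constant $1$. This requires care with the factors of $2$ relating $\partial\bar\partial\varphi$ to $\nabla^2\varphi$, and with $(1,0)$-components versus real tensors. I would bound the complex components by the full real tensor norms, using that $J$ is $g_1$-parallel and isometric, and accept a possible constant if the normalisations disagree. Second, we must confirm that Kato's inequality, $|\nabla|T||\le|\nabla T|$, lets the scalar embedding of $\Cr{const:emb-C1}$ be applied to tensors.
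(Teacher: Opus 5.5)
Your proposal is correct and is essentially the paper's argument: the paper likewise writes the difference of Chern connections as $\Psi=g_2^{-1}\nabla^1 h$ with $h=g_2-g_1$ (your $\Theta$), and differentiates it in the anti-holomorphic direction to obtain the same linear-plus-quadratic identity for $R_2-R_1$. It then bounds $|g_2^{-1}|_{\mathrm{op},g_1}\le(1-\Cr{const:metric-difference}\varepsilon)^{-1}$ via \cref{lemma:metric-comparison}, estimates the linear term in $L^2$, and handles the quadratic term with the $H^1\hookrightarrow L^4$ embedding constant $\Cr{const:emb-C1}$ together with Kato's inequality. The normalisation issue you flag is settled by the identity $(g_2-g_1)(X,Y)=\tfrac12\bigl(\nabla^2\varphi(X,Y)+\nabla^2\varphi(JX,JY)\bigr)$ from the proof of \cref{lemma:metric-comparison}: since $J$ is $g_1$-parallel and orthogonal, $|\nabla^m h|_{g_1}\le|\nabla^{m+2}\varphi|_{g_1}$ with constant exactly $1$, and the operator norms of the complexified components are dominated by these full tensor norms, so no extra constant enters.
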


\begin{proof}
    Using indices for holomorphic coordinates, and writing $h=g_2-g_1$, we obtain:
    \begin{align*}
        (\nabla^1_i h)_{j \bar l}
        =
        (\nabla^1_i g_2)_{j \bar l}
        =
        Z_i(g_2(Z_j, \bar Z_l))
        -g_2(\nabla^1_{Z_i}Z_j, \bar Z_l)
        -g_2(Z_j, \nabla^1_{Z_i} \bar Z_l)
        =
        \partial_i \left( (g_2)_{j \bar l} \right)
        -
        \Gamma(g_1)^q_{ij} (g_2)_{q \bar l},
    \end{align*}
    where we used $\nabla^1 g_1=0$ in the first step;
    the Leibniz rule for covariant derivatives of tensors in the second step;
    $\nabla^1_{Z_i} Z_j=\Gamma(g_1)^q_{ij}Z_q$ by \cite[Eqn. 4.37]{Ballmann2006} and $\nabla^1_{Z_i} \bar Z_l=0$ by \cite[Eqn. 4.36]{Ballmann2006} in the last step.
    Defining $\Psi^k_{ij} := \Gamma(g_2)^k_{ij}-\Gamma(g_1)^k_{ij}$, we get:
    \[
    g_2^{k \bar l} \nabla^1_i h_{j \bar l}
    =
    g^{k \bar l}_2 (\partial_i (g_2) _{j \bar l} - \Gamma(g_1) ^q_{ij} (g_2) _{q \bar l} )
    =
    \Gamma(g_2)^k_{ij}-\Gamma(g_1)^q_{ij} \delta^k_q
    =
    \Gamma(g_2)^k_{ij}-\Gamma(g_1)^k_{ij}= \Psi^k_{ij},
    \]
    where we plugged in the above expression for $\nabla^1_i h_{j \bar l}$ in the first step,
    and in the second step we used $\Gamma(g)^k_{ij}=g^{k \bar l} \partial_i g_{j \bar l}$ from \cite[Eqn. 4.39]{Ballmann2006}.
    \cite[Eqn. 3.96]{Boucksom2013} gives 
    $(\nabla^2_{\bar b} \Psi)^k_{ip}
    =R(g_1)^k_{i \bar b p}-R(g_2)^k_{i \bar b p}$.
    Since $\Psi \in T^{1,0}M \otimes ((T^*)^{1,0}M)^{\otimes 2}$, i.e. only holomorphic indices, we have by \cite[Eqn. 4.36]{Ballmann2006} and the Leibniz rule for covariant derivatives of tensors $(\nabla^1_{\bar b} \Psi)^k_{ip}=\partial_{\bar b} \Psi^k_{ip} = (\nabla^2_{\bar b} \Psi)^k_{ip}$.
    Altogether:
    \begin{align*}
        (R(g_2)-R(g_1))^k_{i \bar b p}
        &=
        - (\nabla^1_{\bar b} \Psi)^k_{ip}
        =
        -(\nabla^1_{\bar b} g_2^{k \bar l})(\nabla^1_i h)_{p \bar l}
        -
        g^{k \bar l}_2(\nabla^1 _{\bar b} \nabla^1_i h)_{p \bar l},
    \end{align*}
    Now $g_2^{k \bar l}(g_2)_{r \bar l}=\delta^k_r$ and taking $\nabla^1_{\bar b}$ on both sides and multiplying by $g_2^{-1}$
    \[
    \nabla^1_{\bar b} g_2^{k \bar l}
    =
    -
    g_2^{k \bar q}
    (\nabla^1_{\bar b} g_2)_{r \bar q} g_2^{r \bar l}
    =
    -
    g_2^{k \bar q}
    (\nabla^1_{\bar b} h)_{r \bar q} g_2^{r \bar l}.
    \]
    Plugging this into the previous equation gives
    \begin{align}
    \label{eqn:curvature-difference-identity}
    (R(g_2)-R(g_1))^k_{i \bar b p}
    =
    g_2^{k \bar q} (\nabla^1_{\bar b} h)_{r \bar q} g_2^{r \bar l} (\nabla^1_i h)_{p \bar l}
    -
    g^{k \bar l}_2(\nabla^1_{\bar b} \nabla^1_i h)_{p \bar l}.
    \end{align}
    By \cref{lemma:metric-comparison}, $\|{ \,|h|_{\text{op},g_1}\,}_{L^\infty(M,g_1)} \leq \Cr{const:metric-difference} \varepsilon$, and hence $(1-\Cr{const:metric-difference} \varepsilon)g_1 \leq g_2 \leq (1+\Cr{const:metric-difference} \varepsilon)g_1$ as quadratic forms.
    Hence $|g_2^{-1}|_{\text{op},g_1} \leq \frac{1}{1-\Cr{const:metric-difference} \varepsilon}$ everywhere.
    Thus, \cref{eqn:curvature-difference-identity} gives the pointwise estimate $|R_2-R_1|_{\text{op},g_1}
    \leq
    \frac{|\nabla_1^2 h|_{g_1}}{1-\Cr{const:metric-difference} \varepsilon}
    +
    \frac{|\nabla_1 h|_{g_1}^2}{(1-\Cr{const:metric-difference} \varepsilon)^2}$ which yields:
    \[
    \|{\, |R_2-R_1|_{\text{op},g_1} \,}_{L^2}
    \leq
    \frac{\|{(\nabla^1)^2 h}_{L^2}}{1-\Cr{const:metric-difference} \varepsilon}
    +
    \frac{\|{\nabla^1 h}_{L^4}^2}{(1-\Cr{const:metric-difference} \varepsilon)^2}
    \leq
    \frac{\|{(\nabla^1)^2 h}_{L^2}}{1-\Cr{const:metric-difference} \varepsilon}
    +
    \Cr{const:emb-C1}^2
    \frac{\|{\nabla^1 h}_{L^2_1}^2}{(1-\Cr{const:metric-difference} \varepsilon)^2}
    \leq
    \frac{\|{\varphi}_{L^2_4}}{1-\Cr{const:metric-difference} \varepsilon}
    +
    \Cr{const:emb-C1}^2
    \frac{\|{\varphi}_{L^2_4}^2}{(1-\Cr{const:metric-difference} \varepsilon)^2},
    \]
    where in the last step we used \cref{equation:metric-difference-phi-identity}.
    This proves the claim.
\end{proof}

\begin{definition}
    \label{definition:HSC}
    Let $(M,g,J)$ be Kähler.
    For $p \in M$ and $0 \neq X \in T_p M$ the number
    \[
    H_p(X)
    :=
    \frac{\Riem (X,JX,JX,X)}{|X|^4}
    =
    \frac{g(R(X',X'')X',X'')}{|X|^4},
    \quad
    X'=\frac{1}{\sqrt{2}}(X-iJX) \in T^{1,0}M,
    X''=\frac{1}{\sqrt{2}}(X+iJX) \in T^{0,1}M
    \]
    is called \emph{holomorphic sectional curvature}.
    Here, $R \in \Omega^{1,1}(\End(T^{1,0}M))$ denotes the Chern curvature of $g$ and the identifications $X \mapsto X'$, $X \mapsto X''$ of the tangent bundle and holomorphic/anti-holomorphic tangent bundles where chosen to be isometries, compare with the usual factor $\frac{1}{2}$ in \cite[Eqn. 2.17]{Ballmann2006}.
\end{definition}

\begin{proposition}
\label{proposition:negative_HSC_at_point}
    Let $0\leq \eta < 1$ and $\rho \geq 0$ such that
    \[
    |(g_2-g_1)(X,Y)|
    \leq
    \eta |X|_{g_1} |Y|_{g_1} \text{ for all }
    X,Y \in TM 
    \quad\text{ and }\quad
    \|{\, |R_2-R_1|_{\text{op},g_1}\,}_{L^2(M,g_1)} \leq \rho.
    \]
    Let $U \subset M$ be open and $V \in \Gamma(U,TM)$ be a $g_1$-unit vector field.
    Set $Q_i(V) := \Riem_i(V,JV,JV,V)$.
    If
    \[
    \int_U Q_1(V) \vol_{g_1}
    <
    -\vol_{g_1}(U)^{1/2} (\eta \|{R_1}_{L^2(U,g_1)}+(1+\eta)\rho),
    \]
    then there exists $p \in U$ such that $H^{g_2}_p(V) < 0$.
\end{proposition}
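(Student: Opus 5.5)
The plan is to argue by contradiction. Suppose $H^{g_2}_p(V)\ge 0$ for every $p\in U$, i.e. $Q_2(V)\ge 0$ on $U$. Then
\[
\int_U Q_1(V)\,\vol_{g_1} \ge \int_U \bigl(Q_1(V)-Q_2(V)\bigr)\vol_{g_1}\ge -\int_U |Q_2(V)-Q_1(V)|\,\vol_{g_1},
\]
and it suffices to show pointwise on $U$ that
\[
|Q_2(V)-Q_1(V)|\le \eta |R_1|_{\text{op},g_1}+(1+\eta)|R_2-R_1|_{\text{op},g_1}.
\]
Integrating this over $U$ and applying Cauchy--Schwarz to each term gives $\int_U|\cdot|\le \vol_{g_1}(U)^{1/2}\bigl(\eta\|{R_1}_{L^2(U,g_1)}+(1+\eta)\rho\bigr)$. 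Here we extend the $L^2$-norm of $|R_2-R_1|_{\text{op},g_1}$ from $U$ to $M$ and bound it by $\rho$. We also interpret $\|{R_1}_{L^2(U,g_1)}$ as the $L^2$-norm of the operator norm, which is at most the full tensor norm. The result contradicts the hypothesis.

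For the pointwise bound, write both quantities through Definition~\ref{definition:HSC}. Since $J$ is the same for both metrics, the vectors $V'=\frac1{\sqrt2}(V-iJV)$ and $V''$ are the same for $g_1$ and $g_2$. We have
\[
Q_i(V)=g_i\bigl(R_i(V',V'')V',V''\bigr),
\]
where $g_i$ is extended complex bilinearly; note $Q_i(V)=|V|^4_{g_i}H^{g_i}(V)$, so its sign is that of $H^{g_2}$. Then split
\[
Q_2-Q_1=g_2\bigl((R_2-R_1)(V',V'')V',V''\bigr)+(g_2-g_1)\bigl(R_1(V',V'')V',V''\bigr).
\]
The second term is at most $\eta\,|R_1(V',V'')V'|_{g_1}|V''|_{g_1}\le \eta|R_1|_{\text{op},g_1}$, using the hypothesis on $g_2-g_1$ (extended sesquilinearly to complex vectors) and $|V'|_{g_1}=|V''|_{g_1}=|V|_{g_1}=1$. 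For the first term, use $|g_2(X,Y)|\le |g_1(X,Y)|+\eta|X|_{g_1}|Y|_{g_1}\le(1+\eta)|X|_{g_1}|Y|_{g_1}$, which gives the bound $(1+\eta)|R_2-R_1|_{\text{op},g_1}$.

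The main obstacle is bookkeeping rather than analysis. The operator norm $|\cdot|_{\text{op},g_1}$ of an element of $\Omega^{1,1}(\End T^{1,0}M)$ must be defined so that $|R(X,\bar Y)Z|_{g_1}\le |R|_{\text{op}}|X||Y||Z|$ for $g_1$-norms. The real-bilinear bound on $g_2-g_1$ must also transfer to the Hermitian extension on $T^{1,0}$, which may cost a factor of at most $2$ that has to be absorbed or checked against the normalisation in Definition~\ref{definition:HSC}. I would fix these conventions first. I would also verify that $\eta<1$ is only needed to keep $g_2$ positive, so that $H^{g_2}_p(V)$ is well defined with $|V|_{g_2}>0$.
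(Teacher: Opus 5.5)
Your proposal is correct and is essentially the paper's proof. The paper uses the same splitting $Q_2-Q_1=g_2\bigl((R_2-R_1)(V',V'')V',V''\bigr)+(g_2-g_1)\bigl(R_1(V',V'')V',V''\bigr)$, the same pointwise bound $\eta|R_1|+(1+\eta)|R_2-R_1|_{\text{op},g_1}$, and Cauchy--Schwarz over $U$; your contradiction framing is just the contrapositive of the paper's direct conclusion $\int_U Q_2(V)\,\vol_{g_1}<0$, and the factor-of-$2$ worry does not arise, because the complex-bilinear extension of a real symmetric form with $g_1$-operator norm at most $\eta$ still has norm at most $\eta$ with respect to the Hermitian extension of $g_1$.
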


\begin{proof}
    Let $V'$, $V''$ as in \cref{definition:HSC}, then:
    \begin{align*}
        |Q_2(V)-Q_1(V)|
        &\leq
        |(g_2-g_1) (R_1(V',V'')V',V'')|
        +|g_2((R_2-R_1)(V',V'')V',V'')|
        \\
        &\leq
        \eta |R_1|_{g_1}+(1+\eta)|R_2-R_1|_{\text{op},g_1}.
    \end{align*}
    Integration and Cauchy-Schwarz give
    \begin{align*}
        \int_U Q_2(V) \vol_{g_1}
        &\leq
        \int_U Q_1(V) \vol_{g_1}
        +\vol_{g_1}(U)^{1/2}
        \left(
        \eta \|{R_1}_{L^2(U,g_1)}+
        (1+\eta)\|{\,|R_2-R_1|_{\text{op},g_1}\,}_{L^2(U,g_1)}
        \right)
        \\
        &\leq
        \int_U Q_1(V) \vol_{g_1}
        +\vol_{g_1}(U)^{1/2}
        \left(
        \eta \|{R_1}_{L^2(U,g_1)}+
        (1+\eta)\rho
        \right)
        <0.
    \end{align*}
    Thus, $Q_2(V)<0$ at some point.
    Because $Q_2$ is the numerator in the definition of $H_2$ and the denominator is positive, this implies $H_2(V)<0$ at the same point.
\end{proof}

\subsubsection{A point with certified negative holomorphic sectional curvature}

We now exhibit a point and a direction in which the holomorphic sectional curvature with respect to the approximate Kähler-Einstein metric $g$ is negative.
We begin by writing the curvature in coordinates:

\begin{lemma}
    Let $(M,\omega, J)$ be a toric K\"ahler metric with symplectic potential $u$, $p\in M^0$ and $X \in TM$. Then, the holomorphic sectional curvature of $g$ in the direction $X$ at $p$ is 
    \[
    H(\xi)
    =
    -\frac{
    \displaystyle
    \sum_{a,b,i,j,k,l=1}^{2}
    u^{ai}u^{kb}
      u^{jl}_{ab}
    \,\xi^i\overline{\xi^j}\xi^k\overline{\xi^l}
    }{
    \displaystyle
    2\left(
    \sum_{r,s=1}^{2}u^{rs}\xi^r\overline{\xi^s}
    \right)^2
    }.
    \]
    where $X = \xi^i\frac{\partial}{\partial z_i} +\bar\xi^i\frac{\partial}{\partial \bar z_i}$, $\xi^i \in \mathbb{C}$.
    
\end{lemma}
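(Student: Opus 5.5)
We will compute everything in complex coordinates adapted to the torus action, where the metric and its curvature take a simple Hessian form, and then translate back into the symplectic variables $u$. On $M_P^o$ we pass from action-angle coordinates $(x,\theta)$ to complex coordinates $z_j=y_j+i\theta_j$, with $y=\nabla u(x)$ the Legendre dual variables. In these coordinates the Kähler potential is the Legendre transform $\phi(y)=x\cdot y-u(x)$, and it satisfies $\phi_{ij}(y)=u^{ij}(x)$, i.e. $\Hess_y\phi=(\Hess_x u)^{-1}$. Since $\phi$ depends only on $\operatorname{Re} z$, we have $g_{i\bar j}=\partial_i\partial_{\bar j}\phi=\frac14\phi_{ij}=\frac14 u^{ij}$. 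The factor $\frac14$ comes from the normalisation of $\partial_{z}$, and this normalisation has to be matched with the isometric identification $X\mapsto X'$ in \cref{definition:HSC}. I will fix it by checking that the denominator $|X|^4$ becomes $\bigl(\sum u^{rs}\xi^r\overline{\xi^s}\bigr)^2$ up to the stated constant. Concretely, with $X=\xi^i\partial_{z_i}+\bar\xi^i\partial_{\bar z_i}$ we get $|X|^2_g=2g_{i\bar j}\xi^i\bar\xi^j$.

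Next I will use the standard Kähler curvature formula
\[
R_{i\bar j k\bar l}=-\partial_k\partial_{\bar l}g_{i\bar j}+g^{p\bar q}\,\partial_k g_{i\bar q}\,\partial_{\bar l}g_{p\bar j}.
\]
Because all functions depend on $y$ only, every holomorphic or antiholomorphic derivative is $\frac12\partial_{y}$. This gives $R_{i\bar jk\bar l}$ as a combination of the terms $\phi_{ijkl}$ and $\phi^{pq}\phi_{ikq}\phi_{jlp}$, with appropriate powers of $\frac12$. The key step is to convert these $y$-derivatives into $x$-derivatives of $u^{ij}$ via the chain rule $\partial_{y_a}=u^{ab}\partial_{x_b}$.

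First, $\phi_{ijk}=\partial_{y_k}u^{ij}=u^{kb}\partial_{x_b}u^{ij}$. Differentiating once more gives
\[
\phi_{ijkl}=u^{la}\partial_{x_a}\bigl(u^{kb}u^{ij}_b\bigr)=u^{la}u^{kb}u^{ij}_{ab}+u^{la}(\partial_a u^{kb})u^{ij}_b .
\]
The second piece should cancel exactly against the quadratic term $\phi^{pq}\phi_{ikq}\phi_{jlp}$, using $\phi^{pq}=u_{pq}$ and $\partial_a u^{kb}=-u^{kc}u_{cda}u^{db}$. This cancellation is the main obstacle, because it requires careful bookkeeping of which indices are symmetrised after contraction with $\xi^i\bar\xi^j\xi^k\bar\xi^l$. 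Contraction with $\xi^i\xi^k$ and $\bar\xi^j\bar\xi^l$ symmetrises in $(i,k)$ and in $(j,l)$, so it suffices to establish the cancellation after this symmetrisation. I would try to verify it by writing both terms as $u^{la}u^{kb}u_{cd\,\cdot}\cdots$ and matching them. This is exactly the structure of Abreu's curvature formula, which I would use as a cross-check.

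After the cancellation, the contracted numerator equals a constant times $-\sum u^{ai}u^{kb}u^{jl}_{ab}\xi^i\bar\xi^j\xi^k\bar\xi^l$, after relabelling indices and using the symmetry of $u^{ij}$. Dividing by $|X|^4$ with the denominator computed above then yields the stated formula. The overall constant $-\frac12$ will be fixed by a consistency check, for instance against the Fubini–Study or $\mathbb{CP}^1\times\mathbb{CP}^1$ symplectic potential, where the holomorphic sectional curvature is known.
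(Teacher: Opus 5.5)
There is a genuine gap, and it sits exactly at the step you flag as the main obstacle. You expand $\phi_{ijkl}=u^{la}\partial_a(u^{kb}u^{ij}_b)$, so the differentiated factor $u^{ij}$ carries one holomorphic and one antiholomorphic index. Using $u^{la}\partial_{x_a}=\partial_{y_l}$ and $u^{ij}_b=u_{bc}\phi_{ijc}$, the second piece becomes
\[
u^{la}(\partial_a u^{kb})\,u^{ij}_b=\phi^{bc}\phi_{klb}\phi_{ijc}.
\]
This pairs $(k,l)$ with $(i,j)$. The quadratic curvature term $\phi^{pq}\phi_{ikq}\phi_{jlp}$ instead pairs $(i,k)$ with $(j,l)$.

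Symmetrising in $(i,k)$ and $(j,l)$ does not repair this. After contraction with $\xi^i\overline{\xi^j}\xi^k\overline{\xi^l}$ the two terms become $\phi^{bc}A_bA_c$, with $A_b=\phi_{klb}\xi^k\overline{\xi^l}$, and $\phi^{pq}B_q\overline{B_p}$, with $B_q=\phi_{ikq}\xi^i\xi^k$. These agree when $\xi$ is a complex multiple of a real vector, but not in general. For example, take $\phi^{pq}=\delta^{pq}$, let $\phi_{112}=1$ be the only nonzero cubic coefficient up to symmetry, and let $\xi=(1,i)$: you get $1$ versus $5$.

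For the same reason, your leftover term $u^{la}u^{kb}u^{ij}_{ab}$ contracts its differentiated factor against $\xi^i\overline{\xi^j}$. The statement's numerator contracts $u^{jl}_{ab}$ against $\overline{\xi^j}\,\overline{\xi^l}$, and no relabelling turns one into the other. As written, your argument only proves the formula for real $\xi$ (the case used later with $\xi=(1,1)$), not the lemma.

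The repair is to use the total symmetry of $\phi_{ijkl}$ and expand it as $\partial_{y_i}\partial_{y_k}\phi_{jl}=u^{ia}\partial_a(u^{kb}u^{jl}_b)$ instead. The second piece is then $u^{ia}(\partial_a u^{kb})u^{jl}_b=\phi^{bc}\phi_{ikb}\phi_{jlc}$, which is exactly the quadratic term; this uses only the total symmetry of $\phi_{abc}=u^{ad}\partial_d u^{bc}$. You then get $R_{i\bar jk\bar l}=-\tfrac{c}{4}u^{ia}u^{kb}u^{jl}_{ab}$ when $g_{i\bar j}=c\,u^{ij}$, with no symmetrisation needed. The paper avoids the quadratic term altogether: it computes $\Gamma^\alpha_{\beta\delta}=\tfrac12u^{\delta\beta}_\alpha$, takes $R_{\beta\bar\gamma\delta}{}^{\alpha}=-\partial_{\bar\gamma}\Gamma^\alpha_{\beta\delta}$, and lowers the index with $g_{\alpha\bar\epsilon}$ at the end.

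Your constant is also wrong. From \eqref{toricKahlermetric},
\[
g(\partial_{z_i},\partial_{\bar z_j})=\tfrac14\bigl(g(\partial_{y_i},\partial_{y_j})+g(\partial_{\theta_i},\partial_{\theta_j})\bigr)=\tfrac12u^{ij},
\]
equivalently $\omega_P=2i\partial\bar\partial\phi$, not $\tfrac14u^{ij}$. With $c=\tfrac14$ you would get twice the stated value.

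With $c=\tfrac12$, write $X=Z+\bar Z$ with $Z=\xi^i\partial_{z_i}$, let $N$ be the stated numerator sum, and let $S=\sum u^{rs}\xi^r\overline{\xi^s}$. Then \cref{definition:HSC} gives
\[
H=\frac{g(R(Z,\bar Z)Z,\bar Z)}{g(Z,\bar Z)^2}=\frac{-\tfrac18N}{\tfrac14S^2}=-\frac{N}{2S^2}.
\]
So the constant follows directly, and no Fubini--Study consistency check is needed.
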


\begin{proof}
    Since $X \in TM$, there exists $Z \in T^{1,0}M$ such that $X = Z + \bar Z$. Substituting this into the definition of holomorphic sectional curvature (\ref{definition:HSC}) implies $H_p(X) = \frac{\langle R(Z,\bar Z) Z, \bar Z\rangle}{(\langle Z,\bar Z\rangle)^2}$. The formula follows from equation \eqref{eq:Riemannian_coordinates} and $g_{i\bar j} = \frac{1}{2}u^{ij}$.
\end{proof}

We first use the approximate metric $g=g_1$ to locate a candidate.
A beam search tests real constant torus directions represented by
primitive integer pairs and refines the most promising
cell--direction pairs. These preliminary evaluations are used 
to select a direction and region. The final proof is a separate
interval calculation. The selected candidate and search
parameters are recorded in \cref{Data}.

For the rigorous calculation, write $u^{ij}=A^{ij}/D_0$ as in \eqref{inverse_metric} and
$u^{jl}_{ab}=N^{jl}_{ab}/D_0^3$, using the quotient
numerators constructed in
Subsection~\ref{subsection:a-posteriori-geometric-bounds}. For the
fixed real direction $\xi$, define
\begin{equation}
\label{eq:hsc-coefficient-contractions}
\begin{aligned}
S&=\sum_{i,j=1}^{2}A^{ij}\xi^i\xi^j,
&
W_a&=\sum_{i=1}^{2}A^{ai}\xi^i,\\
H_{ab}&=\sum_{j,l=1}^{2}N^{jl}_{ab}\xi^j\xi^l,
&
P&=-\frac12\sum_{a,b=1}^{2}W_aW_bH_{ab},
\qquad
Q_1(V)=\frac{P}{D_0^3S^2}.
\end{aligned}
\end{equation}
Here $V$ is the $g_1$-unit normalization of the constant direction
$\xi$, and $S/D_0$ is its unnormalised squared length. The verifier
proves $D_0>0$ and $S>0$ on every integration box, so all
denominators in \eqref{eq:hsc-coefficient-contractions} are strictly
positive.

The base region is subdivided into rectangles. On each
rectangle, directed-rounding interval arithmetic encloses $Q_1(V)$
and $|\Riem(g_1)|^2$. These enclosures are multiplied by the exact
box areas, summed, and multiplied by a rigorous enclosure of the
torus factor $4\pi^2$. Denote the resulting quantities by
$I_Q=\int_UQ_1(V)\,dV_{g_1}$ and
$R_U=(\int_U|\Riem(g_1)|^2\,dV_{g_1})^{1/2}$. The program then
encloses
\begin{equation}
\label{eq:hsc-certificate-margin}
\mathcal M
=
I_Q+\operatorname{vol}_{g_1}(U)^{1/2}
\bigl(\eta R_U+(1+\eta)\rho\bigr),
\end{equation}
where $\eta$ and $\rho$ are the a posteriori comparison constants
obtained from the fixed-point verification. Certification requires
the upper endpoint of the interval for $\mathcal M$ to be strictly
negative. Proposition~\ref{proposition:negative_HSC_at_point} then
implies that the exact K\"ahler--Einstein metric has negative
holomorphic sectional curvature at some point of $U$.

Figure~\ref{fig:negative-hsc-region} shows the selected region and
the numerical profile of $Q_1(V)$.

\begin{figure}[t]
   \centering
   \includegraphics[width=0.6\linewidth]
       {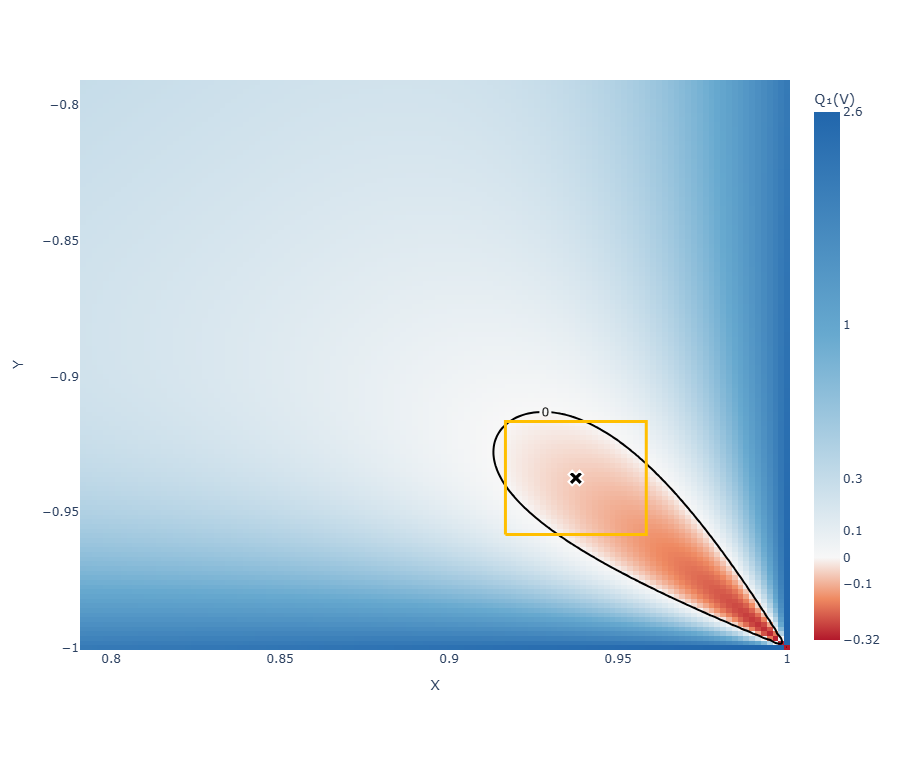}
   \caption{Numerical plot of $Q_1(V)$ for $\xi=(1,1)$. The black
   contour is its zero set, the marked point is the beam-search seed,
   and the highlighted rectangle is the region used in the rigorous
   interval certification. The coordinates $x$ and $y$ are coordinates on the hexagon.}
   \label{fig:negative-hsc-region}
\end{figure}

\subsection{First eigenvalue}\label{Firsteigenvalueestimate}

In \cref{subsection:non-sharp-eigenvalue-bound} we computed a rigorous lower bound for the first $T^2 \rtimes D_6$-invariant eigenvalue of the Laplace operator, $\lambda_{\min}$, with respect to the approximate Kähler-Einstein metric $(M_P,\omega_P,J)$.
In this section we will first compute a rigorous \emph{upper} bound for $\lambda_{\min}$.
Second, we will convert both bounds into rigorous bounds for the first $T^2 \rtimes D_6$-invariant eigenvalue with respect to the genuine Kähler-Einstein metric $(M_P,\omega^{KE},J)$, say $\lambda_{\min}^{KE}$.

\begin{lemma}\label{approxmineigenvalue}
    The first $T^2 \rtimes D_6$-invariant eigenvalue $\lambda_{\min}$ of $(M_P,\omega_P,J)$ satisfies the bound recorded in \cref{tab:numerical-constants}.
\end{lemma}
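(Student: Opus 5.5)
The bound recorded in \cref{tab:numerical-constants} for $\lambda_{\min}$ is two-sided, and I would prove the two sides separately. The lower bound reuses the machinery of \cref{subsection:non-sharp-eigenvalue-bound}. The upper bound is a direct Rayleigh--Ritz computation with an explicit test function.

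\emph{Lower bound.} First I fix $\delta$ and a triangulation $\mathcal{T}$ of $Q_\delta$ whose boundary edges are invariant under $\sigma$. The triangles are refined strongly towards the hexagon edge, where $u^{ij}$ degenerates.

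On each triangle $T$ I certify, by interval arithmetic, a positive definite matrix $B_T$ with $A(x)=u^{ij}(x)\succeq B_T$ on $T$, together with a number $\alpha_T\le\lambda_{\min}(B_T)$. The natural choice is $B_T=\alpha_T I$, where $\alpha_T$ is a certified lower bound on the smallest eigenvalue of $u^{ij}$ over $T$. This uses the factorisation $u^{ij}=A^{ij}/D_0$ from \cref{subsection:a-posteriori-geometric-bounds}, so that the cancellation near $\partial P$ is built in.

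Next I assemble the Crouzeix--Raviart stiffness and mass matrices. I enclose $\lambda_{B,\mathcal{T},1}$ rigorously with \texttt{veigs}, taking the second eigenvalue on $\operatorname{CR}(Q_\delta,\mathcal{T})^{D_6}$ as in the remark. I then evaluate $C(\mathcal{T})$ from \cref{corollary:Lius-constant-for-A-norm}, and \cref{theorem:liu-theorem} gives a lower bound for $\lambda^{D_6}_{A,1}(Q_\delta)=\lambda^\delta_{\min}$.

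Finally I insert this into \eqref{equation:lambda-min-lower-bound-final}. This needs a certified Ricci lower bound $\mu$, which I obtain from the bound on $\|\Ric(g)-g\|_{C^0_g}$ in \cref{subsection:a-posteriori-geometric-bounds}, taking $\mu=1-\|\Ric(g)-g\|_{C^0}$. I also check that $\delta<3/7$ and that the denominator is positive.

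\emph{Upper bound.} By \eqref{Qformulation}, any mean-zero $D_6$-invariant test function $f$ gives $\lambda_{\min}\le \int_Q|\nabla f|_g^2/\int_Q f^2$. I would take $f$ to be a low-degree $D_6$-invariant polynomial in $I_2=x_1^2+x_1x_2+x_2^2$ and $I_6$, with its constant fixed so that the mean is zero. The remaining coefficients are chosen by a small numerical Rayleigh--Ritz optimisation, then rationalised.

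With $f$ fixed, I bound the two integrals rigorously. The denominator is an exact polynomial integral over the triangle $Q$. The numerator $\int_Q u^{ij}f_if_j$ is enclosed by subdividing $Q$ into boxes, enclosing $A^{ij}f_if_j/D_0$ with interval arithmetic on each box, and summing the enclosures weighted by area.

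\emph{Main obstacle.} I expect the hardest step to be the lower bound near $\partial P$. There $\alpha_T\to0$, which inflates $C(\mathcal{T})$, so $\delta$ and the mesh must be balanced. $\delta$ has to be small so that the $\sqrt{7\delta/3}$ losses in \eqref{equation:lambda-min-lower-bound-final} stay negligible, while the elements must be small enough that $h(T)/\sqrt{\alpha_T}$ stays controlled. Since $u^{ij}$ vanishes only linearly in the distance to the facet, I would try a graded mesh with $h(T)$ proportional to the distance to the edge of $P$. This keeps $h(T)/\sqrt{\alpha_T}$ of order $\sqrt{h(T)}$, so the resulting $C(\mathcal{T})$ is small.
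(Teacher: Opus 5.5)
Your upper bound is essentially the paper's whole proof of this lemma. The paper uses only $f_0=x_1^2+x_1x_2+x_2^2-\frac{5}{12}$, the one-term case of your ansatz, which already gives $6.5976\ldots$. For the lower endpoint the paper simply cites \cref{subsection:non-sharp-eigenvalue-bound}, whose pipeline you reproduce. The step that fails is your concrete choice $B_T=\alpha_T I$: it cannot reach the recorded lower bound $6.2366\ldots$.

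Whatever mesh you use, the certified quantity $\lambda_{B,\mathcal T,1}/(1+\lambda_{B,\mathcal T,1}C(\mathcal T)^2)$ never exceeds $\lambda_{B,1}$. With $B_T=\alpha_T I\preceq\beta(x)I$ on $T$, where $\beta(x)=\lambda_{\min}(u^{ij}(x))$, this is at most the first invariant eigenvalue of $-\nabla\cdot(\beta\nabla)$, not of $-\nabla\cdot(u^{ij}\nabla)$. In the Euclidean coordinates of $P$ these two operators are far apart. $P$ is only an affine image of a regular hexagon, and $D_6$ acts by non-orthogonal matrices such as $R_2$. Invariance therefore forces $\Hess u(0)$ to be a multiple of the form of $x_1^2+x_1x_2+x_2^2$, hence $u^{ij}(0)=c\begin{pmatrix}2&-1\\-1&2\end{pmatrix}$, with eigenvalue ratio $3$; the anisotropy does not disappear away from the centre.

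With $A$ frozen at this central value you get $\nabla f_0\cdot A\nabla f_0=6c\,(x_1^2+x_1x_2+x_2^2)$, while $\lambda_{\min}(A)|\nabla f_0|^2=c\,(5x_1^2+8x_1x_2+5x_2^2)$. Since $\int_P x_1^2=\int_P x_2^2=-2\int_P x_1x_2$, the isotropic minorant keeps only $12/18=2/3$ of the Dirichlet energy of this near-eigenfunction. So $\lambda_{B,1}$ lands far below the roughly $6.28$ you need for $\lambda^{D_6}_{A,1}(Q_\delta)$; the table's $6.2808$ is what the $\delta$-comparison turns into $6.2366$, and the true value is only about one percent higher. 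No refinement of $\mathcal T$ or of $\delta$ recovers this loss. Your main-obstacle discussion, which only tracks $\alpha_T\to 0$ at the boundary, does not see it.

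The repair is to let $B_T$ follow the anisotropy of $A$ on each triangle. For example, take $B_T=A(x_T)-\epsilon_T I$, with $\epsilon_T$ a certified bound for $|A(x)-A(x_T)|_{\mathrm{op}}$ on $T$, or take $B_T=(1-\theta_T)A(x_T)$. Then use $\alpha_T=\lambda_{\min}(B_T)$ only inside $C(\mathcal T)$. This is exactly the freedom \cref{theorem:liu-theorem} allows, and your graded-mesh treatment of $C(\mathcal T)$ carries over unchanged.
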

\begin{proof}
We have the Rayleigh-Ritz characterisation of $\lambda_{\min}$, namely
\[
\lambda_{\min}
=
\inf_{f}
\frac{\int_{M} |\nabla f|_{\omega_P}^2dV_g}{\int_M f^2 dV_g},
\]
where the infimum is taken over smooth and $T^2 \rtimes D_6$-invariant functions $f$ that have mean zero, but are not identically zero. 
In particular, any such function $f$ can be used to find an upper bound for this infimum.
Letting $q=x_1^2+x_1x_2+x_2^2$ and $f_0=q-\frac{5}{12}$, we have
\[
\int_M f_0 =
3 \cdot 4\pi^2 \int_{[0,1] \times [-1,0]} f_0 dx_1dx_2=0,
\quad
\int_M f_0^2 dV_g = 
3 \cdot 4\pi^2 \int_{[0,1] \times [-1,0]} f_0^2dx_1dx_2=
3 \cdot 4\pi^2 \cdot \frac{43}{720}.
\]
Here, the factor $3$ appears because the area of the moment polytope is three times the area of the square $[0,1] \times [-1,0]$, and the factor $4 \pi^2$ follows from the formula for the volume form $dV_g$ given in \cref{volumeform}. 
We estimate the integral of $|\nabla f_0|_{\omega_P}^2$ over $M$ using the formula 
\begin{align*}
    \int_{M} |\nabla f_0|^2_{g}dV_g=12\pi^2\int_{[0,1]\times [-1,0]} u^{ij}\frac{\partial f_0}{\partial x_i}\frac{\partial f_0}{\partial x_j}dx_1dx_2,
\end{align*} combined with straightforward interval arithmetic. 
\end{proof}

In \cref{HSC}, we showed that the approximate Kähler-Einstein metric and the true Kähler-Einstein metric are close in the $C^0$ topology. 
In this subsection, we show how this also implies closeness of the first eigenvalues.
A similar estimate was used in the proof of \cref{corollary:L-spectral-gap}.

\begin{proposition}\label{proposition:eigenvalue-comparison}
    If $\Cr{const:metric-difference}\varepsilon<1$, then
    \[
    \frac{(1-\Cr{const:metric-difference}\varepsilon)^2}{(1+\Cr{const:metric-difference}\varepsilon)^3}
    \lambda_{\min}
    \leq
    \lambda_{\min}^{KE}
    \leq
    \frac{(1+\Cr{const:metric-difference}\varepsilon)^2}{(1-\Cr{const:metric-difference}\varepsilon)^3}
    \lambda_{\min}.
    \]
\end{proposition}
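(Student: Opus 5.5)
The plan is to compare the two Rayleigh--Ritz quotients directly, using the pointwise two-sided comparison of the metrics. Set $\eta:=\Cr{const:metric-difference}\varepsilon<1$, $g_1=g$ (the approximate metric) and $g_2=g^{KE}$. By \cref{lemma:metric-comparison} we have
\[
(1-\eta)\,g_1\le g_2\le (1+\eta)\,g_1
\]
as quadratic forms on every tangent space. The two metrics are compared on the same complex manifold $(M_P,J)$, so the admissible test functions are the same space: smooth functions that are invariant under both $T^2$ and $D_6$. The $T^2$-action is by biholomorphisms preserving the Kähler class, and $\varphi$ is $T^2\rtimes D_6$-invariant because it lies in $X$. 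Both metrics are therefore invariant under this group.

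\textbf{Step 1: comparison of inverse metrics and volume forms.} From the quadratic form inequality we get $(1+\eta)^{-1}g_1^{-1}\le g_2^{-1}\le(1-\eta)^{-1}g_1^{-1}$, so for any function $f$,
\[
\frac{|df|_{g_1}^2}{1+\eta}\le |df|_{g_2}^2\le \frac{|df|_{g_1}^2}{1-\eta}.
\]
For the volume forms, note that on a real $4$-manifold $dV_{g_2}/dV_{g_1}=\sqrt{\det(g_1^{-1}g_2)}$, and every eigenvalue of $g_1^{-1}g_2$ lies in $[1-\eta,1+\eta]$. Hence
\[
(1-\eta)^2\le \frac{dV_{g_2}}{dV_{g_1}}\le (1+\eta)^2.
\]
Because both metrics are Kähler of real dimension $4$, the eigenvalues come in $J$-pairs, which gives the same bounds. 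This is consistent with the exponent $2$ appearing in the statement. Alternatively, since $[\omega_1]=[\omega_2]$, the total volumes agree, but a pointwise bound is what is needed here.

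\textbf{Step 2: handling the mean-zero constraint.} The mean-zero condition depends on the volume form, so a $g_1$-mean-zero $f$ need not have $g_2$-mean zero. I would avoid this issue with the standard device: for $f\perp 1$ in $L^2(g_2)$,
\[
\int f^2\,dV_{g_2}=\min_c\int (f-c)^2\,dV_{g_2}.
\]
Equivalently, I would use the min-max characterisation of $\lambda_{\min}$ as the second eigenvalue, taken over $2$-dimensional invariant subspaces. Take a test space spanned by $1$ and an invariant $f$. For any $c$,
\[
\int|df|_{g_2}^2\,dV_{g_2}\le \frac{(1+\eta)^2}{1-\eta}\int|df|_{g_1}^2\,dV_{g_1},
\qquad
\int(f-c)^2\,dV_{g_2}\ge (1-\eta)^2\int(f-c)^2\,dV_{g_1}.
\]
Minimising over $c$ on the left-hand denominator side gives the $g_2$-variance, and its lower bound is at least $(1-\eta)^2$ times the $g_1$-variance.

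\textbf{Step 3: conclusion.} Taking $f$ to be the $g_1$-eigenfunction, or a minimising sequence, gives
\[
\lambda^{KE}_{\min}\le \frac{(1+\eta)^2}{(1-\eta)^3}\,\lambda_{\min}.
\]
Swapping the roles of the metrics gives
\[
\lambda_{\min}\le \frac{(1+\eta)^2}{(1-\eta)^3}\,\lambda^{KE}_{\min}.
\]
Strictly, swapping the roles requires the inequalities expressed relative to $g_2$. Instead, I would redo the estimate directly in the other direction, using the lower bounds $|df|_{g_2}^2\ge|df|_{g_1}^2/(1+\eta)$, $dV_{g_2}\ge(1-\eta)^2dV_{g_1}$ and $dV_{g_2}\le(1+\eta)^2dV_{g_1}$. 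This yields the factor $(1-\eta)^2/(1+\eta)^3$ stated.

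The main obstacle is bookkeeping rather than analysis: keeping the mean-zero normalisation honest when the measure changes, which Step 2 handles via the variance minimisation, and checking that the invariant test-function class is the same for both metrics. A further point to verify is that the regularity of $\varphi\in H^5$ (hence $C^2$ after \cref{equation:L-infty-embedding}) makes $g_2$ a continuous metric, so that the Rayleigh quotient characterisation applies.
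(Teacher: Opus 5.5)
Your proposal is correct and follows the same route as the paper. The paper also writes the denominator as $\inf_{c\in\R}\int_M (f-c)^2\,dV_{g_i}$ to avoid the measure-dependent mean-zero constraint, and it combines the pointwise bounds $|\nabla f|_{g_1}^2/(1+\eta)\le|\nabla f|_{g_2}^2\le|\nabla f|_{g_1}^2/(1-\eta)$ and $(1-\eta)^2dV_{g_1}\le dV_{g_2}\le(1+\eta)^2dV_{g_1}$ from \cref{lemma:metric-comparison}, exactly as your direct two-sided estimate in Step 3 does; your aside about swapping the roles of the metrics is unnecessary and can be dropped.
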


\begin{proof}
    As in the previous section, let $g_1$ denote the approximate Kähler-Einstein metric induced by $\omega_P$ and $g_2$ be the genuine Kähler-Einstein metric in the same Kähler class.
    We write
    \[
    N_i(f)
    :=
    \int_M |\nabla f|^2_{g_i} dV_{g_i},
    \quad
    D_i(f)
    :=
    \inf_{c \in \R}
    \int_M (f-c)^2 dV_{g_i}
    \quad
    \text{ for }
    \quad
    i \in \{1,2\}
    \]
    for the numerator and denominator of the Rayleigh-Ritz quotient, respectively.
    In particular, 
    \[
    \lambda_{\min}
    =
    \inf_{f}
    \frac{N_1(f)}{D_1(f)},
    \quad
    \lambda_{\min}^{KE}
    =
    \inf_{f}
    \frac{N_2(f)}{D_2(f)},
    \]
    where this time, the infimum is taken over smooth and $T^2 \rtimes D_6$-invariant smooth functions that are simply \textit{non-constant}; the cost of removing the condition that $f$ has mean zero with the infimum expression for $D_i(f)$ involving the constat $c$.
    From \cref{lemma:metric-comparison}, we get
    $(1-\Cr{const:metric-difference}\varepsilon)g_1 \leq g_2 \leq (1+\Cr{const:metric-difference}\varepsilon) g_1$ as bilinear forms; two immediate consequences are that 
    \begin{align*}
        \frac{|\nabla_{g_1}f|_{g_1}^2}{1+\Cr{const:metric-difference}\varepsilon}  \leq |\nabla_{g_2}f|_{g_2}^2 \leq \frac{|\nabla_{g_1}f|_{g_1}^2}{1-\Cr{const:metric-difference}\varepsilon} 
    \end{align*}
    for any smooth functions $f$, and
    \begin{align*}
        (1-\Cr{const:metric-difference}\varepsilon)^2 dV_{g_1} \leq dV_{g_2} \leq (1+\Cr{const:metric-difference}\varepsilon)^2 dV_{g_1}
    \end{align*}
    when applied to any positively oriented frame.
    Hence,
    \begin{align*}
    \frac{(1-\Cr{const:metric-difference}\varepsilon)^2}{1+\Cr{const:metric-difference}\varepsilon} N_1(f)
    \leq
    N_2(f)
    \leq
    \frac{(1+\Cr{const:metric-difference}\varepsilon)^2}{1-\Cr{const:metric-difference}\varepsilon} N_1(f)
    \end{align*}
    and 
    \begin{align*}
    (1-\Cr{const:metric-difference}\varepsilon)^2
    \int_M (f-c)^2 dV_{g_1}
    \leq
    \int_M (f-c)^2 dV_{g_2}
    \leq
    (1+\Cr{const:metric-difference}\varepsilon)^2
    \int_M (f-c)^2 dV_{g_1}
    \quad
    \text{ for all }
    \quad
    c \in \R.
    \end{align*}
    Taking the infimum over $c$ proves the claim.
\end{proof}

By plugging in the numerical values for $\Cr{const:metric-difference}$ and the Kähler potential bound $\|{\varphi}_{H^5(M,\omega_P)}$ for the Kähler potential of the true Kähler-Einstein metric from \cref{tab:numerical-constants} and the known bounds for $\lambda_{\min}$ from \cref{approxmineigenvalue}, we thus prove the enclosure for $\lambda_{\min}^{KE}$ given in \cref{tab:numerical-constants}.
This is consistent with the numerical values from the literature:
6.32 in \cite[Section 4.4]{Doran2008}, 6.322 in \cite[Section 6.1]{Doran2008}, 6.3288 in \cite{HallMurphy}.

\newpage 
\appendix

\section{Toric K\" ahler Curvature with Complex Geometry}\label{CDE}

The aim of this appendix is to prove the formulae for the curvature quantities.
The idea will be to use complex coordinates to simplify the computations. A-priori, the curvature expressions have $\operatorname{Hess}(u)$ terms. However, $\operatorname{Hess}(u)$ blows up at the boundary which causes the numerical estimation scheme to fail. As such, we factorise these terms out of the expressions.  
To discuss the curvature of K\"ahler metrics \eqref{toricKahlermetric}, we choose holomorphic 
coordinates $z_i = \xi_i + i\theta_i$ on  $M^0$ so that
$T_{\mathbb{C}}M = \operatorname{span}_{\mathbb{C}} \left\{\frac{\partial}{\partial z_1}, \frac{\partial}{\partial z_2}, \frac{\partial}{\partial\bar z_1}, \frac{\partial}{\partial \bar z_2}\right\}$ (see \cite{Abreu} for more details).
For instance, \eqref{toricKahlermetric} extended complex linearly to $T_{\mathbb{C}}^*M\otimes T_{\mathbb{C}}^*M$ is given by 

\begin{equation}
    g= \sum_{\alpha,\beta=1}^2\frac{1}{2}u^{\alpha\beta}( d z^{\alpha}\otimes d\bar z^{\beta} +  d\bar z^\beta\otimes d z^{\alpha})
\end{equation}

(c.f. \cite{Doran2008}). Denote by 
\begin{align}
    \label{eq:Cmplx_projection_operators}
    S^{\alpha\ldots}_{\ldots} = \frac{1}{2}\left(S^{a\ldots}_{\ldots} - iJ^{a}_jS^{j\ldots}_{\ldots} \right), \quad  S^{\bar \alpha \ldots}_{\ldots} = \frac{1}{2}\left(S^{a\ldots}_{\ldots} + iJ^{a}_{j}S^{j\ldots}_{\ldots}\right), \\
    S^{\ldots}_{\alpha\ldots} = \frac{1}{2}\left(S^{\ldots}_{a\ldots} - iJ^{j}_aS^{\ldots}_{j\ldots} \right), \quad S^{ \ldots}_{\bar\alpha\ldots} = \frac{1}{2}\left(S^{\ldots}_{a\ldots} + iJ^{j}_{a}S^{\ldots}_{j\ldots}\right),
\end{align}

the contravariant and covariant projections of the tensor field $S^{\dots}_{\dots}$ onto the holomorphic and anti-holomorphic sub-bundles (see for instance \cite{Joyce}). We adopt the coordinate convention
\begin{equation}
    R\left(\frac{\partial}{\partial z_j},\frac{\partial}{\partial \bar z_k}\right)\frac{\partial}{\partial z_l} = R_{j\bar k l}{}^{\mu} \frac{\partial}{\partial z_{\mu}}
\end{equation}
so that the  Riemannian curvature tensor $R_{bcd}{}^{a}$ as a smooth section of $\wedge^2 T_{\mathbb{C}}M \otimes End(T_{\mathbb{C}} M)$ is given by

\begin{equation}
    \label{eq:Riem curvature}
    R_{bcd}{}^a = R_{\beta \bar \gamma \delta}{}^{\alpha} + R_{\bar \beta \gamma  \delta}{}^{\alpha} + R_{ \beta \bar \gamma \bar \delta}{}^{\bar \alpha} + R_{\bar \beta \gamma \bar \delta}{}^{\bar\alpha}
\end{equation}

(c.f. \cite{Joyce}[Section 4.5]). We also denote by $\zeta= x_i + \theta_i$ the symplectic coordinates on the polytope. Hence, for $u: P \rightarrow \R$, 

\begin{equation}
    \frac{\partial u }{\partial z_i} = \frac{1}{2}\frac{\partial}{\partial \xi_i}u = \frac{u^{ij}}{2}\frac{\partial u}{\partial x_j}
\end{equation}

since $\frac{\partial}{\partial \xi_i} = \frac{u^{ij}}{\partial x_j}$ \cite{Doran2008}.

\begin{lemma}
\label{lemma:Riem_complex_coordiante_formula}
    Let $u^{ij}_{kl} = \frac{\partial^2}{\partial x_k\partial x_l}u^{ij}$ where $u^{ij}$ is the inverse of the euclidean Hessian of $u: P \rightarrow \R$. Then component $R_{\beta \bar \gamma \delta}{}^{\alpha}$  of the Riemann curvature tensor is given by 

    \begin{equation}
        \label{eq:Riemannian_coordinates}
        R = -\frac{1}{4}u^{\gamma \tau}u^{\delta \beta}_{\tau \alpha} dz^{\beta} \wedge d\bar z^{\gamma}\otimes  dz^{\delta}\otimes  \frac{\partial}{\partial z_{\alpha}}.
    \end{equation}

    Denote by $\nabla$ the complex linear extension of the Levi-Civita connection to $T_{\C}M$ and $\nabla_{\bar\epsilon}$ as the natural contraction $\frac{\partial}{\partial \bar{z}_{\epsilon}}\cdot \nabla$. Then the first covariant derivatives are given by

    \begin{equation}
        \nabla_{\bar\epsilon}R_{\alpha}{}^{\beta}{}_{\gamma}{}^\mu = -\frac{1}{4}u^{\epsilon \lambda}u^{\gamma \alpha}_{\beta \mu \lambda}
    \end{equation}
    and 
    \begin{equation}
    \nabla_{\epsilon}R_{\alpha}{}^{\beta}{}_{\gamma}{}^{\mu}{} =-\frac{1}{4}(u^{\epsilon\lambda}u^{\gamma\alpha}_{\beta\mu\lambda}+u^{\gamma\alpha}_{\lambda\mu}u^{\epsilon\lambda}_\beta-u^{\lambda\alpha}_{\beta\mu}u^{\epsilon\gamma}_\lambda+u^{\gamma\alpha}_{\beta\lambda}u^{\epsilon\lambda}_\mu-u^{\gamma\lambda}_{\beta\mu}u^{\epsilon\alpha}_\lambda).
\end{equation}

Finally, the relevant components of the second covariant derivative are given by

\begin{equation}
        \nabla_{\bar \omega}\nabla_{\epsilon}R_{\alpha}{}^{\beta}{}_{\gamma}{}^{\mu} = -\frac{1}{8}u^{\omega \eta}(u^{\epsilon\lambda}u^{\gamma\alpha}_{\beta\mu\lambda}+u^{\gamma\alpha}_{\lambda\mu}u^{\epsilon\lambda}_\beta-u^{\lambda\alpha}_{\beta\mu}u^{\epsilon\gamma}_\lambda+u^{\gamma\alpha}_{\beta\lambda}u^{\epsilon\lambda}_\mu-u^{\gamma\lambda}_{\beta\mu}u^{\epsilon\alpha}_\lambda)_\eta
\end{equation}

and 
\begin{equation}
    \nabla_{ \omega}\nabla^{\epsilon} R^{\alpha}{}_{\beta}{}^{\gamma}{}_{\mu}{} = -\frac{1}{4}(u^{\omega\tau}u^{\mu\beta}_{\alpha\gamma\epsilon\tau} - u^{\tau \beta}_{\alpha\gamma\epsilon}u^{\omega\mu}_{\tau} +u^{\mu\beta}_{\tau\gamma\epsilon}u^{\omega\tau}_{\alpha} - u^{\mu\tau}_{\alpha\gamma\epsilon}u^{\omega\beta}_{\tau} + u^{\mu\beta}_{\alpha\tau\epsilon}u^{\omega\tau}_{\gamma}+u^{\mu\beta}_{\alpha\gamma\tau}u^{\omega\tau}_{\epsilon})
\end{equation}

where the subscript $\eta$ denotes $\frac{\partial}{\partial x_\eta}$.

\end{lemma}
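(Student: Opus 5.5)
The plan is to work entirely in the holomorphic coordinates $z_i=\xi_i+i\theta_i$ on $M^0$. In these coordinates $g_{\alpha\bar\beta}=\tfrac12 u^{\alpha\beta}$, where the $u^{\alpha\beta}$ are viewed as functions of $\xi$ through the Legendre transform. The Kähler potential in these coordinates is the Legendre dual $\phi(\xi)$ of $u$, so $\phi_{\xi_\alpha\xi_\beta}=u^{\alpha\beta}$, and all the metric data depend only on $\xi$. The key conversion rule is the one recorded just before the lemma:
\[
\frac{\partial}{\partial z_\alpha}=\frac{\partial}{\partial\bar z_\alpha}=\frac12\frac{\partial}{\partial\xi_\alpha}=\frac12 u^{\alpha\lambda}\frac{\partial}{\partial x_\lambda}
\]
on $T^2$-invariant functions. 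Every holomorphic or antiholomorphic derivative therefore becomes a symplectic-coordinate derivative weighted by $u^{\alpha\lambda}$. In addition, $\partial_{x_\lambda}$ applied to $u_{\mu\nu}$ can always be traded for derivatives of $u^{\mu\nu}$ via $\partial u^{-1}=-u^{-1}(\partial u)u^{-1}$, which is the factorisation we want.

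\textbf{Step 1 (curvature).} I would start from the standard Kähler formulas $\Gamma^\alpha_{\beta\delta}=g^{\alpha\bar\gamma}\partial_\beta g_{\delta\bar\gamma}$ and $R_{\beta\bar\gamma\delta}{}^\alpha=-\partial_{\bar\gamma}\Gamma^\alpha_{\beta\delta}$, and compute in $x$-coordinates. We have $g^{\alpha\bar\gamma}=2u_{\alpha\gamma}$ and $\partial_\beta g_{\delta\bar\gamma}=\tfrac14 u^{\beta\lambda}u^{\delta\gamma}_\lambda$. Hence
\[
\Gamma^\alpha_{\beta\delta}=\tfrac12 u_{\alpha\gamma}u^{\beta\lambda}u^{\delta\gamma}_\lambda .
\]
Here $u_{\alpha\gamma}u^{\beta\lambda}\partial_\lambda$ should collapse using $u^{\beta\lambda}\partial_\lambda u_{\alpha\gamma}=\partial_{\xi_\beta}\partial_{\xi_\alpha}\partial_{\xi_\gamma}\phi$-type symmetry. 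I expect the cleanest route to be the observation that $\Gamma^\alpha_{\beta\delta}=\tfrac12\partial_{x_\alpha}u^{\beta\delta}$. This follows by writing $\partial_{\xi_\beta}u^{\delta\gamma}=\phi_{\beta\delta\gamma}$, which is totally symmetric, and then $u_{\alpha\gamma}\phi_{\gamma\beta\delta}=\partial_{x_\alpha}\phi_{\beta\delta}$. Applying $-\partial_{\bar z_\gamma}=-\tfrac12 u^{\gamma\tau}\partial_{x_\tau}$ then gives
\[
R_{\beta\bar\gamma\delta}{}^\alpha=-\tfrac14 u^{\gamma\tau}u^{\beta\delta}_{\tau\alpha},
\]
which is \eqref{eq:Riemannian_coordinates} up to relabelling and the symmetry $u^{\beta\delta}=u^{\delta\beta}$. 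Conjugation and \eqref{eq:Riem curvature} give the remaining components.

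\textbf{Step 2 (first derivatives).} The $\nabla_{\bar\epsilon}$ derivative is easy because $\Gamma$ has no mixed components in Kähler geometry. Consequently $\nabla_{\bar\epsilon}$ acts on a tensor with only holomorphic indices as the plain derivative $\partial_{\bar z_\epsilon}$. Rewriting the index positions via $g$ and applying $\tfrac12 u^{\epsilon\lambda}\partial_\lambda$ to the Step 1 expression, the raised-index form makes the outer factor cancel, leaving $-\tfrac14 u^{\epsilon\lambda}u^{\gamma\alpha}_{\beta\mu\lambda}$. For $\nabla_\epsilon$, I would write $\partial_{z_\epsilon}$ plus one $\Gamma$ contraction per index, each with sign according to type. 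Substituting $\Gamma=\tfrac12\partial_x u$ yields exactly the four correction terms $u^{\cdot\cdot}_{\cdot\cdot}u^{\epsilon\cdot}_\cdot$ listed.

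\textbf{Step 3 (second derivatives).} For $\nabla_{\bar\omega}\nabla_\epsilon R$, the same type argument shows $\nabla_{\bar\omega}$ acts as $\tfrac12 u^{\omega\eta}\partial_{x_\eta}$ on the all-holomorphic-type tensor $\nabla R$. This gives the stated formula with $-\tfrac18$. For $\nabla_\omega\nabla^\epsilon R$ I would apply Step 2's $\nabla_\epsilon$ recipe to the $\nabla_{\bar\epsilon}R$ tensor: one derivative term and five $\Gamma$-corrections, one for each of the indices $\beta,\alpha,\gamma,\epsilon,\mu$ with appropriate signs.

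The main obstacle is bookkeeping: index placement (raised versus lowered via $g=\tfrac12u^{-1}$, conjugate indices becoming plain ones) and the signs of the Christoffel corrections. I would guard against errors by checking the final Ricci contraction against \eqref{toricKahlerRicci} and the Fubini–Study case.
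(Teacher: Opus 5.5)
Your proposal is correct and follows essentially the paper's route. You establish $\Gamma^\alpha_{\beta\delta}=\tfrac12 u^{\beta\delta}_\alpha$, apply $-\partial_{\bar z_\gamma}=-\tfrac12 u^{\gamma\tau}\partial_{x_\tau}$, use that $\nabla_{\bar\epsilon}$ is a plain derivative on tensors of purely holomorphic type, and expand $\nabla_\epsilon$ by the Leibniz rule with $\Gamma=\tfrac12\partial_x u$. For $\nabla_\omega\nabla^\epsilon R$ the paper explicitly raises $\bar\epsilon$ with $g^{\epsilon\bar\lambda}=2u_{\epsilon\lambda}$ to get $-\tfrac12 u^{\mu\beta}_{\alpha\gamma\epsilon}$ before differentiating, which your five-correction count does implicitly.

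The only difference is cosmetic. You justify the Christoffel identity via total symmetry of the third $\xi$-derivatives of the Legendre dual potential, whereas the paper expands $\partial_{x_\lambda}u^{\delta\nu}=-u^{\delta\mu}u_{\mu\tau\lambda}u^{\tau\nu}$ and commutes third $x$-derivatives of $u$; these are the same fact.
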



\begin{proof}
     In the coordinates defined on $M^0$, $R_{\beta \bar \gamma \delta}{}^{\alpha} = -\bar \partial_\gamma \Gamma^{\alpha}_{\beta \delta}$  where $\Gamma^{\alpha}_{\beta\delta} = g^{\bar \nu \alpha}\frac{\partial g_{\delta \bar \nu}}{\partial z_\beta}$ (c.f. \cite{Ballmann2006}). Since $g_{\alpha \bar \beta} = \frac{1}{2}u^{a b}$,

\begin{align}
    \Gamma^\alpha_{\beta\delta} &= g^{\bar \nu\alpha}\frac{\partial g_{\delta \bar \nu}}{\partial z_\beta} \\
    &= \frac{u^{\beta\lambda}}{2}\frac{\partial}{\partial x_\lambda}\frac{1}{2}u^{\delta \nu}2u_{\nu\alpha} \\
    &=-\frac{1}{2}u_{\nu\alpha}u^{\beta \lambda }u^{\delta \mu}u_{\mu \tau\lambda}u^{\tau\nu} \\
    &=-\frac{1}{2}u^{\beta\lambda}u_{\mu \alpha \lambda}u^{\delta\mu}
    =\frac{1}{2}u^{\delta\beta}_{\alpha}
\end{align}

where in the final step we commuted $u_{\mu \alpha \lambda} = u_{\mu\lambda \alpha}$ and used the symmetry $u^{\beta \lambda} = u^{\lambda \beta}$.
Hence,

\begin{align}
    R_{\beta \bar \gamma \delta}{}^{\alpha} &= -\frac{\partial \Gamma^{\alpha}_{\beta \delta}}{\partial \bar z^\gamma} \\
    &=-\frac{1}{2}\frac{\partial}{\partial\bar z^\gamma}\bigg(u^{\delta\beta}_{\alpha}\bigg) \\
    &=-\frac{u^{\gamma\tau}}{2^2}u^{\delta\beta}_{\alpha\tau} = -\frac{1}{4}u^{\gamma\tau}u^{\delta\beta}_{\tau \alpha}
\end{align}

where in the last step we again swapped the order of the Euclidean partial derivatives. Next, 

  \begin{equation}
        \nabla_{\bar \epsilon}R_{\alpha}{}^{\beta}{}_{\gamma}{}^{\mu} =\nabla_{\bar \epsilon}\left(-\frac{1}{2}u_{\beta\mu}^{\gamma\alpha} dz^{\alpha}\otimes \frac{\partial}{\partial z_{\beta}}\otimes dz^{\gamma} \otimes \frac{\partial}{\partial z_{\mu}}\right).
    \end{equation} 
    Recall that $\nabla_{\bar\alpha}dz^{\beta} = \nabla_{\alpha}dz^{\bar \beta} = \nabla_{\bar \alpha} \frac{\partial}{\partial z_{ \beta}} = \nabla_{\alpha}\frac{\partial}{\partial \bar z_{\beta}}=0$ and hence
    \begin{equation}
        \nabla_{\bar \epsilon}\left(-\frac{1}{2}u_{\beta\mu}^{\gamma\alpha} dz^{\alpha}\otimes \frac{\partial}{\partial z_{\beta}}\otimes dz^{\gamma} \otimes \frac{\partial}{\partial z_{\mu}}\right) = -\frac{1}{4}u^{\epsilon\lambda}u_{\beta\mu\lambda}^{\gamma\alpha}dz^{\alpha}\otimes \frac{\partial}{\partial z_{\beta}}\otimes dz^{\gamma} \otimes \frac{\partial}{\partial z_{\mu}}.
    \end{equation}

    To compute $\nabla_{\epsilon}R_{\alpha}{}^{\beta}{}_{\gamma}{}^{\mu}$, recall that $\nabla_{\epsilon}dz^{\alpha} = -\Gamma_{\epsilon \lambda}^{\alpha}dz^{\lambda} = -\sum_{\lambda}\frac{1}{2}u^{\lambda \epsilon}_\alpha dz^{\lambda}$ and $\nabla_{\epsilon}\frac{\partial}{\partial z_{\alpha}} = \Gamma^{\lambda}_{\epsilon\alpha}\frac{\partial}{\partial z_{\lambda}} = \sum_{\lambda} \frac{1}{2}u^{\alpha\epsilon}_{\lambda}\frac{\partial}{\partial z_{\lambda}}$ and apply the Leibniz rule. The formula for $\nabla_{\bar \omega}\nabla_{\epsilon}R^{\alpha}{}_{\beta}{}^{\gamma}{}_{\mu}$ follows immediately since $\nabla_{\epsilon}R^{\alpha}{}_{\beta}{}^{\gamma}{}_{\mu}$ has no anti-holomorphic indices. Finally, $\nabla_{\omega}\nabla^{\epsilon}R^{\alpha}{}_{\beta}{}^{\gamma}{}_{\mu}$ follows from the symmetry $R^{\alpha}{}_{\beta}{}^{\gamma}{}_{\mu} =R_{\beta}{}^{\alpha}{}_{\mu}{}^{\gamma}$. Then $\nabla^\epsilon R_{\beta}{}^{\alpha}{}_{\mu}{}^{\gamma} = g^{\epsilon\bar \lambda}\nabla_{\bar\lambda}R_{\beta}{}^{\alpha}{}_{\mu}{}^{\gamma} = -\frac{1}{2}u^{\mu\beta}_{\alpha\gamma\epsilon}$. The formula for $\nabla_{\omega}\nabla^{\epsilon}R^{\alpha}{}_{\beta}{}^{\gamma}{}_\mu$ follows from the application of the Leibniz rule.
\end{proof}

By taking the trace of \eqref{eq:Riem curvature} with respect to \ref{toricKahlermetric} yields the following formula for the Ricci endomorphism.

\begin{corollary}
    The Ricci endomorphism $R^{a}{}_{b}$  is given by 
    \begin{equation}
        \Ric^{\#} = -\frac{1}{2}u^{\beta \gamma}_{\alpha \gamma}\left(\frac{\partial}{\partial z^{\alpha}}\otimes dz^{\beta} + \frac{\partial}{\partial \bar z^{\alpha}}\otimes d\bar z^{\beta}\right)
    \end{equation}
    Since $R_{cb} = R_{\bar\lambda \gamma \bar\beta}{}^{\bar \lambda} + R_{\lambda\bar \gamma \beta}{}^{\lambda} $, and raising one index with $g^{ac}$ gives $R^{a}{}_{b} = R_{\bar \lambda}{}^{\bar \alpha}{}_{\bar \beta}{}^{\bar \lambda} + R_{\lambda}{}^{\alpha}{}_{\beta}{}^{\lambda}$ and $R_{\bar \lambda}{}^{\bar \alpha}{}_{\bar \beta}{}^{\bar \lambda} = R_{\lambda}{}^{\alpha}{}_{\beta}{}^{\lambda} = -\frac{1}{2}u^{\beta \lambda}_{\alpha \lambda}$ by \ref{lemma:Riem_complex_coordiante_formula}.
\end{corollary}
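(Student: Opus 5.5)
The plan is to obtain the Ricci endomorphism by contracting the Chern/Riemann curvature components already computed in \cref{lemma:Riem_complex_coordiante_formula}, and then raising an index with the inverse metric. We never differentiate anything new; the proof is bookkeeping with the type decomposition \eqref{eq:Riem curvature} and the identity $g_{\alpha\bar\beta}=\tfrac12 u^{\alpha\beta}$.

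\emph{Step 1: reduce to mixed-type components.} Since $g$ is K\"ahler, $\nabla J=0$, so $R(X,Y)$ commutes with $J$. Hence the curvature endomorphism preserves $T^{1,0}M$ and $T^{0,1}M$. Moreover, by \eqref{eq:Riem curvature} the only nonzero components are those with one holomorphic and one antiholomorphic form index. When forming $R_{cb}=R_{acb}{}^{a}$, the summed index therefore splits into a holomorphic part and an antiholomorphic part. This gives $R_{cb}=R_{\lambda\bar\gamma\beta}{}^{\lambda}+R_{\bar\lambda\gamma\bar\beta}{}^{\bar\lambda}$. In particular, $\Ric$ has only mixed components $R_{\bar\gamma\beta}$ and their conjugates.

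\emph{Step 2: contract the formula of \cref{lemma:Riem_complex_coordiante_formula}.} Setting $\alpha=\beta=\lambda$ in \eqref{eq:Riemannian_coordinates} gives
\[
R_{\lambda\bar\gamma\delta}{}^{\lambda}=-\tfrac14\, u^{\gamma\tau}u^{\delta\lambda}_{\tau\lambda}.
\]
We raise the index $\bar\gamma$ using $g^{\alpha\bar\gamma}$, which is the inverse of $\tfrac12 u^{\alpha\gamma}$ and hence equals $2u_{\alpha\gamma}$. Since $u_{\alpha\gamma}u^{\gamma\tau}=\delta_\alpha^\tau$, the result is
\[
R_{\lambda}{}^{\alpha}{}_{\delta}{}^{\lambda}=2u_{\alpha\gamma}\cdot\left(-\tfrac14\right)u^{\gamma\tau}u^{\delta\lambda}_{\tau\lambda}=-\tfrac12\, u^{\delta\lambda}_{\alpha\lambda}.
\]
After relabelling $\delta\to\beta$, this is the coefficient of $\frac{\partial}{\partial z^\alpha}\otimes dz^\beta$.

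\emph{Step 3: the antiholomorphic block.} The antiholomorphic block is the complex conjugate of the holomorphic one, because $\Ric^{\#}$ is a real endomorphism. The coefficient $-\tfrac12u^{\beta\lambda}_{\alpha\lambda}$ is a real function of $x$, so the $\frac{\partial}{\partial\bar z^\alpha}\otimes d\bar z^\beta$ coefficient is the same. Together with Step 1, this gives the stated formula.

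\emph{Main obstacle and cross-check.} The main risk is convention-tracking rather than mathematics. One must confirm which slot of $R_{bcd}{}^a$ is contracted to give $\Ric$, and whether the raised index lands on the $\bar\gamma$ slot with the factor $2$ from $g^{\alpha\bar\gamma}=2u_{\alpha\gamma}$. Sign and factor errors are easy here. I would check the result against two independent facts:
\begin{itemize}
\item Taking the real trace gives $\operatorname{scal}=2\cdot(-\tfrac12)u^{ab}_{ab}=-u^{ab}_{ab}$. This matches the cross term $2\lambda\sum u^{ab}_{ab}=-2\lambda\operatorname{scal}$ in \cref{Riccibounds}, and it also matches Abreu's scalar curvature formula.
\item Lowering the index again should reproduce $R_{ij}=-\tfrac12 u_{im}u_{jn}u^{nr}_{lr}u^{ml}$ from \eqref{toricKahlerRicci}, once we pass back to real coordinates using $\partial_{\xi_i}=u^{ij}\partial_{x_j}$.
\end{itemize}
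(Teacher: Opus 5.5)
Your proposal is correct and follows essentially the same route as the paper. The paper's argument, written into the statement itself, is exactly this: split $R_{cb}$ into the two mixed-type contractions, contract the component formula from \cref{lemma:Riem_complex_coordiante_formula}, and raise the index with $g^{\alpha\bar\gamma}=2u_{\alpha\gamma}$ to get $-\tfrac12 u^{\beta\lambda}_{\alpha\lambda}$ for both blocks. You add an explicit conjugation and reality argument for the antiholomorphic block and consistency checks against the scalar curvature and against \eqref{toricKahlerRicci}; both checks hold.
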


It is now straightforward to compute the required curvature bounds. For convenience, we collect them in the following lemma.

\begin{lemma}
   The norm of the Riemann curvature tensor is given by 
   \begin{align*}
       |\Riem|_g^2= \sum_{i,j,k,l=1}^2 u^{ij}_{kl}u^{kl}_{ij}. 
   \end{align*}
   The norm of the first covariant derivative is given by
   \begin{align*}
       |\nabla \Riem|_g^2 = \sum_{i,j,k,l,n,m=1}^2u^{jl}_{ikm}(u^{mn}u^{ik}_{jln}+u^{ik}_{nl}u^{mn}_j-u^{nk}_{jl}u^{mi}_n+u^{ik}_{jn}u^{mn}_l-u^{in}_{jl}u^{mk}_n).
   \end{align*}
   For the second covariant derivative, set
    \[
    \begin{aligned}
    B_b^{ik}{}_{jl}=\sum_{m=1}^2\big(
    &u^{bm}u^{ik}_{jlm}
    +u^{bm}_j u^{ik}_{ml}-u^{bi}_m u^{mk}_{jl}\\
    &+u^{bm}_l u^{ik}_{jm}-u^{bk}_m u^{im}_{jl}\big).
    \end{aligned}
    \]
    Then
    \[
    \begin{aligned}
    |\nabla^2\Riem|_g^2
    ={}&\frac12\sum_{a,b,i,j,k,l,m=1}^2
    u^{jl}_{ikab}\\
    &\quad\cdot\big(
    u^{am}\partial_m B_b^{ik}{}_{jl}
    +u^{am}_j B_b^{ik}{}_{ml}
    -u^{ai}_m B_b^{mk}{}_{jl}\\
    &\qquad
    +u^{am}_l B_b^{ik}{}_{jm}
    -u^{ak}_m B_b^{im}{}_{jl}
    -u^{ab}_m B_m^{ik}{}_{jl}\big)\\
    &+\frac12\sum_{a,b,i,j,k,l,m=1}^2
    (\partial_a B_b^{ik}{}_{jl})\\
    &\quad\cdot\big(
    u^{am}u^{jl}_{ikbm}
    +u^{am}_i u^{jl}_{mkb}
    -u^{aj}_m u^{ml}_{ikb}\\
    &\qquad
    +u^{am}_k u^{jl}_{imb}
    -u^{al}_m u^{jm}_{ikb}
    +u^{am}_b u^{jl}_{ikm}\big),
    \end{aligned}
    \]
    where $\partial_a=\partial/\partial x_a$, and subscripts on
    $u^{ij}$ denote ordinary derivatives.
\end{lemma}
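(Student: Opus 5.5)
The plan is to compute all three norms in the complex frame $\{\partial_{z_\alpha},\partial_{\bar z_\alpha}\}$ on $M_P^o$, using the component formulas of \cref{lemma:Riem_complex_coordiante_formula}. Because $u^{ij}$ is smooth up to $\partial P$, each resulting expression extends continuously to all of $M_P$, so it suffices to prove the identities pointwise on $M_P^o$. The metric is $g_{\alpha\bar\beta}=\frac12 u^{\alpha\beta}$ with inverse $g^{\alpha\bar\beta}=2u_{\alpha\beta}$. For a Kähler metric the only nonzero blocks of $\Riem$ are the four listed in \eqref{eq:Riem curvature}, and they come in complex-conjugate pairs related by the Kähler symmetries. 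Hence
\[
|\Riem|_g^2=4\,g^{\beta\bar\beta'}g^{\gamma'\bar\gamma}g^{\delta\bar\delta'}g_{\alpha\bar\alpha'}\,R_{\beta\bar\gamma\delta}{}^{\alpha}\,\overline{R_{\beta'\bar\gamma'\delta'}{}^{\alpha'}},
\]
and the multiplicity $4$ must be tracked carefully.

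The key observation is a cancellation. In $R_{\beta\bar\gamma\delta}{}^\alpha=-\frac14 u^{\gamma\tau}u^{\delta\beta}_{\tau\alpha}$, the undifferentiated factor $u^{\gamma\tau}$ is cancelled by the inverse metric $2u_{\gamma\gamma'}$ contracting the $\bar\gamma$ slot. The factor $g_{\alpha\bar\alpha'}=\frac12u^{\alpha\alpha'}$ pairs with the lower index of the conjugate copy, which itself carries a $u^{\cdot\cdot}$ from its $\bar\gamma'$ slot. I would arrange the contraction so that, after using the symmetries $R_{\beta\bar\gamma\delta\bar\alpha}=R_{\delta\bar\gamma\beta\bar\alpha}=R_{\beta\bar\alpha\delta\bar\gamma}$ of the fully lowered tensor, every undifferentiated $u^{\cdot\cdot}$ meets a $u_{\cdot\cdot}$. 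What survives should be $\sum u^{ij}_{kl}u^{kl}_{ij}$, with the numerical factors $4\cdot(\frac14)^2\cdot 2^{\cdot}\cdot(\frac12)^{\cdot}$ collapsing to $1$.

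I would verify this first on the fully lowered form $R_{\beta\bar\gamma\delta\bar\alpha}=-\frac18u^{\gamma\tau}u^{\alpha\mu}u^{\delta\beta}_{\tau\mu}$. Its symmetry in $(\gamma,\alpha)$ and $(\beta,\delta)$ makes the cancellation transparent. As a sanity check, tracing the result should reproduce \cref{Riccibounds}.

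For $|\nabla\Riem|_g^2$, the $\bar\epsilon$-derivative $\nabla_{\bar\epsilon}R$ and the $\epsilon$-derivative $\nabla_\epsilon R$ are complex conjugate in the appropriate sense. I would therefore write the norm as a pairing of the formula for $\nabla_{\bar\epsilon}R_\alpha{}^\beta{}_\gamma{}^\mu$ against the formula for $\nabla_\epsilon R_\alpha{}^\beta{}_\gamma{}^\mu$ from the lemma. The index pairing is done with $g$ and $g^{-1}$ as above, so that the leading factor $u^{\epsilon\lambda}$ in one copy cancels against the metric on the derivative slot. This is exactly the structure of the claimed formula: a single factor $u^{jl}_{ikm}$ times the five-term bracket.

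For $|\nabla^2\Riem|_g^2$, I would decompose by the type of the two derivative slots and use that $\nabla_{\bar\omega}$ acting on a tensor with purely holomorphic indices is just $\partial_{\bar z_\omega}=\frac12u^{\omega\eta}\partial_{x_\eta}$. The $(\bar\omega,\epsilon)$ component pairs against its conjugate. The $(\omega,\bar\epsilon)$ component uses the Leibniz expansion with the Christoffel symbols $\Gamma^\alpha_{\beta\delta}=\frac12u^{\delta\beta}_\alpha$. The two pairings give the two $\frac12$-weighted sums, with $B_b{}^{ik}{}_{jl}$ being (a multiple of) $\nabla_\epsilon R$ and the six-term brackets being the Leibniz expansions of the other derivative.

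The main obstacle is purely bookkeeping: the correct multiplicities of the type-blocks, the conjugation rules that turn barred indices into unbarred ones in the final real sums, and the powers of $2$. I would control this by first checking the $|\Riem|^2$ formula on the canonical potential $u_P$, or on Fubini–Study via its triangle polytope, where $|\Riem|^2$ is known. I would then carry the same conventions through the two derivative cases, and if needed cross-check the higher formulas numerically with the code at random interior points.
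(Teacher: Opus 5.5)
For $|\Riem|_g^2$ and $|\nabla\Riem|_g^2$ you follow the paper's argument, and both identities come out as stated. You work in the frame $\partial_{z_\alpha},\partial_{\bar z_\alpha}$ and use the component formulas of \cref{lemma:Riem_complex_coordiante_formula}. You count $4$, resp.\ $2^3=8$, equal type-blocks, and let every undifferentiated $u^{\cdot\cdot}$ cancel against $g^{\alpha\bar\beta}=2u_{\alpha\beta}$. Where your plan goes wrong is the block decomposition for $|\nabla^2\Riem|_g^2$.

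\textbf{The $(\bar\omega,\epsilon)$ and $(\omega,\bar\epsilon)$ components are not two separate sums.} In the full contraction, $\nabla_{\bar\omega}\nabla_\epsilon R$ is paired with $g^{\bar\omega w}g^{\epsilon\bar e}(\nabla^2R)_{w\bar e\cdots}$, which is an $(\omega,\bar\epsilon)$-component. So $\nabla_{\bar\omega}\nabla_\epsilon R$ and $\nabla_\omega\nabla^\epsilon R$ are the two factors of a single mixed pairing. Pairing each with its own conjugate computes the same quantity twice: the two mixed families of blocks coincide by conjugation together with $R_{\bar\alpha\beta\bar\gamma\mu}=R_{\beta\bar\alpha\mu\bar\gamma}$. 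This mixed pairing is the paper's second $\tfrac12$-sum. There $\nabla_{\bar\omega}\nabla_\epsilon R_\alpha{}^\beta{}_\gamma{}^\mu=-\tfrac18u^{\omega a}\partial_aB_\epsilon{}^{\gamma\alpha}{}_{\beta\mu}$. Its factor $u^{\omega a}$ cancels the $2u_{\omega w}$ that raises $\bar\omega$ on the six-term Leibniz expansion of $\nabla_\omega\nabla^\epsilon R$.

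\textbf{The pure block is missing.} The paper's first $\tfrac12$-sum is
\[
8\sum(\nabla_\omega\nabla_\epsilon R_\alpha{}^\beta{}_\gamma{}^\mu)(\nabla^\omega\nabla^\epsilon R^\alpha{}_\beta{}^\gamma{}_\mu),
\]
coming from the $(\omega,\epsilon)$ and $(\bar\omega,\bar\epsilon)$ blocks, which your plan never mentions.
\begin{itemize}
\item The second factor is simply $-\tfrac12u^{\mu\beta}_{\alpha\gamma\epsilon\omega}$. It involves only $\partial_{\bar z}$-derivatives of a tensor of purely holomorphic type.
\item The first factor is $-\tfrac18$ times the six-term bracket of the first sum. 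You get it by applying $\nabla_\omega$ to $\nabla_\epsilon R=-\tfrac14B_\epsilon{}^{\gamma\alpha}{}_{\beta\mu}$ as a covariant Hessian.
\item That Hessian includes the Christoffel correction on the derivative slot $\epsilon$. This correction is exactly the term $-u^{ab}_mB_m{}^{ik}{}_{jl}$ in the stated formula.
\end{itemize}
This pure block is the squared norm of the $(2,0)$-part of $\nabla^2R$ and does not vanish in general.

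As written, your plan would produce twice the mixed contribution and none of the pure one. The fix is to split $\nabla^2 R$ by pure versus mixed derivative type, each with multiplicity $8$, as the paper does.
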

\begin{proof}

Recall that $|\Ric|^2_g= g^{a_1b_1}g^{a_2b_2}R_{a_1a_2}\overline{R_{b_1b_2}} $ and $\overline{R_{b_1b_2}} = \overline{R^{\lambda}{}_{b_1 \lambda \bar{b_2}} + R^{\bar \lambda}{}_{\bar b_1 \bar \lambda b_2}} = {R^{\lambda}{}_{b_1 \lambda \bar{b_2}} + R^{\bar \lambda}{}_{\bar b_1 \bar \lambda b_2}}$. Hence

\begin{align*}
    |\Ric|^2_g &= g^{a_1b_1}g^{a_2b_2}R_{a_1a_2}{R_{b_1b_2}} \\ 
    &= R^{b_1}{}_{a_2}R_{b_1}{}^{a_2} \\ 
    &= R^{\beta_1}{}_{\alpha_2}R_{\beta_1}{}^{\alpha_2} + R^{\bar \beta_1}{}_{\bar \alpha_2}R_{\bar \beta_1}{}^{\bar \alpha_2} \\ 
    &= 2R^{\beta}{}_{\alpha}R_{\beta}{}^{\alpha} = \frac{1}{2}u^{ak}_{bk}u^{bl}_{al}
\end{align*}
    
The Riemannian curvature formula is then given by 
\begin{align*}
    |\Riem|^2 &= g^{a_1b_1}g^{a_2b_2}g^{a_3b_3}g^{a_4b_4}R_{a_1a_2a_3a_4}{R_{b_1b_2b_3b_4}} \\
     &= R^{b_1}{}_{a_2}{}^{b_3}{}_{a_4} R_{b_1}{}^{a_2}{}_{b_3}{}^{a_4} \\
     &= R^{\beta_1^{\pm_1}}{}_{\alpha_2^{\pm_1}}{}^{\beta_3^\pm}{}_{\alpha_4^\pm} R_{\beta_1^{\pm_1}}{}^{\alpha_2^{\pm_1}}{}_{\beta_3^\pm}{}^{\alpha_4^{\pm}} \\
     &= 4 R_{\alpha_2}{}^{\beta_1}{}_{\alpha_4}{}^{\beta_3}R_{\beta_1}{}^{\alpha_2}{}_{\beta_3}{}^{\alpha_4} \\&= u^{a_4a_2}_{b_1b_3}u^{b_3b_1}_{a_2a_4} = u^{ij}_{kl}u^{kl}_{ij}
\end{align*}
where ${\pm}$ denotes barred and unbarred indices respectively from the projections \ref{eq:Cmplx_projection_operators} and the curvature formula \eqref{eq:Riem curvature}. 

For the covariant derivatives, we note that  $\overline{\nabla_{\epsilon}R_{\alpha\bar\beta\gamma}{}^{\mu}} = \nabla_{\bar \epsilon}R_{\bar \alpha \beta \bar \gamma}{}^{\bar \mu}$.

Using the formula \eqref{eq:Riem curvature} and the projections \ref{eq:Cmplx_projection_operators}, 
\begin{align*}
    |\nabla \Riem|^2 &= \nabla_eR_{a}{}^{b}{}_{c}{}^{d}\nabla^eR^a{}_{b}{}^{c}{}_d \\
&= \nabla_e R_{\alpha^{\pm_1}}{}^{\beta^{\pm_1}}{}_{\gamma^{\pm}}{}^{\delta^\pm} \nabla^eR^{\alpha^{\pm_1}}{}_{\beta^{\pm_1}}{}^{\gamma^{\pm}}{}_{\delta^\pm} \\      
&= \nabla_{\epsilon^{\pm_2}} R_{\alpha^{\pm_1}}{}^{\beta^{\pm_1}}{}_{\gamma^{\pm}}{}^{\delta^\pm} \nabla^{\epsilon^{\pm_2}}R^{\alpha^{\pm_1}}{}_{\beta^{\pm_1}}{}^{\gamma^{\pm}}{}_{\delta^\pm}  \\
\end{align*}
and the formula follows by observing that the $2^3$ summands are equal. Finally, the curvature symmetries and complex conjugation give
\[
\begin{aligned}
|\nabla^2\Riem|_g^2
={}&8\sum_{\omega,\epsilon,\alpha,\beta,\gamma,\mu=1}^2
(\nabla_\omega\nabla_\epsilon
R_\alpha{}^\beta{}_\gamma{}^\mu)
(\nabla^\omega\nabla^\epsilon
R^\alpha{}_\beta{}^\gamma{}_\mu)\\
&+8\sum_{\omega,\epsilon,\alpha,\beta,\gamma,\mu=1}^2
(\nabla_{\bar\omega}\nabla_\epsilon
R_\alpha{}^\beta{}_\gamma{}^\mu)
(\nabla^{\bar\omega}\nabla^\epsilon
R^\alpha{}_\beta{}^\gamma{}_\mu).
\end{aligned}
\]
Here the iterated covariant derivatives denote components of
the covariant Hessian, including the connection on the
$\epsilon$ index. The two sums are the pure and mixed
derivative contributions, respectively. Substituting the
component formulae and applying the Leibniz rule gives the
stated expression. The connection on the derivative index
contributes $-u^{ab}_m B_m^{ik}{}_{jl}$ in the first sum and
$+u^{am}_b u^{jl}_{ikm}$ in the second.
\end{proof}

\section{Elementary Estimates from Geometric Analysis}\label{GAE}
The purpose of this Appendix is to detail a number of basic estimates in geometric analysis that are crucial for our argument. The fact that such inequalities hold in some form will not be surprising to experts in PDE, but we emphasise here that for our purposes, it is essential that the inequality constants are known \textit{explicitly}, and it is preferable if they are as close to optimal as practical. 
Throughout this subsection, we will let $(M,g)$ be a closed Riemannian manifold of dimension $n\ge 2$, with volume $V$ and Ricci curvature satisfying the positive lower bound $\Ric(u,u) \geq \mu g(u,u)$ for all $u\in TM$, and some $\mu\in \mathbb{R}^+$. We use the norm convention $\|{u}_{W^{k,p}}=\|{u}_{L^{p}}+\sum_{i=1}^{k}\|{\nabla^i u}_{L^{p}}$, for any positive integer $k$.

\subsection{Sobolev embedding theorems}
For $1\le q<n$, we define the following constants (adapted from \cite{Aubin} and \cite{Ilias1983}), which use the $\Gamma$ function and the volume  $\omega_n$ of the unit-radius round sphere $\mathbb{S}^n$:
\begin{align*}
    K(n,q)
    =
    \frac{q-1}{n-q}
    \left(\frac{n-q}{n(q-1)}\right)^{1/q}
    \left[
    \frac{q\,\Gamma(n+1)}
    {(n-q)\,\Gamma\!\left(\frac{n}{q}-1\right)\,\Gamma\!\left(n+1-\frac{n}{q}\right)\,\omega_{n-1}}
    \right]^{1/n}
    \text{ for }
    1<q<n,\\
    \Cl{cost:said-isoperimetric-const}= \lim_{q \rightarrow 1^+}
    K(n,q)^{-1}=n \left( \frac{\omega_{n-1}}{n} \right)^{1/n},  \qquad \Cl{const:said-isoperimetric-const-of-M} V^{-1/n}
    =
    2^{-1/n}
    \sqrt{\frac{\mu}{n-1}}
    \left(
    \int_0^{\frac{\pi}{2}}
    (\cos t)^{n-1}
    dt
    \right)^{-1}.
\end{align*}
We obtain Morrey's inequality. 
\begin{theorem}[Théorème 7 in \cite{Ilias1983}]
   For $q >n$ and any $u \in W^q_1(M)$ we have:
    \[
    \|{u}_{L^\infty}
    \leq
    \left(
    \frac{q-1}{q-n}
    \right)^{1-1/q}
    \left(
    \frac{\Cr{cost:said-isoperimetric-const}}{\Cr{const:said-isoperimetric-const-of-M}}
    \right)
    \frac{V^{1/n-1/q}}{n^{1/q-1/n} \omega^{1/n}_{n-1}}
    \|{\nabla u}_{L^q}
    +
    2 V^{-1/q} \|{u}_{L^q}.
    \]
\end{theorem}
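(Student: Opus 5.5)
We intend to prove this along the lines of Ilias's argument: an isoperimetric inequality for $(M,g)$ coming from the Ricci lower bound, followed by a level-set (coarea) estimate. Since the statement is quoted from \cite{Ilias1983}, the task is to reconstruct that argument with the constants kept explicit. Apply the estimate to $u$ and $-u$ separately, so it suffices to bound $\sup u$. Let $m_0$ be a median of $u$, so that $\vol\{u>m_0\}\le V/2$ and $\vol\{u<m_0\}\le V/2$.

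\emph{Step 1 (isoperimetry).} Use the Lévy--Gromov / Bérard--Besson--Gallot comparison for $\Ric\ge\mu g$. Comparing with the model sphere of Ricci curvature $\mu$, it gives for every domain $\Omega$ with $\vol(\Omega)\le V/2$ an inequality
\[
|\partial\Omega|\ \ge\ \Cr{const:said-isoperimetric-const-of-M}\,V^{-1/n}\,\vol(\Omega)^{(n-1)/n}\cdot(\text{normalising factor}).
\]
This is where the factor $2^{-1/n}\sqrt{\mu/(n-1)}\,(\int_0^{\pi/2}\cos^{n-1})^{-1}$ in $\Cr{const:said-isoperimetric-const-of-M}$ originates. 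We then compare it with the Euclidean isoperimetric constant $\Cr{cost:said-isoperimetric-const}=n(\omega_{n-1}/n)^{1/n}$; that ratio is the quotient $\Cr{cost:said-isoperimetric-const}/\Cr{const:said-isoperimetric-const-of-M}$ appearing in the statement.

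\emph{Step 2 (level sets).} Set $m(t)=\vol\{u>t\}$ for $t\ge m_0$. By the coarea formula,
\[
-\tfrac{d}{dt}\int_{\{u>t\}}|\nabla u| = |\partial\{u>t\}|
\]
for a.e.\ $t$. Hölder's inequality on the level sets gives
\[
|\partial\{u>t\}|\le(-m'(t))^{1-1/q}\Bigl(-\tfrac{d}{dt}\int_{\{u>t\}}|\nabla u|^q\Bigr)^{1/q}.
\]
Combining this with Step 1 and dividing, we obtain
\[
1\le \frac{(-m')^{1-1/q}\,(\cdots)^{1/q}}{c\,m^{(n-1)/n}}.
\]
Integrating in $t$ from $m_0$ to $\sup u$ and applying Hölder once more in $t$ yields
\[
\sup u - m_0 \le c^{-1}\|\nabla u\|_{L^q}\Bigl(\int_0^{V/2}s^{-\frac{(n-1)q}{n(q-1)}}\,ds\Bigr)^{1-1/q}.
\]
The exponent $\frac{(n-1)q}{n(q-1)}$ is less than $1$ exactly when $q>n$. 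The integral evaluates to a power of $V/2$ times $\bigl(\frac{n(q-1)}{q-n}\bigr)$, and after raising to the power $1-1/q$ this produces $\bigl(\frac{q-1}{q-n}\bigr)^{1-1/q}$, $V^{1/n-1/q}$ and the factor $n^{1/q-1/n}$. The remaining $\omega_{n-1}^{1/n}$ comes from writing $c$ in terms of $\Cr{cost:said-isoperimetric-const}$.

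\emph{Step 3 (median).} Since $\vol\{|u|\ge|m_0|\}\ge V/2$, Chebyshev's inequality gives $|m_0|\le(2/V)^{1/q}\|u\|_{L^q}\le 2V^{-1/q}\|u\|_{L^q}$. Adding this to the bound from Step 2, and repeating the argument for $-u$, gives the stated inequality.

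The main obstacle is Step 1 with exact constants. We have to check precisely which normalisation of the Gromov--Bérard--Besson--Gallot isoperimetric profile produces $\Cr{const:said-isoperimetric-const-of-M}$ as defined, including the $2^{-1/n}$ and the restriction to volumes at most $V/2$. We also have to check that the lower bound on the isoperimetric profile is valid uniformly on $[0,V/2]$ in the power-law form $s^{(n-1)/n}$, using concavity of the model profile. If this matching fails, we would simply cite \cite[Théorème 7]{Ilias1983} directly, since the statement is quoted verbatim from there.
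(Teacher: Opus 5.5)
Your argument is correct, and it takes a different route from the paper, which gives no proof and simply quotes \cite{Ilias1983}.

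The shape of the quoted constant is that of a symmetrisation argument. The ratio $n(\omega_{n-1}/n)^{1/n}/\Cr{const:said-isoperimetric-const-of-M}$ is the price of rearranging $u$ onto a Euclidean ball. The factor $\bigl(\frac{q-1}{q-n}\bigr)^{1-1/q}V^{1/n-1/q}n^{1/n-1/q}\omega_{n-1}^{-1/n}$ is exactly the constant in $|v(0)|\le C\|{\nabla v}_{L^q}$ for radial $v$ vanishing on the boundary of a Euclidean ball of volume $V$. Your coarea/H\"older argument performs the same computation directly on $M$, without the P\'olya--Szeg\H{o} step, and gives a slightly better constant.

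Your bookkeeping of where the Euclidean constant enters is off: it never enters your proof, and nothing is compared with it in Step~1. Put $c_M:=2^{-1/n}V^{1/n}\sqrt{\mu/(n-1)}\bigl(\int_0^{\pi/2}\cos^{n-1}t\,dt\bigr)^{-1}=\Cr{const:said-isoperimetric-const-of-M}$. Since $n(\omega_{n-1}/n)^{1/n}=n^{1-1/n}\omega_{n-1}^{1/n}$, the stated coefficient of $\|{\nabla u}_{L^q}$ equals $c_M^{-1}\bigl(\frac{n(q-1)}{q-n}\bigr)^{1-1/q}V^{1/n-1/q}$. Your Step~2 with $c=c_M$ gives the same expression with $(V/2)^{1/n-1/q}\le V^{1/n-1/q}$. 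Your Step~3 gives $2^{1/q}$ in place of $2$.

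The step you flagged closes exactly, with no extra normalising factor. You need $|\partial\Omega|\ge c_M\vol(\Omega)^{(n-1)/n}$ whenever $\vol(\Omega)\le V/2$. Rescaling Gromov's inequality to $\Ric\ge\mu g$ gives $|\partial\Omega|\ge V\sqrt{\mu/(n-1)}\,|\partial B_r|/\omega_n$, where $B_r\subset S^n$ is the unit-sphere cap with volume fraction $\vol(\Omega)/V$. With $f(r)=\int_0^r\sin^{n-1}s\,ds$,
\[
\frac{d}{dr}\log\frac{\sin^{n-1}r}{f(r)^{(n-1)/n}}=-\frac{n-1}{n\sin r\,f(r)}\bigl(\sin^n r-n\cos r\,f(r)\bigr).
\]
The bracket vanishes at $r=0$ and has derivative $n\sin r\,f(r)\ge 0$. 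Hence $|\partial B_r|/|B_r|^{(n-1)/n}$ is non-increasing, and over volume fractions at most $1/2$ it is minimised by the hemisphere. Combined with $\omega_n=2\omega_{n-1}\int_0^{\pi/2}\cos^{n-1}t\,dt$, this yields precisely $c_M$.

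Concavity of the model profile $I$ alone would not suffice here. It only gives $I(\beta)\ge 2\beta\,I(1/2)$, which is weaker than the needed $(2\beta)^{(n-1)/n}I(1/2)$. You need concavity of $I^{n/(n-1)}$, or the calculation above.

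Finally, prove the inequality for smooth $u$, where Sard's theorem makes almost every level set regular so your coarea and Hölder steps are literal. Then pass to $W^{1,q}$ by density: the inequality applied to differences makes the approximations converge uniformly. No fallback to the citation is needed.
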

We also obtain a general Sobolev inequality. 

\begin{theorem}\label{SETp}

    Let $2 \le  q < n$, $p=\frac{nq}{n-q}$, and $\alpha=\frac{q(n-2)}{2(n-q)}=\frac{p}{2^*}$. 
    Then, for $u \in W^q_1(M)$ we have
    \[
    \|{u}_{L^p}
    \leq
    \frac{K(n,2)\sqrt{n-1}}{\sqrt{\mu}} \cdot
    \left(
    \frac{\omega_n}{V}
    \right)^{1/n}
    \cdot
    \alpha
    \cdot
    \|{\nabla u}_{L^q}
    +
    V^{-1/n} \|{u}_{L^q}. 
    \]
\end{theorem}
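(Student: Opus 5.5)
The plan is a Moser-type bootstrap from the sharp $L^2$ Sobolev inequality, using the test function $|u|^\alpha$. The starting point is the $q=2$ case, which I take from Ilias \cite{Ilias1983}, as on the first page of that paper already used in \cref{efuncest}. For a closed $n$-manifold with $\Ric\ge\mu g$ it gives, with $2^*=\frac{2n}{n-2}$,
\[
\|{w}_{L^{2^*}}^2\le K(n,2)^2\,\frac{n-1}{\mu}\left(\frac{\omega_n}{V}\right)^{2/n}\|{\nabla w}_{L^2}^2+V^{-2/n}\|{w}_{L^2}^2 .
\]
Taking square roots and using $\sqrt{a+b}\le\sqrt a+\sqrt b$ yields
\[
\|{w}_{L^{2^*}}\le A\|{\nabla w}_{L^2}+V^{-1/n}\|{w}_{L^2},
\qquad
A:=\frac{K(n,2)\sqrt{n-1}}{\sqrt\mu}\left(\frac{\omega_n}{V}\right)^{1/n}.
\]
This is exactly the claim for $q=2$, where $\alpha=1$.

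For general $2\le q<n$, I would apply this to $w=|u|^\alpha$ with $\alpha=\frac{q(n-2)}{2(n-q)}\ge1$. Then $\|{w}_{L^{2^*}}=\|{u}_{L^p}^\alpha$, because $\alpha 2^*=p$. Moreover $|\nabla w|=\alpha|u|^{\alpha-1}|\nabla u|$ almost everywhere. Hölder with exponents $\frac{q}{q-2}$ and $\frac q2$ then gives
\[
\|{\nabla w}_{L^2}\le\alpha\,\|{u}_{L^{s}}^{\alpha-1}\|{\nabla u}_{L^q},
\qquad s=\frac{2(\alpha-1)q}{q-2}.
\]
A short computation shows $\alpha-1=\frac{n(q-2)}{2(n-q)}$, so $s=p$.

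The same exponent identity $\frac{2(\alpha-1)}{p}+\frac2q=1$ handles the lower-order term. Hölder applied to $\int|u|^{2(\alpha-1)}|u|^2$ gives
\[
\|{w}_{L^2}\le\|{u}_{L^p}^{\alpha-1}\|{u}_{L^q}.
\]
Combining the three estimates produces
\[
\|{u}_{L^p}^\alpha\le\|{u}_{L^p}^{\alpha-1}\bigl(A\alpha\|{\nabla u}_{L^q}+V^{-1/n}\|{u}_{L^q}\bigr).
\]
If $u=0$ there is nothing to prove; otherwise I divide by $\|{u}_{L^p}^{\alpha-1}$, which gives the stated inequality.

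The main obstacle is justifying these manipulations. We need $\|{u}_{L^p}<\infty$ before dividing, and we need $|u|^\alpha\in W^{1,2}$. I would first prove the inequality for smooth $u$, where everything is finite on the compact manifold $M$ and $|u|^\alpha$ is Lipschitz because $\alpha\ge1$. I would then extend to $W^{1,q}$ by density, using Fatou's lemma on the left-hand side. The second point to check is that the cited Ilias $L^2$ inequality has precisely the stated constant in the form used here, that is, with the squared norms on both sides.
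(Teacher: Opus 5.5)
Your proposal is correct and is essentially the paper's proof. You apply the Ilias $L^2$ Sobolev inequality to $|u|^\alpha$, take square roots, use $|\nabla(|u|^\alpha)|=\alpha|u|^{\alpha-1}|\nabla u|$ and H\"older with exponents $\frac{p}{2(\alpha-1)}=\frac{q}{q-2}$ and $\frac{q}{2}$ on both terms, then divide by $\|{u}_{L^p}^{\alpha-1}$, treating $q=2$ separately. Your added density and Fatou step, which passes from smooth $u$ to $W^{1,q}(M)$, is a harmless refinement of what the paper leaves implicit.
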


\begin{proof}
For any smooth $u:M\to \mathbb{R}$, the function $v=|u|^{\alpha}$ is in $L_{1}^2(M)$, so by writing $2^*=\frac{2n}{n-2}$ and using \cite[Théorème 3]{Ilias1983}, we obtain 
    \[
    \|{v}^2_{L^{2^*}}
    \leq
    \frac{K(n,2)^2(n-1)}{\mu}
    \left(
   \frac{\omega_n}{V}
    \right)^{2/n}
    \|{\nabla v}^2_{L^2}
    +
    V^{-2/n} \|{v}^2_{L^2}.
    \] 
    Taking square roots and using $\sqrt{a+b} \leq \sqrt{a}+\sqrt{b}$ for positive numbers $a$ and $b$ we get
    \[
    \|{v}_{L^{2^*}}
    \leq
    \frac{K(n,2)\sqrt{n-1}}{\sqrt{\mu}}
    \left(
    \frac{\omega_n}{V}
    \right)^{1/n}
    \|{\nabla v}_{L^2}
    +
    V^{-1/n} \|{v}_{L^2}.
    \]
   Since $|\nabla v|=\alpha |u|^{\alpha-1}|\nabla u|$ almost everywhere, we obtain 

    \begin{align}\label{penultimateSobolevinequality}
        \|{u}_{L^p}^\alpha=\|{v}_{L^{2^*}}&\leq 
        \frac{K(n,2)\sqrt{n-1}}{\sqrt{\mu}}
        \left(
        \frac{\omega_n}{V}
        \right)^{1/n}
        \cdot \alpha \cdot
        \|{ \, |u|^{\alpha-1} \nabla u \,}_{L^2}+V^{-1/n} \|{u}^\alpha_{L^{2\alpha}}. 
    \end{align}
    If $q=2$, then $p=2^*$ and $\alpha=1$, so the claimed estimate is
    precisely the preceding $W^{1,2}(M)\to L^{2^*}(M)$ estimate. We may
    therefore assume that $2<q<n$.
    Using Hölder's inequality with indices $\frac{p}{2(\alpha-1)}$ and $\frac{q}{2}$ (which is permissible since $(\alpha-1)\frac{2q}{q-2}=p$), we obtain 
    \begin{align*}
        \|{ \, |u|^{\alpha-1} \nabla u \,}_{L^2}^2=\|{ \, |u|^{2(\alpha-1)} |\nabla u|^2 \,}_{L^1}\leq
        \|{u}_{L^p}^{2(\alpha-1)} \cdot \|{\nabla u}_{L^q}^2
    \end{align*} and
    \begin{align*}
        \|{u}_{L^{2\alpha}}^{2 \alpha} =\|{|u|^{2\alpha-2}\cdot |u|^{2}}_{L^{1}} \leq
        \|{u}_{L^p}^{2(\alpha-1)} \cdot \|{u}_{L^q}^2;
    \end{align*}putting these into \eqref{penultimateSobolevinequality} gives 
    \begin{align*}
      \|{u}_{L^p}^\alpha  \leq
        \frac{K(n,2)\sqrt{n-1}}{\sqrt{\mu}}
        \left(
        \frac{\omega_n}{V}
        \right)^{1/n}
        \cdot \alpha \cdot
        \|{u}_{L^p}^{\alpha-1} \cdot \|{\nabla u}_{L^q}
        +V^{-1/n} \|{u}_{L^p}^{\alpha-1} \|{u}_{L^q}. 
    \end{align*}
    If $u=0$, then the required estimate holds trivially, and if $u \neq 0$, then we can divide both sides by $\|{u}_{L^p}^{\alpha-1}$ and obtain the claim. 
\end{proof}

For the rest of this subsection, we specialise to the case $n=4$. 
To simplify notation in this article, we also use less sharp versions of the above inequalities, that use the constants 
\begin{align*}
\Cl{const:emb-C1}=
\max \left\{ \sqrt{3}K(4,2) \cdot
    \left(
   \frac{\omega_4}{V\mu^{2}}
    \right)^{1/4}
    ,
    V^{-1/4}
    \right\}, 
    \qquad 
    \Cl{const:emb-C3}= 
    \max \left\{ 3\sqrt{3}\cdot K(4,2) \cdot
    \left(
    \frac{\omega_4}{V\mu^{2}}
    \right)^{1/4}
    ,
    V^{-1/4} \right\},\\
     \Cl{const:emb-C4}=\max\{\left(
    \frac{11}{8}
    \right)^{1-1/12}
    \left(
    \frac{\Cr{cost:said-isoperimetric-const}}{\Cr{const:said-isoperimetric-const-of-M}}
    \right)
    \frac{(4V)^{1/6}}{ \omega^{1/4}_{3}}
    ,
    2 V^{-1/12}\}.
\end{align*}

\begin{corollary}
\label{corollary:explicit-embeddings}
   If $n=4$, then for all functions $u$ in the appropriate function spaces:
    \begin{align*}
        \|{u}_{L^4}
        &\leq
        \Cr{const:emb-C1} \|{u}_{H^1},
        \\
        \|{u}_{L^{12}}
        &\leq
        \Cr{const:emb-C3} \|{u}_{W^{1,3}},
        \\
        \|{u}_{L^\infty}
        &\leq
        \Cr{const:emb-C4} \|{u}_{W^{1,12}}.
    \end{align*}
\end{corollary}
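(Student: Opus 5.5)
The plan is to specialise \cref{SETp} and the Morrey inequality (Théorème 7 of \cite{Ilias1983}) to $n=4$ with suitable exponents. In each case the right-hand side has the form $a\|{\nabla u}_{L^q}+b\|{u}_{L^q}$, and we bound it by $\max\{a,b\}\|{u}_{W^{1,q}}$ using the norm convention $\|{u}_{W^{1,q}}=\|{u}_{L^q}+\|{\nabla u}_{L^q}$. What remains is to check that each $\max\{a,b\}$ is exactly the stated constant $\Cr{const:emb-C1}$, $\Cr{const:emb-C3}$ or $\Cr{const:emb-C4}$. We prove everything first for smooth $u$ and then extend by density of smooth functions in $W^{1,q}(M)$. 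Both sides are continuous in the $W^{1,q}$-norm, the left side by lower semicontinuity or by the inequality itself applied to Cauchy sequences.

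\textbf{First inequality.} Take $q=2$ in \cref{SETp}, so that $p=4$ and $\alpha=\frac{2\cdot 2}{2\cdot 2}=1$. The gradient coefficient is
\[
\frac{K(4,2)\sqrt{3}}{\sqrt{\mu}}\left(\frac{\omega_4}{V}\right)^{1/4}=\sqrt{3}K(4,2)\left(\frac{\omega_4}{V\mu^2}\right)^{1/4},
\]
and the $L^2$ coefficient is $V^{-1/4}$. Their maximum is $\Cr{const:emb-C1}$, and $W^{1,2}=H^1$.

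\textbf{Second inequality.} Take $q=3$, so that $p=\frac{12}{1}=12$ and $\alpha=\frac{3\cdot 2}{2\cdot 1}=3$. The gradient coefficient becomes $3\sqrt3 K(4,2)(\omega_4/(V\mu^2))^{1/4}$, and the lower-order coefficient is again $V^{-1/4}$. Their maximum is $\Cr{const:emb-C3}$. The hypothesis $2\le q<n$ of \cref{SETp} holds since $2\le 3<4$.

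\textbf{Third inequality.} Apply the Morrey inequality with $n=4$ and $q=12>n$. The prefactor $\left(\frac{q-1}{q-n}\right)^{1-1/q}$ becomes $(11/8)^{11/12}$. The volume power is $V^{1/4-1/12}=V^{1/6}$. The factor $n^{1/q-1/n}=4^{-1/6}$ sits in the denominator, so it contributes $4^{1/6}$, and together with $V^{1/6}$ this gives $(4V)^{1/6}$. The denominator keeps $\omega_3^{1/4}$. The lower-order coefficient is $2V^{-1/12}$. Hence the maximum is $\Cr{const:emb-C4}$.

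There is no real obstacle here. The only care needed is the exponent bookkeeping in the Morrey constant, namely matching $n^{1/q-1/n}$ and $V^{1/n-1/q}$ to the displayed $(4V)^{1/6}$, and confirming that $\alpha$ in \cref{SETp} is $1$ and $3$ respectively.
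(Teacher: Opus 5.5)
Your proposal is correct and is essentially the argument the paper intends; the paper states the corollary as an immediate specialisation, without a separate proof. Taking $n=4$ and $q=2,3$ (so $\alpha=1,3$) in Theorem~\ref{SETp} and $q=12$ in the Morrey inequality, then using $a\|{\nabla u}_{L^q}+b\|{u}_{L^q}\le\max\{a,b\}\|{u}_{W^{1,q}}$, gives exactly the stated constants; your bookkeeping $4^{1/6}V^{1/6}=(4V)^{1/6}$ and the lower-order coefficients $V^{-1/4}$ and $2V^{-1/12}$ all check out, and the density step is harmless but unnecessary since both cited inequalities are already stated for $W^{1,q}$ functions.
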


Also for later use we record the embedding constant of $L^4$ into $L^3$.
The statement is a direct consequence of Hölder's inequality.

\begin{proposition}
\label{proposition:L4-L3-embedding}
    Let $M$ be a manifold with volume $V$.
    Then:
    \[
    \|{u}_{L^3}
    \leq
    \Cl{const:emb-C2}
    \|{u}_{L^4}
    \]
    for $\Cr{const:emb-C2}=V^{1/12}$.
\end{proposition}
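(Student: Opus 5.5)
The statement is a pure consequence of Hölder's inequality on a finite measure space, so I will apply Hölder to $|u|^3 = |u|^3\cdot 1$ with the conjugate exponents $\tfrac{4}{3}$ and $4$. This gives
\[
\int_M |u|^3\,dV \le \left(\int_M |u|^4\,dV\right)^{3/4}\left(\int_M 1\,dV\right)^{1/4} = \|{u}_{L^4}^3\, V^{1/4}.
\]
Taking cube roots then yields $\|{u}_{L^3}\le V^{1/12}\|{u}_{L^4}$, which is exactly the claim with $\Cr{const:emb-C2}=V^{1/12}$. In general the exponent is $\tfrac{1}{3}-\tfrac{1}{4}=\tfrac{1}{12}$.

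For completeness I would note two points. First, the same argument applies verbatim to tensors by replacing $|u|$ with the pointwise norm $|u|_g$, since only the scalar function $|u|_g$ enters. This is needed because the constant is later applied to $\nabla^2_{g_1}\varphi$ in \cref{lemma:metric-comparison}. Second, if $u\notin L^4$ the inequality is trivially true, so I only need to treat the case $u\in L^4$.

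There is no real obstacle here. The only care needed is to get the exponent of $V$ right and to check that the measure is the Riemannian volume $dV_g$ with total mass $V$. For the application, $V=12\pi^2$ by \cref{volumeform}.
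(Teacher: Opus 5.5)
Your proof is correct and matches the paper's approach: the paper simply records the bound as a direct consequence of H\"older's inequality. That is precisely the computation you carry out, applying H\"older to $|u|^3\cdot 1$ with exponents $4/3$ and $4$ and then taking cube roots to get $V^{1/12}$.
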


This then implies the following Sobolev multiplication theorem with an explicit constant:

\begin{corollary}
    \label{corollary:sobolev-multiplication-thm}
    Let $M$ be closed and let $u, v \in H^3(M)$.
    Then:
    \[
    \|{uv}_{H^3}
    \leq
    \Cl{const:multiplication-theorem}
    \|{u}_{H^3}
    \|{v}_{H^3},
    \]
    where $\Cr{const:multiplication-theorem}$ is defined in \cref{equation:multiplication-theorem-constant-def}.
\end{corollary}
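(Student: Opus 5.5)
The plan is to expand $\nabla^k(uv)$ for $k\le 3$ with the Leibniz rule and bound each term by Hölder's inequality, using the explicit embeddings of \cref{corollary:explicit-embeddings} and \cref{proposition:L4-L3-embedding}. Recall the convention $\|{w}_{H^3}=\|{w}_{L^2}+\sum_{i=1}^3\|{\nabla^i w}_{L^2}$. For each $k$ we have the pointwise estimate
\[
|\nabla^k(uv)|\le \sum_{j=0}^k \binom{k}{j}|\nabla^j u|\,|\nabla^{k-j}v|,
\]
since the Levi-Civita connection is metric and $|A\otimes B|=|A||B|$.

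The first step is a single $L^\infty$ bound, used for the terms in which one factor carries no derivatives. Chaining the corollary's three inequalities gives
\[
\|{w}_{L^\infty}\le \Cr{const:emb-C4}\|{w}_{W^{1,12}}\le \Cr{const:emb-C4}\Cr{const:emb-C3}\|{w}_{W^{2,3}}\le \Cr{const:emb-C4}\Cr{const:emb-C3}\Cr{const:emb-C2}\|{w}_{W^{2,4}}\le \Cr{const:emb-C4}\Cr{const:emb-C3}\Cr{const:emb-C2}\Cr{const:emb-C1}\|{w}_{H^3}.
\]
At each stage we apply the scalar inequality to $|\nabla^i w|$ and use Kato's inequality $|\nabla|\nabla^i w||\le|\nabla^{i+1}w|$, exactly as in the proof of \cref{lemma:metric-comparison}. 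The same chain also yields $\|{\nabla w}_{L^4}\le \Cr{const:emb-C1}\|{w}_{H^3}$ and $\|{w}_{L^4}\le \Cr{const:emb-C1}\|{w}_{H^3}$.

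The second step bounds each term in the Leibniz expansion:
\begin{itemize}
\item If $j=0$ or $j=k$, use $\|{u\,\nabla^k v}_{L^2}\le \|{u}_{L^\infty}\|{\nabla^k v}_{L^2}$ (and symmetrically).
\item If $k=2,\ j=1$, use $\|{\nabla u}_{L^4}\|{\nabla v}_{L^4}$.
\item If $k=3,\ j\in\{1,2\}$, use $\|{\nabla u\otimes\nabla^2 v}_{L^2}\le \|{\nabla u}_{L^\infty}\|{\nabla^2 v}_{L^2}$. Here $\|{\nabla u}_{L^\infty}$ is controlled through $W^{1,12}$, and the resulting $\nabla^3 u$ in $L^{12}$ is where the approach breaks. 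So instead I would use $\|{\nabla u}_{L^4}\|{\nabla^2 v}_{L^4}$, with $\|{\nabla^2 v}_{L^4}\le \Cr{const:emb-C1}(\|{\nabla^2 v}_{L^2}+\|{\nabla^3 v}_{L^2})$, which only needs three derivatives in $L^2$.
\end{itemize}
Summing all terms, every piece is a product of two quantities each at most an explicit constant times the corresponding $H^3$ norm. So $\Cr{const:multiplication-theorem}$ can be taken as
\[
\Cr{const:multiplication-theorem}=2\Cr{const:emb-C4}\Cr{const:emb-C3}\Cr{const:emb-C2}\Cr{const:emb-C1}+c\,\Cr{const:emb-C1}^2,
\]
with a small combinatorial $c$ coming from the binomial coefficients. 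This value would be recorded as \cref{equation:multiplication-theorem-constant-def}.

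The main obstacle is bookkeeping: keeping the constant explicit and not wastefully large. In particular, the terms of the form $\nabla^2 u\cdot\nabla v$ must be bounded using only three derivatives in $L^2$, which in dimension four is the critical case. The $L^4\times L^4$ split resolves it, provided the embedding $H^1\hookrightarrow L^4$ is applied to the tensors $\nabla u$ and $\nabla^2 v$ via Kato's inequality.
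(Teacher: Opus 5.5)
Your strategy is the paper's: Leibniz expansion, $L^\infty$ on the terms $u\,\nabla^k v$ and $\nabla^k u\,v$, and the $L^4\times L^4$ split with Kato's inequality for the mixed terms. However, the explicit constant you claim is not established, because your $L^\infty$ chain drops two factors of $2$. With the additive norm convention of the appendix, apply $\|{f}_{L^{12}}\le\Cr{const:emb-C3}\|{f}_{W^{1,3}}$ to $f=w$ and to $f=|\nabla w|$. This gives
\[
\|{w}_{W^{1,12}}
\le
\Cr{const:emb-C3}\bigl(\|{w}_{L^3}+2\|{\nabla w}_{L^3}+\|{\nabla^2 w}_{L^3}\bigr)
\le
2\Cr{const:emb-C3}\|{w}_{W^{2,3}},
\]
since $\|{\nabla w}_{L^3}$ is counted twice. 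The passage from $W^{2,4}$ to $H^3$ double counts $\|{\nabla w}_{L^2}$ and $\|{\nabla^2 w}_{L^2}$ in the same way. The chain therefore only yields $\|{w}_{L^\infty}\le 4\Cr{const:emb-C4}\Cr{const:emb-C3}\Cr{const:emb-C2}\Cr{const:emb-C1}\|{w}_{H^3}$, which is \cref{equation:L-infty-embedding}. Your leading term $2\Cr{const:emb-C4}\Cr{const:emb-C3}\Cr{const:emb-C2}\Cr{const:emb-C1}$ is thus too small by a factor of $4$.

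This is not cosmetic, because the only content of the statement is an explicit constant that later feeds the computer-assisted fixed-point argument. Here $4\Cr{const:emb-C4}\Cr{const:emb-C3}\Cr{const:emb-C2}\Cr{const:emb-C1}=\Cr{const:metric-difference}\approx 10.93$, while $\Cr{const:emb-C1}^2\approx 0.14$. So your constant is about $5.5+0.14\,c$, whereas the corrected leading term alone is about $21.9$. You also leave $c$ unspecified.

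Once you use the correct $L^\infty$ constant, your term-by-term scheme is sound. It gives, for example, $8\Cr{const:emb-C4}\Cr{const:emb-C3}\Cr{const:emb-C2}\Cr{const:emb-C1}+8\Cr{const:emb-C1}^2$, where $8=2+3+3$ comes from the binomial coefficients of the mixed terms of orders $2$ and $3$.

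The paper's constant is somewhat smaller because its bookkeeping differs:
\begin{itemize}
\item It reserves $L^\infty$ for $\nabla^3u\cdot v$ and $u\cdot\nabla^3 v$ only.
\item It uses the $L^4\times L^4$ split for every term of order at most $2$.
\item It records the coefficient of each product $\|{\nabla^k u}_{L^2}\|{\nabla^l v}_{L^2}$ in \cref{table:quadratic-estimate-coefficients}.
\item It then uses $\|{u}_{H^3}\|{v}_{H^3}=\sum_{k,l}\|{\nabla^k u}_{L^2}\|{\nabla^l v}_{L^2}$ to take the \emph{largest} coefficient, rather than summing one constant per term. This gives \cref{equation:multiplication-theorem-constant-def}.
\end{itemize}
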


\begin{proof}
    We have
    \begin{align*}
        \|{\nabla^3(uv)}_{L^2}
        &\leq
        \|{(\nabla^3 u)v}_{L^2}
        +
        3\|{(\nabla^2 u)\otimes\nabla v}_{L^2}
        +
        3\|{(\nabla u)\otimes\nabla^2 v}_{L^2}
        +
        \|{u \nabla^3 v}_{L^2}
        =:
        I+II+III+IV.
    \end{align*}
    Note that
    \begin{align}
    \label{equation:L-infty-embedding}
    \begin{split}
        \|{v}_{L^\infty}
        &\leq 
        \Cr{const:emb-C4}
        \|{v}_{W^{1,12}}
        =
        \Cr{const:emb-C4}
        (\|{v}_{L^{12}}
        +
        \|{\, |\nabla v| \,}_{L^{12}}
        )
        \leq
        \Cr{const:emb-C4}
        \Cr{const:emb-C3}
        (\|{v}_{W^{1,3}}
        +
        \|{\, |\nabla v| \,}_{W^{1,3}}
        )
        \\
        &
        \leq
        2\Cr{const:emb-C4}
        \Cr{const:emb-C3}
        \|{v}_{L^3_2}
        \leq 2
        \Cr{const:emb-C4}
        \Cr{const:emb-C3}
        \Cr{const:emb-C2}
        \|{v}_{L^4_2}
        \leq 4
        \Cr{const:emb-C4}
        \Cr{const:emb-C3}
        \Cr{const:emb-C2}
        \Cr{const:emb-C1}
        \|{v}_{H^3},
    \end{split}
    \end{align}
    where in the first and third step we used \cref{corollary:explicit-embeddings};
    in the fourth step we used Kato's inequality $|\nabla | \nabla v|| \leq |\nabla^2 v|$;
    in the fifth step we used \cref{proposition:L4-L3-embedding};
    in the last step we again combined \cref{corollary:explicit-embeddings} with Kato's inequality like before.

    Hence:
    \[
    I
    \leq
    \|{\nabla^3 u}_{L^2}
    \|{v}_{L^\infty}
    \leq 4
    \Cr{const:emb-C4}
    \Cr{const:emb-C3}
    \Cr{const:emb-C2}
    \Cr{const:emb-C1}
    \|{\nabla^3 u}_{L^2}
    \|{v}_{H^3}.
    \]
    The summand $IV$ is treated identically, while $II$ is estimated with $\|{(\nabla^2 u)\otimes \nabla v}_{L^2}\le \|{(\nabla^2 u)}_{L^4}\|{\nabla v}_{L^4}$ and \cref{corollary:explicit-embeddings}, and similarly for $III$, which all together gives:
    \begin{align*}
        \|{\nabla^3(uv)}_{L^2}
        &\leq 4
        \Cr{const:emb-C4}
        \Cr{const:emb-C3}
        \Cr{const:emb-C2}
        \Cr{const:emb-C1}
        \|{\nabla^3 u}_{L^2}
        \|{v}_{H^3}
        +
        3\Cr{const:emb-C1}^2
        \|{\nabla^2 u}_{H^1}
        \|{\nabla v}_{H^1}
        \\
        &
        \quad
        +
        3\Cr{const:emb-C1}^2
        \|{\nabla u}_{H^1}
        \|{\nabla^2 v}_{H^1}
        +
        4\Cr{const:emb-C4}
        \Cr{const:emb-C3}
        \Cr{const:emb-C2}
        \Cr{const:emb-C1}
        \|{u}_{H^3}
        \|{\nabla^3 v}_{L^2}.
    \end{align*}
    Similarly,
    \begin{align*}
        \|{\nabla^2(uv)}_{L^2}
        &\leq
        \Cr{const:emb-C1}^2
        \|{\nabla^2 u}_{H^1}
        \|{v}_{H^1}
        +
        2\Cr{const:emb-C1}^2
        \|{\nabla u}_{H^1} \|{\nabla v}_{H^1}
        +
        \Cr{const:emb-C1}^2
        \|{u}_{H^1} \|{\nabla^2 v}_{H^1},
        \\
        \|{\nabla(uv)}_{L^2}
        &\leq
        \Cr{const:emb-C1}^2 \|{\nabla u}_{H^1} \|{v}_{H^1}
        +
        \Cr{const:emb-C1}^2 \|{u}_{H^1} \|{\nabla v}_{H^1},
        \\
        \|{uv}_{L^2}
        &\leq
        \Cr{const:emb-C1}^2 \|{u}_{H^1} \|{v}_{H^1}.
    \end{align*}
    We sum up the four right hand sides and record the coefficients of the different products $\|{\nabla^k u}_{L^2} \|{\nabla^l v}_{L^2}$ in \cref{table:quadratic-estimate-coefficients}.
    \begin{table}[htbp]
        \centering
        \begin{tabular}{ccccc}
             &$u$&$\nabla u$&$\nabla^2 u$&$\nabla^3 u$
             \\
             \hline
             $v$&$\Cr{const:emb-C1}^2$&$2\Cr{const:emb-C1}^2$&$2\Cr{const:emb-C1}^2$&$4\Cr{const:emb-C4}
            \Cr{const:emb-C3}
            \Cr{const:emb-C2}
            \Cr{const:emb-C1}+\Cr{const:emb-C1}^2$
             \\
             $\nabla v$&$2\Cr{const:emb-C1}^2$&$5\Cr{const:emb-C1}^2$&$7\Cr{const:emb-C1}^2$&$4\Cr{const:emb-C4}
            \Cr{const:emb-C3}
            \Cr{const:emb-C2}
            \Cr{const:emb-C1}+4\Cr{const:emb-C1}^2$
             \\
             $\nabla^2 v$&$2\Cr{const:emb-C1}^2$&$7\Cr{const:emb-C1}^2$&$8\Cr{const:emb-C1}^2$&$4\Cr{const:emb-C4}
            \Cr{const:emb-C3}
            \Cr{const:emb-C2}
            \Cr{const:emb-C1}+3\Cr{const:emb-C1}^2$
             \\
             $\nabla^3 v$&$4\Cr{const:emb-C4}
            \Cr{const:emb-C3}
            \Cr{const:emb-C2}
            \Cr{const:emb-C1}+\Cr{const:emb-C1}^2$&$4\Cr{const:emb-C4}
            \Cr{const:emb-C3}
            \Cr{const:emb-C2}
            \Cr{const:emb-C1}+4\Cr{const:emb-C1}^2$&$4\Cr{const:emb-C4}
            \Cr{const:emb-C3}
            \Cr{const:emb-C2}
            \Cr{const:emb-C1}+3\Cr{const:emb-C1}^2$&$8\Cr{const:emb-C4}
            \Cr{const:emb-C3}
            \Cr{const:emb-C2}
            \Cr{const:emb-C1}$
        \end{tabular}
        \caption{Coefficients of the different products $\|{\nabla^k u}_{L^2} \|{\nabla^l v}_{L^2}$ in the proof of \cref{corollary:sobolev-multiplication-thm}.}
        \label{table:quadratic-estimate-coefficients}
    \end{table}
    
    The value of the constant $\Cr{const:multiplication-theorem}$ is then the largest entry of that table, i.e.
    \begin{align}
        \label{equation:multiplication-theorem-constant-def}
        \Cr{const:multiplication-theorem}
        =
        \max \{
        8\Cr{const:emb-C4}
        \Cr{const:emb-C3}
        \Cr{const:emb-C2}
        \Cr{const:emb-C1},
        \,4
        \Cr{const:emb-C4}
        \Cr{const:emb-C3}
        \Cr{const:emb-C2}
        \Cr{const:emb-C1}+4\Cr{const:emb-C1}^2,
        \,
        8\Cr{const:emb-C1}^2
        \}.
    \end{align}
\end{proof}

\begin{corollary}
    \label{corollary:sobolev-low-order-multiplication}
    Let $M$ be closed and define
    \begin{align*}
        \Cl{const:sobolev-low-order-multiplication-M0}
        &=
        2\Cr{const:emb-C4}
        \Cr{const:emb-C3}
        \Cr{const:emb-C2}
        \Cr{const:emb-C1},
        \quad
        \Cl{const:sobolev-low-order-multiplication-M1}
        =
        2\Cr{const:emb-C4}
        \Cr{const:emb-C3}
        \Cr{const:emb-C2}
        \Cr{const:emb-C1}
        +
        \Cr{const:emb-C1}^2,
        \quad
        \Cl{const:sobolev-low-order-multiplication-M2}
        =
        2\Cr{const:emb-C4}
        \Cr{const:emb-C3}
        \Cr{const:emb-C2}
        \Cr{const:emb-C1}
        +
        4\Cr{const:emb-C1}^2.
    \end{align*}
    If $u\in H^3(M)$, then
    \begin{align*}
        \|{uv}_{L^2}
        &\leq
        \Cr{const:sobolev-low-order-multiplication-M0}
        \|{u}_{H^3}
        \|{v}_{L^2},
        \\
        \|{uv}_{H^1}
        &\leq
        \Cr{const:sobolev-low-order-multiplication-M1}
        \|{u}_{H^3}
        \|{v}_{H^1},
        \\
        \|{uv}_{H^2}
        &\leq
        \Cr{const:sobolev-low-order-multiplication-M2}
         \|{u}_{H^3}
        \|{v}_{H^2},
    \end{align*}
    for $v\in L^2(M),W^2_1(M),W^2_2(M)$, respectively.
\end{corollary}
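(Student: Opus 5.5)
We prove the three estimates of \cref{corollary:sobolev-low-order-multiplication} in the same way as \cref{corollary:sobolev-multiplication-thm}. The only change is that $u$ always carries the $H^3$ norm while $v$ carries the weaker norm. This means that whenever a term contains no derivatives of $u$, we put $u$ in $L^\infty$ using \cref{equation:L-infty-embedding}, namely $\|{u}_{L^\infty}\le 4\Cr{const:emb-C4}\Cr{const:emb-C3}\Cr{const:emb-C2}\Cr{const:emb-C1}\|{u}_{H^3}$. Terms in which $u$ is differentiated are handled with the $L^4$ embedding from \cref{corollary:explicit-embeddings}.

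\emph{The $L^2$ estimate.} We have $\|{uv}_{L^2}\le \|{u}_{L^\infty}\|{v}_{L^2}$. Feeding in \cref{equation:L-infty-embedding} gives the constant $4\Cr{const:emb-C4}\Cr{const:emb-C3}\Cr{const:emb-C2}\Cr{const:emb-C1}$. To reach the stated $\Cr{const:sobolev-low-order-multiplication-M0}=2\Cr{const:emb-C4}\Cr{const:emb-C3}\Cr{const:emb-C2}\Cr{const:emb-C1}$, I would redo the last step of \cref{equation:L-infty-embedding} more carefully. The factor $2$ in $\|{v}_{L^4_2}\le 2\Cr{const:emb-C1}\|{v}_{H^3}$ comes from applying $L^4\le \Cr{const:emb-C1}H^1$ separately to $v$, $|\nabla v|$ and $|\nabla^2 v|$. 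Summing these with Kato's inequality, each $\|{\nabla^k v}_{L^2}$ appears at most twice, so the coefficient is $\le 2\Cr{const:emb-C1}$ per norm component. The sharper constant should follow if one instead controls the sum by the full $H^3$ norm in the convention $\|{\cdot}_{W^{k,p}}=\sum\|{\nabla^i\cdot}_{L^p}$ and bounds $\|{u}_{L^\infty}\le 2\Cr{const:emb-C4}\Cr{const:emb-C3}\Cr{const:emb-C2}\Cr{const:emb-C1}\|{u}_{H^3}$. Checking which of these bookkeeping versions is valid is the main (purely bookkeeping) obstacle. If only the constant $4\cdots$ is obtainable, the proof still goes through with the constants enlarged accordingly.

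\emph{The $H^1$ estimate.} Expand $\nabla(uv)=(\nabla u)v+u\nabla v$. The term $u\nabla v$ is bounded by $\|{u}_{L^\infty}\|{\nabla v}_{L^2}$, contributing $\Cr{const:sobolev-low-order-multiplication-M0}\|{u}_{H^3}\|{\nabla v}_{L^2}$. For $(\nabla u)v$, use Hölder to get $\|{\nabla u}_{L^4}\|{v}_{L^4}$, then $\Cr{const:emb-C1}^2\|{\nabla u}_{H^1}\|{v}_{H^1}$ by \cref{corollary:explicit-embeddings} and Kato. Adding the $L^2$ estimate and collecting coefficients of $\|{\nabla^k v}_{L^2}$ gives at most $\Cr{const:sobolev-low-order-multiplication-M0}+\Cr{const:emb-C1}^2$, as in the table bookkeeping of \cref{corollary:sobolev-multiplication-thm}.

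\emph{The $H^2$ estimate.} Expand $\nabla^2(uv)=(\nabla^2u)v+2\nabla u\otimes\nabla v+u\nabla^2v$. The last term is handled with $L^\infty$ on $u$. The middle term is bounded by $2\Cr{const:emb-C1}^2\|{\nabla u}_{H^1}\|{\nabla v}_{H^1}$. For the first term, use $\|{\nabla^2u}_{L^4}\|{v}_{L^4}\le\Cr{const:emb-C1}^2\|{\nabla^2u}_{H^1}\|{v}_{H^1}$, which is admissible because $u\in H^3$. Summing with the $H^1$ bound and recording coefficients per $\|{\nabla^k v}_{L^2}$ in a small table, the largest entry is at most $\Cr{const:sobolev-low-order-multiplication-M0}+4\Cr{const:emb-C1}^2$: the entry for $\nabla v$ picks up $\Cr{const:emb-C1}^2$ from the $H^1$ step, $2\Cr{const:emb-C1}^2$ from the middle term, and $\Cr{const:emb-C1}^2$ from the first term. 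This gives $\Cr{const:sobolev-low-order-multiplication-M2}$.
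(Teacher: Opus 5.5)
Apart from one constant, your argument is the paper's. You expand $\nabla^k(uv)$ by Leibniz, put $u$ in $L^\infty$ when it is undifferentiated, and otherwise use $\|{S\otimes T}_{L^2}\le\Cr{const:emb-C1}^2\|{S}_{H^1}\|{T}_{H^1}$. The paper replaces your per-component tables by the count $\sum_{k=1}^r\sum_{i=1}^k\binom{k}{i}=2^{r+1}-r-2$, which gives the same $\Cr{const:emb-C1}^2$ and $4\Cr{const:emb-C1}^2$. Your $H^1$ and $H^2$ bookkeeping is correct.

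\textbf{The gap.} It is the one you flagged yourself. The stated value of $\Cr{const:sobolev-low-order-multiplication-M0}$ requires $\|{u}_{L^\infty}\le 2\Cr{const:emb-C4}\Cr{const:emb-C3}\Cr{const:emb-C2}\Cr{const:emb-C1}\|{u}_{H^3}$, and your proposed repair does not deliver it.

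\begin{itemize}
\item Your remark that each $\|{\nabla^k v}_{L^2}$ appears at most twice concerns only the last step of \cref{equation:L-infty-embedding}.
\item The factor $2$ produced earlier by $\|{u}_{L^{12}}+\|{\nabla u}_{L^{12}}$ compounds with it.
\end{itemize}

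Tracking all three derivative-raising embeddings term by term, using Kato's inequality, gives
\begin{align*}
\|{u}_{L^\infty}
&\le
\Cr{const:emb-C4}\Cr{const:emb-C3}\Cr{const:emb-C2}
\bigl(\|{u}_{L^4}+2\|{\nabla u}_{L^4}+\|{\nabla^2 u}_{L^4}\bigr)
\\
&\le
\Cr{const:emb-C4}\Cr{const:emb-C3}\Cr{const:emb-C2}\Cr{const:emb-C1}
\bigl(\|{u}_{L^2}+3\|{\nabla u}_{L^2}+3\|{\nabla^2 u}_{L^2}+\|{\nabla^3 u}_{L^2}\bigr)
\le
3\Cr{const:emb-C4}\Cr{const:emb-C3}\Cr{const:emb-C2}\Cr{const:emb-C1}\|{u}_{H^3}.
\end{align*}
The weights $1,3,3,1$ are the best this chain of embeddings can produce. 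So your proof establishes the three inequalities with $2\Cr{const:emb-C4}\Cr{const:emb-C3}\Cr{const:emb-C2}\Cr{const:emb-C1}$ replaced by $3\Cr{const:emb-C4}\Cr{const:emb-C3}\Cr{const:emb-C2}\Cr{const:emb-C1}$. If you quote \cref{equation:L-infty-embedding} exactly as displayed, the replacement is $4\Cr{const:emb-C4}\Cr{const:emb-C3}\Cr{const:emb-C2}\Cr{const:emb-C1}$. Either way, you do not obtain the stated constants.

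The paper's own proof has exactly the same defect. It attributes the factor $2$ to \cref{equation:L-infty-embedding}, but that display's final constant is $4\Cr{const:emb-C4}\Cr{const:emb-C3}\Cr{const:emb-C2}\Cr{const:emb-C1}$. Your suspicion was therefore justified.

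The right resolution is to enlarge $\Cr{const:sobolev-low-order-multiplication-M0}$, $\Cr{const:sobolev-low-order-multiplication-M1}$ and $\Cr{const:sobolev-low-order-multiplication-M2}$ accordingly, not to expect the factor $2$ to come out of more careful bookkeeping. This is harmless downstream, because these constants only ever multiply $\epsilon=\|{E}_{H^3}$ in \cref{corollary:a-priori-estimate}.
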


\begin{proof}
Hölder's inequality together with \cref{corollary:explicit-embeddings} and Kato's inequality gives, for tensor fields $S,T$,
\[
    \|{S\otimes T}_{L^2}
    \leq
    \Cr{const:emb-C1}^2
    \|{S}_{H^1}\|{T}_{H^1}.
\]
For $1\leq i\leq k\leq r\leq 2$, this implies
\[
    \|{\nabla^i u\otimes\nabla^{k-i}v}_{L^2}
    \leq
    \Cr{const:emb-C1}^2
    \|{u}_{H^3}\|{v}_{H^r}.
\]
We thus have
\begin{align*}
    \|{uv}_{H^r}
    &=
    \sum_{k=0}^r
    \|{\nabla^k(uv)}_{L^2}
    \\
    &\leq
    \sum_{k=0}^r
    \sum_{i=0}^k
    \binom{k}{i}
    \|{\nabla^i u\otimes\nabla^{k-i}v}_{L^2}
    \\
    &=
    \sum_{k=0}^r
    \|{u\nabla^k v}_{L^2}
    +
    \sum_{k=1}^r
    \sum_{i=1}^k
    \binom{k}{i}
    \|{\nabla^i u\otimes\nabla^{k-i}v}_{L^2}
    \\
    &\leq
    \|{u}_{L^\infty}
    \|{v}_{H^r}
    +
    \Cr{const:emb-C1}^2
    \|{u}_{H^3}
    \|{v}_{H^r}
    \sum_{k=1}^r
    \sum_{i=1}^k
    \binom{k}{i}
    \\
    &\leq
    2\Cr{const:emb-C4}
    \Cr{const:emb-C3}
    \Cr{const:emb-C2}
    \Cr{const:emb-C1}
    \|{u}_{H^3}
    \|{v}_{H^r}
    +
    \Cr{const:emb-C1}^2
    \|{u}_{H^3}
    \|{v}_{H^r}
    \sum_{k=1}^r
    \sum_{i=1}^k
    \binom{k}{i}
    \\
    &=
    \left(
    2\Cr{const:emb-C4}
    \Cr{const:emb-C3}
    \Cr{const:emb-C2}
    \Cr{const:emb-C1}
    +
    (2^{r+1}-r-2)
    \Cr{const:emb-C1}^2
    \right)
    \|{u}_{H^3}
    \|{v}_{H^r},
\end{align*}
where we used \cref{equation:L-infty-embedding} in the fifth step, and we used $\sum_{k=1}^r
    \sum_{i=1}^k
    \binom{k}{i}=2^{r+1}-r-2$ in the last step.
Taking $r=0,1,2$ proves the three claims.
\end{proof}

\subsection{Elliptic estimates}
\label{subsection:elliptic-estimates}

In this subsection, we use Bochner's formula to estimate a tensor field $T$ in terms of its connection Laplacian $\Delta T$. 

\begin{proposition}
    Let $M$ be an $n$-dimensional manifold and let $E \rightarrow M$ be a vector bundle with a metric connection $\nabla$.
    Let $s$ be a section of $E$.
    Denote the connection Laplacian as $\Delta = \nabla^* \nabla$. Then $[\Delta,\nabla]s$ is a section of $T^* M\otimes E$ which satisfies the following for any $x \in M$ and $X \in T_x M$:
    \begin{align}
    \label{equation:Delta-commutator}
    [\Delta, \nabla]s(X)
    =
    -\nabla_{\Ric(X)}s
    -
    2\sum_{i=1}^n
    R(e_i, X) \nabla_{e_i} s
    -
    \sum_{i=1}^n
    (\nabla_{e_i}R)(e_i, X)s,
    \end{align}
    where $e_i$ is an orthonormal basis of $T_x M$ and $R(U,V)s=\nabla_U \nabla_V s-\nabla_V \nabla_U s-\nabla_{[U,V]} s$ denotes the curvature endomorphism of $E$.
\end{proposition}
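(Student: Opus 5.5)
The plan is a direct computation at a point, using the Ricci identity twice and working in a frame adapted to the point. We use the sign convention $\Delta=\nabla^*\nabla=-\sum_i\nabla^2_{e_i,e_i}$. Here $\nabla^2_{U,V}s=\nabla_U\nabla_Vs-\nabla_{\nabla_UV}s$, and higher covariant derivatives are built the same way using the induced connections on $T^*M^{\otimes k}\otimes E$.

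Fix $x\in M$ and extend $e_1,\dots,e_n$ and $X$ to a neighbourhood so that $(\nabla e_i)(x)=0$ and $(\nabla X)(x)=0$, for instance via a geodesic normal frame. Then at $x$ we can write
\[
(\Delta\nabla s)(X)=-\sum_i\nabla^3_{e_i,e_i,X}s,\qquad (\nabla\Delta s)(X)=-\sum_i\nabla^3_{X,e_i,e_i}s .
\]
The commutator then splits into two differences:
\[
\nabla^3_{e_i,e_i,X}s-\nabla^3_{X,e_i,e_i}s=\bigl(\nabla^3_{e_i,e_i,X}s-\nabla^3_{e_i,X,e_i}s\bigr)+\bigl(\nabla^3_{e_i,X,e_i}s-\nabla^3_{X,e_i,e_i}s\bigr).
\]

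For the first bracket, apply $\nabla_{e_i}$ to the Ricci identity $\nabla^2_{U,V}s-\nabla^2_{V,U}s=R(U,V)s$, evaluated at $U=e_i$, $V=X$. At $x$ the frame terms vanish, and the Leibniz rule gives
\[
\nabla^3_{e_i,e_i,X}s-\nabla^3_{e_i,X,e_i}s=(\nabla_{e_i}R)(e_i,X)s+R(e_i,X)\nabla_{e_i}s .
\]

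For the second bracket, apply the Ricci identity to the $T^*M\otimes E$-valued section $\nabla s$. The curvature of the tensor product connection acts on $\nabla s$ by
\[
\bigl(R(e_i,X)\nabla s\bigr)(e_i)=R(e_i,X)\nabla_{e_i}s-\nabla_{R^{TM}(e_i,X)e_i}s .
\]
Summing over $i$ and using $\sum_iR^{TM}(e_i,X)e_i=-\Ric(X)$, which follows from $\Ric(X,Y)=\sum_i g(R(e_i,X)Y,e_i)$ and the pair symmetry of the Riemann tensor, this bracket contributes $\sum_iR(e_i,X)\nabla_{e_i}s+\nabla_{\Ric(X)}s$.

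Adding the two brackets and multiplying by $-1$ yields exactly \eqref{equation:Delta-commutator}. Both the $-2\sum_i R(e_i,X)\nabla_{e_i}s$ term and the $-\nabla_{\Ric(X)}s$ term appear there. Since both sides are tensorial in $X$ and independent of the choice of orthonormal frame, the pointwise identity at $x$ suffices.

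The only real obstacle is keeping the sign and index conventions consistent. There are three places to check: which slot of $\nabla^3$ is the outermost derivative, the sign in $R(U,V)=[\nabla_U,\nabla_V]-\nabla_{[U,V]}$, and the convention relating $\Ric$ to $\sum_i R(e_i,X)e_i$. I would confirm these by testing the formula on $E$ the trivial line bundle, where it must reduce to the classical Bochner identity $[\Delta,d]f=-\Ric(\nabla f)$ for the positive Laplacian. In that case $R^E=0$, so only the Ricci term survives, and its sign is thereby fixed.
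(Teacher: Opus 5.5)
Your proof is correct, and it follows the same overall strategy as the paper. Both work at a point and split $-\nabla_{e_i}\nabla_{e_i}\nabla_X s+\nabla_X\nabla_{e_i}\nabla_{e_i}s$ into an inner and an outer commutation.

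The difference is in the bookkeeping.

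\textbf{The paper's route.} It uses iterated directional derivatives along normal-coordinate fields chosen with $[e_i,X]=0$. It therefore needs a Gram--Schmidt error term $\mathcal{E}$, because the coordinate frame is orthonormal only at $x$. The Ricci term comes from the non-vanishing second-order frame term $\nabla_{e_i}\nabla_{e_i}X-\nabla_X\nabla_{e_i}e_i=R^M(e_i,X)e_i$.

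\textbf{Your route.} You work with the tensor $\nabla^3 s$ and the Ricci identity, applied once differentiated on $E$ and once on $T^*M\otimes E$. The Ricci term then comes from the $T^*M$-factor of the curvature of the induced connection. Every step is tensorial, so the frame conditions and the error term $\mathcal{E}$ are essentially unnecessary. It also makes it transparent that the two copies of $R(e_i,X)\nabla_{e_i}s$ come one from each commutation.
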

\begin{proof}
    Let $e_i$ be a coordinate basis of normal coordinates centred at a point $x \in M$ and extend $X$ to have constant coefficients in the coordinate frame given by normal coordinates centred at $x$ so that $\nabla X=0$ and $\nabla e_i=0$ at $x$ and $[e_i,X]=0$ in a neighbourhood of $x$.
    If we let $\tilde{e}_i$ be an orthonormal basis, formed by applying Gram-Schmidt to $e_i$, then $\tilde{e}_i-e_i=0$ and $\nabla(\tilde{e}_i-e_i)=0$ at $x$, so,  
    \begin{align}
    \begin{split}
        \label{equation:basic-laplace-formula}
        (\nabla^* \nabla \sigma)
        &=
        -\text{trace}(\nabla^2 \sigma)\\
        &=-\sum_{i}(\nabla^2 \sigma (\tilde{e}_i,\tilde{e}_i))\\
        &=-\sum_{i}(\nabla^2 \sigma (e_i,e_i))+\mathcal{E}\\
        & =
        -\sum_i (\nabla_{e_i} \nabla_{e_i} \sigma-\nabla_{\nabla_{e_i} e_i} \sigma)+\mathcal{E}
        \end{split}
    \end{align}
    by \cite[Eqn. 8.3]{Lawson2016}. Here, $\mathcal{E}(x)=0$ and $\nabla \mathcal{E}(x)=0$.  We will use \eqref{equation:basic-laplace-formula} with $\sigma=\nabla s$ to compute $\Delta (\nabla s)$ at $x$. 
To this end, observe that the Leibniz rule for the induced connection $\nabla$ on $T^*M\otimes E$ gives 
    \begin{align*}
        (\nabla_{e_i} \nabla s)(X)
        &=\nabla_{e_i}(\nabla s(X))-\nabla s (\nabla_{e_i}X)\\
        &=\nabla_{e_i}\nabla_X s-\nabla_{\nabla_{e_i}X}s,
        \\
        (\nabla_{e_i} \nabla_{e_i} \nabla s)(X)
        &=\nabla_{e_i}((\nabla_{e_i} \nabla s)(X))-(\nabla_{e_i} \nabla s)(\nabla_{e_i}X)\\
        &=\nabla_{e_i}\nabla_{e_i}\nabla_X s-\nabla_{e_i}\nabla_{\nabla_{e_i}X}s-\nabla_{e_i}\nabla_{\nabla_{e_i}X} s+\nabla_{\nabla_{e_i}\nabla_{e_i}X}s\\
        &=\nabla_{e_i}\nabla_{e_i}\nabla_X s-2\left(\nabla_{\nabla_{e_i}\nabla_{e_i}X}s+\nabla_{\nabla_{e_i}X}\nabla_{e_i}s\right)+\nabla_{\nabla_{e_i}\nabla_{e_i}X}s\\
        &=\nabla_{e_i}\nabla_{e_i}\nabla_X s-\nabla_{\nabla_{e_i}\nabla_{e_i}X}s
        \end{align*}
at $x$ since $\nabla_{e_i}X=0$ here. 
  But since $\nabla_{e_i}e_i$ also vanishes at $x$, we can use the above expression and \cref{equation:basic-laplace-formula} with $\sigma=\nabla s$ to conclude
    \begin{align*}
        \Delta \nabla s (X)
        &=
        \left(
        -\sum_i (\nabla_{e_i} \nabla_{e_i} \nabla s-\nabla_{\nabla_{e_i} e_i} \nabla s)
        \right)(X)
        \\
        &=
        -\left(
        \sum_i (\nabla_{e_i} \nabla_{e_i} \nabla s)
        \right)(X)
        \\
       &=-\nabla_{e_i}\nabla_{e_i}\nabla_X s+\nabla_{\nabla_{e_i}\nabla_{e_i}X}s
    \end{align*}
at $x$.

    To compute $\nabla \Delta s$, we apply $\nabla$ to \cref{equation:basic-laplace-formula} and obtain:
    \begin{align*}
        \nabla_X \Delta s
        &=
        -\sum_i \nabla_X \nabla_{e_i} \nabla_{e_i} s
        +
        \sum_i \nabla_X \nabla_{\nabla_{e_i} e_i} s
        \\
        &=
        -\sum_i \nabla_X \nabla_{e_i} \nabla_{e_i} s
        +
        \sum_i
        \nabla_X (\nabla s) \underbrace{(\nabla_{e_i} e_i)}_{=0}+\nabla_{\nabla_X \nabla_{e_i} e_i} s
    \end{align*}
    by the Leibniz rule, which implies 
    \begin{align}\label{LBFF}
        ([\Delta,\nabla]s)(X)
        &=
        \sum_i -\nabla_{e_i} \nabla_{e_i} \nabla_X s
        +\nabla_X \nabla_{e_i} \nabla_{e_i} s
        +\sum_i \nabla_{\nabla_{e_i} \nabla_{e_i} X-\nabla_X \nabla_{e_i} e_i} s.
    \end{align}
    Now we can use $R(e_i, X)=\nabla_{e_i} \nabla_X-\nabla_X \nabla_{e_i} -\nabla_{[e_i,X]}$ twice to commute covariant derivatives to obtain
    \begin{align}\label{triplecommute}
    \begin{split}
        -\nabla_{e_i} \nabla_{e_i} \nabla_X s
        +\nabla_X \nabla_{e_i} \nabla_{e_i} s&=-\nabla_{e_i} (\nabla_{e_i} \nabla_X s-\nabla_{X}\nabla_{e_i}s)
        +(\nabla_X \nabla_{e_i} -\nabla_{e_i}\nabla_X)\nabla_{e_i} s
        \\
        &=
        -\nabla_{e_i} (R^E(e_i,X)s+\nabla_{[e_i,X]} s)
        -
        R^E(e_i,X) \nabla_{e_i} s- \underbrace{\nabla_{[e_i,X]}}_{=0} \nabla_{e_i} s
        \\
        &=
        -(\nabla_{e_i} R^E(e_i,X))s-R^E(e_i,X) \nabla_{e_i} s
        -\nabla_{e_i}(\nabla s)(\underbrace{[e_i,X]}_{=0})-\nabla_{\nabla_{e_i} [e_i,X]} s
        -R^E(e_i,X) \nabla_{e_i} s
    \end{split}
    \end{align}
    and the last summand of $([\Delta,\nabla]s)(X)$ can be 
    rewritten using the following relation:
    \begin{align}\label{Riemanncurvaturecompute}
        \nabla_{e_i} \nabla_{e_i} X-\nabla_X \nabla_{e_i} e_i
        &=
        \nabla_{e_i} \nabla_X e_i+\nabla_{e_i} [e_i,X]-\nabla_X \nabla_{e_i} e_i
        =
        R^M(e_i,X) e_i + \underbrace{\nabla_{[e_i,X]}}_{=0} e_i + \nabla_{e_i} [e_i,X].
    \end{align}
    After putting  \eqref{triplecommute} and \eqref{Riemanncurvaturecompute} into \eqref{LBFF}, we obtain
    \begin{align*}
        [\Delta, \nabla]s(X)&=\sum_i -\nabla_{e_i} \nabla_{e_i} \nabla_X s
        +\nabla_X \nabla_{e_i} \nabla_{e_i} s
        +\sum_i \nabla_{\nabla_{e_i} \nabla_{e_i} X-\nabla_X \nabla_{e_i} e_i} s\\
        &=\sum_{i}
        -(\nabla_{e_i} R^E(e_i,X))s-2R^E(e_i,X) \nabla_{e_i} s
        -\nabla_{\nabla_{e_i} [e_i,X]} s
       \\
       &+\sum_{i} \nabla_{ R^M(e_i,X) e_i} s+\nabla_{ \nabla_{e_i} [e_i,X]} s\\
       &=
        \sum_i
        -2R^E(e_i,X) \nabla_{e_i} s
        -(\nabla_{e_i} R^E(e_i,X))s
        +\nabla_{R^M(e_i,X)e_i}
    \end{align*}
    The claim follows by using $-\Ric(X)=\sum_i R^M(e_i,X)e_i$. 
\end{proof}

Now we move from general vector bundles to tensor bundles. On each of these tensor bundles, we will use the metric and connection induced from the Riemannian metric, and the Levi-Civita connection. 
\begin{proposition}
\label{proposition:commutator-estimates}
    For a $(0,k)$-tensor $T \in C^\infty(T^* M ^{\otimes k})$ we have
    \[
    |[\Delta, \nabla] T|
    \leq
    n(2k+1) |\Riem| \cdot |\nabla T|
    +
    kn |\nabla \Riem| \cdot |T|.
    \]
\end{proposition}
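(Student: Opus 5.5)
We apply the commutator identity \eqref{equation:Delta-commutator} with $E=T^*M^{\otimes k}$, equipped with the metric and connection induced by the Levi-Civita connection, and $s=T$. Then we estimate each of the three terms on the right-hand side pointwise. Fix $x\in M$, an orthonormal basis $e_1,\dots,e_n$ of $T_xM$, and write $|[\Delta,\nabla]T|^2=\sum_j|[\Delta,\nabla]T(e_j)|^2$. All three terms are built from the curvature endomorphism $R^E(U,V)$ of the tensor bundle. This acts as a derivation:
\[
(R^E(U,V)S)(X_1,\dots,X_k)=-\sum_{m=1}^k S(X_1,\dots,R^M(U,V)X_m,\dots,X_k).
\]
Consequently $|R^E(U,V)S|\le k\,|R^M(U,V)|_{\mathrm{op}}|S|$, and similarly for $\nabla R^E$ with $\nabla\Riem$ in place of $\Riem$.

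\textbf{First term.} We bound $\nabla_{\Ric(X)}T$ by $|\Ric(X)|\,|\nabla T|$. We then use $|\Ric|\le\sqrt n\,|\Riem|$, or more crudely a bound of $n|\Riem|$, when summing over $X=e_j$. This should produce a contribution at most $n|\Riem||\nabla T|$.

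\textbf{Second term.} The sum $2\sum_i R^E(e_i,X)\nabla_{e_i}T$ is controlled via the derivation formula by $2k\sum_i|R^M(e_i,X)|_{\mathrm{op}}|\nabla_{e_i}T|$. Cauchy--Schwarz over $i$ and then summing the squares over $X=e_j$ gives a bound of order $2k\,|\Riem|\,|\nabla T|$, possibly with an extra factor $\sqrt n$ or $n$. Since the target allows the generous constant $2kn$, the crude estimate $\sum_i a_ib_i\le\sqrt n(\sum a_i^2b_i^2)^{1/2}$ with operator norms bounded by $|\Riem|$ suffices. Together with the first term, this accounts for the coefficient $n(2k+1)$.

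\textbf{Third term.} The contribution $\sum_i(\nabla_{e_i}R^E)(e_i,X)T$ is treated the same way. The derivation formula gives $k\sum_i|(\nabla_{e_i}\Riem)(e_i,X)|_{\mathrm{op}}|T|$, and after summing over $X$ this is at most $kn|\nabla\Riem||T|$.

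The only delicate point is the bookkeeping of the dimensional factors, namely how a contraction over one index $i$, followed by the $\ell^2$-sum over $X=e_j$, compares with the full tensor norms $|\Riem|$ and $|\nabla\Riem|$. I expect this to be the main obstacle. I would handle it uniformly by bounding each $\ell^1$-sum over $i$ by $\sqrt n$ times an $\ell^2$-sum, and each $\ell^2$-norm over $j$ of a vector by its $\ell^1$-norm. Because the stated constants are not sharp, this loses at most factors of $n$, which the statement absorbs.
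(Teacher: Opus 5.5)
Your proposal is correct and follows the paper's proof essentially step by step. Both use the derivation formula for the curvature action on $(0,k)$-tensors, which gives $|R^E(U,V)S|\le k\,|\Riem|\,|S|$ and its $\nabla\Riem$ analogue, then split $[\Delta,\nabla]T$ into the Ricci term, the $R^E\nabla T$ term and the $(\nabla R^E)T$ term, with crude $\ell^1$--$\ell^2$ bookkeeping giving the contributions $n$, $2kn$ and $kn$.

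One caution: do the bookkeeping as in your term-by-term paragraphs, i.e.\ via $\sum_j|B_j|^2\le n\max_j|B_j|^2$, which is what the paper does. The shortcut in your last paragraph would instead produce $2kn^{3/2}$ rather than $2kn$; there you bound the $\ell^2$-sum over $j$ by the $\ell^1$-sum after the entrywise estimate $|R^M(e_i,e_j)|_{\mathrm{op}}\le|\Riem|$.
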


\begin{proof}
    We begin choosing an arbitrary $x\in M$, and estimating $|R(U,V)T|$ for unit length vectors $U,V \in T_x M$. 
    
    Denote the curvature endomorphism of $T^* M ^{\otimes k}$ by $R$ and the curvature endomorphism of $TM$ by $R^M$.
    Then, for $U,V,X_1,\dots,X_k \in T_x M$:
    \begin{align}
    \label{equation:curvature-action}
    (R(U,V)T)(X_1,\dots,X_k)
    =
    -\sum_{i=1}^k
    T(X_1,\dots,X_{i-1}, R^M(U,V)X_i, X_{i+1}, \dots, X_k).
    \end{align}
    To see this, we extend  $U,V,X_1,\dots,X_k$ to vector fields in a neighbourhood of $x$ that are parallel at $x$ itself, expand the definition of $R$ and $R^M$, and use $\nabla_U T(X_1,\dots,X_k)=U(T(X_1,\dots,X_k))-T(\nabla_U X_1,\dots,x_k)-\dots$. Indeed, this gives 
    \begin{align*}
        (R(U,V)T)(X_1,\cdots,X_k)&=(\nabla_{U}\nabla_{V} T-\nabla_{V}\nabla_{U}T-\nabla_{[U,V]}T)(X_1,\cdots,X_k)\\
        &=U(\nabla_{V}T(X_1,\cdots,X_k))-\nabla_V T(\nabla_{U}X_1,\cdots,X_k)-\cdots -\nabla_{V}T(X_1,\cdots,\nabla_{U} X_k)\\
        &-V(\nabla_{U}T(X_1,\cdots,X_k))+\nabla_U T(\nabla_{V}X_1,\cdots,X_k)+\cdots +\nabla_{U}T(X_1,\cdots,\nabla_{V} X_k)\\
        &-[U,V](T(X_1,\cdots,X_k))+T(\nabla_{[U,V]}X_1,\cdots,X_k)+T(X_1,\cdots,\nabla_{[U,V]}X_k)\\
        &=U(V(T(X_1,\cdots,X_k))-T(\nabla_{V}X_1,\cdots,X_k)-\cdots- T(X_1,\cdots,\nabla_{V}X_k))\\
        &-V(T(\nabla_{U}X_1,\cdots,X_k))+T(\nabla_{V}\nabla_{U}X_1,\cdots,X_k)+\cdots +T(\nabla_{U}X_1,\cdots,\nabla_{V}X_k)+\cdots\\
        &-V(T(X_1,\cdots,\nabla_{U}X_k)+T(\nabla_{V}X_1,\cdots,\nabla_{U}X_k)+\cdots+T(X_1,\cdots,\nabla_{V}\nabla_{U}X_k)\\
        &-V(U(T(X_1,\cdots,X_k))-T(\nabla_{U}X_1,\cdots,X_k)-\cdots -T(X_1,\cdots,\nabla_{U}X_k))\\
        &+U(T(\nabla_{V}X_1,\cdots,X_k))-T(\nabla_{U}\nabla_{V}X_1,\cdots,X_k)-\cdots -T(\nabla_{V}X_1,\cdots,\nabla_{U}X_k)+\cdots\\
        &+U(T(X_1,\cdots,\nabla_{V}X_k))-T(\nabla_{U}X_1,\cdots,\nabla_{V}X_k)-\cdots-T(X_1,\cdots,\nabla_{U}\nabla_{V}X_k)\\
         &-[U,V](T(X_1,\cdots,X_k))+T(\nabla_{[U,V]}X_1,\cdots,X_k)+T(X_1,\cdots,\nabla_{[U,V]}X_k)
    \end{align*}
    and 
     \Cref{equation:curvature-action} follows. We therefore find 
    \begin{align}
    \label{equation:R-action-bound}
        \begin{split}
        |R(U,V)T|
        &\leq
        k
        |T| \cdot |\Id \otimes \dots \otimes \Id \otimes R^M(U,V)^* \otimes \Id \otimes \dots \otimes \Id|_{op}
        \\
        &=
        k
        |T| \cdot |\Id|_{op} \cdot \dots \cdot | \Id|_{op} \cdot  |R^M(U,V)^*|_{op} \cdot |\Id|_{op} \cdot \dots \cdot |\Id|_{op}
        \\
        &\leq
        k |T| \cdot |\Riem|,
        \end{split}
    \end{align}
    where in the first step we used the definition of the operator norm $|\cdot|_{op}$ for endomorphisms acting on $T^* M^{\otimes k}$;
    in the second step we computed the operator norm acting on a tensor product;
    in the last step we used $|\Id|_{op}=1$ and $|R^M(U,V)^*|_{op}=|R^M(U,V)|_{op} \leq |R^M(U,V)| \leq |\Riem|$.
    This holds because the operator norm is bounded by the Hilbert-Schmidt norm (with constant equal to one), and the last step follows from writing $U,V$ in an orthonormal basis.

    We now estimate $|(\nabla_X) R(U,V)T|$ for unit length vectors $X,U,V \in T_x M$. We again extend $X,U,V,X_1,\cdots,X_k$ to vector fields defined in a neighbourhood of $x$ that are parallel at $x$ itself. 
    Differentiating \cref{equation:curvature-action} with respect to $X$ gives:
    \begin{align}
    \label{equation:curvature-action-differentiated}
    (\nabla_X R(U,V))T(X_1,\dots,X_k)
    =
    -\sum_{i=1}^k
    T(X_1,\dots,X_{i-1}, (\nabla_XR^M)(U,V)X_i, X_{i+1}, \dots, X_k).
    \end{align}
    As above, we get
    \begin{align}
        \label{equation:nabla-R-action}
        \begin{split}
        |\nabla_X R(U,V)T|
        &\leq
        k
        |T| \cdot |\Id \otimes \dots \otimes \Id \otimes \nabla_X R^M(U,V)^* \otimes \Id \otimes \dots \otimes \Id|_{op}
        \\
        &=
        k
        |T| \cdot |\Id|_{op} \cdot \dots \cdot | \Id|_{op} \cdot  |\nabla_X R^M(U,V)^*|_{op} \cdot |\Id|_{op} \cdot \dots \cdot |\Id|_{op}
        \\
        &\leq
        k |T| \cdot |\nabla_X \Riem|.
        \end{split}
    \end{align}
    For $j \in \{1,\dots,n\}$ we write
    \[
    A_j
    :=
    \nabla_{\Ric(e_j)}T,
    \qquad
    B_j
    :=
    2 \sum_{i=1}^n
    R(e_i,e_j) \nabla_{e_i} T,
    \qquad
    C_j
    :=
    \sum_{i=1}^n
    (\nabla_{e_i} R)(e_i,e_j) T
    \]
    and \cref{equation:Delta-commutator} gives:
    \begin{align}
    \label{equation:commutator-with-Bj-Cj}
        |[\Delta, \nabla]T|
        &=
        \left(
        \sum_{j=1}^n
        |A_j+B_j+C_j|^2
        \right)^{1/2}
        \leq
        \left(
        \sum_{j=1}^n
        |A_j|^2
        \right)^{1/2}
        +
        \left(
        \sum_{j=1}^n
        |B_j|^2
        \right)^{1/2}
        +
        \left(
        \sum_{j=1}^n
        |C_j|^2
        \right)^{1/2}.
    \end{align}
    Now:
    \begin{align*}
        \sum_{j=1}^n
        |A_j|^2
        &\leq
        \sum_{j=1}^n |\Ric(e_j)|^2 |\nabla T|^2
        =
        |\Ric|^2 |\nabla T|^2
        \leq
        n^2 |\Riem|^2 |\nabla T|^2
    \end{align*}
    and    
    \begin{align*}
        \sum_{j=1}^n
        |B_j|^2
        \leq
        n \cdot 4k^2 |\Riem|^2
        \left(
        \sum_{i=1}^n |\nabla_{e_i} T|
        \right)^2
        \leq
        n^2 \cdot 4k^2 |\Riem|^2
        \left(
        \sum_{i=1}^n |\nabla_{e_i} T|^2
        \right)
        \leq
        n^2 \cdot 4k^2 |\Riem|^2
        |\nabla T|^2,
    \end{align*}
    where in the first step we used \cref{equation:R-action-bound};
    in the second step we used the norm equivalence between $1$-norm and $2$-norm.
    Furthermore:
    \begin{align*}
        \sum_{j=1}^n |C_j|^2
        &\leq
        n \left(
        \sum_{i=1}^n
        |\nabla_{e_i} \Riem| \cdot k \cdot |T|
        \right)^2
        \leq
        n^2
        \sum_{i=1}^n
        |\nabla_{e_i} \Riem|^2 \cdot k^2 \cdot |T|^2
        \leq
        n^2 |\nabla \Riem|^2 k^2 |T|^2,
    \end{align*}
    where in the second step we used the equivalence of $1$-norm and $2$-norm.

    Taking the square root of the last three lines and plugging them into \cref{equation:commutator-with-Bj-Cj} gives the claim.
\end{proof}

\begin{proposition}
\label{proposition:nabla-commutator-estimates}
    For a $(0,l)$-tensor $T \in C^\infty(T^* M ^{\otimes l})$ we have
    \[
    |\nabla [\Delta, \nabla] T|
    \leq
    n(3l+n) |\nabla \Riem| \cdot |\nabla T|
        +
        n(2l+n) |\Riem| \cdot |\nabla^2 T|
        +
        nl | \nabla^2 \Riem| \cdot |T|.
    \]
\end{proposition}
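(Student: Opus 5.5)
We differentiate the commutator formula \cref{equation:Delta-commutator} once more and bound each resulting term with the pointwise action estimates \cref{equation:R-action-bound} and \cref{equation:nabla-R-action}, now applied to tensors of varying rank. Fix $x\in M$ and use normal coordinates centred at $x$, with orthonormal frame $e_1,\dots,e_n$ parallel at $x$. Write $[\Delta,\nabla]T=\sum_j (A_j+B_j+C_j)\otimes e^j$, where
\[
A_j=\nabla_{\Ric(e_j)}T,\qquad B_j=2\sum_i R(e_i,e_j)\nabla_{e_i}T,\qquad C_j=\sum_i(\nabla_{e_i}R)(e_i,e_j)T,
\]
up to sign. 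Since the frame is parallel at $x$, the derivative $\nabla_{e_m}$ of this $(0,l+1)$-tensor at $x$ is obtained by differentiating each $A_j,B_j,C_j$ with the Leibniz rule.

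The terms come out as follows.
\begin{itemize}
\item \textbf{The $A$ term.} We have $\nabla_{e_m}A_j=\nabla_{(\nabla_{e_m}\Ric)(e_j)}T+\nabla^2_{e_m,\Ric(e_j)}T$.
\item \textbf{The $B$ term.} We have $\nabla_{e_m}B_j=2\sum_i\bigl((\nabla_{e_m}R)(e_i,e_j)\nabla_{e_i}T+R(e_i,e_j)\nabla^2_{e_m,e_i}T\bigr)$. Here $R$ now acts on the $(0,l+1)$-tensor $\nabla T$, so \cref{equation:R-action-bound} contributes a factor $l+1$ rather than $l$.
\item \textbf{The $C$ term.} We have $\nabla_{e_m}C_j=\sum_i\bigl((\nabla^2_{e_m,e_i}R)(e_i,e_j)T+(\nabla_{e_i}R)(e_i,e_j)\nabla_{e_m}T\bigr)$. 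The first piece needs a version of \cref{equation:nabla-R-action} one derivative higher: differentiating \cref{equation:curvature-action-differentiated} gives $|(\nabla^2_{X,Y}R)(U,V)T|\le l\,|T|\,|\nabla^2\Riem|$, by the same operator-norm argument.
\end{itemize}

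We then sum over $j,m$ exactly as in the proof of \cref{proposition:commutator-estimates}. We use the triangle inequality in $\ell^2$ over the index pair $(j,m)$, bound $\ell^1$ norms by $\sqrt n$ times $\ell^2$ norms, and use $|\Ric|\le n|\Riem|$, $|\nabla\Ric|\le n|\nabla\Riem|$.

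Collecting coefficients gives the following.
\begin{itemize}
\item \textbf{$|\nabla\Riem||\nabla T|$:} $n$ from $\nabla\Ric$, $2nl$ from the $B$-derivative term, and $nl$ from the second $C$ piece, giving $n(3l+1)\le n(3l+n)$. The factors $l$ versus $l+1$ need care, but $n\ge 2$ gives slack.
\item \textbf{$|\Riem||\nabla^2T|$:} $n$ from $\Ric$ and $2n(l+1)$ from $B$, giving $n(2l+3)\le n(2l+n)$ for $n\ge 3$. For $n=2$ one must check more carefully, using that the $\Ric$ and $R$ contributions are not simultaneously maximal, or simply use $n=4$, which is the case of interest.
\item \textbf{$|\nabla^2\Riem||T|$:} $nl$ from the first $C$ piece.
\end{itemize}

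The main obstacle is bookkeeping. First, the ranks of the tensors that the curvature endomorphism acts on change between terms ($l$ versus $l+1$), which changes the factors coming from \cref{equation:R-action-bound}. Second, the $\ell^1$-to-$\ell^2$ conversions must produce exactly one factor of $n$ per term, matching the stated constants. If a coefficient overshoots, I would first try grouping the $\nabla^2$-terms in $A$ and $B$ together before applying the triangle inequality, as the first-order proof groups them.
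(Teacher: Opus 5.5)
Your strategy is the same as the paper's: differentiate \eqref{equation:Delta-commutator} at $x$ in a frame parallel at $x$, obtain the six terms, bound each with \eqref{equation:R-action-bound}, \eqref{equation:nabla-R-action} and their second-derivative analogue, and sum over the index pair using $\ell^1$-to-$\ell^2$ conversions. There is one genuine error, and as written it leaves the case $n=2$ unproved.

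\textbf{The error.} You claim that in the term $R(e_i,e_j)\nabla^2_{e_m,e_i}T$ the curvature acts on the $(0,l+1)$-tensor $\nabla T$, so \eqref{equation:R-action-bound} contributes a factor $l+1$. That is not so. In \eqref{equation:Delta-commutator}, $R(e_i,X)$ is the curvature endomorphism of $E=T^*M^{\otimes l}$, applied to the $E$-valued section $\nabla_{e_i}T$. After differentiating, it is applied to $\nabla^2_{e_m,e_i}T$, which is again a $(0,l)$-tensor because both slots have been filled. So the factor is $l$. This is exactly the factor you correctly used for $(\nabla_{e_m}R)(e_i,e_j)\nabla_{e_i}T$, and the one the paper uses in its bound for $B_{jk}$.

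\textbf{Why it matters.} With $l+1$, your count for the $|\Riem|\,|\nabla^2T|$ coefficient is $n(2l+3)$, which exceeds $n(2l+n)$ when $n=2$. The appendix works with arbitrary $n\ge 2$, so falling back to $n=4$ does not prove the stated proposition. The remark that the $\Ric$ and $R$ contributions ``are not simultaneously maximal'' is not an argument.

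\textbf{The fix.} With the factor $l$, your bookkeeping gives coefficients $n(3l+1)$, $n(2l+1)$ and $nl$. These are bounded by $n(3l+n)$, $n(2l+n)$ and $nl$ for every $n\ge 1$, so no case distinction is needed and the ``$l$ versus $l+1$'' worry in the first coefficient disappears.

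Your count of a single factor $n$ for each of the two Ricci terms is correct, provided you apply Cauchy--Schwarz in the contracted index before summing. For example, $\sum_{j,m}|\nabla_{(\nabla_{e_m}\Ric)(e_j)}T|^2\le|\nabla\Ric|^2|\nabla T|^2$, and then you use $|\nabla\Ric|\le n|\nabla\Riem|$. This is sharper than the paper, which bounds each $E_{jk}$ and $F_{jk}$ individually by $n|\nabla\Riem|\,|\nabla T|$ and $n|\Riem|\,|\nabla^2T|$, then sums over all $n^2$ pairs; that is where the $n^2$ in the stated constants comes from.
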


\begin{proof}
    We fix $x \in M$ and choosing an orthonormal basis $e_i$ with $\nabla e_i=0$ at $x$ as before.
    Setting $X=e_k$ in \cref{equation:Delta-commutator} and differentiating with respect $e_j$ gives:
    \begin{align*}
        \nabla [\Delta, \nabla] T(e_j,e_k)
        &=
        -\nabla_{(\nabla_{e_j}\Ric)(e_k)}T
        -(\nabla^2T)(e_j,\Ric(e_k))
        \\
        &\quad
        -
        2
        \sum_i (\nabla_{e_j} R)(e_i,e_k) \nabla_{e_i} T
        -
        2
        \sum_i
        R(e_i,e_k) \nabla_{e_j} \nabla_{e_i} T
        -
        \sum_i (\nabla_{e_j} \nabla_{e_i} R)(e_i, e_k) T
        -
        \sum_i (\nabla_{e_i} R)(e_i, e_k)(\nabla_{e_j} T)
        \\
        &=
        E_{jk}+F_{jk}+
        A_{jk}+B_{jk}+C_{jk}+D_{jk},
    \end{align*}
    where for the first summand we used
    \[
    \nabla \nabla_{\Ric(\cdot)} T(e_j,e_k)
    =
    \nabla_{e_j} \nabla_{\Ric(e_k)} T-
    \underbrace{\nabla_{\Ric(\nabla_{e_j}e_k)} T}_{=0 \text{ at } x}
    =
    \nabla^2 T(e_j, \Ric(e_k))+\nabla_{\nabla_{e_j}(\Ric(e_k))}T
    =
    \nabla^2 T(e_j, \Ric(e_k))+\nabla_{(\nabla_{e_j}\Ric)(e_k)}T.
    \]
    For the individual summands we find
    \begin{align*}
        |A_{jk}|^2
        &=
        4
        \left| 
        \sum_i 
        (\nabla_{e_j} R)(e_i,e_k) \nabla_{e_i} T
        \right|^2
        \\
        &\leq
        4n
        \sum_i 
        \left| 
        (\nabla_{e_j} R)(e_i,e_k) \nabla_{e_i} T
        \right|^2
        \\
        &\leq
        4n l^2 \sum_i |\nabla_{e_j} \Riem|^2 |\nabla_{e_i} T|^2
        \\
        &\leq
        4n l^2 |\nabla_{e_j} \Riem|^2 |\nabla T|^2,
    \end{align*}
    where in the second step we used the equivalence of $1$-norm and $2$-norm;
    and in the third step we used \cref{equation:nabla-R-action}.
    For the other summands we find analogously:
    \begin{align*}
        |B_{jk}|^2
        &\leq
        4
        \left|
        \sum_i
        R(e_i,e_k) \nabla_{e_j} \nabla_{e_i} T
        \right|^2
        \\
        &\leq
        4n 
        \sum_i
        \left|
        R(e_i,e_k) \nabla_{e_j} \nabla_{e_i} T
        \right|^2
        \\
        &\leq
        4n l^2
        |\Riem|^2
        \sum_i
        |\nabla_{e_j} \nabla_{e_i} T|^2,
        \\
        |C_{jk}|^2
        &\leq
        \left|
        \sum_i (\nabla_{e_j} \nabla_{e_i} R)(e_i, e_k) T
        \right|^2
        \\
        &\leq
        n
        \sum_i 
        \left|
        (\nabla_{e_j} \nabla_{e_i} R)(e_i, e_k) T
        \right|^2
        \\
        &\leq
        n l^2 |T|^2 \sum_i |\nabla_{e_j} \nabla_{e_i} \Riem|^2,
        \\
        |D_{jk}|^2
        &\leq
        \left|
        \sum_i (\nabla_{e_i} R)(e_i, e_k)(\nabla_{e_j} T)
        \right|^2
        \\
        &\leq
        n 
        \sum_i 
        \left|
        (\nabla_{e_i} R)(e_i, e_k)(\nabla_{e_j} T)
        \right|^2
        \\
        &\leq
        n l^2
        |\nabla \Riem|^2 |\nabla_{e_j} T|^2,
        \\
        |E_{jk}|^2
        &\leq
        |\nabla_{(\nabla_{e_j}\Ric)(e_k)}T|^2
        \leq
        n^2|\nabla \Riem|^2 \cdot |\nabla T|^2,
        \\
        |F_{jk}|^2
        &\leq
        |(\nabla^2T)(e_j,\Ric(e_k))|^2
        \leq
        n^2|\Riem|^2 \cdot |\nabla^2 T|^2.
    \end{align*}
    Thus:
    \begin{align*}
        |\nabla [\Delta,\nabla] T|
        &
        \leq
        \left(
        \sum_{j,k}
        |E_{jk}|^2
        \right)^{1/2}
        +
        \left(
        \sum_{j,k}
        |F_{jk}|^2
        \right)^{1/2}
        +
        \left(
        \sum_{j,k}
        |A_{jk}|^2
        \right)^{1/2}
        +
        \left(
        \sum_{j,k}
        |B_{jk}|^2
        \right)^{1/2}
        +
        \left(
        \sum_{j,k}
        |C_{jk}|^2
        \right)^{1/2}
        +
        \left(
        \sum_{j,k}
        |D_{jk}|^2
        \right)^{1/2}
        \\
        &\leq
        (n^2 \cdot n^2|\nabla \Riem|^2 \cdot |\nabla T|^2)^{1/2}
        +
        (n^2 \cdot n^2|\Riem|^2 \cdot |\nabla^2 T|^2)^{1/2}
        \\
        &\qquad+
        \left(
        4n^2 l^2 |\nabla T|^2 \sum_k |\nabla_{e_k} \Riem|^2
        \right)^{1/2}
        +
        \left(
        4n^2 l^2 | \Riem|^2 \sum_{i,k} |\nabla_{e_k} \nabla_{e_i} T|^2
        \right)^{1/2}
        \\
        &\qquad
        +
        \left(
        n^2 l^2 |T|^2 \sum_{i,k} |\nabla_{e_k} \nabla_{e_i} \Riem|^2
        \right)^{1/2}
        +
        \left(
        n^2 l^2 |\nabla \Riem|^2 \sum_k |\nabla_{e_k} T|^2
        \right)^{1/2}
        \\
        &=
        (2nl+n^2) |\nabla \Riem| \cdot |\nabla T|
        +
        (2nl+n^2) |\Riem| \cdot |\nabla^2 T|
        +
        nl | \nabla^2 \Riem| \cdot |T|
        +
        nl |\nabla \Riem| \cdot |\nabla T|.
        \qedhere
    \end{align*}
\end{proof}

\begin{proposition}
    \label{proposition:estimates-from-commutator-formula}
    Assume $\|{\Ric}_{C^0} \leq K_1$, $\|{\Riem}_{C^0} \leq K_2$, $\|{\nabla \Riem}_{C^0} \leq K_3$, $\|{\nabla^2 \Riem}_{C^0} \leq K_4$.
    Then
    \begin{align*}
        \|{\nabla u}_{L^2}^2
        &\leq
        \frac{1}{2}\|{\Delta u}_{L^2}^2
        +
        \frac{1}{2} \|{u}^2_{L^2},
        \\
        \|{\nabla ^2 u}_{L^2}^2
        &\leq
        \|{\Delta u}_{L^2}^2
        +
        K_1 \|{\nabla u}_{L^2}^2,
        \\
        \|{\nabla^3 u}_{L^2}^2
        &\leq
        2\|{\nabla \Delta u}_{L^2}^2
        +
        \Cr{D^3u-estimate-second-summand}
        \|{u}_{H^2}^2,
        \\
        \|{\nabla^4 u}_{L^2}^2
        &\leq
        3\|{\nabla^2 \Delta u}_{L^2}^2
        +
        \Cr{D^4u-estimate-second-summand}
        \|{u}_{H^3}^2,
        \\
        \|{\nabla^5 u}_{L^2}^2
        &\leq
        4\|{\nabla^3 \Delta u}_{L^2}^2
        +
        \Cr{D^5u-estimate-second-summand}
        \|{u}_{L^2_4}^2.
    \end{align*}
    where
    \begin{align*}
        \Cl{D^3u-estimate-second-summand}
        &=
        2K_1^2
        +
        3nK_2
        +
        nK_3,
        \Cl{D^4u-estimate-second-summand}
        =3(K_1+nK_3)^2
        +
        3(3nK_2+nK_3)^2
        +
        5nK_2+2nK_3,
        \\
        \Cl{D^5u-estimate-second-summand}
        &=
        4(K_1+2nK_3+nK_4)^2
        +
        4\bigl(n(n+3)K_3+n(n+2)K_2+nK_4\bigr)^2
        +
        4(5nK_2+2nK_3)^2
        +
        7nK_2+3nK_3.
    \end{align*}
\end{proposition}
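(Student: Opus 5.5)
The plan is to derive every estimate from the single integration-by-parts identity
\[
\|{\nabla S}_{L^2}^2=\int_M \langle \Delta S, S\rangle\, dV_g,\qquad \Delta=\nabla^*\nabla,
\]
applied to $S=\nabla^k u$ for $k=0,\dots,4$. The Laplacian is then moved past the covariant derivatives using $\Delta\nabla T=\nabla\Delta T+[\Delta,\nabla]T$. The commutator terms are controlled pointwise by \cref{equation:Delta-commutator}, \cref{proposition:commutator-estimates} and \cref{proposition:nabla-commutator-estimates}, with the curvature norms replaced by $K_1,\dots,K_4$.

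For the first estimate, take $S=u$. Then $\|{\nabla u}_{L^2}^2=\int u\Delta u\le \|{u}_{L^2}\|{\Delta u}_{L^2}$, and $ab\le\frac12(a^2+b^2)$ finishes it. For the second, take $S=\nabla u$. For functions, \cref{equation:Delta-commutator} gives exactly $[\Delta,\nabla]u=-\Ric(\nabla u)$, since the curvature of the trivial bundle vanishes. Hence
\[
\|{\nabla^2u}_{L^2}^2=\int\langle\nabla\Delta u,\nabla u\rangle-\int\Ric(\nabla u,\nabla u).
\]
Integrating the first term by parts back gives $\|{\Delta u}_{L^2}^2$, and $|\Ric|\le K_1$ bounds the second term by $K_1\|{\nabla u}_{L^2}^2$. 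This is the usual Bochner argument.

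For the third estimate, take $S=\nabla^2u$ and write
\[
\Delta\nabla^2u=\nabla(\Delta\nabla u)+[\Delta,\nabla]\nabla u=\nabla A+[\Delta,\nabla]\nabla u,\qquad A:=\nabla\Delta u+[\Delta,\nabla]u.
\]
Integrating $\int\langle\nabla A,\nabla^2u\rangle$ by parts gives $\int\langle A,\nabla^*\nabla^2u\rangle=\int\langle A,\Delta\nabla u\rangle=\|{A}_{L^2}^2$. Then $(a+b)^2\le 2a^2+2b^2$ yields $2\|{\nabla\Delta u}_{L^2}^2+2K_1^2\|{\nabla u}_{L^2}^2$, which explains the leading coefficient $2$ and the summand $2K_1^2$. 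The remaining term is $\int\langle[\Delta,\nabla]\nabla u,\nabla^2u\rangle$. By \cref{proposition:commutator-estimates} with $k=1$, it is at most $\int\bigl(3nK_2|\nabla^2u|+nK_3|\nabla u|\bigr)|\nabla^2u|$. Cauchy--Schwarz then bounds this by $(3nK_2+nK_3)\|{u}_{H^2}^2$, using $\|{\nabla^iu}\|\,\|{\nabla^ju}\|\le\|{u}_{H^2}^2$ for the sum-type norm convention.

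The fourth and fifth estimates repeat the same pattern. For $S=\nabla^3u$, write $\Delta\nabla^3u=\nabla\nabla A'+\nabla[\Delta,\nabla]\nabla u+[\Delta,\nabla]\nabla^2u$ with $A'=A$. Integrate the doubly differentiated leading part by parts twice to reach $\|{\nabla^2\Delta u+\nabla[\Delta,\nabla]u+[\Delta,\nabla]\nabla u}_{L^2}^2$, and split it as $(a+b+c)^2\le3(a^2+b^2+c^2)$. The resulting commutator pieces are estimated by \cref{proposition:commutator-estimates} and \cref{proposition:nabla-commutator-estimates}; this produces the squares $3(K_1+nK_3)^2$ and $3(3nK_2+nK_3)^2$. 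The leftover cross term $\int\langle[\Delta,\nabla]\nabla^2u,\nabla^3u\rangle$ gives the linear part $5nK_2+2nK_3$. For $k=4$ one uses a four-term split with factor $4$, and \cref{proposition:nabla-commutator-estimates} is needed at one more level, which brings in $K_4$.

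I expect the main obstacle to be the bookkeeping rather than any conceptual step. One must track exactly which nested commutators $\nabla^j[\Delta,\nabla]\nabla^i u$ arise and which tensor rank $l$ enters each commutator bound. One must also make sure every integration by parts lands on a product of norms dominated by $\|{u}_{H^{k}}^2$, so that the stated explicit constants are genuine upper bounds. I will verify the $k=3$ and $k=4$ constants by writing out the expansion explicitly and checking each coefficient against the claimed $\Cr{D^4u-estimate-second-summand}$ and $\Cr{D^5u-estimate-second-summand}$.
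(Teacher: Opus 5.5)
Your proposal is correct and follows the paper's proof essentially step for step. Both arguments use the identity $\|{\nabla^{k+1}u}_{L^2}^2=\langle\Delta\nabla^k u,\nabla^k u\rangle_{L^2}$, commute repeatedly via $\Delta\nabla=\nabla\Delta+[\Delta,\nabla]$ to reach $\|{\nabla\Delta u+[\Delta,\nabla]u}_{L^2}^2$, $\|{\nabla^2\Delta u+\nabla[\Delta,\nabla]u+[\Delta,\nabla]\nabla u}_{L^2}^2$ and the four-term analogue, split these with factors $2,3,4$, and bound the leftover term $\langle[\Delta,\nabla]\nabla^{k-1}u,\nabla^k u\rangle$ by Cauchy--Schwarz; the one bookkeeping point to watch is the term $\nabla^j[\Delta,\nabla]u=-\nabla^j(\Ric(\nabla u))$, which, to get the stated $K_1+nK_3$ and $K_1+2nK_3+nK_4$, you must expand directly using $|\Ric|\le K_1$ and $|\nabla^i\Ric|\le n|\nabla^i\Riem|$, as the paper does, rather than quoting \cref{proposition:nabla-commutator-estimates} with $l=0$, which only gives the weaker $n^2K_2+n^2K_3$.
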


\begin{proof}
    Estimate for $\nabla u$:
    we have $\|{\nabla u}_{L^2}^2 \leq \|{\Delta u}_{L^2} \|{u}_{L^2} \leq \frac{1}{2}(\|{\Delta u}_{L^2}^2+
    \|{u}_{L^2}^2)$, where we used the Cauchy-Schwarz inequality in the first step and $2ab \leq a^2+b^2$ for real numbers $a,b$ in the second step.

    Estimate for $\nabla^2 u$:
    this is the integrated Bochner formula.

    Estimate for $\nabla^3 u$:
    we have
    \begin{align*}
        \|{\nabla^3 u}_{L^2}^2
        &=
        \langle \Delta \nabla^2 u,\nabla^2 u\rangle
        \\
        &=
        \langle \nabla \Delta \nabla u,\nabla^2 u\rangle
        +
        \langle [\Delta,\nabla]\nabla u,\nabla^2 u\rangle
        \\
        &=
        \|{\Delta\nabla u}_{L^2}^2
        +
        \langle [\Delta,\nabla]\nabla u,\nabla^2 u\rangle
        \\
        &=
        \|{\nabla\Delta u+[\Delta,\nabla]u}_{L^2}^2
        +
        \langle [\Delta,\nabla]\nabla u,\nabla^2 u\rangle
        \\
        &\leq
        2\|{\nabla\Delta u}_{L^2}^2
        +
        2\|{[\Delta,\nabla]u}_{L^2}^2
        +
        \|{[\Delta,\nabla]\nabla u}_{L^2}
        \|{\nabla^2 u}_{L^2}
        \\
        &\leq
        2\|{\nabla\Delta u}_{L^2}^2
        +
        2K_1^2\|{\nabla u}_{L^2}^2
        +
        \bigl(
        3n\|{\Riem}_{C^0}
        +
        n\|{\nabla\Riem}_{C^0}
        \bigr)
        \|{u}_{H^2}\|{\nabla^2 u}_{L^2}
        \\
        &\leq
        2\|{\nabla\Delta u}_{L^2}^2
        +
        \bigl(
        2K_1^2
        +
        3nK_2
        +
        nK_3
        \bigr)
        \|{u}_{H^2}^2,
    \end{align*}
    where in the sixth step we used \cref{equation:Delta-commutator} to estimate the $[\Delta,\nabla]u$ term,
    and we used \cref{proposition:commutator-estimates} to estimate the $[\Delta,\nabla]\nabla u$ term.

    Estimate for $\nabla^4 u$:
    \begin{align*}
        \|{\nabla^4 u}_{L^2}^2
        &=
        \langle \Delta\nabla^3 u,\nabla^3 u\rangle
        \\
        &=
        \langle \nabla \Delta \nabla^2 u,\nabla^3 u\rangle
        +
        \langle [\Delta,\nabla]\nabla^2 u,\nabla^3 u\rangle
        \\
        &=
        \|{\Delta\nabla^2 u}_{L^2}^2
        +
        \langle [\Delta,\nabla]\nabla^2 u,\nabla^3 u\rangle
        \\
        &=
        \|{\nabla^2\Delta u+\nabla[\Delta,\nabla]u+[\Delta,\nabla]\nabla u}_{L^2}^2
        +
        \langle [\Delta,\nabla]\nabla^2 u,\nabla^3 u\rangle
        \\
        &\leq
        3\|{\nabla^2\Delta u}_{L^2}^2
        +
        3\|{\nabla[\Delta,\nabla]u}_{L^2}^2
        +
        3\|{[\Delta,\nabla]\nabla u}_{L^2}^2
        +
        \|{[\Delta,\nabla]\nabla^2 u}_{L^2}
        \|{\nabla^3 u}_{L^2}
        \\
        &\leq
        3\|{\nabla^2\Delta u}_{L^2}^2
        +
        3(K_1+nK_3)^2\|{u}_{H^2}^2
        +
        3(3nK_2+nK_3)^2\|{u}_{H^2}^2
        \\
        &\quad
        +
        (5nK_2+2nK_3)\|{u}_{H^3}^2
        \\
        &\leq
        3\|{\nabla^2\Delta u}_{L^2}^2
        +
        \Bigl(
        3(K_1+nK_3)^2
        +
        3(3nK_2+nK_3)^2
        +
        5nK_2+2nK_3
        \Bigr)
        \|{u}_{H^3}^2 .
    \end{align*}
    In the sixth step we used \cref{equation:Delta-commutator}, which gives $\nabla [\Delta,\nabla] u=\nabla (-\nabla_{\Ric(\cdot)} u)$, and bounded this as in the proof of \cref{proposition:nabla-commutator-estimates};
    for the other estimates in this step we used \cref{proposition:commutator-estimates}.
    
    Estimate for $\nabla^5 u$:
    using \cref{proposition:commutator-estimates,proposition:nabla-commutator-estimates}, the steps are essentially the same as before and we obtain:
    \begin{align*}
        \|{\nabla^5u}_{L^2}^2
        &=
        \|{\Delta\nabla^3u}_{L^2}^2
        +
        \langle [\Delta,\nabla]\nabla^3u,\nabla^4u\rangle
        \\
        &=
        \|{\nabla^3\Delta u
        +
        \nabla^2[\Delta,\nabla]u
        +
        \nabla[\Delta,\nabla]\nabla u
        +
        [\Delta,\nabla]\nabla^2u}_{L^2}^2
        +
        \langle [\Delta,\nabla]\nabla^3u,\nabla^4u\rangle
        \\
        &\leq
        4\|{\nabla^3\Delta u}_{L^2}^2
        +
        4\|{\nabla^2[\Delta,\nabla]u}_{L^2}^2
        +
        4\|{\nabla[\Delta,\nabla]\nabla u}_{L^2}^2
        +
        4\|{[\Delta,\nabla]\nabla^2u}_{L^2}^2
        \\
        &\quad
        +
        \|{[\Delta,\nabla]\nabla^3u}_{L^2}
        \|{\nabla^4u}_{L^2}
        \\
        &\leq
        4\|{\nabla^3\Delta u}_{L^2}^2
        +
        4(K_1+2nK_3+nK_4)^2\|{u}_{H^3}^2
        \\
        &\quad
        +
        4\bigl(n(n+3)K_3+n(n+2)K_2+nK_4\bigr)^2\|{u}_{H^3}^2
        \\
        &\quad
        +
        4(5nK_2+2nK_3)^2\|{u}_{H^3}^2
        +
        (7nK_2+3nK_3)\|{u}_{L^2_4}^2
        \\
        &\leq
        4\|{\nabla^3\Delta u}_{L^2}^2
        \\
        &\quad
        +
        \Bigl(
        4(K_1+2nK_3+nK_4)^2
        +
        4\bigl(n(n+3)K_3+n(n+2)K_2+nK_4\bigr)^2
        \\
        &\qquad
        +
        4(5nK_2+2nK_3)^2
        +
        7nK_2+3nK_3
        \Bigr)
        \|{u}_{L^2_4}^2,
    \end{align*}
    where we used
    \begin{align*}
    \begin{split}
        |\nabla^2[\Delta,\nabla]u|
        &=
        |\nabla^2(-\nabla_{\Ric(\cdot)}u)|
        \\
        &\leq
        |\nabla^2\Ric|\,|\nabla u|
        +
        2|\nabla\Ric|\,|\nabla^2u|
        +
        |\Ric|\,|\nabla^3u|
        \\
        &\leq
        n|\nabla^2\Riem|\,|\nabla u|
        +
        2n|\nabla\Riem|\,|\nabla^2u|
        +
        K_1|\nabla^3u|,
        \\
        \|{\nabla^2[\Delta,\nabla]u}_{L^2}
        &\leq
        (K_1+2nK_3+nK_4)\|{u}_{H^3}.
    \end{split}
    \end{align*}
    by \cref{equation:Delta-commutator}.
    The other estimates are analogous to the previous steps.
\end{proof}

\begin{corollary}
\label{corollary:a-priori-estimate}
    Let $E\in H^3(M)$, and $\epsilon=\|{E}_{H^3}$, and $\mathscr{L}
        =
        -\frac{1}{2}\Delta
        +
        (1+E)\Id$.
    Then, for every $u\in C^\infty(M)$,
    \[
        \|{u}_{H^5}
        \leq
        \Cr{H^5-estimate-Laplace-term}
        \|{\mathscr{L}u}_{H^3}
        +
        \Cr{H^5-estimate-u-term}
        \|{u}_{L^2},
    \]
    where
    $\Cr{H^5-estimate-Laplace-term}$
    and
    $\Cr{H^5-estimate-u-term}$
    are defined at the bottom of
    \cref{equation:a-priori-derivation}.
\end{corollary}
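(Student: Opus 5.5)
We will bootstrap from the $L^2$ norm of $u$ up to $H^5$ using \cref{proposition:estimates-from-commutator-formula}. At each step we trade $\Delta u$ for $\mathscr{L}u$ via the identity
\[
\Delta u = 2(1+E)u - 2\mathscr{L}u .
\]
We write $f=\mathscr{L}u$.

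\textbf{Step 1: bound $\Delta u$ and its derivatives.} From the identity above we get $\|{\nabla^k\Delta u}_{L^2}\le 2\|{\nabla^k f}_{L^2}+2\|{\nabla^k((1+E)u)}_{L^2}$ for $k=0,1,2,3$. To bound the product term we use \cref{corollary:sobolev-low-order-multiplication} and \cref{corollary:sobolev-multiplication-thm}. This gives $\|{(1+E)u}_{H^r}\le (1+\Cr{const:sobolev-low-order-multiplication-M0}\epsilon)\|{u}_{H^r}$ for $r=0$, with analogous bounds using $\Cr{const:sobolev-low-order-multiplication-M1}$ and $\Cr{const:sobolev-low-order-multiplication-M2}$ for $r=1,2$. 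For $r=3$ we use $\|{u}_{H^3}+\Cr{const:multiplication-theorem}\epsilon\|{u}_{H^3}$. So $\|{\Delta u}_{H^r}\le 2\|{f}_{H^r}+c_r\|{u}_{H^r}$ with explicit $c_r$, and each right-hand side is lower order by two than what we are estimating.

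\textbf{Step 2: climb the derivative ladder.} We apply \cref{proposition:estimates-from-commutator-formula} inductively.
\begin{itemize}
\item First, $\|{\nabla u}_{L^2}^2\le \tfrac12\|{\Delta u}_{L^2}^2+\tfrac12\|{u}_{L^2}^2$ bounds $\|{u}_{H^1}$ by $\|{f}_{L^2}$ and $\|{u}_{L^2}$.
\item Next, $\|{\nabla^2u}^2\le\|{\Delta u}^2+K_1\|{\nabla u}^2$ bounds $\|{u}_{H^2}$.
\item Then the $\nabla^3$ estimate uses $\|{\nabla\Delta u}_{L^2}\le 2\|{f}_{H^1}+c_1\|{u}_{H^1}$ together with $\Cr{D^3u-estimate-second-summand}\|{u}_{H^2}^2$, which is already controlled.
\item Likewise for $\nabla^4$ (with $\Cr{D^4u-estimate-second-summand}$) and $\nabla^5$ (with $\Cr{D^5u-estimate-second-summand}$).
\end{itemize}
At each stage we use $\sqrt{a^2+b^2}\le a+b$ and $\|{f}_{H^r}\le\|{f}_{H^3}$. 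This yields a recursive chain of linear inequalities
\[
\|{u}_{H^k}\le a_k\|{f}_{H^3}+b_k\|{u}_{L^2}.
\]
Summing the $\nabla^k$ contributions under the norm convention $\|{u}_{H^5}=\sum_k\|{\nabla^k u}_{L^2}$ gives $\Cr{H^5-estimate-Laplace-term}=\sum_k a_k$ and $\Cr{H^5-estimate-u-term}=\sum_k b_k$, recorded in \cref{equation:a-priori-derivation}.

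\textbf{Main obstacle: avoiding circularity.} In each step the lower-order term of the $H^k$ estimate involves $\|{u}_{H^{k-1}}$ or $\|{u}_{H^{k-2}}$, never $\|{u}_{H^k}$. The one exception is the $\nabla^5$ step, whose error term involves $\|{u}_{L^2_4}$ and the $\|{\nabla^4u}_{L^2}$ factor from the pairing $\langle[\Delta,\nabla]\nabla^3u,\nabla^4u\rangle$; these are fine because $H^4$ is already bounded at that stage. The top-order $\nabla^k\Delta u$ is handled by the identity with $f$, and only at order $k-2$ in $u$ through the $E$-product, so no absorption argument is needed.

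The delicate bookkeeping is keeping every constant explicit and monotone. Since $\|{u}_{L^2}$ is left on the right-hand side rather than removed via the spectral gap, each step closes without smallness assumptions on $\epsilon$. The spectral gap is only used later, in \cref{corollary:inj-estimate}, to eliminate $\|{u}_{L^2}$ via $\|{u}_{L^2}\le\|{f}_{L^2}/|\lambda_{\min}(\mathscr{L})|$.
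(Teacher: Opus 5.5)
Your proposal is correct and follows the paper's proof essentially step for step: rewrite $\Delta u=2(1+E)u-2\mathscr{L}u$, control $(1+E)u$ in $H^r$ for $r=0,1,2,3$ via the two multiplication corollaries, and climb from $H^1$ to $H^5$ using the square-rooted commutator estimates, with no absorption argument needed. The only quibble is bookkeeping: under the paper's norm convention $\|{u}_{H^k}$ already contains all lower-order terms, so the stated constants are the nested coefficients from the last step of the chain (each level's constants built from the previous level's), not the sums $\sum_k a_k$, $\sum_k b_k$; those sums would still give a valid inequality, but with larger constants than the ones in the statement.
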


\begin{proof}
By
\cref{corollary:sobolev-multiplication-thm,corollary:sobolev-low-order-multiplication},
we have
\begin{align*}
    \|{(1+E)u}_{L^2}
    &\leq
    \left(
    1+
    \Cr{const:sobolev-low-order-multiplication-M0}
    \epsilon
    \right)
    \|{u}_{L^2},
    \\
    \|{\nabla((1+E)u)}_{L^2}
    &\leq
    \|{(1+E)u}_{H^1}
    \leq
    \left(
    1+
    \Cr{const:sobolev-low-order-multiplication-M1}
    \epsilon
    \right)
    \|{u}_{H^1},
    \\
    \|{\nabla^2((1+E)u)}_{L^2}
    &\leq
    \|{(1+E)u}_{H^2}
    \leq
    \left(
    1+
    \Cr{const:sobolev-low-order-multiplication-M2}
    \epsilon
    \right)
    \|{u}_{H^2},
    \\
    \|{\nabla^3((1+E)u)}_{L^2}
    &\leq
    \|{(1+E)u}_{H^3}
    \leq
    \left(
    1+
    \Cr{const:multiplication-theorem}
    \epsilon
    \right)
    \|{u}_{H^3}.
\end{align*}

Taking square roots in
\cref{proposition:estimates-from-commutator-formula}
gives
\begin{align*}
    \|{\nabla u}_{L^2}
    &\leq
    \frac{1}{\sqrt{2}}
    \|{\Delta u}_{L^2}
    +
    \frac{1}{\sqrt{2}}
    \|{u}_{L^2},
    \\
    \|{\nabla^2u}_{L^2}
    &\leq
    \|{\Delta u}_{L^2}
    +
    \sqrt{K_1}
    \|{\nabla u}_{L^2},
    \\
    \|{\nabla^3u}_{L^2}
    &\leq
    \sqrt{2}
    \|{\nabla\Delta u}_{L^2}
    +
    \sqrt{\Cr{D^3u-estimate-second-summand}}
    \|{u}_{H^2},
    \\
    \|{\nabla^4u}_{L^2}
    &\leq
    \sqrt{3}
    \|{\nabla^2\Delta u}_{L^2}
    +
    \sqrt{\Cr{D^4u-estimate-second-summand}}
    \|{u}_{H^3},
    \\
    \|{\nabla^5u}_{L^2}
    &\leq
    2
    \|{\nabla^3\Delta u}_{L^2}
    +
    \sqrt{\Cr{D^5u-estimate-second-summand}}
    \|{u}_{L^2_4}.
\end{align*}

Since
\[
    \Delta u
    =
    -2\mathscr{L}u
    +
    2(1+E)u,
\]
we obtain
\begin{align}
\label{equation:a-priori-derivation}
\begin{split}
    \|{u}_{H^1}
    &\leq
    \frac{1}{\sqrt{2}}
    \|{\Delta u}_{L^2}
    +
    \left(
    1+\frac{1}{\sqrt{2}}
    \right)
    \|{u}_{L^2}
    \\
    &\leq
    \sqrt{2}
    \|{\mathscr{L}u}_{L^2}
    +
    \left(
    1+\frac{1}{\sqrt{2}}
    +
    \sqrt{2}
    \left(
    1+
    \Cr{const:sobolev-low-order-multiplication-M0}
    \epsilon
    \right)
    \right)
    \|{u}_{L^2}
    \\
    &=
    \Cl{H^1-estimate-Laplace-term}
    \|{\mathscr{L}u}_{L^2}
    +
    \Cl{H^1-estimate-u-term}
    \|{u}_{L^2},
    \\
    \|{u}_{H^2}
    &\leq
    \left(
    1+\sqrt{K_1}
    \right)
    \|{u}_{H^1}
    +
    \|{\Delta u}_{L^2}
    \\
    &\leq
    2
    \|{\mathscr{L}u}_{L^2}
    +
    \left(
    1+\sqrt{K_1}
    \right)
    \|{u}_{H^1}
    +
    2
    \left(
    1+
    \Cr{const:sobolev-low-order-multiplication-M0}
    \epsilon
    \right)
    \|{u}_{L^2}
    \\
    &\leq
    \left(
    2
    +
    \left(
    1+\sqrt{K_1}
    \right)
    \Cr{H^1-estimate-Laplace-term}
    \right)
    \|{\mathscr{L}u}_{L^2}
    \\
    &\quad
    +
    \left(
    2
    \left(
    1+
    \Cr{const:sobolev-low-order-multiplication-M0}
    \epsilon
    \right)
    +
    \left(
    1+\sqrt{K_1}
    \right)
    \Cr{H^1-estimate-u-term}
    \right)
    \|{u}_{L^2}
    \\
    &=
    \Cl{H^2-estimate-Laplace-term}
    \|{\mathscr{L}u}_{L^2}
    +
    \Cl{H^2-estimate-u-term}
    \|{u}_{L^2},
    \\
    \|{u}_{H^3}
    &\leq
    \left(
    1+
    \sqrt{\Cr{D^3u-estimate-second-summand}}
    \right)
    \|{u}_{H^2}
    +
    \sqrt{2}
    \|{\nabla\Delta u}_{L^2}
    \\
    &\leq
    2\sqrt{2}
    \|{\mathscr{L}u}_{H^1}
    +
    \left(
    1+
    \sqrt{\Cr{D^3u-estimate-second-summand}}
    +
    2\sqrt{2}
    \left(
    1+
    \Cr{const:sobolev-low-order-multiplication-M1}
    \epsilon
    \right)
    \right)
    \|{u}_{H^2}
    \\
    &\leq
    \Bigg(
    2\sqrt{2}
    +
    \left(
    1+
    \sqrt{\Cr{D^3u-estimate-second-summand}}
    +
    2\sqrt{2}
    \left(
    1+
    \Cr{const:sobolev-low-order-multiplication-M1}
    \epsilon
    \right)
    \right)
    \Cr{H^2-estimate-Laplace-term}
    \Bigg)
    \|{\mathscr{L}u}_{H^1}
    \\
    &\quad
    +
    \left(
    1+
    \sqrt{\Cr{D^3u-estimate-second-summand}}
    +
    2\sqrt{2}
    \left(
    1+
    \Cr{const:sobolev-low-order-multiplication-M1}
    \epsilon
    \right)
    \right)
    \Cr{H^2-estimate-u-term}
    \|{u}_{L^2}
    \\
    &=
    \Cl{H^3-estimate-Laplace-term}
    \|{\mathscr{L}u}_{H^1}
    +
    \Cl{H^3-estimate-u-term}
    \|{u}_{L^2},
    \\
    \|{u}_{L^2_4}
    &\leq
    \left(
    1+
    \sqrt{\Cr{D^4u-estimate-second-summand}}
    \right)
    \|{u}_{H^3}
    +
    \sqrt{3}
    \|{\nabla^2\Delta u}_{L^2}
    \\
    &\leq
    2\sqrt{3}
    \|{\mathscr{L}u}_{H^2}
    +
    \left(
    1+
    \sqrt{\Cr{D^4u-estimate-second-summand}}
    +
    2\sqrt{3}
    \left(
    1+
    \Cr{const:sobolev-low-order-multiplication-M2}
    \epsilon
    \right)
    \right)
    \|{u}_{H^3}
    \\
    &\leq
    \Bigg(
    2\sqrt{3}
    +
    \left(
    1+
    \sqrt{\Cr{D^4u-estimate-second-summand}}
    +
    2\sqrt{3}
    \left(
    1+
    \Cr{const:sobolev-low-order-multiplication-M2}
    \epsilon
    \right)
    \right)
    \Cr{H^3-estimate-Laplace-term}
    \Bigg)
    \|{\mathscr{L}u}_{H^2}
    \\
    &\quad
    +
    \left(
    1+
    \sqrt{\Cr{D^4u-estimate-second-summand}}
    +
    2\sqrt{3}
    \left(
    1+
    \Cr{const:sobolev-low-order-multiplication-M2}
    \epsilon
    \right)
    \right)
    \Cr{H^3-estimate-u-term}
    \|{u}_{L^2}
    \\
    &=
    \Cl{L^2_4-estimate-Laplace-term}
    \|{\mathscr{L}u}_{H^2}
    +
    \Cl{L^2_4-estimate-u-term}
    \|{u}_{L^2},
    \\
    \|{u}_{H^5}
    &\leq
    \left(
    1+
    \sqrt{\Cr{D^5u-estimate-second-summand}}
    \right)
    \|{u}_{L^2_4}
    +
    2
    \|{\nabla^3\Delta u}_{L^2}
    \\
    &\leq
    4
    \|{\mathscr{L}u}_{H^3}
    +
    \left(
    1+
    \sqrt{\Cr{D^5u-estimate-second-summand}}
    +
    4
    \left(
    1+
    \Cr{const:multiplication-theorem}
    \epsilon
    \right)
    \right)
    \|{u}_{L^2_4}
    \\
    &\leq
    \Bigg(
    4
    +
    \left(
    1+
    \sqrt{\Cr{D^5u-estimate-second-summand}}
    +
    4
    \left(
    1+
    \Cr{const:multiplication-theorem}
    \epsilon
    \right)
    \right)
    \Cr{L^2_4-estimate-Laplace-term}
    \Bigg)
    \|{\mathscr{L}u}_{H^3}
    \\
    &\quad
    +
    \left(
    1+
    \sqrt{\Cr{D^5u-estimate-second-summand}}
    +
    4
    \left(
    1+
    \Cr{const:multiplication-theorem}
    \epsilon
    \right)
    \right)
    \Cr{L^2_4-estimate-u-term}
    \|{u}_{L^2}
    \\
    &=
    \Cl{H^5-estimate-Laplace-term}
    \|{\mathscr{L}u}_{H^3}
    +
    \Cl{H^5-estimate-u-term}
    \|{u}_{L^2}.
\end{split}
\end{align}
\end{proof}

\begin{corollary}
\label{corollary:inj-estimate}
    Let $E\in H^3(M)$ be $T^2\rtimes D_6$-invariant, and $\mathscr{L}
        =
        -\frac{1}{2}\Delta
        +
        (1+E)\Id$.
    Let $\lambda_{\min}(\mathscr{L})$ denote the smallest absolute
    value of an eigenvalue of $\mathscr{L}$ on
    $T^2\rtimes D_6$-invariant functions.
    If $\lambda_{\min}(\mathscr{L})>0$, then 
    for all $D_6 \times T^2$-invariant $u \in H^5(M)$, we have
    \[
    \|{u}_{H^5}
    \leq
    \Cl{const:injectivity-estimate}
    \|{\mathscr{L} u}_{H^3},
    \]
    where $\Cr{const:injectivity-estimate}=\left(
    \Cr{H^5-estimate-Laplace-term}
    +
    \frac{\Cr{H^5-estimate-u-term}}{\lambda_{\min}(\mathscr{L})}
    \right)$
    for the constants $\Cr{H^5-estimate-Laplace-term}, \Cr{H^5-estimate-u-term}$ from \cref{corollary:a-priori-estimate}.
\end{corollary}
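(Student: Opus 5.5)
The idea is to combine the a priori estimate of \cref{corollary:a-priori-estimate} with an $L^2$ bound for $u$ in terms of $\mathscr{L}u$, obtained from the spectral gap. Apply \cref{corollary:a-priori-estimate} with $\epsilon=\|{E}_{H^3}$. By density it holds for all $u\in H^5(M)$, since both sides are continuous in the $H^5$ norm. This gives
\[
\|{u}_{H^5}\le \Cr{H^5-estimate-Laplace-term}\|{\mathscr{L}u}_{H^3}+\Cr{H^5-estimate-u-term}\|{u}_{L^2}.
\]
It therefore remains to show
\[
\|{u}_{L^2}\le \lambda_{\min}(\mathscr{L})^{-1}\|{\mathscr{L}u}_{L^2}\le \lambda_{\min}(\mathscr{L})^{-1}\|{\mathscr{L}u}_{H^3}.
\]
The second inequality is trivial from the norm convention $\|{\cdot}_{H^3}=\|{\cdot}_{L^2}+\dots$. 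Substituting the first inequality into the a priori estimate then yields exactly the constant $\Cr{const:injectivity-estimate}$.

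For the $L^2$ bound, I regard $\mathscr{L}$ as an unbounded operator on the Hilbert space $L^2(M)^{T^2\rtimes D_6}$ of invariant functions, with domain $H^2$. This space is preserved by $\mathscr{L}$: $\Delta$ commutes with isometries, and multiplication by the invariant function $1+E$ preserves invariance.

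$E\in H^3\subset L^\infty$ by \cref{equation:L-infty-embedding}, so multiplication by $1+E$ is a bounded self-adjoint perturbation of $-\frac12\Delta$. Hence $\mathscr{L}$ is self-adjoint with compact resolvent, and its spectrum consists only of real eigenvalues. The invariant eigenfunctions form an $L^2$-orthonormal basis $\{\phi_k\}$ with eigenvalues $\mu_k$, each satisfying $|\mu_k|\ge\lambda_{\min}(\mathscr{L})>0$.

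Expanding $u=\sum a_k\phi_k$ gives
\[
\|{\mathscr{L}u}_{L^2}^2=\sum \mu_k^2a_k^2\ge \lambda_{\min}(\mathscr{L})^2\|{u}_{L^2}^2.
\]
Equivalently, $\|{\mathscr{L}^{-1}}_{L^2\to L^2}=1/\lambda_{\min}(\mathscr{L})$ for a self-adjoint operator whose spectrum avoids $(-\lambda_{\min},\lambda_{\min})$.

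The only delicate point is justifying self-adjointness and the discreteness of the spectrum on the invariant subspace. I would handle it via Kato–Rellich, since the perturbation is bounded, together with compactness of $H^2\hookrightarrow L^2$ on the closed manifold. Restricting to the closed invariant subspace is harmless because the orthogonal projection onto it, namely averaging over the compact group $T^2\rtimes D_6$, commutes with $\mathscr{L}$. Everything else is bookkeeping.
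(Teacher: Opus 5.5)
Your proposal is correct and follows the same route as the paper. The paper likewise bounds $\|u\|_{L^2}\le \lambda_{\min}(\mathscr{L})^{-1}\|\mathscr{L}u\|_{L^2}$ by expanding $u$ in an $L^2$-orthonormal basis of invariant eigenfunctions and substitutes this into \cref{corollary:a-priori-estimate}. Your justification of the eigenbasis (self-adjointness and compact resolvent on the invariant subspace) and your density extension from $C^\infty$ to $H^5$ supply details the paper leaves implicit.
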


\begin{proof}
    Let $\phi_1,\phi_2,\dots$ be an orthonormal basis of the
    $T^2\rtimes D_6$-invariant subspace of $L^2(M)$ consisting
    of eigenfunctions of $\mathscr{L}$, with corresponding
    eigenvalues ordered so that
    \[
    0<\lambda_{\min}(\mathscr{L})=|\lambda_1|
    \leq|\lambda_2|\leq\cdots.
    \]
    Then
    \begin{align*}
        \|{u}_{L^2}^2
        &=
        \sum_{i=1}^\infty
        |\< u, \phi_i \>_{L^2}|^2
        \|{\phi_i}_{L^2}^2
        =
        \sum_{i=1}^\infty
        |\< u, \phi_i \>_{L^2}|^2
        \frac{1}{\lambda_i^2}
        \|{\mathscr{L} \phi_i}_{L^2}^2
        \leq
        \frac{1}{\lambda_1^2}
        \sum_{i=1}^\infty
        |\< u, \phi_i \>_{L^2}|^2
        \|{\mathscr{L} \phi_i}_{L^2}^2
        =
        \frac{1}{\lambda_1^2}
        \|{\mathscr{L} u}_{L^2}^2,
    \end{align*}
    where in the first and last step we used that the $\phi_i$ are orthogonal, so the sum can be pulled out of the norm.
    Plugging this into \cref{corollary:a-priori-estimate} gives the claim.
\end{proof}

\subsection{Quadratic estimate}
\label{subsection:quadratic-estimate}

For our later application of a fixed-point theorem to the complex Monge--Ampère equation we need an estimate for the linearised operator as well as for the remaining non-linear terms.
The linear estimate has been achieved in the previous section, and in this section we prove the estimate for the non-linear terms from \cref{Taylordecomp}.

\begin{proposition}
\label{proposition:non-linear-estimate}
    Let $e^F=1+E$, where $E\in H^3(M)$ is real-valued, and set
    $\epsilon=\|{E}_{H^3}$.
    The map $\mathscr{N}:H^5(M)\rightarrow H^3(M)$ defined by $\mathscr{N}(u)
        =
        \frac{1}{2}\star(i\partial\bar\partial u)^2
        -
        e^F(e^{-u}-1+u)$
    satisfies
    \begin{align*}
        \|{\mathscr{N}(u_1)-\mathscr{N}(u_2)}_{H^3}
        &\leq
        \Cr{const:multiplication-theorem}
        \left(
        \|{u_1}_{H^5}
        +
        \|{u_2}_{H^5}
        \right)
        \\
        &\quad\cdot
        \left[
        \frac{1}{2}+
        \left(
        1+
        \Cr{const:multiplication-theorem}\epsilon
        \right)
        e^{\Cr{const:multiplication-theorem}
        \left(
        \|{u_1}_{H^3}
        +
        \|{u_2}_{H^3}
        \right)}
        \right]
        \|{u_1-u_2}_{H^5}
    \end{align*}
    for all $u_1,u_2\in H^5(M)$.
\end{proposition}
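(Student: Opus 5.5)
We split $\mathscr{N}=\mathscr{N}_1-\mathscr{N}_2$ with $\mathscr{N}_1(u)=\frac12\star(i\partial\bar\partial u)^2$ and $\mathscr{N}_2(u)=(1+E)(e^{-u}-1+u)$, and estimate the two differences separately in $H^3$. Each will be controlled by the Sobolev multiplication theorem, \cref{corollary:sobolev-multiplication-thm}, with constant $\Cr{const:multiplication-theorem}$.

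\emph{The quadratic term.} Write $\alpha_k=i\partial\bar\partial u_k$. The difference factorises as
\[
\star(\alpha_1\wedge\alpha_1)-\star(\alpha_2\wedge\alpha_2)=\star\bigl((\alpha_1+\alpha_2)\wedge(\alpha_1-\alpha_2)\bigr),
\]
using that $2$-forms commute under the wedge product. Since $\star(\beta\wedge\gamma)$ is a bilinear pointwise contraction of two $2$-forms, we apply the multiplication theorem to it. Here we must check that the proof of \cref{corollary:sobolev-multiplication-thm} goes through for contractions of tensors, since it only used Hölder, Kato and the Leibniz rule. One also needs to confirm that $|\star(\beta\wedge\gamma)|\le 2|\beta||\gamma|$ pointwise, so that the factor $\frac12$ absorbs the constant; this is the delicate bookkeeping point.

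This gives
\[
\|\mathscr{N}_1(u_1)-\mathscr{N}_1(u_2)\|_{H^3}\le \tfrac12\Cr{const:multiplication-theorem}\|i\partial\bar\partial(u_1+u_2)\|_{H^3}\|i\partial\bar\partial(u_1-u_2)\|_{H^3}.
\]
We then use \eqref{equation:metric-difference-phi-identity}, namely $|i\partial\bar\partial v|\le|\nabla^2 v|$, together with the parallelism of $J$. These give $\|i\partial\bar\partial v\|_{H^3}\le\|v\|_{H^5}$, which yields the $\frac12$ term.

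\emph{The exponential term.} Set $f(s)=e^{-s}-1+s$. By the fundamental theorem of calculus,
\[
f(u_1)-f(u_2)=(u_1-u_2)\int_0^1 f'(u_t)\,dt,\qquad u_t=tu_1+(1-t)u_2,\qquad f'(s)=1-e^{-s}.
\]
We need an $H^3$ bound on $1-e^{-w}$ in terms of $\|w\|_{H^3}$. Expanding the power series $1-e^{-w}=\sum_{m\ge1}(-1)^{m+1}w^m/m!$ and applying the multiplication theorem repeatedly gives $\|w^m\|_{H^3}\le \Cr{const:multiplication-theorem}^{m-1}\|w\|_{H^3}^m$. Hence $\|1-e^{-w}\|_{H^3}\le \Cr{const:multiplication-theorem}^{-1}(e^{\Cr{const:multiplication-theorem}\|w\|}-1)$.

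A cleaner route for the final form is the inequality $e^{x}-1\le x e^{x}$, which gives
\[
\|1-e^{-u_t}\|_{H^3}\le \|u_t\|_{H^3}e^{\Cr{const:multiplication-theorem}\|u_t\|_{H^3}}\le(\|u_1\|_{H^3}+\|u_2\|_{H^3})e^{\Cr{const:multiplication-theorem}(\|u_1\|_{H^3}+\|u_2\|_{H^3})}.
\]
Multiplying by $u_1-u_2$ costs one factor of $\Cr{const:multiplication-theorem}$. Multiplying by $1+E$ costs a factor $(1+\Cr{const:multiplication-theorem}\epsilon)$, since $\|(1+E)v\|_{H^3}\le\|v\|_{H^3}+\Cr{const:multiplication-theorem}\epsilon\|v\|_{H^3}$. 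Finally, using $\|\cdot\|_{H^3}\le\|\cdot\|_{H^5}$ on the prefactor, this produces exactly the second term in the bracket.

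\emph{Main obstacle.} The delicate step is the convergence and justification of the power-series argument in $H^3$. This relies on $H^3\hookrightarrow L^\infty$ (\cref{equation:L-infty-embedding}), so that $e^{-w}\in H^3$ and the series converges in the Banach algebra $H^3$. It also relies on the norm constant being $1$ on the constant function, which holds for the $H^3$ norm convention used here, although with volume weights one should check this. If the series bound is awkward, the fallback is a direct Leibniz estimate of $\nabla^k(e^{-w})$ for $k\le3$.
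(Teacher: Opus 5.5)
This is essentially the paper's proof. You use the same factorisation $\star(\alpha_1\wedge\alpha_1)-\star(\alpha_2\wedge\alpha_2)=\star\bigl((\alpha_1+\alpha_2)\wedge(\alpha_1-\alpha_2)\bigr)$ together with $|\nabla^m(i\partial\bar\partial w)|\le|\nabla^{m+2}w|$. Your fundamental-theorem-of-calculus representation of $f(u_1)-f(u_2)$ is just an integrated version of the paper's termwise factorisation $u_1^k-u_2^k=(u_1-u_2)\sum_{j=0}^{k-1}u_1^ju_2^{k-1-j}$. It yields the same factor $CSe^{CS}$, where $C$ is the constant of \cref{corollary:sobolev-multiplication-thm} and $S=\|{u_1}_{H^3}+\|{u_2}_{H^3}$.

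One correction to your bookkeeping: the pointwise bound you need is $|\star(\beta\wedge\gamma)|=|\langle\beta,\star\gamma\rangle|\le|\beta|\,|\gamma|$ with constant $1$, not $2|\beta|\,|\gamma|$. This holds because $\star$ is a pointwise isometry of $\Lambda^2T^*M$, and since the pairing is parallel, each Leibniz term obeys the same bound. With a constant $2$, the prefactor $\frac12$ would be used up and the bracket would contain $1$ rather than $\frac12$, contradicting your own displayed estimate.

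Your stated ``main obstacle'' is not one. The function $1-e^{-w}$ has no constant term, so the series converges in $H^3$ by the multiplication theorem alone. The factor $1+E$ is handled by the triangle inequality, so the norm of the constant function never enters.
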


\begin{proof}
    \emph{Step 1: a tensor version of \cref{corollary:sobolev-multiplication-thm}.}
    We claim that for all $2$-forms $\eta,\mu\in H^3(M;\Lambda^2 T^*M)$,
    \begin{align}
    \label{equation:tensor-multiplication}
        \|{\star(\eta\wedge\mu)}_{H^3}
        \leq
        \Cr{const:multiplication-theorem}
        \|{\eta}_{H^3}
        \|{\mu}_{H^3},
    \end{align}
    with the same constant as in
    \cref{equation:multiplication-theorem-constant-def}.
    As $\star$ is a pointwise isometry of $\Lambda^2$ and
    $\star(\eta\wedge\mu)=\<\eta,\star\mu\>$, we have by Cauchy-Schwarz $|\star(\eta\wedge\mu)| = |\<\eta,\star\mu\>| \leq |\eta|\cdot|\mu|$
    pointwise. Since $\star$ and the metric are parallel, the bilinear map
    $(\eta,\mu)\mapsto\star(\eta\wedge\mu)$ is parallel as well, so the Leibniz
    rule gives, schematically,
    $\nabla^k\!\left(\star(\eta\wedge\mu)\right)
    =\sum_{i=0}^k\binom{k}{i}\star(\nabla^i\eta\wedge\nabla^{k-i}\mu)$,
    and each summand obeys the pointwise bound
    $|\star(\nabla^i\eta\wedge\nabla^{k-i}\mu)|
    \leq|\nabla^i\eta|\cdot|\nabla^{k-i}\mu|$.
    The proof of \cref{corollary:sobolev-multiplication-thm} now applies
    verbatim with $u,v$ replaced by $\eta,\mu$.

    \emph{Step 2: the Monge--Amp\`ere part.}
    Write $w_\pm=u_1\pm u_2$. Since $2$-forms commute,
    \[
        \frac{1}{2}\star(i\partial\bar\partial u_1)^2
        -
        \frac{1}{2}\star(i\partial\bar\partial u_2)^2
        =
        \frac{1}{2}\star\!\left(
        i\partial\bar\partial w_+
        \wedge
        i\partial\bar\partial w_-
        \right).
    \]
    By \cite[Exercise 7.50]{Ballmann2006}, we have
    $i\partial\bar\partial w
    =\tfrac12\left(\nabla^2 w(J\cdot,\cdot)-\nabla^2 w(\cdot,J\cdot)\right)$,
    and differentiating it $m$ times and using that $J$ is parallel yields
    $\nabla^m(i\partial\bar\partial w)
    =\tfrac12\left(\nabla^{m+2} w(J\cdot,\cdot)-\nabla^{m+2}w(\cdot,J\cdot)\right)$, hence
    $|\nabla^m(i\partial\bar\partial w)|\leq|\nabla^{m+2}w|$
    for every $m\geq0$ and therefore $\|{i\partial\bar\partial w}_{H^3} \leq \|{w}_{H^5}$.
    Combining this bound with \cref{equation:tensor-multiplication} gives
    \begin{align*}
        \|{\frac{1}{2}\star(i\partial\bar\partial u_1)^2
        -
        \frac{1}{2}\star(i\partial\bar\partial u_2)^2}_{H^3}
        &\leq
        \frac{1}{2}\Cr{const:multiplication-theorem}
        \|{i\partial\bar\partial w_+}_{H^3}
        \|{i\partial\bar\partial w_-}_{H^3}
        \leq
        \Cr{const:multiplication-theorem}
        \left(
        \|{u_1}_{H^5}
        +
        \|{u_2}_{H^5}
        \right)
        \|{u_1-u_2}_{H^5}.
    \end{align*}

    \emph{Step 3: the zeroth order part.}
    Set $\xi(u)=e^{-u}-1+u=\sum_{k=2}^\infty\frac{(-1)^k}{k!}u^k$.
    For each $k\geq2$ we factor
    $u_1^k-u_2^k=(u_1-u_2)\sum_{j=0}^{k-1}u_1^j u_2^{k-1-j}$;
    each summand is a product of $k$ functions, so $k-1$ applications of
    \cref{corollary:sobolev-multiplication-thm} give
    \[
        \|{u_1^k-u_2^k}_{H^3}
        \leq
        \Cr{const:multiplication-theorem}^{k-1}
        \|{u_1-u_2}_{H^3}
        \sum_{j=0}^{k-1}
        \|{u_1}_{H^3}^j\|{u_2}_{H^3}^{k-1-j}
        \leq
        \Cr{const:multiplication-theorem}^{k-1}
        \|{u_1-u_2}_{H^3}
        \left(
        \|{u_1}_{H^3}+\|{u_2}_{H^3}
        \right)^{k-1}.
    \]
    Summing the series (which converges absolutely in $H^3$) we obtain
    \begin{align*}
        \|{\xi(u_1)-\xi(u_2)}_{H^3}
        &\leq
        \Cr{const:multiplication-theorem}
        \|{u_1-u_2}_{H^3}
        \sum_{k=2}^{\infty}
        \frac{
        \Cr{const:multiplication-theorem}^{k-2}
        }{k!}
        \left(
        \|{u_1}_{H^3}
        +
        \|{u_2}_{H^3}
        \right)^{k-1}
        \\
        &\leq
        \Cr{const:multiplication-theorem}
        \|{u_1-u_2}_{H^3}
        \left(
        \|{u_1}_{H^3}
        +
        \|{u_2}_{H^3}
        \right)
        \cdot
        e^{\Cr{const:multiplication-theorem}
        \left(
        \|{u_1}_{H^3}
        +
        \|{u_2}_{H^3}
        \right)},
    \end{align*}
    where in the last step we used $1/k!\leq1/(k-2)!$.
    Since $e^F=1+E$, another application of
    \cref{corollary:sobolev-multiplication-thm}
    gives
    \begin{align*}
        \|{e^F(\xi(u_1)-\xi(u_2))}_{H^3}
        \leq
        \|{\xi(u_1)-\xi(u_2)}_{H^3}
        +
        \|{E(\xi(u_1)-\xi(u_2))}_{H^3}
        \leq
        \left(
        1+
        \Cr{const:multiplication-theorem}\epsilon
        \right)
        \|{\xi(u_1)-\xi(u_2)}_{H^3}.
    \end{align*}
    Combining Steps 2 and 3 and using
    $\|{u}_{H^3}\leq\|{u}_{H^5}$
    proves the claim.
\end{proof}

\section{The Data}\label{Data}
Numerical values for constants:

\begin{table}[htbp]
    \centering
    \begin{tabular}{c|c|c}
         $\uparrow\|{\mathscr{E}}_{H^3}$ & $1.634476\times10^{-27}$ & \cref{subsection:a-posteriori-error} \\
         $\Cr{const:metric-difference}$ & $10.925657$ & \cref{lemma:metric-comparison} \\
         inset $\delta$ & $1/5120000$ & \cref{subsection:non-sharp-eigenvalue-bound} \\
         $\downarrow\lambda^{D_6}_{A,1}(Q_\delta)$ & $6.2807793467039440$ & \cref{subsection:non-sharp-eigenvalue-bound} \\
         $\lambda_{\min}$ & $[6.2366561589878447,\,6.5975991681974387]$ & \cref{subsection:non-sharp-eigenvalue-bound,Firsteigenvalueestimate} \\
         $\downarrow\min|\operatorname{spec}(\mathscr{L})|$ & $1-2.110667\times10^{-36}$ & \cref{corollary:L-spectral-gap} \\
         $r=\uparrow\|{\varphi}_{H^5(M,\omega_P)}$ & $1.711300\times10^{-18}$ & \cref{perturb} \\
         $\lambda_{\min}^{KE}$ & $[6.2366561589878442,\,6.5975991681974393]$ & \cref{Firsteigenvalueestimate} \\
         $\eta$ & $[1.869706,\,1.869707]\times10^{-17}$ & \cref{HSC} \\
         $\rho$ & $[1.711299,\,1.711300]\times10^{-18}$ & \cref{HSC} \\
         $U$ & $\left[\frac{11}{12},\frac{23}{24}\right] \times \left[-\frac{23}{24},-\frac{11}{12}\right] \times\mathbb T^2$ & \cref{HSC} \\
         $\mathcal M$ & $[-4.318638188974,\,-1.081153782238]\times10^{-3}$ & \cref{HSC} \\
         $\xi$ & $(1,1)$ & \cref{HSC} \\
         $\varepsilon_{\Ric}:=\uparrow\|{\Ric(g)-g}_{C^0_g}$ & $5.393183\times10^{-3}$ & \cref{subsection:elliptic-estimates} \\
         $\downarrow\mu$ & $0.994606817$ & \cref{subsection:elliptic-estimates,subsection:non-sharp-eigenvalue-bound} \\
         $\uparrow\|{\Ric(g)}_{C^0_g}$ & $4.085698$ & \cref{subsection:elliptic-estimates} \\
         $\uparrow\|{\Riem(g)}_{C^0_g}$ & $10.325726$ & \cref{subsection:elliptic-estimates} \\
         $\uparrow\|{\nabla\Riem(g)}_{C^0_g}$ & $27.787753$ & \cref{subsection:elliptic-estimates} \\
         $\uparrow\|{\nabla^2\Riem(g)}_{C^0_g}$ & $93.177426$ & \cref{subsection:elliptic-estimates} \\
    \end{tabular}
    \caption{Chosen parameter values, rigorous enclosures, and rigorous upper ("$\uparrow$") and lower ("$\downarrow$") bounds. Here $\Ric(g)\geq\mu g$ and $\mu=1-\varepsilon_{\Ric}$. The displayed endpoints have been rounded outwards.}
    \label{tab:numerical-constants}
\end{table}

\begin{table}[htbp]
    \centering
    \begin{tabular}{c|ccc}
        $\alpha$
        & $\|{\partial^\alpha u^{11}}_{C^0(P)}$
        & $\|{\partial^\alpha u^{12}}_{C^0(P)}
             =\|{\partial^\alpha u^{21}}_{C^0(P)}$
        & $\|{\partial^\alpha u^{22}}_{C^0(P)}$ \\
        \hline
        $(0,0)$ & $0.896032$  & $0.442755$  & $0.896032$  \\
        $(1,0)$ & $2.597958$  & $2.044278$  & $4.024637$  \\
        $(0,1)$ & $4.024637$  & $2.044278$  & $2.597958$  \\
        $(2,0)$ & $6.407032$  & $11.913082$ & $23.731556$ \\
        $(1,1)$ & $11.913082$ & $3.366399$  & $11.913082$ \\
        $(0,2)$ & $23.731556$ & $11.913082$ & $6.407032$
    \end{tabular}
    \caption{Certified componentwise $C^0(P)$-bounds for the inverse Hessian and its coordinate derivatives of order at most two. Each displayed bound has been rounded upward. Here, for a multi-index $\alpha=(\alpha_1,\alpha_2)$ we wrote $\partial^\alpha:=\partial_1^{\alpha_1}\partial_2^{\alpha_2}$.}
    \label{tab:inverse-metric-bounds}
\end{table}

\clearpage

\bibliographystyle{apalike}
\bibliography{MASTER/LIB}

\end{document}